\documentclass[10pt,leqno]{amsart}
\usepackage{amsmath,amsthm,amsfonts,eucal,eufrak}
\usepackage{latexsym}
\usepackage{amssymb}
\usepackage[dvips]{epsfig}
\usepackage[active]{srcltx}
\renewcommand{\thesection}{\arabic{section}}

\newtheorem{theorem}{Theorem}[section]
\newtheorem*{thma}{Theorem A}
\newtheorem*{thmb}{Theorem B}

\newtheorem*{thmi}{Theorem $I$}

\newtheorem*{thmta}{Theorem $\tilde A$}
\newtheorem*{thmtb}{Theorem $\tilde B$}
\newtheorem*{thmtc}{Theorem $\tilde C$}
\newtheorem*{thmtd}{Theorem $\tilde D$}

\newtheorem*{thmti}{Theorem $\tilde I$}
\newtheorem{lemma}[theorem]{Lemma}
\newtheorem{prop}[theorem]{Proposition}

\newtheorem{corollary}[theorem]{Corollary}
\theoremstyle{definition}
\newtheorem{remark}[theorem]{Remark}
\theoremstyle{definition}
\newtheorem{defi}[theorem]{Definition}

\setlength{\oddsidemargin}{-.1truein}
\setlength{\evensidemargin}{-.1truein} \setlength{\topmargin}{.20in}
\textwidth 6.50in \textheight 8.4in

\renewcommand{\theequation}{\thesection .\arabic{equation}}
\let\subs\subsection
\renewcommand\subsection{\setcounter{equation}{0}
\gdef\theequation{\thesubsection \arabic{equation}}\subs}
\let\sect\section
\renewcommand\section{\setcounter{equation}{0}
\gdef\theequation{\thesection .\arabic{equation}}\sect}


\newcommand{\cT}{{\mathcal{T}}}
\newcommand{\cS}{{\mathcal{S}}}

\newcommand{\IC}{{\mathbb{C}}}

\newcommand{\IR}{{\mathbb{R}}}

\newcommand{\IZ}{{\mathbb{Z}}}
\newcommand{\zv}{\IZ^\nu}

\newcommand{\be}{\begin{equation}}
\newcommand{\ee}{\end{equation}}
\newcommand{\nn}{\nonumber}

\newcommand{\dist}{\mathop{\rm{dist}}}

\newcommand{\spec}{\mathop{\rm{spec}}}
\newcommand{\sgn}{\mathop{\rm{sgn}}}

\newcommand{\La}{\Lambda}

\newcommand{\ve}{\varepsilon}

\newcommand{\vp}{\varphi}
\newcommand{\ka}{\kappa}

\def\Ga{\Gamma}



\def\zero{{(0)}}

\def\one{{(1)}}

\def\es{{(s)}}
\def\ar{{(r)}}
\def\esone{{(s-1)}}




\DeclareMathOperator*{\supp}{supp}

\newcommand{\eqdef}{\overset{\mathrm{def}}=}
\newcommand{\R}{\mathbb{R}}

\newcommand{\Z}{\mathbb{Z}}

\sloppy

\begin{document}

\smallskip

\title[The isospectral torus of quasi-periodic Schr\"odinger operators]{The isospectral torus of quasi-periodic Schr\"odinger operators via periodic approximations}

\author{David Damanik}

\address{Department of Mathematics, Rice University, Houston TX 77005, U.S.A.}

\email{damanik@rice.edu}

\author{Michael Goldstein}

\address{Department of Mathematics, University of Toronto, Bahen Centre, 40 St. George St., Toronto, Ontario, CANADA M5S 2E4}

\email{gold@math.toronto.edu}

\author{Milivoje Lukic}

\address{Department of Mathematics, Rice University, Houston TX 77005, U.S.A. and Department of Mathematics, University of Toronto, Bahen Centre, 40 St. George St., Toronto, Ontario, CANADA M5S 2E4}

\email{mlukic@math.toronto.edu}

\thanks{D.~D.\ was partially supported by NSF grants DMS--1067988 and DMS--1361625. M.~G.\ was partially supported by NSERC. M.~G.\ expresses his gratitude for the hospitality during a stay at the Institute of Mathematics at the University of Stony Brook in May 2014. M.~L.\ was partially supported by NSF grant DMS--1301582.}

\begin{abstract}
We study the quasi-periodic Schr\"odinger operator
$$
-\psi''(x) + V(x) \psi(x) = E \psi(x), \qquad x \in \IR
$$
in the regime of ``small'' $V(x) = \sum_{m\in\zv}c(m)\exp (2\pi i m\omega x)$, $\omega = (\omega_1, \dots, \omega_\nu) \in \mathbb{R}^\nu$, $|c(m)| \le \ve \exp(-\kappa_0|m|)$. We show that the set of reflectionless potentials isospectral with $V$ is homeomorphic to a torus. Moreover, we prove that any reflectionless potential $Q$ isospectral with $V$ has the form $Q (x) = \sum_{m \in \mathbb{Z}^\nu} d(m) \exp (2\pi i m\omega x)$, with the same $\omega$ and with $\lvert d(m) \rvert \le \sqrt{2 \ve} \exp(-\frac{\kappa_0}{2} |m|)$.

Our derivation relies on the study of the approximation via Hill operators with potentials $\tilde V (x) = \sum_{m \in \zv} c(m) \exp (2 \pi i m \tilde \omega x)$, where $\tilde \omega$ is a rational approximation of $\omega$. It turns out that the multi-scale analysis method of \cite{DG} applies to these Hill operators. Namely, in \cite{DGL} we developed the multi-scale analysis for the operators dual to the Hill operators in question. The main estimates obtained in  \cite{DGL} allow us here to establish estimates for the gap lengths and the Fourier coefficients in a form that is considerably stronger than the estimates known in the theory of Hill operators with analytic potentials in the general setting. Due to these estimates, the approximation procedure for the quasi-periodic potentials is effective, despite the fact that the rate of approximation $|\omega - \tilde \omega| \thicksim \tilde T^{-\delta}$, $0 < \delta < 1/2$ is slow on the scale of the period $\tilde T$ of the Hill operator.
\end{abstract}

\date{\today}

\maketitle

\tableofcontents

\section{Introduction and Statement of the Main Results}\label{sec.1}

Let $U(\theta)$ be a real function on the torus $\mathbb{T}^\nu$,
\begin{equation}\label{eq:PA17-2}
U(\theta) = \sum_{n \in \zv} c(n) e^{2 \pi i n\theta}\ , \quad \theta \in \mathbb{T}^\nu.
\end{equation}

Let $\omega = (\omega_1, \dots, \omega_\nu) \in \mathbb{R}^\nu$. Assume that the following Diophantine condition holds,
\begin{equation}\label{eq:1PAI7-5-85a}
|n \omega| \ge a_0 |n|^{-b_0}, \quad n \in \mathbb{Z}^\nu \setminus \{ 0 \}
\end{equation}
for some
\begin{equation}\label{eq:PAIombasicTcondition5a}
0 < a_0 < 1,\quad \nu < b_0 < \infty.
\end{equation}
Here, and everywhere else in this work, $|n| = |(n_1, \dots, n_\nu)| = \sum_j |n_j|$ for $n \in \zv$.

Let $V(x) = U(x \omega)$. Consider the Schr\"odinger operator
\begin{equation} \label{eq:SLPAI1-1}
[H_V y](x) = - y''(x) +  V(x) y(x), \quad x \in \IR.
\end{equation}

Assume that $U$ is real-analytic, that is, the Fourier coefficients $c(n)$ obey
\begin{equation}\label{eq:1-4}
\begin{split}
\overline{c(n)} & = c(-n), \quad n \in \zv \setminus \{ 0 \}, \\
|c(n)| & \le  \ve\exp(-\kappa_0|n|), \quad n \in \zv \setminus \{ 0 \},
\end{split}
\end{equation}
$\ve > 0$, $0 < \kappa_0 \le 1$. We also assume that $c(0) = 0$ since the constant term can be subsumed in the energy.

In \cite{DG} the eigenfunctions and the spectrum of this operator were studied via a multi-scale analysis method. The spectrum turns out to be of the form
\begin{equation}\label{eq:1spectrum}
\cS = [E(0) , \infty) \setminus \bigcup_{m \in \zv \setminus \{0\} : E^-( k_m) < E^+( k_m)} (E^-(k_m), E^+(k_m)),
\end{equation}
where $E(k)$ are the Floquet eigenvalues parametrized against the quasi-impulses $k \in \mathbb{R}$, and $k_m = -m \omega/2$ are the values at which $E(k)$ may be discontinuous. The intervals $(E^-( k_m), E^+( k_m))$ are called the gaps.

One of the main results in \cite{DG} establishes \textbf{two-sided estimates relating the gaps in the spectrum and the Fourier coefficients $c(n)$}. This enabled the authors to prove the existence and uniqueness of global solutions for the Korteweg-de Vries equation with small quasi-periodic initial data; see \cite{DG2}. However, the multi-scale analysis method of \cite{DG} does not allow one to identify the manifolds of isospectral quasi-periodic potentials and to give a solution of the inverse spectral problem for these potentials. The development of such a theory and particularly finding the ``canonical'' foliation of the space of quasi-periodic potentials into isospectral tori, similarly to the periodic case (see \cite{FlMcL, McKvM, McKTr, KaPo, PoTr}), will be instrumental for the complete integrability of the Korteweg-de Vries equation with quasi-periodic initial data. We would like to mention that the study of quasi-periodic Korteweg-de Vries equations addresses Problem~1 of Deift's list of problems in random matrix theory and the theory of integrable systems, see \cite{De}.

It is the purpose of this paper to develop such a theory. It will be central to our approach to approximate the frequency vector $\omega$ with vectors having rational components, and hence to approximate the quasi-periodic potential $V$ with periodic potentials.

In the paper \cite{DGL}, a companion to the present paper, a multi-scale analysis was developed for the spectral problem dual to the periodic approximations. This analysis is the main technological tool which enables us to develop in the current work an exact analogue of the above-mentioned \textbf{two-sided estimates relating the gaps in the spectrum for the rational approximations of the vector $\omega$ and the Fourier coefficients $c(n)$}. Moreover, these estimates are actually \textbf{uniform} for the approximating rational vectors. These results set up an effective periodic approximation theory for the quasi-periodic inverse spectral problem.

We invoke some fundamental objects from the theory of Hill's equation, namely the trace formula of McKean-van Moerbeke-Trubowitz and the differential equation for the translation dynamics $V \mapsto V(\cdot+t)$ derived by Dubrovin \cite{Du} and Trubowitz \cite{Tr}. It is very important for our method that versions of these objects can be developed in the case of a general reflectionless potential; see the remarkable work by Craig \cite{Cr}. Due to these two-sided estimates we control these objects in the process of the rational approximation of the vector $\omega$. We want to mention here that there are well known two-sided estimates relating the gaps in the spectrum of Hill's equation and the Fourier coefficients $c(n)$ of the potential, see \cite{Ho, KaMi, P, Tk, Tr}. However, these estimates are \textbf{insufficient} to derive the convergence of the inverse problem solutions in the rational approximation process. We explain this issue in Section~\ref{sec.10}; see Remark~\ref{rem:9fortheorI}.

Let us now present the main results of the current paper. In addition to the Diophantine condition \eqref{eq:1PAI7-5-85a} we assume also that each component $\omega_j$ obeys
\begin{equation}
\label{eq:diophant}
\|n\omega_j\| \geq \frac{c}{|n|^{\beta}} \mbox{\ \ \ for all $n \in \mathbb{Z}\setminus \{0\}$},
\end{equation}
where $0 < c < 1 < \beta$ and $\| \cdot \|$ denotes $\dist (\cdot, \Z)$. Denote by $\mathcal{P}(\omega,\ve,\kappa_0)$ the set of the potentials
\begin{equation}\label{eq:1potentials}
V(x) = \sum_{n \in \zv} c(n) e^{2 \pi i x n\omega}\ , \quad x \in \mathbb{R}
\end{equation}
with $c(n)$ as in \eqref{eq:1-4}. For $j = 0,1$, let $V^{(j)} \in \mathcal{P}(\omega, \ve, \kappa_0)$, that is,
\begin{equation}\label{eq:1potentialsA}
V^{(j)}(x) = \sum_{n \in \zv} c^{(j)}(n) e^{2 \pi i x n\omega}\ , \quad x \in \mathbb{R},
\end{equation}
and define the distance
\begin{equation}\label{eq:1potentialsA1}
\rho (V^{(0)}, V^\one) := \sum_{n \in \zv} |c^{(0)}(n) - c^\one(n)|.
\end{equation}

Let $V \in \mathcal{P}(\omega,\ve,\kappa_0)$. Denote by $\mathcal{S} = \mathcal{S}_V$ the spectrum of the Schr\"odinger operator \eqref{eq:SLPAI1-1} and by $(E^-( k_m), E^+( k_m))$ the gaps; compare \eqref{eq:1spectrum}. Set $r_m = |m|^{-\nu-1}$, $m\in\zv$ and let $C_m$ be a circle of radius $r_m$. Consider the torus
\begin{equation}\label{eq:1torus}
\cT_V = \prod_{m \in \zv \setminus \{ 0 \} : E^-( k_m) < E^+( k_m)} C_m.
\end{equation}
Define on $\mathcal{T}_V$ the distance
\begin{equation}\label{eq:1torusdist}
d((\theta^\zero_m)_m, (\theta^\one_m)_m)=\sum_m |\theta^\zero_m - \theta^\one_m|.
\end{equation}
Let $Q(x)$ be a real bounded continuous function, $x \in \mathbb{R}$. Consider the Schr\"odinger operator
\begin{equation} \label{eq:SLPAI1-1Q}
[H_Q y](x) := -y''(x) +  Q(x) y(x) = E y(x), \quad x \in \IR.
\end{equation}
One says that $Q$ is isospectral with $V$ if the spectrum of $H_Q$ is the same as the spectrum of $H_V$, that is, $\mathcal{S}_Q = \mathcal{S}_V$. We denote by $\mathcal{ISO}(V)$ the set of all $Q$ isospectral with $V$.

\begin{remark}\label{rem:reflectionles}
$(1)$ Let $W$ be a continuous real function on the torus $\mathbb{T}^\nu$ and let $\omega' = (\omega'_1, \dots, \omega'_\nu) \in \mathbb{R}^\nu$ be a vector with rationally independent components. Consider the potential $Q(x)=W(x\omega')$. Assume that $Q$ is isospectral with $V$ and that $H_Q$ has purely absolutely continuous spectrum. What can one say about $W$ and $\omega'$? Theorem~$I$ below in particular gives a complete answer to this question.

$(2)$ As a matter of fact, the statement in Theorem~$I$ is stronger. It applies to any reflectionless potential.
Recall the definition, see ~\cite{Cr}. Consider the Green function $G(x, y; E + i \eta) = (H_Q - E - i \eta)^{-1}(x,y)$, $\eta > 0$. One says that $Q$ is reflectionless if
$$
\lim_{\eta \downarrow 0} \Re G(x, x; E + i \eta) = 0 \quad \text{for all $x$ and Lebesgue almost every } E \in \mathcal{S}_Q.
$$
Some work related to this notion can be found, for example, in \cite{Cr, K97, K08, K14, PR, R07, R11, R13, SY}. In particular, the following result is known. Assume that $Q$ is recurrent in the sense that it belongs to its own $\omega$-limit set and let $H_Q$ be as in \eqref{eq:SLPAI1-1Q} (this assumption clearly holds for the potentials discussed in part (1) of this remark). If the spectrum of $H_Q$ is purely absolutely continuous, then $Q$ is reflectionless \cite{R07}.
\end{remark}

\begin{thmi}
There exists $\ve^\one = \ve^\one(a_0,b_0,\kappa_0) > 0$ such that for $0 < \ve \le \ve^\one$, the following
statements hold for every $V \in \mathcal{P}(\omega, \ve, \kappa_0)$.

$(1)$ There is a homeomorphism $\Phi$ from $(\mathcal{T}_V,d)$ onto $(\mathcal{ISO}(V) \cap \mathcal{P}(\omega, \sqrt{2 \ve}, \kappa_0/2), \rho)$.

$(2)$ Let $Q(x)$ be a bounded continuous function, $x\in\mathbb{R}$. Assume $Q$ is reflectionless and isospectral with $V$. Then,
\begin{equation}\label{eq:1potentialsQ}
Q(x) = \sum_{n \in \zv} d(n) e^{2 \pi i x n\omega}\ , \quad x \in \mathbb{R},
\end{equation}
with
\begin{equation}\label{eq:1-4Q}
|d(n)| \le \sqrt{2 \ve} \exp \Big( -\frac{\kappa_0}{2} |n| \Big), \quad n \in \zv \setminus \{ 0 \}.
\end{equation}
In particular, let $W$ be a continuous real function on the torus $\mathbb{T}^\nu$, and
let $\omega'  \in \mathbb{R}^\nu$ be a vector with rationally independent components. Consider the potential $Q(x) = W(x \omega')$. Assume that $Q$ is isospectral with $V$. Assume also that $H_Q$ has purely absolutely continuous spectrum. Then, $Q$ can be represented in the form \eqref{eq:1potentialsQ}
 with \eqref{eq:1-4Q}.
\end{thmi}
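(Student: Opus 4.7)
The plan is to realize the quasi-periodic inverse spectral problem as a limit of periodic (Hill) inverse spectral problems and to transfer the classical isospectral torus structure to the limit. First I would fix a sequence of rational vectors $\tilde\omega_\ell \to \omega$ with $|\omega-\tilde\omega_\ell| \sim \tilde T_\ell^{-\delta}$, where $\tilde T_\ell$ is the common period of the approximating potentials $\tilde V_\ell(x)=\sum_{n}c(n)e^{2\pi i n\tilde\omega_\ell x}$. Since $\tilde V_\ell$ has the same Fourier coefficients $c(n)$ as $V$, the multi-scale analysis from \cite{DGL} applies to the operators dual to the $\tilde V_\ell$ and yields two-sided estimates of the form $|E^+(\tilde k_m)-E^-(\tilde k_m)|\asymp |c(m)|\exp(-\kappa_0|m|/\cdots)$ together with exponential localization of the generalized eigenfunctions, all uniformly in the approximation index $\ell$. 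These uniform gap estimates, which are sharper than any of the two-sided bounds available in the general Hill theory \cite{Ho,KaMi,P,Tk,Tr}, are the crucial input.

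\textbf{Part (1): constructing $\Phi$.} For each $\tilde V_\ell$, classical Hill theory gives an isospectral torus $\mathcal T_{\tilde V_\ell}\cong \prod_m C_m^{(\ell)}$ parametrized by Dirichlet data (one point on the Riemann surface above each open gap), together with a homeomorphism $\tilde\Phi_\ell:\mathcal T_{\tilde V_\ell}\to\mathcal{ISO}(\tilde V_\ell)$ realized by the trace formula of McKean--van Moerbeke--Trubowitz and the Dubrovin--Trubowitz translation flow \cite{Du,Tr,McKvM,McKTr,FlMcL,KaPo,PoTr}. Given a point $\utheta=(\theta_m)_m \in \mathcal T_V$, I would lift it to compatible data on each $\mathcal T_{\tilde V_\ell}$ (possible because open gaps of $\tilde V_\ell$ converge to open gaps of $V$), producing $Q_\ell = \tilde\Phi_\ell(\utheta_\ell)\in \mathcal{ISO}(\tilde V_\ell)\subset \mathcal{P}(\tilde\omega_\ell,\sqrt{2\ve},\kappa_0/2)$. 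The Fourier bound at level $\ell$ is established by integrating the Dubrovin flow: a Fourier mode at frequency $n\tilde\omega_\ell$ arises as a sum of multi-gap contributions whose size is controlled by products of the gap lengths, and the uniform gap estimate squeezes each such contribution by $\sqrt{2\ve}\exp(-\kappa_0|n|/2)$. Compactness then produces a limit $Q=\Phi(\utheta)$ with Fourier series $\sum d(n)e^{2\pi i n\omega x}$ satisfying \eqref{eq:1-4Q}, and continuity of $\Phi$ in both directions follows from the distances \eqref{eq:1potentialsA1} and \eqref{eq:1torusdist} together with the same gap estimates. Injectivity/surjectivity reduces to the uniqueness statement supplied by part~(2).

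\textbf{Part (2): reflectionless $Q$ and the limit.} For a bounded continuous reflectionless $Q$ with $\mathcal S_Q=\mathcal S_V$, I would invoke Craig's extension \cite{Cr} of the trace formula and the Dubrovin--Trubowitz translation equations to arbitrary reflectionless potentials on $\mathcal S_V$: the reflectionless condition produces, for a.e. $E$ in each gap, a point on the two-sheeted cover, giving $Q$ an intrinsic set of Dirichlet-type data. Restricting this data to the gaps of $\tilde V_\ell$ and pushing forward through $\tilde\Phi_\ell^{-1}$ yields $Q_\ell\in\mathcal{ISO}(\tilde V_\ell)$; the uniform Fourier bound from part (1) gives $|d_\ell(n)|\le \sqrt{2\ve}\exp(-\kappa_0|n|/2)$, and the stability of Craig's trace formula under approximation of the spectrum forces $Q_\ell(x)\to Q(x)$ uniformly on compacts. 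Since $\tilde\omega_\ell\to\omega$ and the coefficients $d_\ell(n)$ are summable uniformly in $\ell$, the limit $Q$ has the representation \eqref{eq:1potentialsQ} with bound \eqref{eq:1-4Q}. The final assertion then follows from Remark~\ref{rem:reflectionles}: $Q(x)=W(x\omega')$ with rationally independent $\omega'$ is recurrent, recurrence plus purely a.c. spectrum implies reflectionless by \cite{R07}, and the representation just obtained forces $\omega'=\omega$ (up to the natural ambiguity) since the Fourier spectrum of $Q$ sits inside $\omega\mathbb Z^\nu$.

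\textbf{Main obstacle.} The delicate point is handling the slow rate of rational approximation $|\omega-\tilde\omega_\ell|\sim \tilde T_\ell^{-\delta}$ with $\delta<1/2$: one must show that a point of $\mathcal T_V$ is tracked \emph{consistently} by points of $\mathcal T_{\tilde V_\ell}$, that the Dubrovin flow is stable under this coarse approximation, and that for each fixed $n\in\zv$ the Fourier mode at $n\tilde\omega_\ell$ truly converges to a mode at $n\omega$ without mixing with neighboring modes. This is precisely where the \emph{uniformity in $\ell$} of the two-sided gap and localization estimates from \cite{DGL} is indispensable; without it the error terms from the periodic approximation would overwhelm the exponential decay $\exp(-\kappa_0|n|/2)$ that one is trying to prove in the limit.
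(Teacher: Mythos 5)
Your overall route is the same as the paper's: approximate $\omega$ by rational vectors, use the classical Hill-theoretic objects (trace formula, Dirichlet data, Dubrovin--Trubowitz translation flow), the uniform gap/coefficient estimates for the approximants, and Craig's reflectionless theory for the limit. The difficulty is that what you isolate as the ``main obstacle'' — consistent tracking of a torus point by the periodic tori, stability of the Dubrovin flow under the coarse approximation $|\omega-\tilde\omega_\ell|\sim\tilde T_\ell^{-\delta}$, and convergence of the modes $n\tilde\omega_\ell\to n\omega$ without mixing — is not an obstacle you may defer to ``uniformity of the DGL estimates''; it is the actual content of the proof. In the paper this occupies Sections~\ref{sec.5}--\ref{sec:9} and \ref{sec.10}: Theorem~\ref{Tdistspec} (Hausdorff closeness of spectra), Lemma~\ref{lem:8enumerinj} and Lemma~\ref{lem:8enumerinj11} (a gap-matching injection allowing $\mathcal X^{(r)}=\mathcal X$), the reparametrization $\mu_n(\theta)$ of \eqref{eq:8auxmanofold} to remove the square-root singularity of $\Phi_n$ at the gap edges, the separation estimates of Lemma~\ref{lem2:homogenuityplus} (also needed to verify Craig's hypotheses \eqref{eq:9craig2}), and the quantitative atlas/regular-vector-field analysis culminating in Propositions~\ref{prop:8.stabil1} and \ref{prop:8.stabilM}. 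Without some substitute for this machinery, your statements ``lift $\utheta$ to compatible data on each $\mathcal T_{\tilde V_\ell}$'' and ``the stability of Craig's trace formula under approximation of the spectrum forces $Q_\ell\to Q$'' are assertions, not proofs.

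Three more specific gaps. First, your proposed derivation of the uniform Fourier bound at the periodic level — ``integrating the Dubrovin flow,'' with a mode at frequency $n\tilde\omega_\ell$ ``controlled by products of the gap lengths'' — is not a valid argument: passing from gap decay to coefficient decay for an \emph{arbitrary} isospectral potential is exactly the hard converse direction, and in the paper it is Theorem~$\tilde I$, proved by an open--closed (connectedness) argument on the periodic torus whose openness step invokes part $(2)$ of Theorem~$\tilde B$ and the continued-fraction-function machinery; you may quote Theorem~$\tilde I$, but your sketch of its mechanism would not survive scrutiny. Second, uniform-on-compacts convergence of $Q^{(r)}$ plus uniform summability of the $d^{(r)}(n)$ does not by itself yield convergence of the coefficients or the representation \eqref{eq:1potentialsQ}; one needs the Diophantine ``quasi-basis'' uniqueness statement of Lemmas~\ref{Lml:8quasibasis} and \ref{Lml:8quasibasisconverge1} to identify the limiting coefficients and the frequency module $\omega\mathbb Z^\nu$. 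Third, injectivity of $\Phi$ does not ``reduce to the uniqueness statement supplied by part (2)'': part (2) gives a representation with bounds, not uniqueness of the torus point; the paper proves injectivity separately via the Lipschitz dependence \eqref{eq:9HillR-2} of Dirichlet eigenvalues on the potential applied along the approximants, and obtains continuity of $\Phi^{-1}$ from compactness of $(\mathcal M,d)$ — neither step appears in your outline.
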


\begin{remark}\label{rem:SY}
(a) Sodin and Yuditskii \cite{SY} studied the set of all reflectionless potentials for which the spectrum coincides with a given set $\mathcal{E}\subset \mathbb{R}$. A closed set
$$
\mathcal{E}=[\underline E, \infty)\setminus \bigcup_n (E^-_n,E^+_n)
$$
is called homogeneous if there is $\tau > 0$ such that for any $E \in \mathcal{E}$ and any $\sigma > 0$, we have
$|(E - \sigma, E + \sigma) \cap \mathcal{E}| > \tau \sigma$; see \cite{Ca}. Denote by $\mathcal{Q}_\mathcal{E}$ the set of all reflectionless potentials $Q$ with $\mathcal{S}_Q = \mathcal{E}$. Assume that the set $\mathcal{E}$ is homogeneous and
$$
\sum_n (E^+_n-E^-_n) < \infty.
$$
The following results were obtained in \cite{SY}. The set $\mathcal{Q}_\mathcal{E}$ is homeomorphic to the torus
\begin{equation}\label{eq:1torusSY}
\cT = \prod_{n: E^-_n < E^+_n} C_n.
\end{equation}
Furthermore, each $Q \in \mathcal{Q}_\mathcal{E}$ is almost-periodic. Gesztesy and Yuditskii proved in \cite{GY} that for $Q \in \mathcal{Q}_\mathcal{E}$, the operator $H_Q$ has purely absolutely continuous spectrum. They proved also that $\mathcal{Q}_\mathcal{E} \subset L^\infty(\mathbb{R})$.

One can prove using Theorems~$A$ and $B$ from \cite{DG} that the spectrum $\mathcal{S}_V$ in Theorem $I$ is homogeneous; see \cite{DGL14}. Thus, \cite{SY} and \cite{GY} apply to the set $\mathcal{S}_V$ in Theorem $I$. The results of Theorem $I$ cover all the mentioned results from \cite{SY} and \cite{GY} for $\mathcal{E} = \mathcal{S}_V$. For instance, the absolute continuity of the spectrum is due to the fact that this holds for any $Q \in \mathcal{P}(\omega, \ve, \kappa_0)$ with small $\ve$; see \cite{El}. On the other hand, the methods in \cite{SY} and \cite{GY} do not allow one to show that each potential  $Q \in \mathcal{Q}_\mathcal{E}$ has a quasi-periodic representation \eqref{eq:1potentialsQ} with the same vector of frequency $\omega$ and with Fourier coefficients which obey \eqref{eq:1-4Q}.
\\[1mm]
(b) Given the definition of $\mathcal{Q}_\mathcal{E}$ above, we can formulate part (1) of Theorem~$I$ as follows: There is a bijection $\Phi$ between $\mathcal{T}_V$ and $\mathcal{Q}_{\mathcal{S}_V}$, and the latter set is contained in $\mathcal{P}(\omega, \sqrt{2 \ve}, \kappa_0/2)$, and indeed coincides with $\mathcal{ISO}(V) \cap \mathcal{P}(\omega, \sqrt{2 \ve}, \kappa_0/2)$. Thus, $\mathcal{Q}_{\mathcal{S}_V}$ can be equipped with the metric $\rho$, and the map $\Phi$ is a homeomorphism with respect to the metrics $d$ and $\rho$.
\\[1mm]
(c) Theorem $I$ plays a key role in the description and uniqueness of the solutions of KdV equation with quasi-periodic analytic initial data like in the theorem; see the recent paper \cite{BDGL} by Binder et al.
\\[1mm]
(d) There is a very natural question about a sharper identification of the class of quasi-periodic isospectral potentials in question, in terms of the exponential decay rate of the coefficients.
Theorem $\tilde B$, stated below, and its counterpart Theorem B from \cite{DG} identify the rates within a margin comparable with the rate itself. Such an estimate is sufficient for applications
like existence and uniqueness of the solutions of the KdV equation, but one could ask whether the range of decay rates of the coefficients of two isospectral potentials can be shown to be smaller.
Of course the most intriguing question is whether sometimes two isospectral potentials
actually have different exponential rates of decay?
The approach we develop in this paper (and its predecessors) does not allow us to sharpen the margin considerably. It seems that some new ideas are needed to accomplish that goal. One possibility is to investigate some ``isospectral functionals'' and to develop a way to estimate the exponential convergence rate in terms of these functionals.
\end{remark}

As was mentioned above, the proof of Theorem~$I$ relies in a crucial way on periodic approximation. Next, we state the main results for the periodic approximants. Let $\tilde \omega = (\tilde \omega_1, \dots, \tilde \omega_\nu) \neq 0$ be a vector with rational components $\tilde \omega_j = \ell_j/t_j$, $\ell_j , t_j \in \mathbb{Z}$. Consider the function $\tilde V(x) = U(x \tilde \omega)$, $x \in \mathbb{R}$. The function $\tilde V(x)$ is obviously periodic. Consider the Hill operator
\begin{equation} \label{eq:PAI1-1}
[H_{\tilde V} y](x) = -y''(x) + \tilde V(x) y(x), \quad x \in \IR.
\end{equation}
We assume that the following ``Diophantine condition in the box'' holds, see Section~\ref{sec.5},
\begin{equation}\label{eq:PAI7-5-8}
|n \tilde \omega| \ge a_0 |n|^{-b_0}, \quad 0<|n|\le \bar R_0,
\end{equation}
for some
\begin{equation}\label{eq:PAIombasicTcondition}
0 < a_0 < 1,\quad \nu < b_0 < \infty,\quad (\bar R_0)^{b_0} > \prod t_j.
\end{equation}

\begin{remark}\label{rem:PAsectionplan1}
Clearly, for any $\tilde \omega = (\tilde \omega_1, \dots, \tilde \omega_\nu) \neq 0$, one can set up \eqref{eq:PAI7-5-8}, \eqref{eq:PAIombasicTcondition} with some $a_0 < 1, \nu < b_0 < \infty$ and $(\bar R_0)^{b_0} > \prod t_j$. In this paper we assume that $a_0, b_0$ are fixed while the denominators $t_j$ can be arbitrarily large. We use the same exponent $b_0$ in the Diophantine condition \eqref{eq:PAI7-5-8} and for $(\bar R_0)^{b_0} > \prod t_j$, just to save one piece of notation.
\end{remark}

To state the main results related to rational approximations we need the following:

\begin{defi}\label{def:omegalattice1}
Let $\mathfrak{N}(\tilde\omega) := \{ n \in \zv : n\tilde \omega = 0 \}$ and consider the quotient group $\mathfrak{Z}(\tilde \omega) := \mathbb{Z}^\nu/\mathfrak{N}(\tilde\omega)$. We call $\mathfrak{Z}(\tilde \omega)$ the $\tilde \omega$-lattice. We use the notation $[n]_{\tilde\omega} = [n]$ for the coset $n + \mathfrak{N} (\tilde\omega)$, $n \in \mathbb{Z}^\nu$. Given a set $\La \subset \zv$, we denote by $[\La]_{\tilde \omega} = [\La]$ the image of $\La$ under the map $n \rightarrow [n]_{\tilde \omega}$. We introduce the quotient distance in the standard way, that is, via $|\mathfrak{n}| = |\mathfrak{n}|_{\tilde \omega} := \min \{ |n| : n \in \mathfrak{n} \}$, $\mathfrak{n} \in \mathfrak{Z}(\tilde \omega)$. Given $\mathfrak{n} \in \mathfrak{Z}(\tilde \omega)$, we set $\mathfrak{n} \tilde \omega := n \tilde \omega$, where $n \in \mathfrak{n}$ is arbitrary. Obviously, this is a well-defined real function on $\mathfrak{Z}(\tilde \omega)$, which we denote by $\xi(\mathfrak{n})$.
\end{defi}

Here we assume that the Fourier coefficients $c(n)$ decay sub-exponentially:
\begin{equation}\label{eq:17-4-2}
\begin{split}
\overline{c(n)} & = c(-n), \quad n \in \zv \setminus \{ 0 \}, \\
|c(n)| & \le \ve \exp(-\kappa_0|n|^{\alpha_0}), \quad n \in \zv \setminus \{ 0 \},
\end{split}
\end{equation}
$0 < \kappa_0, \alpha_0 \le 1$. In all applications in this work we are interested only in the case when $U$ is analytic, that is, $\alpha_0 = 1$ in \eqref{eq:17-4-2}. We include the case $\alpha_0 < 1$ for purely technical reasons, which we explain in Section~\ref{sec.4}.

Set
\begin{equation}\label{eq:1K.1}
\begin{split}
k_n & = -\xi(n)/2, \quad n \in \mathfrak{Z}(\tilde\omega) \setminus \{0\}, \quad \mathcal{K}(\xi) = \{ k_n : n \in \mathfrak{Z}(\tilde\omega) \setminus \{0\} \}, \\
\mathfrak{J}_n & = ( k_n - \delta(n), k_n + \delta(n) ), \quad \delta(n) = a_0 (1 + |n|)^{-b_0-3}, \quad n \in \mathfrak{Z}(\tilde\omega) \setminus \{0\}, \\
\mathfrak{R}(k) & = \{ n \in \mathfrak{Z}(\tilde\omega) \setminus \{0\} : k \in \mathfrak{J}_n \}, \quad \mathfrak{G} = \{ k : |\mathfrak{R}(k)| < \infty \},
\end{split}
\end{equation}
where $a_0,b_0$ are as in the Diophantine condition \eqref{eq:PAI7-5-8}. Let $k \in \mathfrak{G}$ be such that $|\mathfrak{R}(k)| > 0$. Due to the Diophantine condition, one can enumerate the points of $\mathfrak{R}(k)$ as $n^{(\ell)}(k)$, $\ell = 0, \dots, \ell(k)$, $1 + \ell(k) = |\mathfrak{R}(k)|$, so that $|n^{(\ell)}(k)| < |n^{(\ell+1)}(k)|$. Set
\begin{equation}\label{eq:1mjdefi}
\begin{split}
T_{m}(n) & = m - n ,\quad m, n \in \mathfrak{Z}(\tilde\omega), \\
\mathfrak{m}^{(0)}(k) & = \{ 0, n^{(0)}(k) \}, \\
\mathfrak{m}^{(\ell)}(k) & = \mathfrak{m}^{(\ell-1)}(k) \cup T_{n^{(\ell)}(k)}(\mathfrak{m}^{(\ell-1)}(k)), \quad \ell = 1, \dots, \ell(k).
\end{split}
\end{equation}

\begin{thmta}
There exists $\ve_0 = \ve_0(\ka_0, a_0, b_0) > 0$ such that for $1/2 \le \alpha_0 \le 1$, $0 < \ve \le \ve_0$, and $k \in \mathfrak{G} \setminus \{ \frac{\xi(m)}{2} : m \in \mathfrak{Z}(\tilde\omega)\}$, there exist $E(k) \in \mathbb{R}$ and $\vp(k) := (\vp(n;k))_{n \in\mathfrak{Z}(\tilde\omega)}$ so that the following conditions hold:

$(a)$ $\vp(0 ;k) = 1$,
\begin{equation} \label{eq:1-17evdecay1A}
\begin{split}
|\vp(n; k)| & \le \ve^{1/2} \sum_{m \in \mathfrak{m}^{(\ell)}} \exp \Big( -\frac{7}{8} \kappa_0 |n-m|^{\alpha_0} \Big), \quad \text{ $n \notin \mathfrak{m}^{(\ell(k))}(k)$}, \\
|\vp(m; k)| & \le 2, \quad \text{for any $m \in \mathfrak{m}^{(\ell(k))}(k)$.}
\end{split}
\end{equation}

$(b)$ The function
$$
\psi(k, x) = \sum_{n \in \mathfrak{Z}(\tilde\omega)} \vp(n; k) e^{2 \pi i x (n \omega + k)}
$$
is well-defined and obeys equation \eqref{eq:PAI1-1} with $E = E(k)$, that is,
\begin{equation}\label{eq:1.sco}
H_{\tilde V} \psi(k,x) \equiv  - \psi''(k,x) + \tilde V(x) \psi(k,x) = E(k) \psi(k,x).
\end{equation}

$(c)$
$$
E(k) = E(-k), \quad \varphi(n;-k) = \overline{\varphi(-n; k)}, \quad \psi(-k, x) = \overline{\psi(k, x)},
$$
\begin{equation}\label{eq:1Ekk1EGT11}
\begin{split}
(k^0)^2 (k - k_1)^2  < E(k) - E(k_1) < 2k (k - k_1) + 2 \ve \sum_{k_1 < k_{n} < k} \delta(n), \quad 0 < k - k_1 < 1/4, \; k_1 > 0,
\end{split}
\end{equation}
where $k^\zero := \min(\ve_0, k/1024)$.

$(d)$ The limits
\begin{align}
\label{eq:1Ekm} E^\pm(k_m) & = \lim_{k \to k_m \pm 0, \; k \in \mathfrak{G} \setminus \{ \frac{\xi(m)}{2} : m \in \mathfrak{Z}(\tilde \omega)\}} E(k) \quad \text{ for $ k_m > 0$,} \\
\label{eq:1Ek0} E(0) & = \lim_{k \to 0 , \; k \in \mathfrak{G} \setminus \{\frac{\xi(m)}{2}:m\in \mathfrak{Z}(\tilde \omega)\}} E(k), \\
\label{eq:1phikm} \vp^\pm(n ;k_m) & = \lim_{k \to k_m \pm 0, \; k \in \mathfrak{G} \setminus \{\frac{\xi(m)}{2}:m\in \mathfrak{Z}(\tilde \omega)\}} \vp(n ;k) \quad \text{ for $k_m > 0$,} \\
\label{eq:1phik0} \vp(n ;0) & = \lim_{k \to 0, \; k \in \mathfrak{G} \setminus \{ \frac{\xi(m)}{2} : m \in \mathfrak{Z}(\tilde \omega)\}} \vp(n; k)
\end{align}
exist, $\vp^\pm(0; k_m) = 1$, $\vp(0 ;0) = 1$,
\begin{equation} \label{eq:1-17evdecay1Akmk0}
\begin{split}
|\vp^\pm(n ;k_m)| & \le \ve^{1/2} \sum_{m \in \mathfrak{m}^{(\ell)}} \exp \Big( -\frac{7}{8} \kappa_0 |n-m|^{\alpha_0} \Big), \quad \text{ $n \notin \mathfrak{m}^{(\ell(k_m))}(k_m)$}, \\
|\vp^\pm(n; k_m)| & \le 2 \quad \text{for any $n \in \mathfrak{m}^{(\ell(k_m))}(k_m)$,} \\
|\vp(n; 0)| & \le \ve^{1/2} \exp \Big( -\frac{7}{8} \kappa_0 |n|^{\alpha_0} \Big), \quad n \neq 0.
\end{split}
\end{equation}
The functions
\begin{align*}
\psi^\pm (k_m, x) & = \sum_{n \in \mathfrak{T}} \vp^\pm(n; k_m) e^{2 \pi i x (n \omega + k_m)} \\
\psi (0, x) & = \sum_{n \in \mathfrak{T}} \vp(n;0) e^{2 \pi i x n \omega}
\end{align*}
are well-defined and obey
\begin{equation}\label{eq:1.scokmk0}
\begin{split}
- \partial^2_{xx} \psi^\pm(k_m,x) + \tilde V(x) \psi^\pm(k_m,x) & = E^\pm(k_m) \psi^\pm(k_m,x),\\
- \partial^2_{xx} \psi(0,x) + \tilde V(x) \psi(0,x) & = E(0) \psi(0,x).
\end{split}
\end{equation}

$(e)$ The spectrum $\mathcal{S}_{\tilde V}$ of $H_{\tilde V}$ consists of the following set,
$$
\mathcal{S}_{\tilde V} = [E(0) , \infty) \setminus \bigcup_{m \in \mathfrak{Z}(\tilde \omega) \setminus \{ 0 \} : k_m > 0, \; E^-( k_m) < E^+( k_m)} (E^-( k_m), E^+( k_m)).
$$
\end{thmta}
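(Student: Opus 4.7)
The plan is to dualize the Hill operator via a Fourier (Bloch) ansatz, invoke the multi-scale analysis of the companion paper \cite{DGL} on the resulting discrete operator on the $\tilde\omega$-lattice, and then pass to the spectrum via Floquet--Bloch theory. Looking for solutions of \eqref{eq:PAI1-1} in the form
$$
\psi(k,x) = \sum_{n\in\mathfrak{Z}(\tilde\omega)} \vp(n;k)\, e^{2\pi i (\xi(n)+k) x},
$$
the Hill equation reduces to the discrete eigenvalue problem
$$
(2\pi)^2(\xi(n)+k)^2 \vp(n;k) + \sum_{m\in\mathfrak{Z}(\tilde\omega)} c(n-m)\,\vp(m;k) = E\,\vp(n;k),
$$
which is an operator $H(k)$ on $\ell^2(\mathfrak{Z}(\tilde\omega))$ with free Laplacian $(2\pi)^2(\xi(n)+k)^2$ and off-diagonal $c(n-m)$. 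The quotient $\mathfrak{Z}(\tilde\omega)$ appears naturally because distinct $n,m \in \zv$ with $n\tilde\omega = m\tilde\omega$ contribute identically to $\psi$; the Diophantine condition \eqref{eq:PAI7-5-8} in the box $|n|\le \bar R_0$ and the sub-exponential decay \eqref{eq:17-4-2} are exactly the assumptions required by the multi-scale machinery of \cite{DGL}.

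Next I would apply the multi-scale analysis from \cite{DGL} to $H(k)$. For $k \in \mathfrak{G}\setminus\{\xi(m)/2 : m\in\mathfrak{Z}(\tilde\omega)\}$, the resonance set $\mathfrak{R}(k)$ is finite, so the sites where $(\xi(n)+k)^2$ is near $k^2$ are contained in the finite orbit $\mathfrak{m}^{(\ell(k))}(k)$ generated by the shifts $T_{n^{(\ell)}}$. Outside this orbit, $(\xi(n)+k)^2-k^2$ is bounded away from zero, so the multi-scale Green's function estimates of \cite{DGL} produce a unique eigenvalue $E(k) \approx (2\pi)^2 k^2$ with eigenvector $\vp(\cdot;k)$ concentrated on $\mathfrak{m}^{(\ell(k))}(k)$, proving \eqref{eq:1-17evdecay1A}: the factor $\ve^{1/2}$ records one off-diagonal hop from the support $\mathfrak{m}^{(\ell(k))}$ and $\exp(-\tfrac{7}{8}\kappa_0|n-m|^{\alpha_0})$ is the decay of the (truncated) resolvent away from the near-resonant block, losing a small fraction of $\kappa_0$ to sum the multi-scale corrections.

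Parts (b)--(d) then follow essentially formally. The series in (b) converges and satisfies \eqref{eq:1.sco} by matching Fourier coefficients, since \eqref{eq:1-17evdecay1A} gives $\ell^1$ convergence. The symmetries in (c) follow from $\overline{c(n)}=c(-n)$ and the substitutions $k\to-k$, $n\to-n$ (both preserve $H(k)$ up to complex conjugation). The monotonicity bounds \eqref{eq:1Ekk1EGT11} are produced by differentiating $E(k)$: first-order perturbation theory and the normalization $\vp(0;k)=1$ yield $E'(k) = 8\pi^2 k + O(\ve)$ on the regular set, while a crossing at some $k_n$ with $k_1<k_n<k$ contributes a jump bounded by the width $\ve\,\delta(n)$ of the avoidance window. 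The one-sided limits in (d) exist because the multi-scale construction is locally continuous in $k$ on each connected component of the regular set, and the enumeration $n^{(0)}(k),\dots,n^{(\ell(k))}(k)$ stabilizes on each side of $k_m$; the possible gap $E^-(k_m)<E^+(k_m)$ arises from the standard two-branch behavior at a level crossing of the $2\times 2$ near-resonant block.

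Finally, (e) follows from Floquet--Bloch theory for the periodic operator $H_{\tilde V}$: its spectrum is the closure of the range of the band function $E(\cdot)$ over a Brillouin zone. Combining the strict monotonicity from (c) with the one-sided limits of (d) identifies this range as $[E(0),\infty)\setminus\bigcup (E^-(k_m),E^+(k_m))$. The main obstacle I anticipate is the analysis near resonant $k_m$ where $|\mathfrak{R}(k)|$ may grow as $k\to k_m$: controlling the limits \eqref{eq:1Ekm}--\eqref{eq:1phikm} requires that the block-diagonalizations of \cite{DGL} be uniform in $k$ across the enlargement $\mathfrak{m}^{(\ell-1)}\mapsto \mathfrak{m}^{(\ell)}$ as a new near-resonant site enters, which in turn drives the need to build the orbits $\mathfrak{m}^{(\ell)}(k)$ inductively via the shifts $T_{n^{(\ell)}}$ exactly as in \eqref{eq:1mjdefi}.
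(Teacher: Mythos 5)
Your reduction of the Hill equation to the dual operator on $\ell^2(\mathfrak{Z}(\tilde\omega))$ and your use of the multi-scale analysis of \cite{DGL} to produce $E(k)$ and $\vp(\cdot;k)$ with the decay \eqref{eq:1-17evdecay1A}, the symmetries, the bound \eqref{eq:1Ekk1EGT11} and the one-sided limits is exactly the paper's route: parts $(a)$--$(d)$ are quoted there wholesale from Theorem~$\tilde C$ of \cite{DGL}, and part $(b)$ is verified, as you say, by matching Fourier coefficients of $\psi(k,\cdot)$.

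The genuine gap is in $(e)$. You assert that the spectrum of $H_{\tilde V}$ is ``the closure of the range of the band function $E(\cdot)$'' by Floquet--Bloch theory. But Floquet--Bloch theory only gives $\spec H_{\tilde V}=\overline{\bigcup_k \spec \tilde H_k}$, the fibers being precisely the dual operators (cf.\ the cocycle relation \eqref{eq:12cocyclic}); it does not tell you that the single family $E(k)$ produced by the multi-scale construction exhausts $\spec\tilde H_k$ for each $k$. Your argument establishes only the inclusion $\overline{\{E(k)\}}\subset \spec H_{\tilde V}$, i.e.\ $\spec H_{\tilde V}\supset [E(0),\infty)\setminus\bigcup(E^-(k_m),E^+(k_m))$, since bounded Bloch solutions place $E(k)$ in the spectrum; it does not exclude additional spectrum inside the gaps or below $E(0)$. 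Excluding it is exactly where the paper uses the gap-energy resolvent estimates (4) and (5) of Theorem~$\tilde C$ --- for $E$ strictly inside a gap, or $E<E(0)-\delta$, the inverses $(E-\tilde H_k)^{-1}$ exist for \emph{every} $k$ with off-diagonal decay uniform in $k$ --- together with Lemma~\ref{lem:13.1}, which converts these fiberwise bounds into invertibility of $E-H_{\tilde V}$ on $L^2(\IR)$ via the explicit Fourier-side construction $g(k)=\sum_{\mathfrak n}(E-\tilde H_k)^{-1}(0,\mathfrak n)\,\widehat f(k+\mathfrak n\tilde\omega)$ and the covariance \eqref{eq:12cocyclic13}. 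Without this step (or an equivalent proof that the constructed eigenvalue branch is complete in every fiber), the equality claimed in $(e)$ does not follow: ``spectrum equals the closure of the range of the band function'' presupposes precisely the completeness that has to be proven. Your proposal never invokes the resolvent bounds \eqref{eq:11Hinvestimatestatement1PQreprep2D}--\eqref{eq:11Hinvestimatestatementkzero} at gap energies, so this direction of $(e)$ is missing.
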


\begin{thmtb}
$(1)$ The gaps $(E^-(k_m), E^+( k_m))$ in Theorem~ $\tilde A$ obey $E^+( k_m) - E^-(k_m) \le 2 \ve \exp(-\frac{\kappa_0}{2} |m|^{\alpha_0})$.

$(2)$ Using the notation from Theorem~$\tilde A$, there exists $\ve^\zero = \ve^\zero (a_0, b_0, \kappa_0) > 0$ such that if
the gaps $(E^-(k_m,\tilde\omega), E^+(k_m,\tilde\omega))$ obey $E^+(k_m,\tilde\omega) - E^-( k_m,\tilde\omega) \le \ve \exp(-\kappa'_0 |m|^{\alpha'_0})$ with $0 < \ve < \ve^\zero$, $\kappa'_0 \ge 4 \kappa_0$, $\alpha'_0 \ge \alpha_0$, then, in fact, $\tilde V(x)$ has a representation
\begin{equation}\label{eq:1potentialsVtil}
\tilde V(x) = \sum_{n \in \zv} d(n) e^{2 \pi i x n \tilde \omega}\ , \quad x \in \mathbb{R}
\end{equation}
with $|d(m)| \le \sqrt{2\ve} \exp(-\frac{\kappa'_0}{2} |m|^{\alpha'_0})$.
\end{thmtb}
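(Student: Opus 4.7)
\smallskip

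\noindent\textbf{Proof plan for Theorem $\tilde B$.}

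\emph{Part (1).} I plan to derive the gap upper bound from the explicit Floquet eigenfunctions $\psi^\pm(k_m,\cdot)$ provided by Theorem~$\tilde A$(d). First, I would substitute $\psi(k,x) = e^{2\pi i kx}\sum_{n}\vp(n;k)e^{2\pi i n\tilde\omega x}$ into \eqref{eq:PAI1-1} to recast $H_{\tilde V}\psi = E\psi$ as the dual infinite-matrix problem on $\mathfrak{Z}(\tilde\omega)$,
\begin{equation*}
\bigl((n\tilde\omega+k)^2 - E\bigr)\vp(n;k) + \sum_{n'}c(n-n')\vp(n';k) = 0.
\end{equation*}
At $k=k_m$ the diagonal entries at $n=0$ and $n=m$ coincide, and $E^\pm(k_m)$ are the two eigenvalues arising from this near-degeneracy. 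I would then perform a Schur/Feshbach reduction onto $\mathrm{span}\{\delta_0,\delta_m\}$ to obtain an effective $2\times 2$ Hermitian matrix whose off-diagonal entry takes the form $c(m) + R(m)$, with $R(m)$ a convergent series of Fourier coefficients $c(n)$, $n \in [m]_{\tilde\omega}$, dressed by the decaying amplitudes $\vp^\pm$. Combining \eqref{eq:17-4-2} with the decay estimate \eqref{eq:1-17evdecay1Akmk0} and the Diophantine condition \eqref{eq:PAI7-5-8} (which limits the density of each coset inside a ball), I expect the sum defining $c(m)+R(m)$ to be bounded by $\ve\exp(-\tfrac{\kappa_0}{2}|m|^{\alpha_0})$; the drop from $\kappa_0$ to $\kappa_0/2$ is precisely what the coset summation consumes. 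Since the gap equals twice the modulus of the off-diagonal entry, $E^+(k_m)-E^-(k_m) \le 2\ve\exp(-\tfrac{\kappa_0}{2}|m|^{\alpha_0})$.

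\emph{Part (2).} Here I plan to invoke the lower-bound half of the two-sided estimates developed in the dual multi-scale analysis of \cite{DGL}. That analysis yields a constant $C=C(a_0,b_0,\kappa_0)$, \emph{independent of $\tilde\omega$}, such that the effective (periodic) Fourier coefficient $c_{\mathrm{eff}}([m]) := \sum_{n\in[m]_{\tilde\omega}}c(n)$ at the frequency $m\tilde\omega$ obeys $|c_{\mathrm{eff}}([m])| \le C\bigl(E^+(k_m)-E^-(k_m)\bigr)$. Combining this with the hypothesis of Part (2) gives $|c_{\mathrm{eff}}([m])|\le C\ve\exp(-\kappa'_0|m|^{\alpha'_0})$. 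Next, since $\tilde V$ is a periodic function, its Fourier data at the frequency $m\tilde\omega$ depends only on the coset sum $c_{\mathrm{eff}}([m])$, so I am free to redistribute the quasi-periodic representation: for each coset $[m]$ I would pick a minimal-norm representative $m^*\in[m]$, set $d(m^*):=c_{\mathrm{eff}}([m])$, and $d(n):=0$ for $n\in[m]\setminus\{m^*\}$. By construction $\tilde V(x)=\sum_n d(n)e^{2\pi i n\tilde\omega x}$; the hypothesis $\kappa'_0 \ge 4\kappa_0$ leaves enough exponential slack so that, for $\ve\le\ve^\zero$ chosen so that $C\sqrt{2\ve}\le 1$, the coefficient bound becomes $|d(m^*)|\le\sqrt{2\ve}\exp(-\tfrac{\kappa'_0}{2}|m^*|^{\alpha'_0})$.

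\emph{Main obstacle.} The principal difficulty is the \emph{lower} bound on the gap used in Part (2). Part (1) reduces to a convergent perturbation series in the Fourier coefficients $c(n)$, but the lower bound must rule out accidental cancellations between $c(m)$ and the higher-order corrections $R(m)$, uniformly across the rational approximants $\tilde\omega$. This is exactly the content of the two-sided estimates proved via the dual MSA of \cite{DGL}: the resolvent of the reduced operator must be controlled on each scale in order to prevent $c_{\mathrm{eff}}([m])$ from being much smaller than the gap it produces. Without this uniform lower bound, the gap could in principle collapse even when $c(m)$ is not small.
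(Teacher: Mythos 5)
Your part (1) is in the right spirit: the gap is indeed identified with (twice) a dressed off-diagonal entry of an effective two-by-two block, but the paper does not re-derive this; it simply quotes part $(III)$ of Theorem~$\tilde D$ of \cite{DGL} (the finite-volume gap bound \eqref{eq:10kk1comp1gapsize}) together with the stabilization estimate \eqref{eq:7Ederiss1TD-3} to pass to $E^\pm(k_m)$. The resolvent control off the resonant pair $\{0,m\}$ that your Schur/Feshbach reduction would need is exactly the output of the multi-scale analysis, so as a self-contained argument your sketch is incomplete, but the route coincides with the intended one.

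Part (2) has a genuine gap. The estimate you invoke, $|c_{\mathrm{eff}}([m])| \le C\bigl(E^+(k_m)-E^-(k_m)\bigr)$ with $C$ a constant, is not available in \cite{DGL} and is not how the implication works. What the eigenvalue equation \eqref{eq:10Eequation0} yields, after removing the small denominators of the continued-fraction functions (Lemma~\ref{lem:3.smalldenominators}), is Lemma~\ref{lem:11.gapinequalities}: $|c(n^\zero)|$ is bounded by a sub-exponential factor $\exp(C(b_0)(\log[4+|n^\zero|])^3)$ times the gap length \emph{plus} a quadratic expression $\sum_{m',n'}|c(m')|\,s(n^\zero;m',n')\,|c(n'-n^\zero)|$ in the other Fourier coefficients, dressed by the weight functions of Definition~\ref{def:aux1}. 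This quadratic term cannot be dropped, and ruling out that it dominates — i.e.\ that cancellations let a sizable $c(m)$ hide behind a tiny gap — is precisely the hard part of the theorem. The paper handles it by a multi-scale bootstrap (Lemma~\ref{lem:11scaleddecayestimate} together with the iteration \eqref{eq:11scaledconditionaldecayY}, \eqref{eq:11-10acbasicfunctionsBineq2F1}--\eqref{eq:11-10acbasicfunctionsBineq2F3}) which starts from the a priori decay at rate $\kappa_0$ and successively upgrades the decay rate of all coefficients until it exceeds $\kappa'_0/2$; the condition $\kappa'_0\ge 4\kappa_0$ and the smallness $\ve<\ve^\zero$ are what make each step of this contraction close. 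By appealing to ``the two-sided estimates of \cite{DGL}'' you are in effect assuming the conclusion of part (2): your ``main obstacle'' paragraph names the right difficulty but supplies no mechanism for it. The final step of your plan — redistributing the coset sums onto minimal-norm representatives to produce the representation \eqref{eq:1potentialsVtil} — is harmless, but it is the easy part; the substance is the self-consistent system of inequalities for the $|c(\mathfrak{m})|$ and its iteration.
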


We define the torus $\mathcal{T}_V$, the set $\mathcal{P}(\tilde\omega, \ve, \kappa)$ and the distance $\rho$ just as before.

\begin{thmti}
There exists $\ve^\one = \ve^\one(a_0,b_0,\kappa_0)$ such that for $0 < \ve < \ve^\one$ and $V \in \mathcal{P}(\tilde\omega, \ve, \kappa_0)$, there is a homeomorphism $\Phi$ from $(\mathcal{T}_V,d)$ onto $(\mathcal{ISO}(V) \cap \mathcal{P}(\tilde\omega, \sqrt{2\ve}, \kappa_0/2), \rho)$.
\end{thmti}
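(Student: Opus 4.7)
The plan is to use classical Hill-operator inverse spectral theory, which applies because $\tilde\omega$ has rational components and hence $\tilde V$ is periodic of some period $T$ with $T\tilde\omega_j\in\mathbb{Z}$ for all $j$, and to combine it with the two-sided gap/coefficient estimates of Theorems $\tilde A$ and $\tilde B$ to upgrade the set-theoretic description of the isospectral class to a Fourier-norm homeomorphism.

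I first construct $\Phi$. Theorem $\tilde B$(1) makes the gap lengths $E^+(k_m)-E^-(k_m)\le 2\ve\exp(-\tfrac{\kappa_0}{2}|m|)$ summable, and $\mathcal{S}_V$ is homogeneous (see Remark~\ref{rem:SY}(a)), so the Sodin--Yuditskii theorem identifies the set $\mathcal{Q}_{\mathcal{S}_V}$ of reflectionless potentials with spectrum $\mathcal{S}_V$ with a topological torus isomorphic to $\prod_m C_m=\mathcal{T}_V$. The explicit identification uses the McKean--van Moerbeke--Trubowitz Dirichlet-eigenvalue parametrization: to a point $(\theta_m)\in\mathcal{T}_V$ I assign a pair $(\mu_m,\sigma_m)_m$ of Dirichlet eigenvalues in the closed gaps together with sheet labels, and define $Q=\Phi((\theta_m))$ to be the unique real reflectionless potential recovered from this data via Craig's reflectionless trace formula. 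Since $\tilde V$ is periodic of period $T$ and $Q$ is reflectionless with the same spectrum, classical Floquet theory forces $Q$ to also be periodic of period $T$, and hence $Q$ admits a Fourier series in the frequencies $\{n\tilde\omega:n\in\zv\}$.

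To verify $Q\in\mathcal{P}(\tilde\omega,\sqrt{2\ve},\kappa_0/2)$, note that $Q$ has the same gaps as $V$ and therefore the same gap bound. Theorem $\tilde B$(2) applied to $Q$, with the base parameter of that part taken to be $\kappa_0/8$ so that $\kappa_0'=\kappa_0/2$ meets the hypothesis $\kappa_0'\ge 4\kappa_0^{\mathrm{base}}$, yields $Q(x)=\sum_{n\in\zv} d(n) e^{2\pi i n\tilde\omega x}$ with $|d(n)|\le\sqrt{2\ve}\exp(-\tfrac{\kappa_0}{2}|n|)$. Bijectivity of $\Phi$ onto $\mathcal{ISO}(V)\cap\mathcal{P}(\tilde\omega,\sqrt{2\ve},\kappa_0/2)$ now follows: injectivity is the classical uniqueness in the Hill inverse problem, and for surjectivity every member of the target set is periodic, hence has purely absolutely continuous spectrum, hence is reflectionless, hence lies in the Sodin--Yuditskii torus and therefore in the image of $\Phi$.

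For the homeomorphism statement, continuity of $\Phi^{-1}$ from $\rho$ to $d$ is straightforward: $\ell^1$-convergence of Fourier coefficients implies $L^\infty$-convergence of potentials on $[0,T]$, which controls Dirichlet eigenvalues and thus the angular coordinates $\theta_m$. Continuity of $\Phi$ from $d$ to $\rho$ follows from the Dubrovin--Trubowitz equations governing the flow of $Q$ under motion of the Dirichlet data, combined with the uniform exponential tail bound from the previous paragraph, which makes the tails of $\sum_n|d^{(0)}(n)-d^{(1)}(n)|$ uniformly small so that $d$-smallness passes to $\rho$-smallness. The main obstacle is the quantitative matching between gap-side and coefficient-side estimates in Theorem $\tilde B$(2), requiring a careful calibration of the base parameters so that $\Phi$ maps into $\mathcal{P}(\tilde\omega,\sqrt{2\ve},\kappa_0/2)$ uniformly in $(\theta_m)\in\mathcal{T}_V$; a secondary point, standard within the McKean--Trubowitz framework, is the handling of torus points where some $\mu_m$ reaches a gap endpoint and the two sheets glue.
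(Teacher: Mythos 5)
Your overall architecture (classical Dirichlet-eigenvalue parametrization of the periodic isospectral set, Craig/trace-formula recovery, Theorem~$\tilde B$(2) to upgrade the decay, Dubrovin--Trubowitz stability for continuity) is the same as the paper's, but the decisive step is not justified as you state it, and this is exactly where the paper has to work. You apply Theorem~$\tilde B$(2) directly to an arbitrary isospectral $Q=\Phi(\theta)$ ``with the base parameter taken to be $\kappa_0/8$.'' But Theorem~$\tilde B$(2) is stated ``using the notation from Theorem~$\tilde A$'': its hypotheses include that the potential under consideration already lies in the perturbative class, i.e.\ its Fourier coefficients obey $|d(n)|\le\ve\exp(-\kappa_0^{\mathrm{base}}|n|^{\alpha_0})$ with $\ve$ below the multi-scale threshold $\ve^\zero(a_0,b_0,\kappa_0^{\mathrm{base}})$. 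For $Q$ you only know, from the soft Hill theory (item $(i)$ of Section~4), that $|d(n)|\le A\exp(-\eta|n|_{\mathbb R})$ with constants $A,\eta$ depending on $V$: the prefactor $A$ need not be small and $\eta$ need not be comparable to $\kappa_0/8$, and neither is uniform over the isospectral set. So your invocation of $\tilde B$(2) assumes precisely what has to be proved. The paper closes this gap with a connectedness bootstrap on the torus: for each $\alpha<1$ it defines $\mathcal{G}_\alpha=\{\mu\in\mathcal{C}: \Phi(\mu)\in\mathcal{P}(\tilde\omega,\sqrt{2\ve},\kappa_0/2)\text{ with exponent }|n|^\alpha\}$, shows it is nonempty (it contains $V$), closed, and \emph{open} --- openness uses the stability estimate \eqref{eq:8ODEsystability} for the Dubrovin--Trubowitz flow to control the finitely many coefficients with $|\mathfrak{m}|\le M(V,\alpha)$, while for $|\mathfrak{m}|>M$ the soft bound $Ae^{-\gamma|\mathfrak{m}|}$ beats the stretched exponential $e^{-\frac{\kappa_0}{4}|\mathfrak{m}|^\alpha}$ precisely because $\alpha<1$; only then is $\tilde B$(2) applicable (with base rate $\kappa_0/4$, $\kappa_0'=\kappa_0$, $\alpha_0'=\alpha_0=\alpha$) --- hence $\mathcal{G}_\alpha=\mathcal{C}$, and finally $\alpha\to1$. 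This is also the reason the sub-exponential classes $\alpha_0<1$ appear in Theorems~$\tilde A$--$\tilde D$ at all; a proof that never leaves $\alpha_0=1$, like yours, cannot get started because no uniform a priori smallness for $Q$ is available.

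Two smaller points. First, your numerology does not reach the stated target: with base $\kappa_0/8$ and gap rate $\kappa_0'=\kappa_0/2$ (which is what $\tilde B$(1) provides), the conclusion of $\tilde B$(2) is $|d(m)|\le\sqrt{2\ve}\exp(-\frac{\kappa_0'}{2}|m|)=\sqrt{2\ve}\exp(-\frac{\kappa_0}{4}|m|)$, which misses $\mathcal{P}(\tilde\omega,\sqrt{2\ve},\kappa_0/2)$ by a factor of two in the rate; the paper's choice is base $\kappa_0/4$ with $\kappa_0'=\kappa_0$. Second, the appeal to Sodin--Yuditskii and to ``Floquet theory forces $Q$ to be periodic'' is unnecessary here: since the target set is intersected with $\mathcal{P}(\tilde\omega,\cdot,\cdot)$, its elements are automatically $T$-periodic and smooth, and the classical bijection between the $T$-periodic isospectral class and the torus of Dirichlet data (items $(e)$,$(g)$ of Section~4) already gives the set-theoretic correspondence; the real content of Theorem~$\tilde I$ is the uniform coefficient bound discussed above, plus the continuity in the metrics $d$ and $\rho$, which your last paragraph sketches in essentially the intended way.
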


The structure of the paper is as follows. We will prove Theorem~$\tilde A$ in Section~\ref{sec.2}, Theorem~$\tilde B$ in Section~\ref{sec.PA4B}, and Theorem~$\tilde I$ in Section~\ref{sec.4}. After some preparatory work on periodic approximation in Sections~\ref{sec.5}--\ref{sec:9}, we prove Theorem~$I$ in Section~\ref{sec.10}.

\setcounter{section}{1}

\section{Proof of Theorem~$\tilde A$}\label{sec.2}

Let $\tilde \omega = (\tilde \omega_1, \dots, \tilde \omega_\nu) \neq 0$ be a vector with rational components, $\tilde \omega_j = \ell_j/t_j$, $\ell_j, t_j \in \mathbb{Z}$.

\begin{lemma}\label{lem:PA10omegalattice1}
$(1)$ The set $\mathfrak{F} (\tilde \omega) := \{ m \tilde \omega : m \in \mathbb{Z}^\nu \}$ is a discrete infinite subgroup of $\mathbb{R}$. \\
$(2)$ We have $\mathfrak{F} (\tilde \omega) = \{ a \tau_0 : a \in \mathbb{Z}\}$ for a suitable $\tau_0 \ge \bar T^{-1}$, $\bar T = \prod_j t_j$. \\
$(3)$ The set $\mathfrak{N} (\tilde \omega) := \{ m \in \mathbb{Z}^\nu : m \omega = 0 \}$ is a subgroup of $\mathbb{Z}^\nu$.
\end{lemma}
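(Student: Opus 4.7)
The plan is to verify each of the three assertions more or less directly, using nothing beyond the definition of a discrete subgroup of $\mathbb{R}$ and the explicit rational structure of $\tilde\omega$.

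For part $(1)$, I would first observe that $\mathfrak{F}(\tilde\omega)$ is obviously closed under addition and contains $0$, so it is a subgroup of $\mathbb{R}$. To see discreteness, write $\tilde\omega_j = \ell_j/t_j$ and use the common denominator $\bar T = \prod_j t_j$: for any $m \in \mathbb{Z}^\nu$,
\[
m\tilde\omega \;=\; \sum_{j=1}^\nu \frac{m_j \ell_j}{t_j} \;=\; \frac{1}{\bar T}\sum_{j=1}^\nu m_j\,\ell_j \prod_{i\neq j} t_i \;\in\; \frac{1}{\bar T}\mathbb{Z}.
\]
Hence $\mathfrak{F}(\tilde\omega) \subset \tfrac{1}{\bar T}\mathbb{Z}$, which is a discrete subset of $\mathbb{R}$. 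Infiniteness follows from $\tilde\omega\neq 0$: pick $j$ with $\tilde\omega_j\neq 0$ and consider $n\tilde\omega_j$, $n\in\mathbb{Z}$, which gives infinitely many distinct elements of $\mathfrak{F}(\tilde\omega)$.

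For part $(2)$, I would invoke the classical fact that every nonzero discrete additive subgroup $G$ of $\mathbb{R}$ has the form $G = \tau_0\mathbb{Z}$, where $\tau_0 := \inf\{x\in G : x>0\}$ is attained as a minimum (otherwise one could produce arbitrarily small positive elements, contradicting discreteness). Applying this to $G = \mathfrak{F}(\tilde\omega)$, which by part $(1)$ is nonzero and discrete, yields $\tau_0>0$. The inclusion $\mathfrak{F}(\tilde\omega)\subset \tfrac{1}{\bar T}\mathbb{Z}$ from part $(1)$ forces $\tau_0 \in \tfrac{1}{\bar T}\mathbb{Z}_{>0}$, hence $\tau_0 \geq \bar T^{-1}$.

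Part $(3)$ is routine: if $m\tilde\omega = 0$ and $n\tilde\omega = 0$, then $(m\pm n)\tilde\omega = 0$ and $0\cdot\tilde\omega = 0$, so $\mathfrak{N}(\tilde\omega)$ is closed under the group operations of $\mathbb{Z}^\nu$.

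There is no real obstacle in this lemma; the only subtlety is that the lower bound $\tau_0 \ge \bar T^{-1}$ in $(2)$ relies on the explicit containment in $\tfrac{1}{\bar T}\mathbb{Z}$ established in $(1)$, so the structure of the proof naturally puts $(1)$ first, then $(2)$ as a corollary, with $(3)$ independent.
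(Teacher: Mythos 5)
Your proof is correct and follows essentially the same route as the paper: write $m\tilde\omega$ over the common denominator $\bar T$ to get $\mathfrak{F}(\tilde\omega)\subset\frac1{\bar T}\mathbb{Z}$, deduce discreteness and $\tau_0\ge\bar T^{-1}$, and obtain part $(2)$ from the standard classification of discrete subgroups of $\mathbb{R}$ (which the paper leaves implicit in ``Part (2) follows from part (1) and its proof''). No gaps; your version merely spells out the details slightly more explicitly.
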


\begin{proof}
Clearly, $\mathfrak{F} (\tilde \omega)$ is an infinite subgroup of $\mathbb{R}$. Let $\bar T = \prod_j t_j$. Let $\xi = m \tilde \omega \neq 0$ be arbitrary. Then $\xi = \ell/\bar T$, where $\ell \in \mathbb{Z}$, $\ell \neq 0$. Hence, $|\xi| \ge 1/ \bar T$. Thus, $\tau_0 := \inf_{m \tilde \omega \neq 0} |m \tilde \omega| \ge \bar T^{-1}$. In particular, $\mathfrak{F} (\tilde \omega)$ is a discrete subgroup of $\mathbb{R}$. This validates part~(1). Part~(2) follows from part (1) and its proof.

Part~$(3)$ follows just from the definitions.
\end{proof}

Consider the quotient group $\mathfrak{Z} (\tilde \omega) := \mathbb{Z}^\nu/\mathfrak{N} (\tilde \omega)$. We call $\mathfrak{Z} (\tilde \omega)$ the $\tilde \omega$-lattice. We use the notation $[n]_{\tilde \omega} = [n]$ for the coset $n + \mathfrak{N}(\tilde\omega)$, $n \in \mathbb{Z}^\nu$. Given a set $\La \subset \zv$, we denote by $[\La]_{\tilde \omega} = [\La]$ the image of $\La$ under the map $n \rightarrow [n]_{\tilde \omega}$. We introduce the quotient distance in the standard way, that is, via $|\mathfrak{n}| = |\mathfrak{n}|_{\tilde \omega} := \min \{|n| : n \in \mathfrak{n} \}$, $\mathfrak{n} \in \mathfrak{Z}(\tilde \omega)$. Given $\mathfrak{n} \in \mathfrak{Z}(\tilde \omega)$, we set $\mathfrak{n} \tilde \omega := n \tilde \omega$, where $n \in \mathfrak{n}$ is arbitrary. Obviously, this is a well-defined real function on $\mathfrak{Z}(\tilde \omega)$.

\begin{lemma}\label{lem:PA10omegalattice1a}
$(1)$ $\mathfrak{F} (\tilde \omega) = \{ \mathfrak{m} \omega : \mathfrak{m} \in \mathfrak{Z}(\tilde \omega)\}$. \\
$(2)$ The map $\iota : \mathfrak{m} \mapsto \mathfrak{m} \tilde \omega \in \mathfrak{F} (\tilde \omega)$, $\mathfrak{m} \in \mathfrak{Z} (\tilde \omega)$ is a group isomorphism.
\end{lemma}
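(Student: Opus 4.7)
The plan is to verify both parts directly from the definitions, since this lemma is essentially the first isomorphism theorem applied to the obvious map $\mathbb{Z}^\nu \to \mathfrak{F}(\tilde\omega)$, $m \mapsto m\tilde\omega$, with kernel $\mathfrak{N}(\tilde\omega)$. There is no technical obstacle here; the only thing to check carefully is well-definedness on cosets.

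For part (1), I would argue by double inclusion. Given $\xi \in \mathfrak{F}(\tilde\omega)$, pick $m \in \mathbb{Z}^\nu$ with $\xi = m\tilde\omega$; then $\mathfrak{m} := [m]_{\tilde\omega} \in \mathfrak{Z}(\tilde\omega)$ and by the very definition of $\mathfrak{m}\tilde\omega$ we have $\mathfrak{m}\tilde\omega = m\tilde\omega = \xi$. Conversely, for any $\mathfrak{m} \in \mathfrak{Z}(\tilde\omega)$, choosing any representative $m \in \mathfrak{m}$ gives $\mathfrak{m}\tilde\omega = m\tilde\omega \in \mathfrak{F}(\tilde\omega)$.

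For part (2), I would first remark that $\iota$ is well-defined: if $m,m' \in \mathfrak{m}$, then $m - m' \in \mathfrak{N}(\tilde\omega)$, so $(m-m')\tilde\omega = 0$, i.e.\ $m\tilde\omega = m'\tilde\omega$. Next, $\iota$ is a homomorphism: for $\mathfrak{m}_1, \mathfrak{m}_2 \in \mathfrak{Z}(\tilde\omega)$ with representatives $m_1, m_2$, the sum $\mathfrak{m}_1 + \mathfrak{m}_2$ has representative $m_1 + m_2$, so
\[
\iota(\mathfrak{m}_1 + \mathfrak{m}_2) = (m_1+m_2)\tilde\omega = m_1\tilde\omega + m_2\tilde\omega = \iota(\mathfrak{m}_1) + \iota(\mathfrak{m}_2).
\]
Surjectivity is exactly part (1). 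For injectivity, suppose $\iota(\mathfrak{m}) = 0$; picking any $m \in \mathfrak{m}$ yields $m\tilde\omega = 0$, hence $m \in \mathfrak{N}(\tilde\omega)$, so $\mathfrak{m} = [m]_{\tilde\omega}$ is the identity coset in $\mathfrak{Z}(\tilde\omega)$.

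The ``main obstacle'' is really just bookkeeping: making sure the definition $\mathfrak{m}\tilde\omega := m\tilde\omega$ (any $m \in \mathfrak{m}$) agrees with the arithmetic in $\mathfrak{Z}(\tilde\omega)$, which was already implicitly used when that definition was introduced. Once this is spelled out, both statements are immediate.
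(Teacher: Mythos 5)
Your proposal is correct and follows essentially the same route as the paper: part (1) from the definitions by double inclusion, and part (2) by noting $\iota$ is a well-defined homomorphism, checking injectivity via triviality of the kernel, and deducing surjectivity from part (1). You merely spell out the well-definedness and homomorphism checks in more detail than the paper does.
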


\begin{proof}
Part $(1)$ follows from the definition of the sets $\mathfrak{F} (\tilde \omega), \mathfrak{Z}(\tilde \omega)$. The map $\iota$ is clearly a homomorphism of Abelian groups. If $\iota(\mathfrak{m}) = 0$ then $m \tilde \omega = 0$ for $m \in \mathfrak{m}$, that is, $\mathfrak{m} = 0$. So, $\iota$ is injective. Due to $(1)$ it is also surjective.
\end{proof}

\begin{remark}\label{rem:PAtwogroups}
Although the groups $\mathfrak{F} (\tilde \omega), \mathfrak{Z} (\tilde \omega)$ are isomorphic, it is convenient to use both notions in what follows in this work. This is completely similar to the setup in the work \cite{DG}, the lattice  $\mathfrak{Z} (\tilde \omega)$ replaces $\zv$ in the setup of the current work. The multi-scale analysis developed for the case $\zv$ in \cite{DG} may be generalized to the group $\mathfrak{Z}(\tilde \omega)$, as shown in \cite{DGL}. We use the lattice $\mathfrak{Z}(\tilde \omega)$ and the quotient-distance on it to label and to estimate the Fourier coefficients. It proves to be crucial that the decay of the Fourier coefficients, which we establish in the study of the inverse spectral problem, is controlled via the quotient distance. To simplify the notation we suppress $\tilde \omega$ from $\mathfrak{Z}(\tilde \omega)$.
\end{remark}

Let $c(n) \in \mathbb{C}$, $n \in \zv$ obey
\begin{equation}\label{eq:17-4}
\begin{split}
\overline{c(n)} & = c(-n), \quad n \in \zv , \quad c(0) = 0, \\
|c(n)| & \le \ve \exp(-\kappa_0|n|^{\alpha_0}), \quad n \in \zv ,
\end{split}
\end{equation}
$0 < \kappa_0, \alpha_0 \le 1$. Consider the function
\begin{equation}\label{eq:2Vtilde}
\tilde V(x) = \sum_{n \in \zv} c(n) e^{2 \pi i xn\tilde \omega}\ , \quad x \in \mathbb{R}
\end{equation}
and the Hill operator
\begin{equation} \label{eq:17-1}
[\hat Hy](x) = -y''(x) + \tilde V(x) y(x) = E y(x), \quad x,\ve \in \IR.
\end{equation}

\begin{lemma}\label{lem:PA10omegalattice1tildeV}
We have $\tilde V(x + T) = \tilde V(x)$, $x \in \mathbb{R}$, with $T := T(\tilde \omega) := \tau_0^{-1}$.
\end{lemma}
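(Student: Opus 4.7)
The plan is to show directly that shifting the argument of $\tilde V$ by $T = \tau_0^{-1}$ multiplies each Fourier mode by an exponential factor that is identically one, so the sum is unchanged.

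First I would note that the series defining $\tilde V(x)$ converges absolutely and uniformly on $\mathbb{R}$: since $|c(n)| \le \ve \exp(-\kappa_0 |n|^{\alpha_0})$ and each term has modulus one, the Weierstrass $M$-test applies (the sum $\sum_{n \in \zv}\exp(-\kappa_0 |n|^{\alpha_0})$ is finite because $|n|$ denotes the $\ell^1$-norm and $\alpha_0 > 0$ controls decay of tails). This absolute convergence will let me interchange the shift with the summation.

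Next, substituting $x+T$ for $x$ in \eqref{eq:2Vtilde} gives
\begin{equation*}
\tilde V(x+T) = \sum_{n \in \zv} c(n)\, e^{2\pi i x n\tilde\omega}\, e^{2\pi i T n\tilde\omega}.
\end{equation*}
To conclude $\tilde V(x+T) = \tilde V(x)$ it therefore suffices to verify that $T n\tilde\omega \in \mathbb{Z}$ for every $n \in \zv$. By Lemma~\ref{lem:PA10omegalattice1}(2), one has $\mathfrak{F}(\tilde\omega) = \{a\tau_0 : a \in \mathbb{Z}\}$, so for each $n \in \zv$ there exists $a(n) \in \mathbb{Z}$ with $n\tilde\omega = a(n)\tau_0$. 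Substituting $T = \tau_0^{-1}$ yields $T n\tilde\omega = a(n) \in \mathbb{Z}$, and hence $e^{2\pi i T n\tilde\omega} = 1$. Plugging this back into the display above gives $\tilde V(x+T) = \tilde V(x)$.

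There is no real obstacle here, as the lemma is essentially a restatement of the structure of the group $\mathfrak{F}(\tilde\omega)$ established in Lemma~\ref{lem:PA10omegalattice1}. The only mild point is the justification of term-by-term manipulation of the Fourier series, which is immediate from the decay hypothesis in \eqref{eq:17-4}.
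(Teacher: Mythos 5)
Your proof is correct and follows essentially the same route as the paper: the paper's one-line argument also substitutes $x + \tau_0^{-1}$ and uses the fact (from Lemma~\ref{lem:PA10omegalattice1}) that $n\tilde\omega$ is an integer multiple of $\tau_0$, so each exponential factor $e^{2\pi i n\tilde\omega \tau_0^{-1}}$ equals $1$. Your added remarks on absolute convergence of the series are a harmless elaboration of what the paper leaves implicit.
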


\begin{proof}
For any $x$, we have
\begin{equation}\label{eq:PA17-4}
\tilde V(x + T) = \sum_{n \in \zv \setminus \{ 0 \}} c(n) e^{2 \pi i n\tilde \omega (x + \tau_0^{-1})} = \sum_{n \in \zv \setminus \{ 0 \}} c(n) e^{2 \pi i n\tilde \omega x} = \tilde V(x).
\end{equation}
\end{proof}

To label the Fourier coefficients via $\mathfrak{n} \in \mathfrak{Z}(\tilde \omega)$ we just add up all the coefficients labeled via $n \in \mathfrak{n}$. In Lemma~\ref{lem:PAL10omegV1} below we state the estimates for the corresponding sums. The derivation is elementary and standard and we omit it. Given $\mathfrak{n} \in \mathfrak{Z}$, set $\beta(\mathfrak{n}) = \beta_{\tilde \omega}(\mathfrak{n}) := \sum_{n \in \mathfrak{n}} |c(n)|$.

\begin{lemma}\label{lem:PAL10omegV1}
$(1)$ For any $1/2 \le \alpha_0 \le 1$, $0 < \kappa_0 \le 1$ and $R \ge 0$, we have
\begin{equation}\label{eq:PAexpsum-1}
\sum_{n \in \zv : |n| \ge R} \exp(-\kappa_0 |n|^{\alpha_0}) \le C(\kappa_0,\nu) \exp(-\kappa_0 R^{\alpha_0}/2).
\end{equation}

$(2)$ For any $\mathfrak{n}$ and any $b \ge 0$ and $1 \ge \eta > 0$, we have
$$
|\mathfrak{n}|^b \beta (\mathfrak{n})^{\eta} \le C(\kappa_0, b, \eta, \nu) \exp(-\eta \kappa_0 |\mathfrak{n}|^{\alpha_0}/2).
$$
In particular, $\beta(\mathfrak{n}) \le C(\kappa_0, \nu) \exp(-\kappa_0 |\mathfrak{n}|^{\alpha_0}/2)$.

$(3)$ For any $b > 1$, $R > 0$, we have
\begin{equation}\label{eq:PAexpsumomega1}
\sum_{|\mathfrak{n}| \ge R} |\mathfrak{n}|^b \beta (\mathfrak{n}) \le C(\kappa_0, b, \nu) \exp(-\kappa_0 R^{\alpha_0}/2).
\end{equation}
\end{lemma}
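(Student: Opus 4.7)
The plan is to reduce all three estimates to two elementary observations: (i) any polynomial in $s$ is dominated by $\exp(c s^{\alpha_0})$ for every $c>0$, and (ii) the series $\sum_{n\in\zv}\exp(-c|n|^{\alpha_0})$ converges for every $c,\alpha_0>0$, since the number of lattice points with $|n|=s$ grows polynomially while the summand decays super-polynomially.

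For part (1), I split $\exp(-\kappa_0|n|^{\alpha_0})=\exp(-\kappa_0|n|^{\alpha_0}/2)\cdot\exp(-\kappa_0|n|^{\alpha_0}/2)$; on the range $|n|\ge R$ the first factor is $\le\exp(-\kappa_0 R^{\alpha_0}/2)$ and pulls out of the sum, while the total sum of the second factor over $\zv$ is a constant $C(\kappa_0,\nu)$ by (ii). For part (2), I exploit that every $n\in\mathfrak{n}$ obeys $|n|\ge|\mathfrak{n}|$ by definition of the quotient norm, and introduce a free parameter $0<\mu<1/4$ to write $\exp(-\kappa_0|n|^{\alpha_0})=\exp(-\kappa_0(1-2\mu)|n|^{\alpha_0})\cdot\exp(-2\mu\kappa_0|n|^{\alpha_0})$. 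Estimating the first factor by $\exp(-\kappa_0(1-2\mu)|\mathfrak{n}|^{\alpha_0})$ and summing the second factor over $\mathfrak{n}\subset\zv$ using (ii) yields
\[
\beta(\mathfrak{n})\le C(\kappa_0,\mu,\nu)\,\ve\,\exp(-\kappa_0(1-2\mu)|\mathfrak{n}|^{\alpha_0}).
\]
Choosing $\mu=1/4$ gives the special-case bound $\beta(\mathfrak{n})\le C\exp(-\kappa_0|\mathfrak{n}|^{\alpha_0}/2)$ stated in the lemma.

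For the weighted version in (2), I take the $\eta$-th power of the refined bound above and distribute the resulting exponential as $\exp(-\eta\kappa_0(1-2\mu)|\mathfrak{n}|^{\alpha_0})=\exp(-\eta\kappa_0|\mathfrak{n}|^{\alpha_0}/2)\cdot\exp(-\eta\kappa_0(1/2-2\mu)|\mathfrak{n}|^{\alpha_0})$. Because $1/2-2\mu>0$, observation (i) gives $|\mathfrak{n}|^b\exp(-\eta\kappa_0(1/2-2\mu)|\mathfrak{n}|^{\alpha_0})\le C(\kappa_0,b,\eta,\nu)$, so the polynomial factor is absorbed and the required estimate follows. For part (3), I regroup the coset sum as a sum over $\zv$: since $|\mathfrak{n}|\ge R$ forces $|n|\ge R$ for every $n\in\mathfrak{n}$, and $|[n]|\le|n|$,
\[
\sum_{|\mathfrak{n}|\ge R}|\mathfrak{n}|^b\beta(\mathfrak{n})=\sum_{n\in\zv,\,|[n]|\ge R}|[n]|^b|c(n)|\le \ve\sum_{|n|\ge R}|n|^b\exp(-\kappa_0|n|^{\alpha_0}).
\]
Applying the factorization of (1) to the right-hand side, namely $|n|^b\exp(-\kappa_0|n|^{\alpha_0})\le\exp(-\kappa_0 R^{\alpha_0}/2)\cdot|n|^b\exp(-\kappa_0|n|^{\alpha_0}/2)$, and using that $\sum_{n}|n|^b\exp(-\kappa_0|n|^{\alpha_0}/2)<\infty$ by (i)+(ii), delivers the bound.

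There is no real obstacle: the only small care is to keep $\mu$ strictly between $0$ and $1/4$ so that both the exponential gap $\exp(-\kappa_0(1-2\mu)\cdot)\to\exp(-\kappa_0/2\cdot)$ can be opened and the residual polynomial factor $|\mathfrak{n}|^b$ can be absorbed by the leftover piece of the exponential. The argument is indeed standard book-keeping built on the one trick of writing each exponential as a product of two copies with half the exponent.
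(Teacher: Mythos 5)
Your argument is correct; the paper itself omits the proof of this lemma as ``elementary and standard,'' and your device of splitting each exponential into two halves (together with the exact regrouping of the coset sum as a sum over $\zv$ in part (3)) is precisely the standard derivation it has in mind. The only point worth making explicit is that the constants must be uniform in $\alpha_0 \in [1/2,1]$, which your observation (ii) delivers once you note $|n|^{\alpha_0} \ge |n|^{1/2}$, so all the convergent sums can be dominated by the $\alpha_0 = 1/2$ case and the constants depend only on $\kappa_0$, $\nu$ (and $b$, $\eta$ where indicated).
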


The estimate in the next lemma is used in subsequent estimates. In particular, it is required for the purposes of the multi-scale analysis in \cite{DGL}.

\begin{lemma}\label{lem:PA2Fvolume}
For any $R \ge 1$, we have
\begin{equation}\label{eq:PAexpsum-2}
| \{ \mathfrak{m} : |\mathfrak{m}| \le R \} | \le 4^\nu R^\nu.
\end{equation}
\end{lemma}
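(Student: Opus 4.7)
The plan is to reduce the counting problem for cosets to a counting problem in the ambient lattice $\mathbb{Z}^\nu$ by choosing a minimal-norm representative for each coset. Explicitly, for each coset $\mathfrak{m}\in\mathfrak{Z}(\tilde\omega)$ with $|\mathfrak{m}|\le R$, the definition $|\mathfrak{m}|=\min\{|n|:n\in\mathfrak{m}\}$ guarantees the existence of some $n(\mathfrak{m})\in\mathfrak{m}\subset\mathbb{Z}^\nu$ with $|n(\mathfrak{m})|\le R$. Since distinct cosets are disjoint subsets of $\mathbb{Z}^\nu$, the assignment $\mathfrak{m}\mapsto n(\mathfrak{m})$ is injective, which gives the bound
\[
|\{\mathfrak{m}\in\mathfrak{Z}(\tilde\omega):|\mathfrak{m}|\le R\}|\le |\{n\in\mathbb{Z}^\nu:|n|\le R\}|.
\]

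The second step is a standard counting estimate in $\mathbb{Z}^\nu$ with respect to the $\ell^1$ norm $|n|=\sum_j|n_j|$. If $|n|\le R$ then in particular $|n_j|\le R$ for each $j=1,\dots,\nu$, so each coordinate $n_j$ lies in the integer interval $[-\lfloor R\rfloor,\lfloor R\rfloor]$, which contains at most $2\lfloor R\rfloor+1\le 2R+1$ integers. Consequently
\[
|\{n\in\mathbb{Z}^\nu:|n|\le R\}|\le (2R+1)^\nu.
\]

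Finally, for $R\ge 1$ one has $2R+1\le 3R\le 4R$, so $(2R+1)^\nu\le 4^\nu R^\nu$, which yields the claim. I expect no real obstacle here; the entire argument is an elementary counting estimate, and the only observation of substance is that the quotient map $\mathbb{Z}^\nu\to\mathfrak{Z}(\tilde\omega)$ cannot increase the $\ell^1$-distance from the origin, so the closed $R$-ball in the quotient is no larger than the closed $R$-ball in $\mathbb{Z}^\nu$.
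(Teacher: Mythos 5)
Your argument is correct and matches the paper's own proof: both select a minimal-norm representative of each coset, use disjointness of cosets to get an injection into the $\ell^1$-ball in $\mathbb{Z}^\nu$, and then bound the lattice point count (the paper uses $2^\nu(R+1)^\nu$ where you use $(2R+1)^\nu$, both $\le 4^\nu R^\nu$ for $R \ge 1$). No further comment needed.
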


\begin{proof}
For each $\mathfrak{m}$, there exists $m_\mathfrak{m} \in \mathfrak{m}$ with $|m_\mathfrak{m}| = |\mathfrak{m}|$. Since $\mathfrak{m} \cap \mathfrak{m}' = \emptyset$ unless $\mathfrak{m} = \mathfrak{m}'$, we have
$$
| \{ \mathfrak{m} : |\mathfrak{m}| \le R \} | \le | \{ m \in \mathbb{Z}^\nu : |m| \le R \} | \le 2^\nu (R+1)^\nu \le 4^\nu R^\nu.
$$
\end{proof}

We rewrite the function $\tilde V$ as follows,
\begin{equation}\label{eq:PALVmodomega}
\begin{split}
\tilde V(x) & = \sum_{\mathfrak{n} \in \mathfrak{Z} \setminus \{ 0 \}} c(\mathfrak{n}) e^{2 \pi i \mathfrak{n} \tilde \omega x} + c([0]), \\
c(\mathfrak{n}) & := c_{\tilde \omega}(\mathfrak{n}) := \sum_{n \in \mathfrak{n}} c(n).
\end{split}
\end{equation}

Note that due to the last lemma,
\begin{equation}\label{eq:PAexpsumomega1P2}
\begin{split}
|c(\mathfrak{n})| & \le C(\kappa_0, \nu) \exp(-\kappa_0 |\mathfrak{n}|^{\alpha_0}/2), \\
\sum_{|\mathfrak{n}| \ge R} |\mathfrak{n}|^b |c(\mathfrak{n})| & \le C(\kappa_0, b, \nu) \exp(-\kappa_0 R^{\alpha_0}/2).
\end{split}
\end{equation}
In particular, the series \eqref{eq:PALVmodomega} converges.

Since the constant $c([0])$ can be incorporated in the spectral parameter we assume for convenience that $c([0])=0$.
Set
\begin{equation} \label{eq:2-7}
\begin{split}
\tilde h(\mathfrak{m}, \mathfrak{n}; k) & = (2\pi)^2 (\xi(\mathfrak{m}) + k)^2 \quad \text{if } \mathfrak{m} = \mathfrak{n}, \\
\tilde h(\mathfrak{m}, \mathfrak{n}; k) & = \ve \tilde c(\mathfrak{n}-\mathfrak{m}) \quad \text{if } \mathfrak{m} \neq \mathfrak{n}.
\end{split}
\end{equation}
We call the operators $\tilde H_{ k} = \tilde H_{k,\ve} := \bigl( \tilde h(\mathfrak{m}, \mathfrak{n}; k) \bigr)_{\mathfrak{m}, \mathfrak{n} \in \mathfrak{Z}}$ the operators dual to the Hill operator $H_{\tilde \omega}$; see \eqref{eq:17-1FtransH}. Now we are going to invoke the main results of \cite{DGL} related to the operators $\tilde H_{k}$ and use them to prove Theorem~$\tilde A$. We assume that the following ``Diophantine condition in the box'' holds,
\begin{equation}\label{eq:2PAI7-5-8}
|n \tilde \omega| \ge a_0 |n|^{-b_0}, \quad 0 < |n| \le \bar R_0,
\end{equation}
for some
\begin{equation}\label{eq:2PAIombasicTcondition}
0 < a_0 < 1,\quad \nu < b_0 < \infty, \quad (\bar R_0)^{b_0} > \prod t_j.
\end{equation}

\begin{lemma}\label{lem:PAL10omdiophsubst}
For any $\mathfrak{n} \neq 0$, we have
\begin{equation}\label{eq:7-5-8lattices}
|\mathfrak{n} \tilde \omega| \ge a_0 |\mathfrak{n}|^{-b_0}.
\end{equation}
\end{lemma}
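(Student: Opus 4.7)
The plan is to split the argument into two regimes according to whether $|\mathfrak{n}|$ lies inside or outside the ``box'' of size $\bar R_0$ in which the Diophantine condition \eqref{eq:2PAI7-5-8} was assumed.

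First, choose $n_* \in \mathfrak{n}$ achieving the minimum in the definition of $|\mathfrak{n}|$, so $|n_*| = |\mathfrak{n}|$ and $\mathfrak{n} \tilde \omega = n_* \tilde \omega$ by the well-definedness of $\xi$. If $|\mathfrak{n}| \le \bar R_0$, then $0 < |n_*| \le \bar R_0$ (note $n_* \neq 0$ because $\mathfrak{n} \neq 0$), so \eqref{eq:2PAI7-5-8} applies directly and yields
\begin{equation*}
|\mathfrak{n} \tilde \omega| = |n_* \tilde \omega| \ge a_0 |n_*|^{-b_0} = a_0 |\mathfrak{n}|^{-b_0}.
\end{equation*}

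The remaining case is $|\mathfrak{n}| > \bar R_0$. Here the Diophantine condition in the box is of no direct use, and instead I will rely on the integrality already established in Lemma~\ref{lem:PA10omegalattice1}. Since $\mathfrak{n} \neq 0$, Lemma~\ref{lem:PA10omegalattice1a}(2) gives $\mathfrak{n}\tilde\omega \neq 0$, and Lemma~\ref{lem:PA10omegalattice1}(2) then forces $|\mathfrak{n}\tilde\omega| \ge \tau_0 \ge \bar T^{-1}$ where $\bar T = \prod_j t_j$. It now suffices to check $\bar T^{-1} \ge a_0 |\mathfrak{n}|^{-b_0}$, i.e.\ $|\mathfrak{n}|^{b_0} \ge a_0 \bar T$; but this is immediate from $|\mathfrak{n}| > \bar R_0$ together with the standing hypothesis $(\bar R_0)^{b_0} > \bar T$ from \eqref{eq:2PAIombasicTcondition} and $a_0 < 1$:
\begin{equation*}
|\mathfrak{n}|^{b_0} > (\bar R_0)^{b_0} > \bar T > a_0 \bar T.
\end{equation*}

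There is no real obstacle here; the point is simply that the extra condition $(\bar R_0)^{b_0} > \prod t_j$ in \eqref{eq:2PAIombasicTcondition} was put in place precisely so that the ``box'' Diophantine bound dovetails with the trivial lower bound $\tau_0^{-1}$ coming from the denominators of the rational vector $\tilde \omega$. The only mild subtlety is to recognize that the quotient distance $|\mathfrak{n}|$ is controlled by picking a representative of minimal norm, so that any sharp Diophantine statement about $\mathbb{Z}^\nu$ automatically descends to $\mathfrak{Z}(\tilde\omega)$ in the first case.
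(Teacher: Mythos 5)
Your proof is correct and follows essentially the same route as the paper: for $|\mathfrak{n}| \le \bar R_0$ one passes to a minimal-norm representative and applies the box condition \eqref{eq:2PAI7-5-8}, while for $|\mathfrak{n}| > \bar R_0$ one uses $|\mathfrak{n}\tilde\omega| \ge \tau_0 \ge \bar T^{-1}$ from Lemma~\ref{lem:PA10omegalattice1} together with $(\bar R_0)^{b_0} > \prod_j t_j$ and $a_0 < 1$. The paper phrases the second case as $\tau_0 \ge \bar R_0^{-b_0}$, which is the same estimate you obtain after rearranging.
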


\begin{proof}
Let $0 < |\mathfrak{n}| \le \bar R_0$ be arbitrary. There exists $n \in \mathfrak{n}$ such that $|n| = |\mathfrak{n}|$. Since $n \tilde \omega = \mathfrak{n} \tilde \omega$, \eqref{eq:2PAI7-5-8} implies
\begin{equation}\label{eq:7-5-8lattice}
|\mathfrak{n} \tilde \omega| \ge a_0 |\mathfrak{n}|^{-b_0} \quad \text{ if } 0 < |\mathfrak{n}| \le \bar R_0.
\end{equation}
Let $\tau_0 := \inf_{m \tilde \omega \neq 0} |m \tilde \omega| = \inf_{\mathfrak{m} \neq 0} |\mathfrak{m} \tilde \omega|$ as in Lemma~\ref{lem:PA10omegalattice1}. Due to Lemma~\ref{lem:PA10omegalattice1} and condition \eqref{eq:2PAIombasicTcondition}, we have $\tau_0 \ge \bar R_0^{-b_0}$. This implies
\begin{equation}\label{eq:PAombasictaulat}
|\mathfrak{n} \tilde \omega| \ge a_0 |\mathfrak{n}|^{-b_0} \quad \text { if } |\mathfrak{n}| > \bar R_0.
\end{equation}
By \eqref{eq:7-5-8lattice} and \eqref{eq:PAombasictaulat}, \eqref{eq:7-5-8lattices} holds for any $\mathfrak{n} \neq 0$.
\end{proof}

Thus, all conditions needed for Theorem~$\tilde C$ in \cite{DGL} hold with $\mathfrak{T} = \mathfrak{Z} (\tilde \omega)$:

\begin{thmtc}
There exists $\ve_0 = \ve_0(\ka_0, a_0, b_0) > 0$ such that for $0 < \ve \le \ve_0$ and any $k \in \mathfrak{G} \setminus \{ \frac{\xi(m)}{2} : m \in \mathfrak{Z}(\tilde\omega) \}$, there exist $E(k) \in \mathbb{R}$ and $\vp(k) := (\vp(n;k))_{n \in \zv}$ such that the following conditions hold:

$(1)$ $\vp(0 ;k) = 1$,
\begin{equation} \label{eq:1-17evdecay1}
\begin{split}
|\vp(n;k)| & \le \ve^{1/2} \sum_{m \in \mathfrak{m}^{(\ell)}} \exp \Big( -\frac{7}{8} \kappa_0 |n-m|^{\alpha_0} \Big), \quad \text{ $n \notin \mathfrak{m}^{(\ell(k))}(k)$}, \\
|\vp(m;k)| & \le 2, \quad \text{for any $m \in \mathfrak{m}^{(\ell(k))}(k)$,}
\end{split}
\end{equation}
\begin{equation} \label{eq:1philim}
\tilde H_{k} \vp(k) = E(k) \vp(k).
\end{equation}

$(2)$
\begin{equation}\label{eq:1EsymmetryT}
E(k) = E(-k), \quad \vp(n ;-k) = \overline{\vp(-n ;k)},
\end{equation}
\begin{equation}\label{eq:1Ekk1EGT}
(k^\zero)^2 (k - k_1)^2  < E(k) - E(k_1) < 2k (k - k_1) + 2 \ve \sum_{k_1 < k_{n} < k}(\delta(n))^{1/8} , \quad \quad 0 < k - k_1 < 1/4, \; k_1 > 0,
\end{equation}
where $k^\zero := \min(\ve_0, k/1024)$.

$(3)$ The limits
\begin{align}
\label{eq:1Ekm-2}
E^\pm(k_m) & = \lim_{k \to k_m \pm 0, \; k \in \mathfrak{G} \setminus \{\frac{\xi(m)}{2}:m\in \mathfrak{T}\}} E(k), \quad \text{ for $k_m>0$,} \\
\label{eq:1Ek0-2} E(0) & = \lim_{k \to 0 , \; k \in \mathfrak{G} \setminus \{\frac{\xi(m)}{2}:m\in \mathfrak{T}\}} E(k), \\
\label{eq:1phikm-2} \vp^\pm(n ;k_m) & = \lim_{k \to k_m \pm 0, \; k \in \mathfrak{G} \setminus \{\frac{\xi(m)}{2}:m\in \mathfrak{T}\}} \vp(n ;k), \quad \text{ for $k_m>0$,} \\
\label{eq:1phik0-2} \vp(n ;0) & = \lim_{k \to 0, \; k \in \mathfrak{G} \setminus \{\frac{\xi(m)}{2}:m\in \mathfrak{T}\}} \vp(n ;k)
\end{align}
exist,
$\vp^\pm(0 ;k) = 1$, $\vp(0 ;0) = 1$,
\begin{equation} \label{eq:1-17evdecay1Akmk0-2}
\begin{split}
|\vp^\pm(n; k_m)| & \le \ve^{1/2} \sum_{m \in \mathfrak{m}^{(\ell)}} \exp \Big( -\frac{7}{8} \kappa_0 |n-m|^{\alpha_0} \Big), \quad \text{ $n \notin \mathfrak{m}^{(\ell(k_m))}(k_m)$}, \\
|\vp^\pm(m; k_m)| & \le 2, \quad \text{for any $m \in \mathfrak{m}^{(\ell(k_m))}(k_m)$,} \\
|\vp(n ;0)| & \le \ve^{1/2} \exp \Big( -\frac{7}{8} \kappa_0 |n-m|^{\alpha_0} \Big), \quad n \neq 0,
\end{split}
\end{equation}
\begin{equation} \label{eq:1philimk0km-2}
\begin{split}
\tilde H_{k_m} \vp^\pm(k_m) & = E^\pm(k_m) \vp^\pm(k_m),\\
\tilde H_{0} \vp(0) & = E(0) \vp(0).
\end{split}
\end{equation}

$(4)$ Assume $E^-(k_{n^\zero}) < E^+(k_{n^\zero})$. Let $E \in (E^-(k_{n^\zero}) + \delta, E^+(k_{n^\zero}) - \delta)$ with $\delta > 0$ arbitrary. Then for every $k$, we have
\begin{equation}\label{eq:11Hinvestimatestatement1PQreprep2D}
|[(E - \tilde H_{k})^{-1}](m,n)| \le \begin{cases} \exp(-\frac{1}{8} \kappa_0 |m-n|^{\alpha_0}) & \text{if $|m-n| > [16 \log \delta^{-1}]^{1/\alpha_0}$}, \\ \delta^{-1} & \text{for any $m,n$.} \end{cases}
\end{equation}

$(5)$ Let $E < E(0) - \delta$, $\delta > 0$. For every $k$, we have
\begin{equation}\label{eq:11Hinvestimatestatementkzero}
|[(E - \tilde H_{k})^{-1}](m,n)| \le \begin{cases} \exp(-\frac{1}{8} \kappa_0 |m-n|^{\alpha_0}) & \text{if $|m-n| > [16 \log \delta^{-1}]^{1/\alpha_0}$}, \\ \delta^{-1} & \text{for any $m,n$.} \end{cases}
\end{equation}
\end{thmtc}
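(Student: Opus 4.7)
The plan is to implement the multi-scale/KAM scheme of \cite{DG}, adapted in \cite{DGL} to the quotient lattice $\mathfrak{Z}(\tilde\omega)$ in place of $\zv$. All of the arithmetic and geometric inputs that drive the scheme are now in place: \eqref{eq:PAexpsumomega1P2} supplies the sub-exponential decay of the matrix entries $c(\mathfrak{n})$ along $\mathfrak{Z}$, Lemma~\ref{lem:PAL10omdiophsubst} upgrades the box-Diophantine hypothesis to the unconditional bound $|\mathfrak{n}\tilde\omega|\ge a_0|\mathfrak{n}|^{-b_0}$ on all of $\mathfrak{Z}\setminus\{0\}$, and Lemma~\ref{lem:PA2Fvolume} provides the volume bound $|\{\mathfrak{m}:|\mathfrak{m}|\le R\}|\le 4^\nu R^\nu$ needed for Weyl-type counting of bad energies at each scale. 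Equipped with these, $\tilde H_k$ behaves, scale by scale, exactly as the operators treated in \cite{DG}.

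The construction proceeds inductively on scales $R_s\to\infty$. At scale zero, restrict $\tilde H_k$ to the box $\{|\mathfrak{n}|\le R_0\}$; for $k\notin\mathcal{K}(\xi)$ and $\ve\ll 1$, a first-order perturbation from the diagonal produces an eigenpair with $E_0(k)$ close to $(2\pi k)^2$ and eigenvector close to the standard basis vector at $\mathfrak{n}=0$. At subsequent scales, on the complement of the quantitatively bad intervals $\mathfrak{J}_\mathfrak{n}$ defined in \eqref{eq:1K.1}, one uses Schur complement together with Neumann series reductions to effective low-dimensional blocks built on the inductively identified resonance sets $\mathfrak{m}^{(\ell)}(k)$ of \eqref{eq:1mjdefi}. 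The good set $\mathfrak{G}$ collects those $k$ that survive every scale; the exceptional points $\xi(m)/2$ are excluded because there the diagonal entries at $\mathfrak{m}=0$ and $\mathfrak{m}=-m$ coincide and the normalization $\vp(0;k)=1$ breaks down. Exponential contraction of the iteration yields the decay bound \eqref{eq:1-17evdecay1} with rate $7\kappa_0/8$, the loss from the input rate $\kappa_0$ absorbing the Neumann-series overhead, together with the eigenequation $\tilde H_k\vp(k)=E(k)\vp(k)$ of item (1).

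The symmetry \eqref{eq:1EsymmetryT} in item (2) follows from the matrix identity $\tilde h(\mathfrak{m},\mathfrak{n};-k)=\overline{\tilde h(-\mathfrak{m},-\mathfrak{n};k)}$, a direct consequence of $\overline{c(\mathfrak{n})}=c(-\mathfrak{n})$, applied to the uniquely normalized eigenvector. The monotonicity estimate \eqref{eq:1Ekk1EGT} is extracted by differentiating along the branch of $E(k)$: the lower bound $(k^{(0)})^2(k-k_1)^2$ reflects a quantitative perturbed convexity of $E(k)$, itself inherited from the convexity of the unperturbed parabola $(2\pi k)^2$, while the upper bound accumulates the finitely many resonance jumps of size $\ve(\delta(\mathfrak{n}))^{1/8}$ that $E(k)$ picks up each time $k$ crosses one of the intervals $\mathfrak{J}_\mathfrak{n}$ between $k_1$ and $k$.

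For item (3), the one-sided limits at $k_m$ and at $0$ are obtained from uniform-in-$k$ versions of the decay estimate \eqref{eq:1-17evdecay1} on approaches through $\mathfrak{G}\setminus\{\xi(m)/2:m\in\mathfrak{Z}\}$: monotonicity of $E(k)$ on each side, combined with the uniform decay of $\vp(n;k)$, produces Cauchy convergence in both quantities, and the limit eigenequations follow by passing to the limit inside each finite-scale block. The two distinct branches $E^\pm(k_m)$ come from the splitting by $\ve|c(\mathfrak{m})|$ of the two degenerate unperturbed eigenvalues $(2\pi k_m)^2=(2\pi(\xi(m)+k_m))^2$ under the off-diagonal perturbation, which is also the mechanism behind the gap estimate of Theorem~$\tilde B$(1). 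The resolvent bounds in items (4) and (5) are produced by the same multi-scale machinery, but with the resonance bookkeeping replaced by a direct counting argument: when $E$ sits strictly inside a gap or below the spectrum, $|E-(2\pi(\xi(\mathfrak{m})+k))^2|\ge\delta$ off a controllable finite set of $\mathfrak{m}$'s, and iterating the Neumann series over scales $R_s\sim[\log\delta^{-1}]^{1/\alpha_0}$ gives the stated off-diagonal decay $\exp(-\kappa_0|m-n|^{\alpha_0}/8)$ past the onset length and the trivial bound $\delta^{-1}$ otherwise. The principal obstacle throughout is uniformity in $\tilde\omega$: since the denominators $t_j$ are unbounded, $\mathfrak{Z}(\tilde\omega)$ has no a priori bounded diameter, and every constant and every exclusion estimate must depend only on $a_0,b_0,\kappa_0,\nu$ and not on the rational denominators. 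Securing this uniformity is precisely the technical content that \cite{DGL} adds to the scheme of \cite{DG}, and is what will later make Theorem~$\tilde C$ useful for the rational-approximation passage to the quasi-periodic setting.
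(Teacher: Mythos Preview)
Your sketch is a reasonable high-level outline of the multi-scale scheme, but note that the paper does not prove Theorem~$\tilde C$ at all: it is quoted verbatim from the companion paper \cite{DGL}. The paper's only contribution here is to verify that the hypotheses of that external result are met in the present setting --- namely, the sub-exponential decay of $c(\mathfrak{n})$ via Lemma~\ref{lem:PAL10omegV1}, the volume bound of Lemma~\ref{lem:PA2Fvolume}, and the unconditional Diophantine estimate on $\mathfrak{Z}(\tilde\omega)$ via Lemma~\ref{lem:PAL10omdiophsubst} --- after which Theorem~$\tilde C$ is simply invoked as a black box with $\mathfrak{T}=\mathfrak{Z}(\tilde\omega)$.

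So there is no proof in the paper to compare your proposal against. What you have written is closer in spirit to a summary of what \cite{DGL} itself does, and at that level of description it is broadly accurate: the inductive Schur-complement reduction to resonant blocks, the normalization at $\mathfrak{n}=0$, the symmetry from $\overline{c(\mathfrak{n})}=c(-\mathfrak{n})$, and the monotonicity/gap mechanism are all the right ingredients. But as a self-contained proof it would be far too thin --- the actual execution in \cite{DGL} (and its predecessor \cite{DG}) runs to many dozens of pages of careful bookkeeping, and in particular the uniformity in $\tilde\omega$ that you correctly flag as the main issue is not something one can wave through. If your intent is to reproduce the paper's treatment, the correct move is simply to cite \cite{DGL} after checking its hypotheses, as the paper does.
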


The relation between the Hill operator $\tilde H$ and the dual operators $\tilde H_k$ is via the Fourier transform. We denote by $\widehat{f}(k)$ the Fourier transform of a function $f(x)$,
\begin{equation}\label{eq:11Fourier}
\widehat{f}(k) := \int_\mathbb{R} e^{-2 \pi i k x} f(x) \, dx,
\end{equation}
$x, k \in \IR$. Let $\cS(\IR)$ be the space of Schwartz functions $f(x)$, $x \in \IR $. Let $g(k)$ be a measurable function which decays faster than $|k|^{-a}$ as $|k| \rightarrow \infty$ for any $a > 0$. Let $\psi = \check{g}$ be its inverse Fourier transform. Then $\psi$ belongs to the domain of $\tilde H$ and the following identity holds:
\begin{equation} \label{eq:17-1FtransH}
\widehat{\tilde H\psi}(k) = (2 \pi)^2k^2 \widehat{\psi}(k) + \sum_{\mathfrak{m} \in \mathfrak{Z} (\tilde \omega) \setminus \{ 0 \}} c(-\mathfrak{m}) \widehat{\psi} (k + \mathfrak{m} \omega).
\end{equation}
In particular, this identity holds for any $f \in \cS(\IR)$. We rewrite \eqref{eq:17-1FtransH} as follows,
\begin{equation} \label{eq:17-1FtransHdual}
\widehat{\tilde H \psi}(k + \mathfrak{n} \tilde \omega) =  (2 \pi)^2 k^2 \widehat{\psi}(k + \mathfrak{n} \tilde \omega) + \sum_{\mathfrak{m} \in \mathfrak{Z} (\tilde \omega) \setminus \{ n \}} c(\mathfrak{n} - \mathfrak{m}) \widehat{\psi} (k + \mathfrak{m} \omega) = [\tilde H_k \widehat{\psi} (k + \cdot \omega)](\mathfrak{n});
\end{equation}
see \eqref{eq:2-7}. One has for any $\mathfrak{m}, \mathfrak{n}, \mathfrak{l} \in \mathfrak{Z} = \mathfrak{Z}(\tilde \omega)$,
\begin{equation} \label{eq:12cocyclic}
\tilde H_{k + \mathfrak{l} \tilde \omega}(\mathfrak{m}, \mathfrak{n}) = \tilde H_{k}(\mathfrak{m} + \mathfrak{l}, \mathfrak{n} + \mathfrak{l}).
\end{equation}
Let $\mathfrak{l} \in \mathfrak{Z}$ be arbitrary. Define $U_\mathfrak{l} : \ell^2(\mathfrak{Z}) \to \ell^2(\mathfrak{Z})$ by setting $[U_\mathfrak{l} f](\mathfrak{n}) := f(\mathfrak{l} + \mathfrak{n})$, $f \in \ell^2(\mathfrak{Z})$.

\begin{lemma}\label{lem:13.1UINV}
$(1)$ $U_\mathfrak{l}$ is a unitary operator. \\
$(2)$ For any linear operator $T$ whose domain $\mathcal{D}_T$ contains the standard basis vectors $\delta_{\mathfrak{m}}$, we have
\begin{equation} \label{eq:12cocyclic11}
[U_\mathfrak{l} T U^{-1}_\mathfrak{l}](\mathfrak{m}, \mathfrak{n}) = T(\mathfrak{m} + \mathfrak{l}, \mathfrak{n} + \mathfrak{l}).
\end{equation}

$(3)$
\begin{equation} \label{eq:12cocyclic12}
[U_\mathfrak{l} H_{k} U^{-1}_\mathfrak{l}] (\mathfrak{m}, \mathfrak{n}) = H_{k + \mathfrak{l} \tilde \omega} (\mathfrak{m},\mathfrak{n}).
\end{equation}
In particular, if for some $E \in \IC$, the inverse $(E - H_{k})^{-1}$ exists, then for any $\mathfrak{l} \in \mathfrak{Z}$, the inverse $(E - H_{k + \mathfrak{l} \tilde \omega})^{-1}$ also exists and
\begin{equation} \label{eq:12cocyclic13}
(E - H_{k + \mathfrak{l} \tilde \omega})^{-1} (\mathfrak{m}, \mathfrak{n}) = (E - H_{k})^{-1} (\mathfrak{m} + \mathfrak{l}, \mathfrak{n} + \mathfrak{l}).
\end{equation}
\end{lemma}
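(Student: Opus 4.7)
The plan is to check the three parts in order, treating the statement as a straightforward exercise in translating the already established cocycle identity \eqref{eq:12cocyclic} into operator-theoretic language on $\ell^2(\mathfrak{Z})$.

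For part (1), I would verify unitarity by a direct change of summation variable: for $f\in\ell^2(\mathfrak{Z})$,
\[
\|U_\mathfrak{l} f\|^2 \;=\; \sum_{\mathfrak{n}\in\mathfrak{Z}} |f(\mathfrak{l}+\mathfrak{n})|^2 \;=\; \sum_{\mathfrak{m}\in\mathfrak{Z}} |f(\mathfrak{m})|^2 \;=\; \|f\|^2,
\]
where $\mathfrak{m}=\mathfrak{l}+\mathfrak{n}$ traverses $\mathfrak{Z}$. Bijectivity is immediate from the identity $U_{-\mathfrak{l}}U_\mathfrak{l}=U_\mathfrak{l} U_{-\mathfrak{l}}=I$, giving $U_\mathfrak{l}^{-1}=U_{-\mathfrak{l}}$ and hence $U_\mathfrak{l}^*=U_{-\mathfrak{l}}$.

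For part (2), the key observation is the effect of $U_\mathfrak{l}^{-1}=U_{-\mathfrak{l}}$ on the standard basis: $[U_{-\mathfrak{l}}\delta_\mathfrak{m}](\mathfrak{n})=\delta_\mathfrak{m}(\mathfrak{n}-\mathfrak{l})=\delta_{\mathfrak{m}+\mathfrak{l}}(\mathfrak{n})$, so $U_\mathfrak{l}^{-1}\delta_\mathfrak{m}=\delta_{\mathfrak{m}+\mathfrak{l}}$. Interpreting matrix elements as $T(\mathfrak{m},\mathfrak{n})=\langle \delta_\mathfrak{m},T\delta_\mathfrak{n}\rangle$ and using that $U_\mathfrak{l}$ is unitary (so $U_\mathfrak{l}^*=U_\mathfrak{l}^{-1}$), one computes
\[
[U_\mathfrak{l} T U_\mathfrak{l}^{-1}](\mathfrak{m},\mathfrak{n})
 = \langle U_\mathfrak{l}^{-1}\delta_\mathfrak{m},T U_\mathfrak{l}^{-1}\delta_\mathfrak{n}\rangle
 = \langle \delta_{\mathfrak{m}+\mathfrak{l}},T\delta_{\mathfrak{n}+\mathfrak{l}}\rangle
 = T(\mathfrak{m}+\mathfrak{l},\mathfrak{n}+\mathfrak{l}).
\]

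Part (3) is then a direct combination: applying part (2) to $T=\tilde H_k$ (whose domain contains all $\delta_\mathfrak{m}$) and invoking \eqref{eq:12cocyclic} gives
\[
[U_\mathfrak{l} \tilde H_k U_\mathfrak{l}^{-1}](\mathfrak{m},\mathfrak{n}) \;=\; \tilde H_k(\mathfrak{m}+\mathfrak{l},\mathfrak{n}+\mathfrak{l}) \;=\; \tilde H_{k+\mathfrak{l}\tilde\omega}(\mathfrak{m},\mathfrak{n}),
\]
which yields \eqref{eq:12cocyclic12}. For the final assertion about resolvents, if $(E-\tilde H_k)^{-1}$ exists, then conjugating by the unitary $U_\mathfrak{l}$ preserves invertibility and gives
\[
U_\mathfrak{l}(E-\tilde H_k)^{-1}U_\mathfrak{l}^{-1} \;=\; \bigl(E-U_\mathfrak{l}\tilde H_k U_\mathfrak{l}^{-1}\bigr)^{-1} \;=\; (E-\tilde H_{k+\mathfrak{l}\tilde\omega})^{-1},
\]
and one more application of part (2) with $T=(E-\tilde H_k)^{-1}$ produces the matrix-element formula \eqref{eq:12cocyclic13}. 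There is no real obstacle here: the entire lemma is a bookkeeping translation of the translation symmetry \eqref{eq:12cocyclic} of the dual operators into the language of unitary conjugation, and the only small point requiring care is the identification $U_\mathfrak{l}^{-1}\delta_\mathfrak{m}=\delta_{\mathfrak{m}+\mathfrak{l}}$, which controls the sign of the shift.
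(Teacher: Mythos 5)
Your proposal is correct and follows essentially the same route as the paper: parts (1) and (2) are direct consequences of the definition of $U_\mathfrak{l}$ (your computation $U_\mathfrak{l}^{-1}\delta_\mathfrak{m}=\delta_{\mathfrak{m}+\mathfrak{l}}$ being the relevant bookkeeping), and part (3) is obtained by combining part (2) with the cocycle identity \eqref{eq:12cocyclic} and conjugating the resolvent by the unitary $U_\mathfrak{l}$. The paper merely compresses these steps into one line, so there is nothing to correct.
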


\begin{proof}
Parts $(1)$ and $(2)$ follow from the definition of $U_\mathfrak{l}$. Part $(3)$ follows from part $(2)$ due to \eqref{eq:12cocyclic}.
\end{proof}

To prove Theorem~$\tilde A$, we need the following:

\begin{lemma}\label{lem:13.1}
Assume that for some $E \in \IR$, there exist $\gamma(E) > 0$, $B(E) < \infty$ such that for any $k$, $(E - H_{k})$ is invertible, and for any $\mathfrak{m}, \mathfrak{n} \in \mathfrak{Z}$, we have
\begin{equation} \label{eq:13-7}
|(E - \tilde H_{k})^{-1}(\mathfrak{m},\mathfrak{n})| \le B(E) \exp(-\gamma(E) |\mathfrak{m}-\mathfrak{n}|^{\alpha_0}),
\end{equation}
where $\alpha_0 > 0$. Then, $(E - \tilde H)$ is invertible.
\end{lemma}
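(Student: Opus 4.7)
The plan is to invert $E-\tilde H$ via Bloch/Floquet decomposition, using the family $\{\tilde H_k\}$ as the fiber operators and inverting each fiber uniformly by a Schur test on the off-diagonal bound \eqref{eq:13-7}. Since $\tilde V$ has period $T=\tau_0^{-1}$ (Lemma~\ref{lem:PA10omegalattice1tildeV}) and $\mathfrak{F}(\tilde\omega)=\tau_0\IZ$ (Lemmas~\ref{lem:PA10omegalattice1} and~\ref{lem:PA10omegalattice1a}), the natural unitary is
\[
\cU : L^2(\IR) \to \int_{[0,\tau_0)}^\oplus \ell^2(\mathfrak{Z})\,dk, \qquad [\cU f](k,\mathfrak{n}) := \widehat f(k+\mathfrak{n}\tilde\omega),
\]
which is an isometry by Parseval. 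Identity \eqref{eq:17-1FtransHdual}, applied to $\psi\in\cS(\IR)$, shows that on that dense subspace $\cU$ intertwines $\tilde H$ with the fiberwise action $F(k,\cdot)\mapsto \tilde H_k F(k,\cdot)$.

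Next, I would use Schur's test to produce a uniform fiber bound. By Lemma~\ref{lem:PA2Fvolume}, the counting function satisfies $|\{\mathfrak{n} : |\mathfrak{n}-\mathfrak{m}|\le R\}|\le 4^\nu R^\nu$, so for any $\mathfrak{m}\in\mathfrak{Z}$,
\[
\sum_{\mathfrak{n}\in\mathfrak{Z}} B(E)\exp(-\gamma(E)|\mathfrak{m}-\mathfrak{n}|^{\alpha_0}) \le K := K(B(E),\gamma(E),\alpha_0,\nu) < \infty,
\]
and by symmetry the same bound controls sums over $\mathfrak{m}$, whence $\sup_k\|(E-\tilde H_k)^{-1}\|_{\ell^2\to\ell^2}\le K$. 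For $f\in\cS(\IR)$, I would then define $\psi$ via
\[
\widehat\psi(k+\mathfrak{n}\tilde\omega) := \sum_{\mathfrak{m}\in\mathfrak{Z}} [(E-\tilde H_k)^{-1}](\mathfrak{n},\mathfrak{m})\,\widehat f(k+\mathfrak{m}\tilde\omega), \quad k\in[0,\tau_0),
\]
so that Parseval gives $\|\psi\|_{L^2}\le K\|f\|_{L^2}$, while \eqref{eq:17-1FtransHdual} applied fiberwise shows $\widehat{(E-\tilde H)\psi}=\widehat f$, i.e.\ $(E-\tilde H)\psi = f$.

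To conclude, the map $f\mapsto\psi$ extends by density to a bounded operator $R_E:L^2(\IR)\to L^2(\IR)$. Since $\tilde V$ is bounded, the relation $-\psi''=\tilde V\psi-E\psi+f\in L^2$ places $R_E f$ in $H^2(\IR)=\mathrm{Dom}(\tilde H)$, and $(E-\tilde H)R_E f=f$ holds on all of $L^2$ by continuity. Injectivity is immediate from the same decomposition: if $(E-\tilde H)\psi=0$ with $\psi\in\mathrm{Dom}(\tilde H)$, then $(E-\tilde H_k)\widehat\psi(k+\cdot\tilde\omega)=0$ in $\ell^2$ for a.e.\ $k$, forcing $\widehat\psi\equiv 0$ and hence $\psi=0$. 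The only subtle step I anticipate is the bookkeeping around the intertwining relation on a dense subdomain and the verification that $R_E$ lands in $\mathrm{Dom}(\tilde H)$; the Schur estimate itself is routine once one invokes Lemma~\ref{lem:PA2Fvolume}.
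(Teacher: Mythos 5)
Your strategy is, at its core, the same as the paper's: pass to the dual operators $\tilde H_k$ through the Fourier correspondence \eqref{eq:17-1FtransHdual}, solve fiberwise with the matrices $(E-\tilde H_k)^{-1}$, and control the solution in $L^2$. Your Schur bound over the fundamental domain $[0,\tau_0)$ plays the role of the estimates \eqref{eq:13-a}, \eqref{eq:13-9}, and your direct-integral bookkeeping replaces the paper's use of the covariance relation \eqref{eq:12cocyclic13} for the single function $g(k)$ on the whole line; these are presentational differences only.

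The step you defer as ``bookkeeping'' is, however, where the paper does real work, and as you have written it there is a gap. You apply \eqref{eq:17-1FtransHdual} fiberwise to the $\psi$ you construct, but that identity has only been established for $\psi=\check g$ with $g$ decaying faster than any power of $|k|$; your $\widehat\psi$ is a priori only in $L^2$. Your proposed repair is circular: you invoke the relation $-\psi''=\tilde V\psi-E\psi+f$ to place $\psi$ in $H^2(\IR)=\mathrm{Dom}(\tilde H)$, but that relation (even in the distributional sense) is exactly what the fiberwise identity was supposed to produce. The paper closes this by checking directly from \eqref{eq:13-7} and $\widehat f\in\cS(\IR)$ that $g(k)$ (your $\widehat\psi$) decays super-polynomially: for large $|k|$, either $|\mathfrak{n}|\le |k|/2$, in which case $|\widehat f(k+\mathfrak{n}\tilde\omega)|$ is rapidly small, or $|\mathfrak{n}|>|k|/2$, in which case the factor $\exp(-\gamma(E)|\mathfrak{n}|^{\alpha_0})$ is; summing with Lemma~\ref{lem:PA2Fvolume} gives the decay, after which \eqref{eq:17-1FtransH} applies directly and $\psi$ lies in the domain with no bootstrap. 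You should add this decay check (or replace it by an honest weak-formulation argument tested against $\cS(\IR)$). Relatedly, your injectivity step applies the fiber decomposition to an arbitrary $\psi\in\mathrm{Dom}(\tilde H)$, which again goes beyond what \eqref{eq:17-1FtransHdual} gives without an extra approximation argument; it is simpler to note that $\tilde H$ is self-adjoint and $E$ real, so $\ker(E-\tilde H)=\mathrm{Ran}(E-\tilde H)^{\perp}$ is trivial once the range contains the dense set $\cS(\IR)$, and then closedness of $E-\tilde H$ together with your uniform bound yields bounded invertibility — which is also how the paper's terse final sentence should be read.
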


\begin{proof}
For any $k$, we have
\begin{equation} \label{eq:13-a}
\begin{split}
\sum_{\mathfrak{l} \in \mathfrak{Z}} |(E - \tilde H_{k})(\mathfrak{m},\mathfrak{l})| |(E - \tilde H_{k})^{-1}(\mathfrak{l},\mathfrak{n})| & \le (k + \mathfrak{m}\tilde \omega)^2 B(E)\exp(-\gamma(E) |\mathfrak{m}-\mathfrak{n}|^{\alpha_0}) \\
& \quad + \ve_0 B(E) \exp(-\gamma_1(E) |\mathfrak{m} - \mathfrak{n}|^{\alpha_0}) \left[\sum_{\mathfrak{r} \in \mathfrak{Z}} \exp(-\gamma_1(E) |\mathfrak{r}|^{\alpha_0}) \right]^2 \\
& \le [(k + \mathfrak{m} \tilde \omega)^2 B(E) + C(\gamma(E), \alpha_0, \nu)] \exp(-\gamma(E) |\mathfrak{m} - \mathfrak{n}|^{\alpha_0}/2), \\
\sum_{\mathfrak{l} \in \mathfrak{Z}} (E - \tilde H_{k})(\mathfrak{m}, \mathfrak{l}) (E - \tilde H_{k})^{-1} (\mathfrak{l}, \mathfrak{n}) & = \delta_{\mathfrak{m}, \mathfrak{n}},
\end{split}
\end{equation}
where $\delta_{\mathfrak{m}, \mathfrak{n}}$ is the Kronecker symbol. In particular, for any $k$ and for any bounded function $\psi : \mathfrak{Z} \to \IC$, we have
\begin{equation} \label{eq:13-b}
\sum_{\mathfrak{l}, \mathfrak{n} \in \mathfrak{Z}} (E - \tilde H_{k})(\mathfrak{m}, \mathfrak{l}) (E - \tilde H_{k})^{-1}(\mathfrak{l}, \mathfrak{n}) \psi(\mathfrak{n}) = \psi(\mathfrak{m}), \quad \mathfrak{m} \in \mathfrak{Z},
\end{equation}
and the series converges absolutely.

Let $f \in \cS(\IR^1)$ be arbitrary. Set
\begin{equation} \label{eq:13-8}
g(k) = \sum_{\mathfrak{n}\in \mathfrak{Z}} (E - \tilde H_{k})^{-1}(0,\mathfrak{n}) \widehat{f} (k + \mathfrak{n} \tilde \omega).
\end{equation}
Note that since $\widehat{f} \in \cS(\IR)$, one obtains using condition \eqref{eq:13-7}, $\lim_{|k| \to \infty} |k|^a |g(k)| = 0$ for any $a > 0$. Therefore, \eqref{eq:17-1FtransH} holds, $\psi := \check{g} \in L^2(\IR)$. Using \eqref{eq:12cocyclic13} from Lemma~\ref{lem:13.1UINV}, one obtains for any $k$ and any $\mathfrak{m} \in \mathfrak{Z}$,
\begin{equation} \label{eq:13-8b}
\begin{split}
g(k + \mathfrak{m}\tilde \omega) & = \sum_{\mathfrak{n} \in \mathfrak{Z}} (E - \tilde H_{k + \mathfrak{m}\tilde \omega})^{-1}(0,\mathfrak{n}) \widehat{f} (k + \mathfrak{m}\tilde \omega + \mathfrak{n}\tilde \omega) \\
& = \sum_{\mathfrak{n} \in \mathfrak{Z}} (E - \tilde H_{ k})^{-1} (\mathfrak{m}, \mathfrak{n} + \mathfrak{m}) \widehat{f} (k + \mathfrak{m} \tilde \omega + \mathfrak{n} \tilde \omega) \\
& = \sum_{\mathfrak{n}\in \mathfrak{Z}} (E -\tilde H_{k})^{-1} (\mathfrak{m}, \mathfrak{n}) \widehat{f} (k + \mathfrak{n} \tilde \omega).
\end{split}
\end{equation}
Combining this with \eqref{eq:13-b}, one obtains for any $k$,
\begin{equation} \label{eq:13-8b-2}
\sum_{\mathfrak{m} \in \mathfrak{Z}} (E - \tilde H_{k})(0,\mathfrak{m}) g(k + \mathfrak{m}\tilde \omega) = \widehat{f}(k).
\end{equation}
Using the definition \eqref{eq:13-8} and \eqref{eq:13-a}, we get
\begin{equation} \label{eq:13-9}
\begin{split}
\Big| \int_\IR g(k) h(k) \, dk \Big| & \le \| \widehat{f} \|_{2} \| h \|_2\sum_{\mathfrak{n}\in \mathfrak{Z}} |(E - \tilde H_{k})^{-1}(0,\mathfrak{n})| \\
& \le \|\widehat{f} \|_{2} \| h \|_2 B(E) C(\gamma(E),\alpha_0,\nu)
\end{split}
\end{equation}
for any $h \in L^2(\IR)$. Hence, $\| g \|_2 \le B(E) C(\gamma(E),\alpha_0,\nu) \| \widehat{f} \|_{2}$. In particular, the inverse Fourier transform $\psi = \check{g} \in L^2(\IR)$ obeys $\| \psi \|_2 = \| g \|_2 \le M(E,\alpha_0,\nu) \|f\|_2$. Combining \eqref{eq:17-1FtransH} with \eqref{eq:13-8b-2}, we find
\begin{equation} \label{eq:17-1FtransHa}
\begin{split}
\widehat{[(E - \tilde H) \psi]}(k) & = E- (2 \pi)^2k^2 \widehat{\psi}(k) -\sum_{\mathfrak{m} \in  \mathfrak{Z}} c(-\mathfrak{m}) \widehat{\psi} (k+\mathfrak{m}\omega) \\
& = E - (2 \pi)^2k^2 g(k) - \sum_{\mathfrak{m} \in  \mathfrak{Z}} c(-\mathfrak{m}) g(k + \mathfrak{m} \omega) \\
& = \sum_{\mathfrak{m} \in \mathfrak{Z}} (E - \tilde H_{k})(0,\mathfrak{m}) g(k + \mathfrak{m} \tilde \omega) \\
& = \widehat{f}(k).
\end{split}
\end{equation}
So,
\begin{equation} \label{eq:17-1FtransHaIn}
(E - \tilde H) \psi = f, \quad \| \psi \|_2 \le M(E,\alpha_0,\nu) \| f \|_2.
\end{equation}
Since $f \in \cS(\IR)$ is arbitrary, $(E - \tilde H)$ is invertible.
\end{proof}

\begin{proof}[Proof of Theorem~$\tilde A$]
Given $k \in \IR$ and $\vp(\mathfrak{n}) : \mathfrak{Z} \rightarrow \mathbb{C}$ such that $|\varphi(\mathfrak{n})| \le C_\varphi |\mathfrak{n}|^{-\nu-1}$, where $C_\varphi$ is a constant, set
\begin{equation} \label{eq:11-5}
y_{\vp, k}(x) = \sum_{\mathfrak{n} \in \mathfrak{Z}}\, \vp(\mathfrak{n}) e^{2 \pi i (\mathfrak{n}\tilde \omega + k)x}.
\end{equation}
The function $y_{\vp, k}(x)$ satisfies the Hill equation \eqref{eq:17-1} if and only if
\begin{equation} \label{eq:111-6}
(2 \pi)^2 (\mathfrak{n}\tilde \omega + k)^2 \vp(n) + \sum_{\mathfrak{m} \in \mathfrak{Z}}\, c(\mathfrak{n} - \mathfrak{m}) \vp(\mathfrak{m}) = E \vp(\mathfrak{n})
\end{equation}
for any $\mathfrak{n} \in \mathfrak{Z}$. Let $E(k)$ and $(\vp(\mathfrak{n}; k))_{\mathfrak{n} \in \mathfrak{Z}}$ be as in Theorem~$\tilde C$. Then,
$$
\psi(k, x) = \sum_{\mathfrak{n} \in \mathfrak{Z}} \vp(\mathfrak{n}; k) e^{2 \pi i (\mathfrak{n}\tilde \omega + k)x}
$$
obeys equation \eqref{eq:17-1} with $E = E(k)$, that is,
\begin{equation}\label{eq:11.sco}
\tilde H \psi \equiv - \psi''(k,x) + \tilde V(x) \psi(k,x) = E(k) \psi(k, x).
\end{equation}
Due to $(1)$ and $(2)$ from Theorem~$\tilde C$, conditions $(a)$--$(d)$ in Theorem~$\tilde A$ hold. Combining $(4)$, $(5)$ from Theorem~$\tilde C$ with Lemma~\ref{lem:13.1}, one concludes that
$$
\spec H \subset  [E(0), \infty) \setminus \bigcup_{\mathfrak{m} \in \mathfrak{Z} \setminus \{0\} : E^-(k_\mathfrak{m}) < E^+(k_\mathfrak{m})} (E^-(k_\mathfrak{m}), E^+( k_\mathfrak{m})).
$$
Recall the following general basic fact from the spectral theory of Schr\"odinger equations: if for some $E \in \mathbb{R}$, there exists a bounded smooth function $y\neq 0$ which obeys equation \eqref{eq:17-1}, that is,
\begin{equation}\label{eq:11.sco-2}
- y'' + \tilde V(x) y(x) = E y(x),
\end{equation}
then $E \in \spec \tilde H$. For any $k \in \mathcal{G} \setminus \mathcal{K} (\omega)$, the function $\psi(k,x)$ is bounded. Hence, $E(k) \in \spec H$. It follows from \eqref{eq:1Ekk1EGT11} that $E(k)$ is continuous at each point $k \in \mathcal{G} \setminus \mathcal{K} (\omega)$. It follows also from \eqref{eq:1Ekk1EGT11} that $E(k)$ is monotone for $k \in \mathcal{G} \setminus \mathcal{K} (\omega)$, $k > 0$. Recall also that $E(-k) = E(k)$. Hence,
$$
\overline{\{ E(k) : k \in \mathcal{G} \setminus \mathcal{K}(\omega) \}} =[E(0), \infty) \setminus \bigcup_{\mathfrak{m} \in \mathfrak{Z} \setminus \{0\} : E^-(k_\mathfrak{m}) < E^+(k_\mathfrak{m})} (E^-(k_\mathfrak{m}), E^+( k_\mathfrak{m})),
$$
which implies
$$
\spec H =[E(0), \infty) \setminus \bigcup_{\mathfrak{m} \in \mathfrak{Z} \setminus \{0\} : E^-(k_\mathfrak{m}) < E^+(k_\mathfrak{m})} (E^-(k_\mathfrak{m}), E^+( k_\mathfrak{m})).
$$
This finishes the proof of Theorem~$\tilde A$.
\end{proof}

\section{Proof of Theorem~$\tilde B$}\label{sec.PA4B}

To prove Theorem~$\tilde B$ we need to invoke Theorem~$\tilde D$ from \cite{DGL} which contains a very detailed description of the results for the multi-scale analysis of the operators $\tilde H_k$. First of all we need to invoke the definition of the weight functions, see Section~2 in \cite{DGL}. These definitions are rather lengthy. We still reproduce them here because we use the structure of the weight functions in our proof of Theorem~$\tilde B$.

As always in the present paper, the role of the group $\mathfrak{T}$ in \cite{DGL} is played by $\mathfrak{Z}(\tilde\omega)$ here. Since we quote some parts from \cite{DGL}, let us nevertheless write $\mathfrak{T}$ for the group in this section. Moreover, to simplify the notation (and also to conform with \cite{DGL}) we use the Latin letters $m, n, \ldots$ to denote elements of the group $\mathfrak{T}$, rather than $\mathfrak{m}, \mathfrak{n}, \ldots$ as before.

\begin{defi}\label{def:aux1}
\begin{itemize}

\item[(1)] For each $m \in \mathfrak{T}$, let $\gamma(m) := (m)$ be the sequence which consists of one point $m$. Set $\Ga(m,m; 1) := \{ \gamma(m) \}$, $\Ga(m,n;1) := \emptyset$ for $n \not= m$. Let $\La \subset \mathfrak{T}$. Set
\begin{equation}\label{eq:auxtraject}
\begin{split}
\Ga(k,\La) & = \{ \gamma = (n_1,\dots ,n_k) : n_j \in \La, \quad n_{j+1} \neq n_j \}, \; k \ge 2, \\
\Ga(m,n;k,\La) & = \{ \gamma \in \Ga(k, \La), \quad n_1 = m, n_k = n \}, \; m, n \in \La, \; k \ge 2, \\
\Ga_1(m,n;\La) & = \bigcup_{k \ge 1} \Ga(m, n; k, \La), \quad \Ga_1(\La) = \bigcup_{m, n \in \La} \Ga_1(m, n; \La).
\end{split}
\end{equation}
We call the sequences $\gamma$ in this definition trajectories.

\item[(2)] Let $\La \subset \mathfrak{T}$, $w(m,n)$, $D(m)$ be real functions, $m, n \in \La$, obeying $w(m,n) \ge 0$, $D(m) \ge 1$, $w(m,m) = 1$, and
\begin{equation}\label{eq:2.weighdecaycond}
w(m,n) \le \exp(-\kappa_0 |m-n|^{\alpha_0}),
\end{equation}
$m, n \in \La$, where $0 < \kappa_0 < 1$, $0 < \alpha_0 \le 1$. For $\gamma = (n_1, \dots, n_k)$, set
\begin{equation}\label{eq:auxtrajectweightO}
\begin{split}
w_{D, \kappa_0} (\gamma) & = \Big[ \prod_{1 \le j \le k-1} w(n_j,n_{j+1}) \Big] \exp \Big( \sum_{1 \le j \le k} D(n_j) \Big), \\
\|\gamma\| & = \sum_{1 \le i \le k-1} |n_i - n_{i+1}|^{\alpha_0}, \quad \bar D(\gamma) := \max_j D(n_j), \\
W_{D,\kappa_0} (\gamma) & = \exp \Big( -\kappa_0 \|\gamma\| + \sum_{1 \le j \le k} D(n_j) \Big).
\end{split}
\end{equation}
Here, $\|\gamma\| = 0$ if $k = 1$. Obviously, $w_{D,\kappa_0} (\gamma)\le W_{D,\kappa_0} (\gamma)$.

\item[(3)] Let $T \gg 1$. We say that $\gamma = (n_1, \dots, n_{k})$, $n_j \in \La$, $k \ge 1$ belongs to $\Ga_{D, T, \kappa_0} (n_1, n_{k}; k, \La)$ if the following condition holds:
\begin{equation}\label{eq:auxtrajectweight5}
\min (D(n_{i}), D(n_{j})) \le T \| (n_{i}, \dots, n_{j}) \|^{\alpha_0/5} \quad \text{for any $i < j$ such that $\min (D(n_{i}), D(n_{j})) \ge 4 T \kappa_0^{-1}$}.
\end{equation}
Note that $\Ga_{D, T, \kappa_0} (n_1, n_{1}; 1, \La) = \{ (n_1) \}$. Set $\Ga_{D, T, \kappa_0} (m, n; \La) = \bigcup_k \Ga_{D, T, \kappa_0} (m, n; k, \La)$, $\Ga_{D, T, \kappa_0} (\La) = \bigcup_{m,n} \Ga_{D, T, \kappa_0} (m, n; \La)$.

\item[(4)] Set
\begin{equation}\label{eq:auxtrajectweight1}
\begin{split}
s_{D, T, \kappa_0; k, \La} (m, n) & = \sum_{\gamma \in \Ga_{D, T, \kappa_0} (m, n; k, \La)} w_{D, \kappa_0} (\gamma),\\
S_{D, T, \kappa_0; k, \La} (m, n) & = \sum_{\gamma \in \Ga_{D, T, \kappa_0} (m, n; k, \La)} W_{D, \kappa_0} (\gamma).
\end{split}
\end{equation}
Note that $s_{D, T, \kappa_0; 1, \La} (m, m) = S_{D, T, \kappa_0; 1, \La} (m, m)=\exp(D(m))$.

\item[(5)] Set $\mu_{\La}(m) := \dist (m,\mathfrak{T} \setminus \La)$. We say that the function $D(m)$, $m \in \La$ belongs to $\mathcal{G}_{\La, T, \kappa_0}$ if the following condition holds:
\begin{equation}\label{eq:auxDcond}
D(m) \le T \mu_{\La}(m)^{\alpha_0/5} \quad \text{for any $m$ such that $D(m) \ge 4 T \kappa_0^{-1}$}.
\end{equation}

\item[(6)] Let $D \in \mathcal{G}_{\La, T, \kappa_0}$. We say that $\gamma = (n_1, \dots, n_{k})$, $n_j \in \La$, $k \ge 1$ belongs to $\Ga_{D, T, \kappa_0} (n_1, n_{k}; k, \La, \mathfrak{R})$ if the following conditions hold:
\begin{equation}\label{eq:auxtrajectweight5NNNNN}
\begin{split}
& \min (D(n_{i}), D(n_{j})) \le T \| (n_{i}, \dots, n_{j}) \|^{\alpha_0/5} \\
& \text{for any $i < j$ such that $\min (D(n_{i}), D(n_{j})) \ge 4 T \kappa_0^{-1}$}, \quad \text{unless $j = i + 1$}.
\end{split}
\end{equation}
Moreover,
\begin{equation}\label{eq:auxtrajectweight5NNNNN1}
\begin{split}
& \text{if $\min (D(n_{i}), D(n_{i+1})) > T |(n_{i} - n_{i+1})|^{\alpha_0/5}$} \quad \text{for some $i$, then } \\
& \min (D(n_{j'}), D(n_{i})) \le T \| (n_{j'}, \dots, n_{i}) \|^{\alpha_0/5}, \quad \min (D(n_{i}), D(n_{j''})) \le T \| (n_{i}, \dots, n_{j''}) \|^{\alpha_0/5}, \\
& \min (D(n_{j'}), D(n_{i+1})) \le T \|(n_{j'}, \dots, n_{i+1}) \|^{\alpha_0/5}, \quad \min (D(n_{i+1}), D(n_{j''})) \le T \| (n_{i+1}, \dots, n_{j''}) \|^{\alpha_0/5},\\
& \text{for any $j' < i < i+1 < j''$.}
\end{split}
\end{equation}

Set $\Ga_{D, T, \kappa_0} (m, n; \La, \mathfrak{R}) = \bigcup_{k} \Ga_{D, T, \kappa_0} (m, n; k, \La, \mathfrak{R})$, $\Ga_{D, T, \kappa_0} (\La, \mathfrak{R}) = \bigcup_{m,n} \Ga_{D, T, \kappa_0} (m, n; \La, \mathfrak{R})$.

Set
\begin{equation}\label{eq:auxtrajectweight111111}
\begin{split}
s_{D, T, \kappa_0; k, \La, \mathfrak{R}} (m,n) & = \sum_{\gamma \in \Ga_{D, T, \kappa_0} (m, n; k, \La, \mathfrak{R})} w_{D,\kappa_0} (\gamma),\\
S_{D, T, \kappa_0; k, \La, \mathfrak{R}} (m,n) & = \sum_{\gamma \in \Ga_{D, T, \kappa_0} (m, n; k, \La, \mathfrak{R})} W_{D,\kappa_0} (\gamma).
\end{split}
\end{equation}
The letter $\mathfrak{R}$ in the above definitions does not signify an additional parameter; it is used as an abbreviation for the word ``resonant", for the purpose of distinguishing between these objects and similar objects introduced earlier in the definition.
\end{itemize}
\end{defi}

It is convenient to use the notation $\xi(n) := n \tilde \omega$, $n \in \mathfrak{T}$. Clearly, $\xi(n)$ is a real additive function on $\mathfrak{T}$. It is easy to see that the case $|\tilde \omega| > 1$ follows from the case $|\tilde \omega|\le 1$. Thus we assume that $|\tilde \omega|\le 1$, so that
\begin{equation}\label{eq:2xisbounded}
|\xi(n)| \le |n|, \quad n \in \mathfrak{T}.
\end{equation}
Recall that due to Lemma~\ref{lem:PAL10omdiophsubst} we have the following Diophantine condition:
\begin{equation}\label{eq:7-5-8latticeR}
|\xi(n)| \ge a_0 |n|^{-b_0}, \quad \text{ for any $|n| > 0$},
\end{equation}
where $a_0,b_0$ are constants.

Next we need to invoke the scales setup from \cite{DGL}.

\begin{defi}\label{defi:7generalcase}
Let $a_0$, $b_0$ be as in \eqref{eq:7-5-8latticeR}. Set $b_1 = 32 b_0$, $\beta_1 = b_1^{-1} = (32 b_0)^{-1}$. Fix an arbitrary $R_1$ with $\log R_1 \ge  \alpha_0^{-1}\max (\log (100 a_0^{-1}) , 2^{34}\beta_1^{-1} \log \kappa_0^{-1})$. Set
\begin{equation}\label{eq:A.1A}
R^\one = R_1, \quad \delta_0^4 := \delta^\zero_0 = (R^\one)^{-\frac{1}{\beta_1}}, \quad \delta_0^{(u-1)} = \exp \bigl( - (\log R^{(u-1)})^2 \bigr), \quad u = 2, \dots, \quad R^{(u)} := \bigl( \delta_0^{(u-1)} \bigr)^{-\beta_1}.
\end{equation}
Note that $\log \delta_0^{-1} > D(\kappa_0, \alpha_0, a_0, b_0) := 2^{32} \alpha_0^{-1}\beta_1^{-1} \log \kappa_0^{-1}$.
It is important to mention that
\begin{equation}\label{eq:diphnores}
\begin{split}
|\xi(n)| \ge a_0  |m|^{-b_0} \ge a_0 ( 48 R^{(u)})^{-b_0} & > (R^{(u)})^{-2b_0} = (\delta_0^{(u-1)})^{1/16} \; \text{ if $0 < |m| \le 48 R^{(u)}$}, \\
\log R^{(u)} & = \beta_1 (\log R^{(u-1)})^2, \\
\exp(-\kappa_0 (R^{(u-1)})^{\alpha_0}) & < (\delta_0^{(u)})^{16}.
\end{split}
\end{equation}
Set
\begin{equation}\label{eq:10K.1}
\begin{split}
k_n & = -\xi(n)/2, \quad \mathcal{I}_n = ( k_n - (\delta^\es_0)^{3/4}, k_n + (\delta^\es_0)^{3/4} ) \quad \text{if $12R^\esone < |n| \le 12 R^\es$}, \\
\mathcal{R}(k) & = \{ n \in \mathfrak{T} \setminus \{0\} : k \in \mathcal{I}_n\}, \quad \mathcal{G} = \{ k : |\mathcal{R}(k)| < \infty \}.
\end{split}
\end{equation}
\end{defi}

The following simple observations are due to the Diophantine property of the function $\xi(m)$.

\begin{lemma}\label{lem:10resetdiscr11}
Assume $m_1 \in \mathcal{R}(k)$. Let $12 R^{(s_1-1)} < |m_1| \le 12 R^{(s_1)}$. Then,

$(1)$ $|\xi(m_1)| > |m_1|^{-2b_0} \ge (12R^{(s_1)})^{-2b_0}$, $|k| > |m_1|^{-2b_0} \ge (12R^{(s_1)})^{-2b_0}$.

$(2)$ $\sgn(k) = -\sgn (\xi(m_1))$.

$(3)$ If $m_2 \in \mathcal{R}(k)$, $m_1 \neq m_2$, $|m_1| \le |m_2|$, then, in fact,
\begin{equation}\label{eq:10mjdeficompmm}
|m_2| > \frac{1}{2} R^{(s_1+1)} = \frac{1}{2} \exp(\beta_1 (\log  R^{(s_1)})^2),
\end{equation}
with $\beta_1$ as in \eqref{eq:A.1A}; in particular, $\beta_1 < 1/2$, $\log R^{(1)} > 2^4 \beta^{-2}_1$.

$(4)$ Let $k = k_{m_2}$, so that $m_2 \in \mathcal{R}(k)$. Assume  $m_1 \neq m_2$. Then \eqref{eq:10mjdeficompmm} holds.
\end{lemma}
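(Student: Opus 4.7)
All four parts reduce to two ingredients: the Diophantine lower bound \eqref{eq:7-5-8latticeR}, and the arithmetic identity
\[
(\delta_0^{(s)})^{-3/(4b_0)} \;=\; \bigl( (\delta_0^{(s)})^{-\beta_1} \bigr)^{24} \;=\; (R^{(s+1)})^{24},
\]
coming from $\beta_1 = 1/(32 b_0)$ and $R^{(s+1)} = (\delta_0^{(s)})^{-\beta_1}$. Its content is that the Diophantine repulsion of frequencies operates on a scale enormously larger than $R^{(s+1)}$, while the interval half-widths $(\delta_0^{(s)})^{3/4} = \exp(-\tfrac34(\log R^{(s)})^2)$ are negligible against any polynomial in $R^{(s)}$.

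For (1), I will write the Diophantine bound as $|\xi(m_1)| \ge (a_0 |m_1|^{b_0}) \cdot |m_1|^{-2b_0}$, so $|\xi(m_1)| > |m_1|^{-2 b_0}$ amounts to $a_0 |m_1|^{b_0} > 1$. This follows from the choice of $R_1$ in \eqref{eq:A.1A}, which forces $R_1 \ge 100 a_0^{-1} > a_0^{-1/b_0}$, together with $|m_1| > 12 R^{(s_1 - 1)}$ (the case $s_1 = 1$ being handled by the size of $R_1$ directly). The inequality $|m_1|^{-2 b_0} \ge (12 R^{(s_1)})^{-2 b_0}$ is immediate from $|m_1| \le 12 R^{(s_1)}$. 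Then $|k| > |m_1|^{-2 b_0}$ follows since membership $k \in \mathcal{I}_{m_1}$ gives $|k - k_{m_1}| < (\delta_0^{(s_1)})^{3/4}$, which is dominated by $|k_{m_1}| = |\xi(m_1)|/2$ upon comparing the super-polynomially small $(\delta_0^{(s_1)})^{3/4}$ with the merely polynomial $(R^{(s_1)})^{-b_0}$. Part (2) then drops out: $k$ lies within $\ll |k_{m_1}|$ of $k_{m_1} = -\xi(m_1)/2$, so $\sgn(k) = \sgn(k_{m_1}) = -\sgn(\xi(m_1))$.

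For (3) and (4), the central step is a repulsion estimate. If $m_1, m_2 \in \mathcal{R}(k)$ are distinct with scales $s_1 \le s_2$, then using $\xi(m_j) = -2 k_{m_j}$ and $(\delta_0^{(s_2)})^{3/4} \le (\delta_0^{(s_1)})^{3/4}$,
\[
|\xi(m_1 - m_2)| \;\le\; 2\bigl(|k - k_{m_1}| + |k - k_{m_2}|\bigr) \;<\; 4 (\delta_0^{(s_1)})^{3/4}.
\]
Combined with \eqref{eq:7-5-8latticeR} applied to $m_1 - m_2 \ne 0$, this yields $|m_1 - m_2| > (a_0/4)^{1/b_0} (R^{(s_1+1)})^{24}$. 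For (3), $|m_1| \le |m_2|$ gives $|m_2| \ge |m_1 - m_2|/2$, enormously larger than $R^{(s_1+1)}/2$; incidentally this forces $s_2 > s_1$, since $|m_2| \le 12 R^{(s_1)}$ would contradict the lower bound. For (4), the hypothesis $k = k_{m_2}$ removes the $|k - k_{m_2}|$ contribution, giving an even larger margin, and $|m_2| \ge |m_1 - m_2| - |m_1| > (a_0/2)^{1/b_0}(R^{(s_1+1)})^{24} - 12 R^{(s_1)}$ far exceeds $R^{(s_1+1)}/2$. The auxiliary assertions $\beta_1 < 1/2$ and $\log R^{(1)} > 2^4 \beta_1^{-2}$ follow directly from $\beta_1 = 1/(32 b_0)$ with $b_0 > \nu \ge 1$ and the explicit lower bound on $\log R_1$ in \eqref{eq:A.1A}.

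The hard part will be purely bookkeeping: carefully converting $(\delta_0^{(s)})^{-3/(4b_0)}$ to $(R^{(s+1)})^{24}$, and checking that $(\delta_0^{(s)})^{3/4}$ is dwarfed by the Diophantine gap $a_0|m|^{-b_0}$ for $|m| \le 12R^{(s)}$, i.e.\ that $\exp(-\tfrac34(\log R^{(s)})^2) \ll (R^{(s)})^{-b_0}$, which is immediate once $\log R_1$ exceeds the threshold from \eqref{eq:A.1A}. No conceptual obstacle appears.
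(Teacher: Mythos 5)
Your proof is correct and is essentially the argument the paper intends: the paper gives no proof of this lemma (it is introduced as ``simple observations'' following from the Diophantine property), and your combination of \eqref{eq:7-5-8latticeR} with the scale identity $(\delta_0^{(s)})^{-3/(4b_0)} = \bigl((\delta_0^{(s)})^{-\beta_1}\bigr)^{24} = (R^{(s+1)})^{24}$, plus the triangle-inequality repulsion step $|\xi(m_1-m_2)| < 4(\delta_0^{(s_1)})^{3/4}$ for two resonant sites, is exactly the content encoded in \eqref{eq:diphnores}; parts (3) and (4) in particular are carried out correctly and with ample margin. The only loose ends are in the easy parts: for $s_1=1$ in part (1), ``handled by the size of $R_1$ directly'' does not by itself yield the needed bound $|m_1| > a_0^{-1/b_0}$ (that depends on how the bottom scale $R^{(0)}$ in \eqref{eq:10K.1} is fixed in \cite{DGL}), and the auxiliary inequality $\log R^{(1)} > 2^4\beta_1^{-2}$ (which parts (1)--(2) implicitly use when dominating $(\delta_0^{(s_1)})^{3/4}$ by $|k_{m_1}|$) is part of the scale setup imported from \cite{DGL} rather than a literal consequence of the constraints displayed in \eqref{eq:A.1A} when $\kappa_0$ is close to $1$. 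Since the paper itself asserts these conventions without proof, they are shared reliances rather than defects specific to your argument.
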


\begin{defi}\label{defi:7slm}
Assume that $0 < |\mathcal{R}(k)| < \infty$. We enumerate the points of $\mathcal{R}(k)$ as $n^{(\ell)}(k)$, $\ell = 0, \dots, \ell(k)$, $1 + \ell(k) = |\mathcal{R}(k)|$, so that $|n^{(\ell)}(k)| < |n^{(\ell+1)}(k)|$. Let $s^{(\ell)}(k)$ be defined so that $12 R^{(s^{(\ell)}(k)-1)} < n^{(\ell)}(k) \le 12 R^{(s^{(\ell)}(k))}$, $\ell = 0, \dots, \ell(k)$, $s^{(\ell(k))} := s^{(\ell(k))}(k)$, $n^{(\ell(k))} := n^{(\ell(k))}(k)$. For $s^{(\ell)}(k) \le s < s^{(\ell+1)}(k)$, set $\mathcal{P}^\es(k) = \{ 0, n^{(\ell(k))} \}$. For $s < s^{(0)}(k)$, set $\mathcal{P}^\es(k) = \{ 0 \}$. Furthermore, set
\begin{equation}\label{eq:10mjdefi}
\begin{split}
T_{m}(n) & = m - n, \quad m,n \in \mathfrak{T}, \\
\mathfrak{m}^{(0)}(k) & = \{ 0, n^{(0)}(k)\}, \quad \mathfrak{m}^{(\ell)}(k) = \mathfrak{m}^{(\ell-1)}(k) \cup T_{n^{(\ell)}(k)} (\mathfrak{m}^{(\ell-1)}(k)), \quad \ell = 1,\dots,\ell(k).
\end{split}
\end{equation}
Let $n^\zero \in \mathfrak{T} \setminus \{ 0 \}$, $k = k_{n^\zero}$. Clearly, $n^\zero \in \mathcal{R}(k)$. It follows from $(4)$ in Lemma~\ref{lem:10resetdiscr11} that $\mathfrak{m}^{(\ell(k))}(k) = n^\zero$. Finally, we set $\ell(k) = 0$, $s^{(0)}(k) = 0$ if $\mathcal{R}(k) = \emptyset$. In particular, for $k = 0$, $\mathcal{R}(k) = \emptyset$, $\ell(k) = 0$, $s^{(0)}(k) = 0$. Note also that $n^{(\ell)}(k_{n^\zero}) = n^\zero$ for any $n^\zero \neq 0$.
\end{defi}

\begin{remark}\label{rem:2regardingtheoremD}
$(1)$ We now invoke Theorem~$\tilde D$ from \cite{DGL}. We invoke here only those statements from~Theorem $\tilde D$
that are needed to prove Theorem~$\tilde B$. We still use the enumeration the way it is in \cite{DGL}. For this reason there are obvious discrepancies in the enumeration below.

$(2)$ In one of the statements we have to invoke the classes of matrix-functions $GSR^{[\mathfrak{s}^{(\ell(k))}(k),s]} \bigl( \mathfrak{m}^{(\ell(k))}(k), m^+(k), m^-(k), \La^{(s)}_k(0); \delta_0,\mathfrak{t}^{(\ell(k))}(k)\bigr)$ from \cite{DGL}. The description of these classes is very involved and we do not reproduce it here. The reason why we need these classes of matrix functions is that in the proof of Theorem~$\tilde B$, we will invoke the theory of continued-fraction-functions from \cite{DG, DGL}. The latter plays a central role in the most important estimates developed in \cite{DG} and \cite{DGL}. It was established in those papers that the characteristic polynomials of the resonant part of the matrices belonging to the above-mentioned classes can be represented as continued-fraction-functions. The resonant eigenvalues of the matrices are implicit functions of the variables $\ve, k$ defined via zeros of the continued-fraction-function. The theory of continued-fraction-functions itself is completely independent of the rest of the material. One can find a comprehensive discussion of this theory in Section~5 of \cite{DG}.
\end{remark}

\begin{thmtd}
There exists $\ve_0 = \ve_0(a_0,b_0,\kappa_0) > 0$ such that for $1/2 \le \alpha_0 \le 1$,  $0 < \ve < \ve_0$, and any $k$, one can define sets $\La^{(s)}_{k}$, $s=1,2,\dots$ such that the following statements hold.

$(I)$ Assume that $\mathcal{R}(k, \tilde \omega) \neq \emptyset$. Let $n^{(\ell)}(k)$, $s^{(\ell)}(k)$, $\ell(k)$, $\mathfrak{m}^{(\ell)}(k)$ be as in \eqref{eq:10mjdefi}. Set $s^{(\ell(k)+1)}(k) := \infty$. Let $s^{(\ell)}(k) \le s < s^{(\ell+1)}(k)$ be arbitrary.

$(0)$ $H_{\La^{(s^{(\ell(k))}(k)+q)}_k(0), \ve, k} \in GSR^{[\mathfrak{s}^{(\ell(k))}(k),s]} \bigl( \mathfrak{m}^{(\ell(k))}(k), m^+(k), m^-(k), \La^{(s)}_k(0); \delta_0, \mathfrak{t}^{(\ell(k))}(k) \bigr)$, $\mathfrak{t}^{(\ell)}(k) = (\tau^\zero(k), \dots, \tau^{(\ell)}(k))$, $\tau^{(r)}(k) = |k_{n_r}| ||k| - |k_{n_r}||$, $m^+(k) = 0$, $m^-(k) = n^{(\ell(k))}(k)$ if $|k| > |k_{n^{(\ell)}(k)}|$, $m^-(k) = 0$, $m^+(k) = n^{(\ell(k))}(k)$ if $|k| < |k_{n^{(\ell)}(k)}|$. In particular,
\begin{equation}\label{eq:boxescondition2}
B(0, R^\es) \cup B(n^{(s^{(\ell)}(k))}, R^\es) \subset \La^\es_{k} \subset B(0, 16 R^{(s)}).
\end{equation}

$(ii)$ Let $|k_1 - k| < (\delta^\esone_0)^{1/4}$, $k_1 \notin \{ \xi(m) : m \in \mathfrak{F} \}$. There exist real-analytic functions $E(0, \Lambda^\es_{k}; \ve, k_1)$, $E(n^{(\ell)}(k), \Lambda^\es_{k}; \ve, k_1)$ of $\ve$, $E(0, \Lambda_{k}; \ve, k_1) \neq E(\mathfrak{m}^{(\ell)}(k, \tilde \omega), \Lambda^\es_{k}; \ve, k_1)$ for any $\ve$, such that $E(\cdot, \La_{k}; 0, k_1) = v(\cdot,k_1)$ and
\begin{equation} \label{eq:5specHEEAAA}
\spec  H_{\La^\es_{k}, \ve, k_1} \cap \{ \min_{\cdot} |E - E(\cdot, \Lambda^\es_{k}; \ve, k_1)| < (\delta^{(s-1)}_0)^{1/8} \} = \{ E(0, \Lambda^\es_{k}; \ve, k_1), E(n^{(\ell)}(k), \Lambda^\es_{k}; \ve, k_1) \}.
\end{equation}
Furthermore,
\begin{equation} \label{eq:7specplitell1}
|E(0, \Lambda^\es_{k}; \ve, k_1) - E(n^{(\ell)}(k), \Lambda^\es_{k}; \ve, k_1)| < \exp \Big( -\frac{\kappa_0}{2} |n^{(\ell)}(k)|^{\alpha_0} \Big).
\end{equation}

$(iii)$ Let $\min_{\cdot} | E - E(\cdot, \Lambda^\es_{k}; \ve, k_1)| < (\delta^{(s-1)}_0)^{1/8}$. There exists a function $D(\cdot) := D(\cdot, \Lambda^\es_{k}) \in \mathcal{G}_{\Lambda^\es_{k}, T, \kappa_0}$, $T = 4 \kappa_0 \log \delta_0^{-1}$, see $(5)$ in Definition~\ref{def:aux1}, such that
\begin{equation}\label{eq:3Hinvestimatestatement1PQ}
|[(E - H_{\La^\es_{k} \setminus \mathcal{P}^\es(k), \ve, k_1})^{-1}] (x,y)| \le s_{D(\cdot), T, \kappa_0, |\ve|; \Lambda^\es_{k} \setminus \{0,n^{(\ell)}\}, \mathfrak{R}}(x,y).
\end{equation}

$(iv)$
\begin{equation}\label{eq:7Ederiss1TD-1}
|E(0, \La^{(s)}_k; \ve, k) - E(0, \La^{(s+1)}_k; \ve, k)| \le |\ve| (\delta^\es_0)^6.
\end{equation}

$(III)$ Let $n^\zero \in \mathfrak{T} \setminus \{ 0 \}$ be arbitrary. Let $s \ge s^{(\ell( k_{n^\zero}))}$. Assume, for instance, $k_{n^\zero} > 0$.

$(1)$ The limits
\begin{equation}\label{eq:10kk1comp1lim}
E^\pm(\La^\es_{k_{n^\zero}}; \ve, k_{n^\zero}) := \lim_{k_1 \rightarrow k_{n^\zero} \pm 0} E(0, \La^\es_{k_{n^\zero}}; \ve, k_1)
\end{equation}
exist, and
\begin{equation}\label{eq:10kk1comp1gapsize}
0 \le E^+(\La^\es_{k_{n^\zero}}; \ve, k_{n^\zero}) - E^-(\La^\es_{k_{n^\zero}}; \ve, k_{n^\zero}) \le 2 |\ve| \exp \Big( -\frac{\kappa_0}{2} |n^\zero|^{\alpha_0} \Big).
\end{equation}
Furthermore,
\begin{equation}\label{eq:10specHEEAAA}
\begin{split}
\spec H_{\La^\es_{k_{n^\zero}}(0), \ve,k_{n^\zero}} & \cap \{ E : \min_{\pm} |E - E^\pm(\La^\es_{k_{n^\zero}}; \ve, k_{n^\zero})| < 8 (\delta^{(\bar s_0-1)}_0)^{1/8}\} \\
& = \{ E^-(\La^\es_{k_{n^\zero}}; \ve, k_{n^\zero}), E^+(\La^\es_{k_{n^\zero}}; \ve, k_{n^\zero}) \}.
\end{split}
\end{equation}

Let $\min_{\pm} |E - E^\pm(\La^\es_{k_{n^\zero}, \tilde \omega}; \ve, k_{n^\zero}) \bigr)| < (\delta_0^{(s-1)})^{1/8}$. There exists a function $D(\cdot) := D(\cdot, \Lambda^\es_{k_{n^\zero}}) \in \mathcal{G}_{\Lambda^\es_{k_{n^\zero}}, T, \kappa_0}$, $T = 4 \kappa_0 \log \delta_0^{-1}$, see $(5)$ in Definition~\ref{def:aux1}, such that
\begin{equation}\label{eq:10Hinvestimatestatement1PQ}
|[(E - H_{\Lambda^\es_{k_{n^\zero}} \setminus \mathcal{P}^\es(k_{n_0}), \ve, k_{n^\zero}})^{-1}](m,n)| \le s_{D(\cdot), T, \kappa_0, |\ve|; \Lambda^\es_{k} \setminus \{0,n^\zero\}, \mathfrak{R}}(m,n).
\end{equation}

$(2)$ \begin{equation}\label{eq:7Ederiss1TD-2}
|E^\pm \bigl(\La^{(s)}_{k_{n^\zero}}; \ve, k_{n^\zero} \bigr) - E^\pm \bigl( \La^{(s+1)}_{k_{n^\zero}}; \ve, k_{n^\zero} \bigr)| \le |\ve| (\delta^\es_0)^6.
\end{equation}

$(3)$ $E = E^\pm(\La^\es_{k_{n^\zero}}; \ve, k_{n^\zero})$ obeys the following equation,
\begin{equation}\label{eq:10Eequation0}
E - v(0, k_{n^\zero}) - Q(0, \La^\es_{k_{n^\zero}}; \ve, E) \mp \big| G(0, n^\zero, \La^\es_{k_{n^\zero}}; \ve, E) \big| = 0,
\end{equation}
where
\begin{equation} \label{eq:10-10acbasicfunctions}
\begin{split}
& Q(0,\La^\es_{k_{n^\zero}}; \ve, E) \\
& = \sum_{m', n' \in \La^\es_{k_{n^\zero}} \setminus \mathcal{P}^\es(k_{n_0})} h(0, m'; \ve, k_{n^\zero}) [(E - H_{\Lambda^\es_{ k_{n^\zero}} \setminus \{ 0, n^\zero \}, \ve, k_{n^\zero}})^{-1}](m',n') h(n', 0; \ve, k_{n_0}), \\
& G(0, n^\zero, \La^\es_{k_{n^\zero}}; \ve, E) = h(0, n^\zero; \ve, k_{n^\zero} ) \\
& + \sum_{m', n' \in \La^\es_{k_{n^\zero}} \setminus \mathcal{P}^\es(k_{n_0})} h(0, m'; \ve, k_{n^\zero}) [(E - H_{\Lambda^\es_{ k_{n^\zero}} \setminus \{ 0, n^\zero \}, \ve, k_{n^\zero}})^{-1}](m',n') h(n', n^\zero; \ve, k_{n^\zero}).
\end{split}
\end{equation}
\end{thmtd}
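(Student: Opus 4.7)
The plan is to prove Theorem $\tilde D$ by induction on the scale $s$, mirroring the multi-scale construction of \cite{DG} but carried out on the quotient lattice $\mathfrak{T} = \mathfrak{Z}(\tilde\omega)$ rather than on $\zv$. At each scale one defines the box $\La^{(s)}_k \supset \La^{(s-1)}_k$ respecting \eqref{eq:boxescondition2}, and maintains as inductive hypotheses the assertions $(I)(0)$, $(I)(ii)$, $(I)(iii)$, $(I)(iv)$; the limiting statements of part $(III)$ are then extracted from these by taking $k_1 \to k_{n^\zero}\pm 0$.

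For the base case, while $s < s^{(0)}(k)$ there are no resonances inside the box and $\mathcal{P}^\es(k) = \{0\}$. The diagonal entries $(2\pi)^2(k+\xi(m))^2 - E$ stay bounded below uniformly on $\La^\es_k \setminus \{0\}$ by the Diophantine input \eqref{eq:7-5-8latticeR} and Lemma~\ref{lem:10resetdiscr11}, so a Neumann/trajectory expansion produces a single non-resonant eigenvalue $E(0, \La^\es_k; \ve, k_1)$ analytic in $\ve$, and the weight $D(\cdot)$ required for \eqref{eq:3Hinvestimatestatement1PQ} is built from the logarithmic distances $\log|k+\xi(m)|^{-1}$. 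Condition \eqref{eq:auxDcond} encodes exactly that resonant sites (those with large $D(m)$) must sit deep inside $\La^\es_k$, which is what makes the trajectory sums $s_{D, T, \kappa_0}$ summable.

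For the inductive step from $s$ to $s+1$, granted the resolvent bound on $\La^\es_k \setminus \mathcal{P}^\es(k)$, I would apply the Schur complement (Feshbach reduction) to the two-dimensional subspace spanned by the basis vectors at $0$ and $n^{(\ell(k))}(k)$. The eigenvalue problem restricted to the spectral window $\min_\cdot |E - E(\cdot,\La^\es_k;\ve,k_1)| < (\delta_0^{(s-1)})^{1/8}$ collapses to the $2\times 2$ effective equation whose off-diagonal entry is $G(0, n^{(\ell(k))}, \La^\es_k; \ve, E)$ and whose diagonal correction is $Q(0, \La^\es_k; \ve, E)$, exactly as in \eqref{eq:10-10acbasicfunctions}; the two roots are $E^\pm(\La^\es_{k_{n^\zero}}; \ve, k_{n^\zero})$, satisfying \eqref{eq:10Eequation0}. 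To obtain analyticity of these roots in $\ve$ and the sharp splitting \eqref{eq:7specplitell1}, I would write $Q$ and $G$ as continued-fraction-functions obtained by successively eliminating the intermediate resonances along the nested chain $\mathfrak{m}^{(0)} \subset \cdots \subset \mathfrak{m}^{(\ell(k))}$; this is the calculus developed in Section~5 of \cite{DG}, and it is exactly what the class $GSR^{[\mathfrak{s}^{(\ell(k))}(k), s]}$ in $(I)(0)$ is set up to axiomatise. The composite hopping element connecting $0$ to $n^{(\ell(k))}(k)$ along this chain carries a factor $\exp(-\kappa_0 |n^{(\ell(k))}|^{\alpha_0})$, which after passing to the $2\times 2$ gap formula yields \eqref{eq:7specplitell1}.

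The scale-to-scale stability estimates \eqref{eq:7Ederiss1TD-1} and \eqref{eq:7Ederiss1TD-2} then follow by a telescoping argument: $Q$ and $G$ differ between scales $s$ and $s+1$ only by contributions from sites $m \in \La^{(s+1)}_k \setminus \La^\es_k$ with $|m| > R^\es$, and each such contribution is bounded by $|\ve|\exp(-\kappa_0|m|^{\alpha_0}) \le |\ve|(\delta_0^\es)^{16}$ using the third line of \eqref{eq:diphnores}. The one-sided limits in $(III)(1)$ are obtained by noting that for fixed $k_{n^\zero}$, equation \eqref{eq:10Eequation0} is real-analytic in $k_1$ on each side of $k_{n^\zero}$ and extends continuously as $k_1 \to k_{n^\zero}\pm 0$; the $\pm$ sign of $|G|$ selects the branch one follows, so that $E^+ - E^- = 2|G|$ at $k_1 = k_{n^\zero}$, which gives \eqref{eq:10kk1comp1gapsize}. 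I expect the main obstacle to be verifying that membership in $GSR^{[\mathfrak{s}^{(\ell(k))}(k), s]}$ is preserved under both the enlargement $\La^\es \to \La^{(s+1)}$ and the Schur elimination; this is where the bulk of the work of \cite{DGL} lies, since $GSR$ bundles together a delicate simultaneous control of the effective $2\times 2$ block, the non-resonant weights $D(\cdot) \in \mathcal{G}_{\La, T, \kappa_0}$, and the joint analytic dependence on $\ve$ and $k_1$ in a form that matches the hypotheses required to launch the next scale.
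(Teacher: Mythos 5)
You should first be aware that the paper you are looking at does not prove Theorem~$\tilde D$ at all: it is imported verbatim from the companion paper \cite{DGL}, where its proof is the main content, and Section~\ref{sec.PA4B} here merely quotes the statements needed for Theorem~$\tilde B$ (see Remark~\ref{rem:2regardingtheoremD}). So there is no in-paper argument to compare your proposal against; the relevant comparison is with the multi-scale scheme of \cite{DGL}/\cite{DG}, and at the level of strategy your outline is consistent with it: induction on the scales $R^{(s)}$, a Schur/Feshbach reduction onto the pair $\{0, n^{(\ell(k))}(k)\}$ producing the implicit equation \eqref{eq:10Eequation0}, continued-fraction-functions to control the resonant block, and weighted trajectory sums $s_{D,T,\kappa_0;\dots,\mathfrak{R}}$ for the off-resonant resolvent.

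However, as a proof your proposal has a genuine gap, and it is exactly the one you name in your last sentence: everything that makes the theorem true is deferred rather than established. You do not define the sets $\La^{(s)}_k$, the classes $GSR^{[\mathfrak{s}^{(\ell(k))}(k),s]}$, or the weight functions $D(\cdot)\in\mathcal{G}_{\La,T,\kappa_0}$, and you do not verify the inductive step in which membership in the $GSR$ class is preserved under the enlargement $\La^{(s)}\to\La^{(s+1)}$ and under elimination of the intermediate resonances along the chain $\mathfrak{m}^{(0)}\subset\cdots\subset\mathfrak{m}^{(\ell(k))}$; this verification, together with the accompanying estimates (summability of the trajectory sums, the separation \eqref{eq:10mjdeficompmm} between successive resonances, analyticity in $\ve$ and $k_1$, existence of the one-sided limits in \eqref{eq:10kk1comp1lim}), is the entire technical substance of \cite{DGL} and cannot be treated as a black box in a proof of the theorem. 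Two smaller points are also too quick: the gap bound \eqref{eq:10kk1comp1gapsize} is not literally ``$E^+-E^-=2|G|$'' because $Q$ and $G$ in \eqref{eq:10Eequation0} depend on $E$, so one must control the implicit dependence (this is where the continued-fraction calculus and the derivative bounds of the type \eqref{eq:4muchismoothness} enter); and the scale-to-scale stability \eqref{eq:7Ederiss1TD-1}--\eqref{eq:7Ederiss1TD-2} is not a pure telescoping over new sites $|m|>R^{(s)}$ but again requires the resolvent bounds of the previous scale, i.e.\ it is part of the induction, not a consequence of \eqref{eq:diphnores} alone. In short: correct roadmap, but the proof itself is missing.
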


We need to invoke also the following fact from the proof of Theorem~$\tilde C$: the function $E(k)$ in Theorem~$\tilde A$ obeys
\begin{equation}\label{eq:7Ederiss1TD-3}
|E(k) - E(0, \La^{(s+1)}_k; \ve, k)| \le 2 |\ve| (\delta^\es_0)^6;
\end{equation}
see \eqref{eq:7Ederiss1TD-1} in Theorem~$\tilde D$.

\begin{proof}[Proof of part $(1)$ of Theorem~B]
Let $n^\zero \in \mathfrak{T} \setminus \{ 0 \}$ be arbitrary. We assume that $k_{n^\zero} = - \frac{n^\zero \omega}{2} > 0$. The case $k_{n^\zero} = - \frac{n^\zero \omega}{2} < 0$ is similar. Due to part $(III)$ of Theorem~$\tilde D$, we have
\begin{equation}\label{eq:11kk1comp1gapsize}
0 \le E^+(\La^\es; k_{n^\zero}) - E^-(\La^\es; k_{n^\zero}) \le 2 \ve \exp \Big(-\frac{\kappa_0}{2} |n^\zero| \Big),
\end{equation}
where $\La = \La^{(s)}_{k_{n^\zero}}(0)$, $s = s^{(\ell(k_{n^\zero}))}(k_{n^\zero})$, $\ell = \ell(k_{n^\zero})$. Combining this with \eqref{eq:7Ederiss1TD-3}, we obtain
\begin{equation}\label{eq:11kk1comp1gapsizelim}
0 \le E^+(k_{n^\zero}) - E^-(k_{n^\zero}) \le 2 \ve \exp \Big(-\frac{\kappa_0}{2} |n^\zero|^{\alpha_0} \Big),
\end{equation}
as claimed in part $(1)$ of Theorem~$\tilde B$.
\end{proof}

\begin{remark}\label{rem3proofs}
$(1)$ The proof of part $(2)$ of Theorem~$\tilde B$ is practically the same as the proof of part $(2)$ of Theorem~$B$ in
\cite{DG}. The fact that we assume $|c(n)| \le \ve \exp(-\kappa_0 |n|^{\alpha_0})$, rather than $|c(n)| \le \ve \exp(-\kappa_0 |n|)$, does not introduce many changes. This is because the estimation of the weights in Definition~\ref{def:aux1} works for any $\alpha_0$. This fact was verified in \cite{DGL}. Here we just utilize this fact. First we discuss the main contraction principle for the Fourier coefficients $c(m)$, $m \in \mathfrak{T}$. The contraction follows from the eigenvalue equation \eqref{eq:10Eequation0} in Theorem~$\tilde D$. The derivation is done in Lemma~\ref{lem:11.gapinequalities} below. Once the contraction is established, the proof of part $(2)$ in Theorem~$\tilde B$ relies on the elementary theory of the weight functions $s_{D(\cdot);...}$, see Definition~\ref{def:aux1}. We reproduce here the lemmas from the proof of part $(2)$ of Theorem~$B$ to demonstrate explicitly how this weight function theory finishes the proof of Theorem~$\tilde B$.

$(2)$ To derive Lemma~\ref{lem:11.gapinequalities} we introduce in the current presentation the additional Lemmas~\ref{lem:3.symmetry} and \ref{lem:3.smalldenominators} to show the connection between the statements in Section~4 of \cite{DG} and the estimates needed in the proof of Theorem~$\tilde B$. We use equation \eqref{eq:10Eequation0} from part $(III)$ of Theorem~$\tilde D$:
\begin{equation} \label{eq:11Eequation0}
[E - v(0, k_{n^\zero}) - Q^{(s)}(0,\La; \ve, k_{n^\zero}, E)  \mp \big| G^{(s)}(0,n^\zero,\La; \ve, k_{n^\zero}, E) \big|]|_{E = E^\pm(\La; \ve, k_{n^\zero}} = 0.
\end{equation}
Set
\begin{equation} \label{eq:11-10acbasicfunctions-2}
\begin{split}
a_1 & = v(0, k_{n^\zero} + \theta) + Q^{(s)}(0, \La; k_{n^\zero} + \theta, E), \\
a_2 & = v(0, k_{n^\zero} - \theta) + Q^{(s)}(0, \La; k_{n^\zero} - \theta, E), \\
b & = G^{(s)}(0, n_0, \La; k_{n^\zero} + \theta, E),
\end{split}
\end{equation}
$f_i := E - a_i$, $i = 1,2$, $f = f_1 - |b|^2 f_2^{-1}$.  Let for instance $k_{n^\zero} > 0$. Due to \cite[Proposition~6.9]{DG}, $f$ and $f_i$ are continued-fraction-functions: $f \in \mathfrak{F}^{(\ell)}_{\mathfrak{g}^{(\ell - 1)},1} (f_1,f_2,b^2)$, for $\theta > 0$, see \cite[Definition~4.9]{DG}. These functions are rational in terms of the variables $\ve, k, E$. One can get rid of the corresponding denominators to turn them into polynomials. This is done inductively in \cite[Definition~4.9]{DG} by setting
\begin{equation}\label{eq:4a-bfunctions1a}
\begin{split}
\mu^{(f)} & = \begin{cases} \mu^{(f_1)}\mu^{(f_2)}f_2 & \text {if} \quad f = f_1 - \frac{b^2}{f_2}, \\ \mu^{(f_1)} \mu^{(f_2)} f_1 & \text {if} \quad f = f_2 - \frac{b^2}{f_1}, \end{cases} \\
\chi^{(f)} & = \mu^{(f)} f, \\
\tau^{(f)} & = (\chi^{(f_2)}-\chi^{(f_1)}) \tau^{(f_1)}\tau^{(f_2)},
\end{split}
\end{equation}
starting with $\chi^{(f_i)} := f_i$, $\mu^{(f_i)} := 1$, $\tau^{(f_i)} := 1$, $\sigma(f_i) := 1$, $i = 1, 2$ when $f \in \mathfrak{F}^{(1)}_{\mathfrak{g}^\one} (a_1,a_2,b^2)$. Due to the theory of continued-fraction-functions,
\begin{equation}\label{eq:4muchismoothness}
|\partial^\alpha \mu^{(f_j)}|, |\partial^\alpha \chi^{(f_j)}| \le 1, \quad 0 \le |\alpha| \le 2,
\end{equation}
see part $(2)$ of \cite[Lemma~4.11]{DG}. The estimates \eqref{eq:4muchismoothness} allow us to get rid of the small denominators of continued-fraction-functions.

$(3)$ We prove part $(2)$ of Theorem~$\tilde B$ for $\alpha'_0 = \alpha_0$. We do this just to avoid a repetition of the same arguments. One can see that the proof works for any $\alpha_0' \ge \alpha_0$.
\end{remark}

\begin{lemma}\label{lem:3.symmetry}
$(i)$ $f_2(\theta) = f_1(-\theta)$, $\chi^{(f_2)}(\theta) = \chi^{(f_1)}(-\theta)$. \\
$(ii)$ $|\chi^{(f_2)}(\theta) - \chi^{(f_1)}(\theta)| > (\tau^{(f_{1})})^8 |\theta|$.
\end{lemma}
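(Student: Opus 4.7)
The approach is to establish (i) first, essentially by construction, and then deduce (ii) by applying the mean value theorem together with a lower bound on $|\partial_\theta \chi^{(f_1)}|$ coming from the continued-fraction-function theory of \cite{DG}.

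For part (i), the identity $f_2(\theta) = f_1(-\theta)$ is immediate from the definition \eqref{eq:11-10acbasicfunctions-2}, since $a_2(\theta) = a_1(-\theta)$ verbatim. To extend this to $\chi^{(f_2)}(\theta) = \chi^{(f_1)}(-\theta)$, I would induct on the level of the continued-fraction-function via the recursion \eqref{eq:4a-bfunctions1a}: if at a given level the pair of sub-functions $(f^{(r)}_1,f^{(r)}_2)$ gets swapped under $\theta\mapsto -\theta$ and the off-diagonal datum $|b|^2$ is even in $\theta$, then both branches of the recursion produce outputs exchanged under $\theta\mapsto -\theta$, and thus $\chi^{(f^{(r+1)})}$ inherits the symmetry. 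The evenness of $|b|^2 = |G^{(s)}(0,n^\zero,\La;k_{n^\zero}+\theta,E)|^2$ should follow from the symmetries $E(k)=E(-k)$ and $\vp(n;-k)=\overline{\vp(-n;k)}$ recorded in Theorem~$\tilde A$(c) together with the explicit representation of $G^{(s)}$ in terms of the resolvent of $H_{\La \setminus \{0,n^\zero\},\ve,k_{n^\zero}+\theta}$ appearing in \eqref{eq:10-10acbasicfunctions}.

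For part (ii), use (i) to rewrite the difference in the symmetric form
$$
\chi^{(f_2)}(\theta) - \chi^{(f_1)}(\theta) = \chi^{(f_1)}(-\theta) - \chi^{(f_1)}(\theta) = -2\theta \int_0^1 \partial_\theta \chi^{(f_1)}\bigl(\theta(1-2s)\bigr)\, ds,
$$
so the problem reduces to a lower bound $|\partial_\theta \chi^{(f_1)}| \ge \tfrac{1}{2}(\tau^{(f_1)})^8$ in a neighborhood of $\theta=0$. Differentiating the recursion \eqref{eq:4a-bfunctions1a}, the dominant contribution to $\partial_\theta \chi^{(f_1)}$ is $\mu^{(f_1)}\,\partial_\theta f_1$, and
$$
\partial_\theta f_1 = -\partial_\theta v(0, k_{n^\zero}+\theta) - \partial_\theta Q^{(s)}(0,\La;k_{n^\zero}+\theta,E) = -8\pi^2(k_{n^\zero}+\theta) + O(\ve).
$$
Since $k_{n^\zero}\gtrsim |n^\zero|^{-2b_0}$ by Lemma~\ref{lem:10resetdiscr11}, this is bounded away from zero; the remaining powers of $\tau^{(f_1)}$ encode the accumulated small denominators cleared during the inductive formation of $\mu^{(f_1)}$, and combining the smoothness bounds \eqref{eq:4muchismoothness} with the lower bounds on $\mu^{(f_1)}$ derived in the continued-fraction-function theory of \cite[Section~5]{DG} should yield exactly the exponent $8$.

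The main obstacle will be the bookkeeping of how many factors of $\tau^{(f_1)}$ arise from the recursive clearing of denominators in $\mu^{(f_1)}$, and matching this against the contribution coming from the smoothness estimates, so as to recover exactly the exponent $8$ rather than a larger one. I expect this to go through by a direct adaptation of the analogous derivative estimate in Section~4 of \cite{DG}, since the only changes here are the relabeling of the lattice (from $\zv$ to $\mathfrak{Z}(\tilde\omega)$) and the replacement of $|n|$ by $|n|^{\alpha_0}$, neither of which affects the structure of the continued-fraction-function recursion.
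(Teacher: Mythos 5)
Your part (i) is fine and matches the paper: the identity $f_2(\theta)=f_1(-\theta)$ is read off directly from \eqref{eq:11-10acbasicfunctions-2}, and the transfer to $\chi^{(f_2)}(\theta)=\chi^{(f_1)}(-\theta)$ through the recursion \eqref{eq:4a-bfunctions1a} is exactly what the paper means by ``due to the definition.''

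Part (ii) is where there is a genuine gap. The paper does not re-derive the bound; it invokes parts $(8)$ and $(9)$ of Lemma~4.11 in \cite{DG}, which is precisely a statement of the continued-fraction-function theory for a symmetric pair $f_1(\theta)$, $f_2(\theta)=f_1(-\theta)$: for such pairs $|\chi^{(f_2)}(\theta)-\chi^{(f_1)}(\theta)|>(\tau^{(f_1)})^8|\theta|$. Your plan instead tries to reconstruct this estimate by the mean value theorem plus a lower bound on $|\partial_\theta\chi^{(f_1)}|$, and that reconstruction is not carried out and would not go through as sketched. First, $\partial_\theta\chi^{(f_1)}=\mu^{(f_1)}\partial_\theta f_1+f_1\,\partial_\theta\mu^{(f_1)}$, and you discard the second term without justification; at the energies where the lemma is actually used, namely $E=E^\pm(\La;k_{n^\zero})$ in Lemma~\ref{lem:11.gapinequalities}, there is no reason for $f_1\,\partial_\theta\mu^{(f_1)}$ to be dominated by $\mu^{(f_1)}\partial_\theta f_1$, since $\mu^{(f_1)}$ is only bounded \emph{below} by $\tau$-type quantities (Lemma~\ref{lem:3.smalldenominators}) and can be very small, while $f_1$ need not be. Second, \eqref{eq:4muchismoothness} provides only \emph{upper} bounds $|\partial^\alpha\mu^{(f_1)}|,|\partial^\alpha\chi^{(f_1)}|\le 1$, so it cannot supply the lower bound on $|\partial_\theta\chi^{(f_1)}|$ your argument needs, and the exponent $8$ is not something one can expect to recover by generic bookkeeping: it comes out of the specific inductive structure of the proof of \cite[Lemma~4.11]{DG}. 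In effect your proposal amounts to reproving that lemma, with the decisive steps left as hoped-for bookkeeping; the intended (and far shorter) route is simply to combine part (i) with the already established symmetric-pair estimate from \cite{DG}.
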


\begin{proof}
$(i)$ is due to the definition \eqref{eq:11-10acbasicfunctions-2}. $(ii)$ follows from $(i)$ due to parts $(8)$, $(9)$ of \cite[Lemma~4.11]{DG}.
\end{proof}

\begin{lemma}\label{lem:3.smalldenominators}
The following estimates hold,
\begin{equation} \label{eq:3smalldenominators}
\begin{split}
|\tau^{(f_i)}| & > \exp(-C(b_0) (\log [4 + |n^{(0)}|])^3), \\
|\mu^{(f_i)}| & > \exp(-C(b_0) (\log [4 + |n^{(0)}|])^3).
\end{split}
\end{equation}
\end{lemma}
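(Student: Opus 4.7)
The plan is to follow the strategy from \cite{DG} closely. By construction $f \in \mathfrak{F}^{(\ell)}_{\mathfrak{g}^{(\ell-1)},1}(f_1,f_2,b^2)$ with $\ell := \ell(k_{n^{(0)}})$, and the quantities $\tau^{(f_i)}$ and $\mu^{(f_i)}$ are defined recursively by \eqref{eq:4a-bfunctions1a} across the $\ell$ depths of this continued-fraction-function, with both $\tau^{(f_i)}$ and $\mu^{(f_i)}$ initialised to $1$ at the base level. Thus the proof reduces to controlling the multiplicative growth of each recursion as the depth increases.

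The first step is to unroll the recursion: $\tau^{(f_i)}$ at depth $\ell$ is written as a product involving $\chi^{(f_2)} - \chi^{(f_1)}$ and previous $\tau$'s at each intermediate level $r < \ell$, and analogously for $\mu^{(f_i)}$. At each level, Lemma~\ref{lem:3.symmetry}(ii) together with parts $(8)$--$(9)$ of \cite[Lemma~4.11]{DG} provide the estimate $|\chi^{(f_2)} - \chi^{(f_1)}|(\theta_r) \geq (\tau^{(f)}_{r-1})^8 |\theta_r|$, where $\theta_r$ is the splitting parameter at depth $r$. The Diophantine condition \eqref{eq:7-5-8latticeR}, applied in the form used in Lemma~\ref{lem:10resetdiscr11}(1), yields $|\theta_r| \geq c(a_0,b_0)(1+|n^{(r)}|)^{-2b_0}$; a parallel lower bound for $|f_j|$ at the relevant evaluation point (which is again of the same Diophantine-polynomial form, via the definition \eqref{eq:11-10acbasicfunctions-2} of $f_i$) drives the $\mu$-recursion. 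The second step is to invoke the rapid growth $\log|n^{(r+1)}| \geq \beta_1(\log|n^{(r)}|)^2$ from Lemma~\ref{lem:10resetdiscr11}(3). This controls both the depth $\ell$ in terms of $\log\log$ of the largest involved scale and, crucially, shows that the telescoping product one picks up when iterating the Diophantine lower bounds is polylogarithmic in $|n^{(0)}|$ with exponent at most $3$. Combining the two estimates gives $-\log|\tau^{(f_i)}|, -\log|\mu^{(f_i)}| \leq C(b_0)(\log[4+|n^{(0)}|])^3$.

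The main obstacle is the eighth-power amplification $(\tau^{(f)}_{r-1})^8$ that appears at every recursion step: applied naively over $\ell$ depths it would produce a geometric blow-up $8^\ell$ of the exponent and destroy the polylogarithmic bound. The bookkeeping that makes the argument succeed, and which is the same one used in the corresponding proof in \cite{DG}, is to pair each amplification factor with the super-exponential spacing of the scales $|n^{(r)}|$ from Lemma~\ref{lem:10resetdiscr11}(3). Because consecutive scales satisfy a squaring-type growth, the eighth-power amplification is absorbed into a constant loss in the tower of logarithms, and what remains is a cubic polynomial in $\log(4+|n^{(0)}|)$. As emphasized in Remark~\ref{rem3proofs}(1), the argument transplants from \cite{DG} without any essential modification: the weight-function machinery of Definition~\ref{def:aux1} was verified in \cite{DGL} to be uniform over $\alpha_0 \in [1/2,1]$, so no new estimate is required for the sub-exponential regime.
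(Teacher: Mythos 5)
Your treatment of the $\tau^{(f_i)}$ bound is essentially the paper's: one unrolls the recursion \eqref{eq:4a-bfunctions1a}, applies Lemma~\ref{lem:3.symmetry}(ii) at each depth with $\theta$ equal to the increment $k_{n^{(\ell-1)}}-k_{n^{(\ell)}}$, lower-bounds these increments by the Diophantine estimate from Lemma~\ref{lem:10resetdiscr11}, and uses the super-exponential growth of the resonant scales to control the depth. The paper's bookkeeping, however, is not the one you describe: its induction keeps a \emph{fixed} exponent per level, concluding $|\tau^{(f_i)}| > \prod_{\ell\le\bar\ell}|k_{n^{(\ell-1)}}-k_{n^{(\ell)}}|^{16} > \prod_{\ell\le\bar\ell}|n^{(\ell)}|^{-32b_0}$, which together with $\bar\ell < \log(4+|n^{(0)}|)$ already gives the stated bound (in fact with a squared logarithm). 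Your alternative accounting --- letting the eighth power compound and absorbing the resulting geometric factor into the doubly exponential scale growth --- is not obviously sufficient as stated: each step amplifies by a factor of roughly $9$--$10$ (since $\tau^{(f)}\gtrsim(\tau^{(f_1)})^{9}\tau^{(f_2)}|\theta|$), and with $\bar\ell\sim\log_2\log_2|n^{(0)}|$ this produces $(\log|n^{(0)}|)^{\log_2 10}$, which slightly exceeds the cubic exponent; you would need the sharper per-level structure (i.e.\ the fact that the new increment $|\theta_r|$ is itself super-exponentially small relative to the lower scales, so the induction closes with a fixed exponent) to land inside \eqref{eq:3smalldenominators}.

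The genuine gap is in your $\mu^{(f_i)}$ estimate. You propose to drive the $\mu$-recursion by a ``parallel lower bound for $|f_j|$ at the relevant evaluation point,'' claimed to be of Diophantine-polynomial form ``via the definition \eqref{eq:11-10acbasicfunctions-2}.'' That claim is unjustified and, as a blanket statement, false: at the energies where these functions are evaluated, the resonant branch is exponentially small --- indeed the defining equation \eqref{eq:11Eequation0} forces $f_1\big|_{E=E^\pm}=\mp|b|$, with $|b|$ of order $\exp(-\tfrac{\kappa_0}{2}|n^{(0)}|)$ --- so no polynomial lower bound holds for an arbitrary $f_j$; only the non-resonant factor appearing in $\mu^{(f)}=\mu^{(f_1)}\mu^{(f_2)}f_2$ enjoys a level-spacing lower bound, and proving that is precisely the content of the continued-fraction machinery of \cite{DG}. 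The paper avoids this entirely: it quotes part $(4)$ of \cite[Lemma~4.11]{DG}, namely $|\mu^{(f_i)}|\ge 2^{-2^{\bar\ell-1}+1}\tau^{(f_i)}$, and then the $\mu$ bound follows from the $\tau$ bound once $2^{2^{\bar\ell}}$ is controlled using the bound on the number of resonant scales. Without either that lemma or a proof of the lower bound on the non-resonant branch, your argument for the second inequality in \eqref{eq:3smalldenominators} does not go through.
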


\begin{proof}
Let $n^\zero \neq 0$ and let $n^{(\ell')} := n^{(\ell')}(k_{n^\zero})$, $\ell' = 0, \dots, \ell = \ell(k_{n^\zero})$
be as in Definition~\ref{defi:7slm}. Recall first of all that $n^{(\ell(k_{n^\zero}))} = n^\zero$, see Definition~\ref{defi:7slm} and Lemma~\ref{lem:10resetdiscr11}. Assume for instance that $\bar \ell := \ell(k_{n^\zero})>1$, so that $n^{(\bar \ell - 1)}$ exists. Then due to \cite[Definition~4.9]{DG}, we have $f_i \in \mathfrak{F}^{(\bar \ell - 1)}_{\mathfrak{g}^{(\ell - 1)}} (f_{i,1}, f_{i,2}, b_i^2)$, $i = 1,2$. Here
$f_{i,1}, f_{i,2}, b_i^2$ are defined similarly to $f_1, f_2, b$ in \eqref{eq:11-10acbasicfunctions-2} with $k_{n^{(\bar \ell - 1)}}$ in the role of $k_{n^\zero}$ and $k_{n^{(\bar\ell-1)}} - k_{n^\zero}$ in the role of $\theta$. Due to Lemma~\ref{lem:3.symmetry} we obtain $|\chi^{(f_{i,2})} - \chi^{(f_{i,1})}| > (\tau^{(f_{i,1})})^8 |k_{n^{(\bar \ell - 1)}} - k_{n^\zero}|$. Due to Lemma~\ref{lem:10resetdiscr11}, $|k_{n^{(\bar \ell - 1)}} - k_{n^\zero}| > (2 |n^\zero|)^{-2b_0}$. Similar estimates hold for $f_{i,j}$ in the role $f_i$, etc. Using induction we conclude that
\begin{equation} \label{eq:3smallden1}
|\tau^{(f_i)}| > \prod_{\ell \le \bar \ell} |k_{n^{(\ell - 1)}} - k_{n^{(\ell)}}|^{16} > \prod_{\ell \le \bar \ell} |n^{(\ell)}|^{-32 b_0}.
\end{equation}
Once again, due to Lemma~\ref{lem:10resetdiscr11}, $|n^{(\ell - 1)}| < \exp ((\log |n^{(\ell)}|)^{2/3})$, $\bar \ell < \log (4 + |n^\zero|)$. Substituting this in \eqref{eq:3smallden1}, we obtain the first estimate in \eqref{eq:3smalldenominators}. Due to part $(4)$ of \cite[Lemma~4.11]{DG}, we have $|\mu^{(f_i)}| \ge 2^{-2^{\ell - 1} + 1} \tau^{(f_{i})}$. Therefore the second estimate in \eqref{eq:3smalldenominators} follows from the first one.
\end{proof}

\begin{lemma}\label{lem:11.gapinequalities}
Using the notation from the proof of part $(1)$ of Theorem~$\tilde B$, for any $n^\zero$, the Fourier coefficient $c(n^\zero)$ obeys the following estimate,
\begin{equation} \label{eq:11gapsineqstatement}
\begin{split}
|c(n^\zero)| & \le \exp(C(b_0)(\log [4+|n^{(0)}|])^3) (E^+(\La;k_{n^\zero}) - E^-(\La;k_{n^\zero})) \\
& \quad + \sum_{m', n' \in \La \setminus \{ 0,n^\zero \}} |c(m')| s_{D(\cdot;\La \setminus \{ 0, n^\zero \}), T, \kappa_0, \ve; \La \setminus \{0,n^\zero\}, \mathfrak{R}}(m',n') |c(n'- n^\zero)|.
\end{split}
\end{equation}
\end{lemma}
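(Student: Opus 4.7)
The plan is to isolate $c(n^\zero)$ as the leading term of the ``off-diagonal'' quantity $G^{(s)}(0,n^\zero,\La;\ve,k_{n^\zero},E)$ from \eqref{eq:10-10acbasicfunctions}, and then to bound $|G^{(s)}|$ at the gap edges $E^\pm$ by the gap width $(E^+-E^-)$ with a polylogarithmically controlled constant. The equations \eqref{eq:11Eequation0} read
\begin{align*}
E^+ - v(0,k_{n^\zero}) - Q^{(s)}(0,\La;\ve,k_{n^\zero},E^+) & = \phantom{-}|G^{(s)}(0,n^\zero,\La;\ve,k_{n^\zero},E^+)|, \\
E^- - v(0,k_{n^\zero}) - Q^{(s)}(0,\La;\ve,k_{n^\zero},E^-) & = -|G^{(s)}(0,n^\zero,\La;\ve,k_{n^\zero},E^-)|.
\end{align*}
Subtracting and invoking the mean value theorem for $Q^{(s)}$ in the variable $E$ gives the preliminary bound
$$|G^{(s)}(E^+)| + |G^{(s)}(E^-)| \le (E^+ - E^-)(1 + \sup |\partial_E Q^{(s)}|).$$

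Next I would control $\sup |\partial_E Q^{(s)}|$ by $\exp(C(b_0)(\log[4+|n^{(0)}|])^3)$. This is the technical heart of the argument. By Remark~\ref{rem3proofs}, both $Q^{(s)}$ and $|G^{(s)}|^2$ are built from the continued-fraction-functions $f, f_i, b$ of \eqref{eq:11-10acbasicfunctions-2}. The smoothness estimate \eqref{eq:4muchismoothness} controls the first two derivatives of the polynomial counterparts $\chi^{(f_i)}, \mu^{(f_i)}$, while the lower bounds $|\mu^{(f_i)}|, |\tau^{(f_i)}| \ge \exp(-C(b_0)(\log[4+|n^{(0)}|])^3)$ from Lemma~\ref{lem:3.smalldenominators} absorb the small denominators produced by differentiating $f = f_1 - b^2/f_2$ in $E$. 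Combining these inputs with the previous step yields
$$|G^{(s)}(0,n^\zero,\La;\ve,k_{n^\zero},E^\pm)| \le \exp(C(b_0)(\log[4+|n^{(0)}|])^3)\,(E^+ - E^-).$$

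Finally, I would extract $c(n^\zero)$ from $G^{(s)}$ algebraically. By \eqref{eq:2-7} and \eqref{eq:PALVmodomega}, the first term of the expression \eqref{eq:10-10acbasicfunctions} for $G^{(s)}$ equals $c(n^\zero)$, so
$$c(n^\zero) = G^{(s)}(0,n^\zero,\La;\ve,k_{n^\zero},E^\pm) - \sum_{m',n' \in \La \setminus \{0,n^\zero\}} h(0,m')\,[(E-H_{\La\setminus\{0,n^\zero\},\ve,k_{n^\zero}})^{-1}](m',n')\, h(n',n^\zero).$$
Taking absolute values, applying the off-diagonal bound \eqref{eq:10Hinvestimatestatement1PQ} to the Green's function, using $|h(0,m')| = |c(m')|$ and $|h(n',n^\zero)| = |c(n^\zero-n')| = |c(n'-n^\zero)|$, and finally inserting the bound on $|G^{(s)}|$ from the previous paragraph yields exactly \eqref{eq:11gapsineqstatement}. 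The main obstacle is the middle step: the derivative bound on $Q^{(s)}$ must grow no faster than $\exp((\log|n^\zero|)^3)$, which is far slower than the polynomial-in-$|n^\zero|$ bound one would obtain from a crude application of Cramer's rule; the layered continued-fraction-function factorization of \cite{DG} is designed precisely for this purpose, exploiting the Diophantine separation of resonances (cf.\ Lemma~\ref{lem:10resetdiscr11}) to prevent degeneracy of the denominators $|\tau^{(f_i)}|$.
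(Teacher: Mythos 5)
Your proposal is correct and is essentially the paper's own argument: the paper likewise starts from the eigenvalue equations \eqref{eq:11Eequation0} at $E^\pm(\La;k_{n^\zero})$, uses the continued-fraction-function smoothness bounds \eqref{eq:4muchismoothness} together with the small-denominator lower bounds of Lemma~\ref{lem:3.smalldenominators} to obtain $|G^{(s)}|\le \exp(C(b_0)(\log[4+|n^{(0)}|])^3)\,(E^+-E^-)$ at the gap edge, and then extracts $c(n^\zero)$ from the expansion \eqref{eq:11-10acbasicfunctionsBineq} using the resolvent bound \eqref{eq:10Hinvestimatestatement1PQ}, exactly as you do. The only cosmetic difference is in the middle step: the paper clears denominators before differencing, i.e.\ multiplies by $\prod_\pm|\mu^{(f_1)}(E^\pm)|$ and applies the Lipschitz bounds to $\mu^{(f_1)}$ and $\chi^{(f_1)}$ (so the lower bound of Lemma~\ref{lem:3.smalldenominators} is needed only at the two edges), whereas your mean-value-theorem bound on $\partial_E Q^{(s)}$ invokes the same lower bound at an interior point of the gap --- harmless, since those bounds are uniform over the relevant domain.
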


\begin{proof}
Equation \eqref{eq:11Eequation0} implies the following inequality,
\begin{equation} \label{eq.11Eestimates1APFINALB1}
|b(k_{n^\zero}, E)| |_{E = E^+(\La;k_{n^\zero})} \le |f_1 (k_{n^\zero}, E) |_{E = E^+(\La;k_{n^\zero})} - f_1 (k_{n^\zero}, E) |_{E = E^-(\La;k_{n^\zero})}|.
\end{equation}
Now we get rid of small denominators in the continued-fraction-functions $f_i$:
\begin{equation} \label{eq.11Eestimatesnodenominators}
\begin{split}
|b(k_{n^\zero}, E)| & |_{E = E^+(\La;k_{n^\zero})}\prod_\pm |\mu^{(f_1)}(k_{n^\zero}, E) |_{E = E^\pm(\La;k_{n^\zero})}| \\
& \le \Big|\mu^{(f_1)}(k_{n^\zero}, E) |_{E = E^-(\La;k_{n^\zero})}\chi^{(f_1)}(k_{n^\zero}, E) |_{E = E^+(\La;k_{n^\zero})} \\
& \quad - \mu^{(f_1)}(k_{n^\zero}, E) |_{E = E^+(\La;k_{n^\zero})}\chi^{(f_1)}(k_{n^\zero}, E) |_{E = E^-(\La;k_{n^\zero})}\Big|.
\end{split}
\end{equation}
Next we invoke the estimates \eqref{eq:4muchismoothness},
\begin{equation} \label{eq.11Eestimates1APFINALB4-1}
|b(k_{n^\zero}, E)| |_{E = E^+(\La;k_{n^\zero})}\prod_\pm |\mu^{(f_1)}(k_{n^\zero}, E) |_{E = E^\pm(\La;k_{n^\zero})}| \le 2 (E^+(\La;k_{n^\zero}) - E^-(\La; k_{n^\zero})).
\end{equation}
Due to Lemma~\ref{lem:3.smalldenominators} this implies
\begin{equation} \label{eq.11Eestimates1APFINALB4-2}
|b(k_{n^\zero}, E)| |_{E = E^+(\La;k_{n^\zero})} \le \exp(C(b_0) (\log [4 + |n^{(0)}|])^3) (E^+(\La;k_{n^\zero}) - E^-(\La; k_{n^\zero})).
\end{equation}
Due to the definitions in \eqref{eq:10-10acbasicfunctions}, \eqref{eq:11-10acbasicfunctions-2},
\begin{equation} \label{eq:11-10acbasicfunctionsBineq}
b(k_{n^\zero},E) = c(n^\zero) + \sum_{m', n' \in \La \setminus \{0,n^\zero\}} c(m') [(E - H_{\La \setminus \{ 0, n^\zero \}, k_{n^\zero}})^{-1}] (m',n') c(n'- n^\zero),
\end{equation}
and due to \eqref{eq:10Hinvestimatestatement1PQ},
\begin{equation}\label{eq:11Hinvestimatestatement1PQ}
|[(E - H_{\La \setminus \{0,n^\zero\},k_{n^\zero}})^{-1}](m,n)| \le s_{D(\cdot; \La \setminus \{0,n^\zero\}), T, \kappa_0, \ve; \La \setminus \{0,n^\zero\}, \mathfrak{R}}(m,n).
\end{equation}
Therefore, \eqref{eq:11gapsineqstatement} follows from \eqref{eq.11Eestimates1APFINALB4-2}.
\end{proof}

It is convenient to introduce the following notation: $\La := \La^\es$, $\La'(n^\zero) = \La \setminus \{ 0, n^\zero \}$, $\La(n^\zero) = \mathfrak{T} \setminus \{0,n^\zero\}$,
\begin{equation} \label{eq:11-10snotation}
s(n^\zero;m',n') = \begin{cases} s_{D(\cdot; \La(n^\zero)), T, \kappa_0, \ve; \La(n^\zero), \mathfrak{R}}(m',n') & \text{if $m', n' \in \La'(n^\zero)$}, \\ 0 & \text{if $m' \in \La(n^\zero) \setminus \La'(n^\zero)$, or $n' \in \La(n^\zero) \setminus \La'(n^\zero)$, or both.}
\end{cases}
\end{equation}

Let us recall the main properties of the sum $s(n^\zero; m', n')$ from \cite[Section~2]{DG}.

\begin{lemma}\label{lem:11.sumsproperties}
Let $s(n^\zero; m', n')$ be as in \eqref{eq:11-10snotation}.

$(1)$
\begin{equation}\label{eq:11auxtrajectweightO}
\begin{split}
s(n^\zero;m,n) & \le \sum_{\gamma \in \Ga_{n^\zero}(m, n)} w_{n^\zero} (\gamma), \\
w_{n^\zero} (\gamma) & = \Big[ \prod w(n_j,n_{j+1}) \Big] \exp \Big( \sum_{1 \le j \le k} D_{n^\zero}(n_j) \Big).
\end{split}
\end{equation}
Here $w(m,n) := |c(n-m)|$, $\Ga_{n^\zero}(m, n)$ stands for a set of trajectories $\gamma = (n_1,\dots,n_k)$, $k := k(\gamma) \ge 1$, $n_j \in \La(n^\zero)$, $n_1 = m$, $n_k = n$, $n_{j+1} \neq n_j$, $D_{n^\zero}(x) > 0$, $x \in \zv \setminus \{0,n^\zero\}$. Moreover, the following conditions hold:

$(i)$ $D_{n^\zero}(x) \le T \mu_{n^\zero}(x)^{1/5}$ for any $x$ such that $D_{n^\zero}(x) \ge 4 T \kappa_0^{-1}$, where $\mu_{n^\zero}(x) = \min (|x|,|x-n^\zero|)$, $T = 4 \kappa_0 \log \delta_0^{-1}$,

$(ii)$
\begin{equation}\label{eq:11auxtrajectweight5NNNNN}
\begin{split}
& \min (D_{n^\zero}(n_{i}), D_{n^\zero}(n_{j})) \le T \| (n_{i}, \dots, n_{j}) \|^{1/5} \\
& \text{for any $i < j$ such that $\min (D_{n^\zero}(n_{i}), D_{n^\zero}(n_{j})) \ge 4 T \kappa_0^{-1}$}, \quad \text{unless $j = i + 1$}.
\end{split}
\end{equation}
Moreover,
\begin{equation}\label{eq:11auxtrajectweight5NNNNN1}
\begin{split}
& \text{if $\min (D_{n^\zero}(n_{i}), D_{n^\zero}(n_{i+1})) > T |(n_{i} - n_{i+1})|^{1/5}$ for some $i$, then} \\
& \min (D_{n^\zero}(n_{j'}), D_{n^\zero}(n_{i})) \le T \| (n_{j'}, \dots, n_{i}) \|^{1/5}, \\
& \min (D_{n^\zero}(n_{i}), D_{n^\zero}(n_{j''})) \le T \| (n_{i}, \dots, n_{j''}) \|^{1/5}, \\
& \min (D_{n^\zero}(n_{j'}), D_{n^\zero}(n_{i+1})) \le T \|(n_{j'}, \dots, n_{i+1}) \|^{1/5}, \\
& \min (D_{n^\zero}(n_{i+1}), D_{n^\zero}(n_{j''})) \le T \| (n_{i+1}, \dots, n_{j''}) \|^{1/5}, \\
& \text{for any $j' < i < i+1 < j''$.}
\end{split}
\end{equation}

$(2)$ Assume that for all $n \in \mathfrak{T}$, we have $|c(n)| \le \tilde \ve \exp (- \tilde \kappa |n|)$ with $\tilde \ve \le \ve_0$, $\tilde \kappa \ge \kappa_0$. Let $\gamma = (n_1, \dots, n_{k}) \in \Ga_{n^\zero} := \bigcup_{m,n} \Ga_{n^\zero}(m, n)$. Set $M = 4 T \kappa_0^{-1}$, $\bar D(\gamma) = \max_j D(n_j)$, $t_D(\gamma) := \frac{\log \bar D(\gamma)}{\log M}$, $\vartheta_t = \sum_{0 < s \le t} 2^{-5s}$. Then,
\begin{equation}\label{eq:11auxtrajectweight2}
w_{n^\zero}(\gamma) \le \begin{cases}  \tilde \ve^{k(\gamma)-1} \exp (- \tilde \kappa \| \gamma \| + k (\gamma) M^5) & \text{if $t_D(\gamma) \le 5$}, \\
\tilde \ve^{k(\gamma)-1} \exp (- \tilde \kappa (1 - \vartheta_{t_D(\gamma)+1}) \| \gamma \| + 2 \bar D(\gamma)) & \text{if $t_D(\gamma) > 5$}. \end{cases}
\end{equation}
Furthermore, $\bar D(\gamma) \le 2 T [\min (|n_1|,|n^\zero-n_k|)^{1/5} + \| \gamma \|^{1/5}]$.
\end{lemma}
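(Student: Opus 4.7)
The plan is to prove the two parts separately. Part (1) is essentially a translation exercise between the notation of Definition~\ref{def:aux1} and the notation used in the subsequent arguments, while part (2) is the combinatorial core that was established in \cite[Section~2]{DG} and whose proof I would transcribe with only cosmetic adjustments.

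For part (1), I would unfold the definitions. The sum $s(n^\zero;m',n')$ in \eqref{eq:11-10snotation} is, by construction, the sum $s_{D(\cdot;\La(n^\zero)),T,\kappa_0,\ve;\La(n^\zero),\mathfrak{R}}(m',n')$ from Definition~\ref{def:aux1}(6). The existence of the weight function $D_{n^\zero}:=D(\cdot;\La(n^\zero))\in\mathcal{G}_{\La,T,\kappa_0}$ and the bound \eqref{eq:11Hinvestimatestatement1PQ} are supplied by Theorem~$\tilde D$(III)(1). Spelling out Definition~\ref{def:aux1}(4)(6) gives the representation
$s(n^\zero;m,n)\le\sum_{\gamma\in\Ga_{n^\zero}(m,n)}w_{n^\zero}(\gamma)$ with the factor $\prod w(n_j,n_{j+1})\exp(\sum D_{n^\zero}(n_j))$, matching \eqref{eq:11auxtrajectweightO}. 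Condition (i) of the lemma is precisely the membership $D_{n^\zero}\in\mathcal{G}_{\La(n^\zero),T,\kappa_0}$ from \eqref{eq:auxDcond}, once one notes that $\mu_{\La(n^\zero)}(x)=\min(|x|,|x-n^\zero|)$ (this identification uses that $\La(n^\zero)=\mathfrak{T}\setminus\{0,n^\zero\}$, so that the complementary set is exactly $\{0,n^\zero\}$). Condition (ii) with its exception at $j=i+1$ is a verbatim copy of \eqref{eq:auxtrajectweight5NNNNN}--\eqref{eq:auxtrajectweight5NNNNN1} from Definition~\ref{def:aux1}(6).

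For part (2), I would start from
$$w_{n^\zero}(\gamma)=\Big[\prod_{j} w(n_j,n_{j+1})\Big]\exp\Big(\sum_j D_{n^\zero}(n_j)\Big)\le \tilde\ve^{\,k(\gamma)-1}\exp\Big(-\tilde\kappa\|\gamma\|+\sum_j D_{n^\zero}(n_j)\Big),$$
using $w(n_j,n_{j+1})=|c(n_{j+1}-n_j)|\le\tilde\ve\exp(-\tilde\kappa|n_{j+1}-n_j|)$. The task then reduces to estimating $\sum_j D_{n^\zero}(n_j)$. In the low range $t_D(\gamma)\le 5$ one has $\bar D(\gamma)\le M^5$ for every vertex, yielding $\sum_j D_{n^\zero}(n_j)\le k(\gamma)M^5$, which is the first case of \eqref{eq:11auxtrajectweight2}. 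In the range $t_D(\gamma)>5$ I would select a peak vertex $n_{j^*}$ with $D_{n^\zero}(n_{j^*})=\bar D(\gamma)$ and partition the vertices into dyadic levels $L_t=\{j: D_{n^\zero}(n_j)\in[M^{t-1},M^t)\}$. Condition (ii) forces that, apart from at most one neighbor of $j^*$, every $j\in L_t$ with $t\ge 1$ satisfies $\|(n_j,\dots,n_{j^*})\|\ge (D_{n^\zero}(n_j)/T)^5\ge M^{5(t-1)}/T^5$, so the subtrajectory lengths at level $t$ grow geometrically. Summing over levels geometrically, the total $\sum_j D_{n^\zero}(n_j)$ splits as $2\bar D(\gamma)$ (the peak plus its one anomalous neighbor) plus a fraction $\tilde\kappa\vartheta_{t_D+1}\|\gamma\|$ of the length-factor, which is the role played by the series $\vartheta_t=\sum_{0<s\le t}2^{-5s}$. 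Rearranging produces the second bound of \eqref{eq:11auxtrajectweight2}.

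The last inequality $\bar D(\gamma)\le 2T[\min(|n_1|,|n^\zero-n_k|)^{1/5}+\|\gamma\|^{1/5}]$ is then a direct consequence of condition (i) applied to the peak vertex, since $\mu_{n^\zero}(n_{j^*})\le\min(|n_{j^*}|,|n^\zero-n_{j^*}|)\le\min(|n_1|,|n^\zero-n_k|)+\|\gamma\|$ (because one can travel along $\gamma$ from either endpoint to $n_{j^*}$), and $(a+b)^{1/5}\le a^{1/5}+b^{1/5}$. The main obstacle is the dyadic bookkeeping in part (2), and in particular ensuring that the exception allowing $j=i+1$ in (ii) contributes only the extra factor of $2$ in front of $\bar D(\gamma)$; this is precisely the point where the argument from \cite[Section~2]{DG} must be imported carefully, but since the structure of $D_{n^\zero}$ and $\Ga_{n^\zero}$ here mirrors the structure in \cite{DG} up to the relabeling $\zv\leftrightarrow\mathfrak{T}$, the estimates transfer without modification.
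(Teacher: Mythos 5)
The first thing to note is that the paper supplies no proof of this lemma at all: it is introduced by the sentence ``Let us recall the main properties of the sum $s(n^\zero;m',n')$ from \cite[Section~2]{DG}'', so the intended justification is citation of \cite{DG} (with $\mathfrak{Z}(\tilde\omega)$ in place of $\zv$, which is legitimized by \cite{DGL}). Measured against that, your treatment of part $(1)$ is exactly the intended reading: $s(n^\zero;\cdot,\cdot)$ in \eqref{eq:11-10snotation} is the sum $s_{D,T,\kappa_0;\La,\mathfrak{R}}$ of Definition~\ref{def:aux1}$(6)$ with $D=D(\cdot,\La^\es_{k_{n^\zero}})\in\mathcal{G}_{\La,T,\kappa_0}$ furnished by part $(III)$ of Theorem~$\tilde D$, condition $(i)$ is \eqref{eq:auxDcond} with $\mu_{\La(n^\zero)}(x)=\min(|x|,|x-n^\zero|)$, and condition $(ii)$ is \eqref{eq:auxtrajectweight5NNNNN}--\eqref{eq:auxtrajectweight5NNNNN1}. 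Likewise, your final deferral of part $(2)$ to \cite[Section~2]{DG} is precisely what the authors do.

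Where your proposal falls short is in the sketch you offer for part $(2)$ if it is meant to stand on its own. Your dyadic-level argument invokes the separation condition \eqref{eq:11auxtrajectweight5NNNNN} only between each vertex and the single peak vertex $n_{j^*}$. That controls how far each large-$D$ vertex sits from the peak, but it does not bound how many vertices occupy a given level $L_t$, nor their total $D$-mass, which is what is actually needed to show $\sum_j D_{n^\zero}(n_j)\le \tilde\kappa\,\vartheta_{t_D(\gamma)+1}\|\gamma\|+2\bar D(\gamma)$ with the specific weights $2^{-5s}$; for that one must apply the separation condition pairwise within each level (equivalently, run the inductive splitting of $\gamma$ at its peak, which is how the estimate is organized in \cite{DG}), and the adjacent-pair exception $j=i+1$ is handled not by ``one anomalous neighbor of $j^*$'' but by the second clause \eqref{eq:11auxtrajectweight5NNNNN1}. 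A smaller but genuine gap: your derivation of the ``furthermore'' bound $\bar D(\gamma)\le 2T[\min(|n_1|,|n^\zero-n_k|)^{1/5}+\|\gamma\|^{1/5}]$ uses condition $(i)$ at the peak, but $(i)$ is vacuous when $\bar D(\gamma)<4T\kappa_0^{-1}$, and since $\kappa_0\le 1$ this threshold exceeds $2T$, so the claimed inequality is not a consequence of $(i)$ alone in that regime; it comes with the construction of $D$ in \cite{DG, DGL}. As a citation-plus-gloss your proposal is aligned with the paper; as an independent proof of part $(2)$ it is not yet complete.
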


In the next lemma we establish an estimate similar to \eqref{eq:11auxtrajectweight2} under a slightly weaker condition on $|c(n)|$, and also an estimate for the sum of such terms.

\begin{lemma}\label{lem:11scaleddecayestimate}
Let $0 < \tilde \ve \le \ve_0$, $\tilde \kappa \ge 2 \kappa_0$, $R_1 \ge 2^{30} (\kappa_0^{-1}T)^2$. Set $R_t = 5 R_{t-1}/4$, $\rho_{t-1} = 2^{-10} t^{-2}$, $t = 2,\dots$, $\sigma_t = \sum_{1 \le \ell \le t} \rho_\ell$. Assume
\begin{equation} \label{eq:11scaledconditionaldecay}
|c(p)| \le \begin{cases} \tilde \ve \exp ( - \tilde \kappa |p|) & \text{if $ 0 < |p| \le R_2$}, \\ \tilde \ve \exp ( - \frac{15}{16} (1 - \sigma_{3t}) \tilde \kappa |p|) & \text{if $R_{t-1} < |p| \le R_t$, $3 \le t \le t_0$.}\end{cases}
\end{equation}
For $t \ge 1$, let $\Gamma^{(t)}_{n^\zero}$ be the set of trajectories $\gamma = (n_1, \dots, n_{k}) \in \Ga_{n^\zero}$ with $\| \gamma \| \le 2 R_t$ and $\max_j |n_{j+1} - n_j| \le R_{t+1}$. Then, for any $\gamma \in \Gamma^{(t)}_{n^\zero}$ with $t \le t_0-1$, we have
\begin{equation} \label{eq:11gamscaledconditionaldecay}
w_{n^\zero}(\gamma) \le \tilde \ve^{k(\gamma)-1} \exp \Big( -\frac{15}{16} (1 - \sigma_{3t+4}) \tilde \kappa \| \gamma \| + 2 \bar D(\gamma) + k(\gamma)M \Big).
\end{equation}
Furthermore,
\begin{equation} \label{eq:11gamscaledconditionaldecaysum1statement}
\sum_{\gamma \in \Ga_{n^\zero}(m, n) : k(\gamma) \ge 2, \| \gamma \| \le R_{t_0}} w_{n^\zero}(\gamma) \le \exp(- 2 T (\min (|m|,|n^\zero-n|)^{1/5}) \exp \Big( -\frac{15}{16} (1 - \sigma_{3t_0+2}) \tilde \kappa |n-m| \Big).
\end{equation}
\end{lemma}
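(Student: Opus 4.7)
The argument closely parallels the proof of Lemma~\ref{lem:11.sumsproperties}(2), with the additional bookkeeping needed to track the scale-dependent decay in \eqref{eq:11scaledconditionaldecay}.

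For part (1), I would classify each edge $(n_j, n_{j+1})$ of $\gamma = (n_1, \dots, n_k) \in \Gamma^{(t)}_{n^\zero}$ by the unique index $t_j$ with $R_{t_j-1} < |n_{j+1}-n_j| \le R_{t_j}$ (convention $R_0 := 0$). Since $\gamma \in \Gamma^{(t)}_{n^\zero}$ forces $\max_j |n_{j+1}-n_j| \le R_{t+1}$, all $t_j$ lie in $\{1, \dots, t+1\}$, so $\sigma_{3t_j} \le \sigma_{3(t+1)} \le \sigma_{3t+4}$. The hypothesis \eqref{eq:11scaledconditionaldecay} then gives, uniformly in $j$,
\[
|c(n_{j+1} - n_j)| \le \tilde \ve\exp\bigl(-\tfrac{15}{16}(1-\sigma_{3t+4})\tilde \kappa|n_{j+1}-n_j|\bigr),
\]
so multiplying and using $|p| \ge |p|^{\alpha_0}$ (valid since each step satisfies $|p| \ge 1$) yields
\[
\prod_{j=1}^{k-1} w(n_j, n_{j+1}) \le \tilde \ve^{k-1}\exp\bigl(-\tfrac{15}{16}(1-\sigma_{3t+4})\tilde \kappa\|\gamma\|\bigr).
\]
The $\exp\bigl(\sum_j D_{n^\zero}(n_j)\bigr)$ factor is estimated by the same two-regime analysis used to prove \eqref{eq:11auxtrajectweight2}: conditions \eqref{eq:11auxtrajectweight5NNNNN}--\eqref{eq:11auxtrajectweight5NNNNN1} limit the number of vertices at which $D(n_j)$ exceeds $M$, and those few vertices contribute at most $\bar D(\gamma)$ each; the remaining vertices contribute at most $M$. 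Summing yields $\sum_j D(n_j) \le 2\bar D(\gamma) + k(\gamma)M$, which combined with the edge-weight bound gives \eqref{eq:11gamscaledconditionaldecay}.

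For part (2), the constraint $\|\gamma\| \le R_{t_0}$ forces $\max_j|n_{j+1}-n_j| \le R_{t_0}$, placing $\gamma \in \Gamma^{(t_0-1)}_{n^\zero}$. Applying part (1) with $t = t_0-1$ gives the pointwise bound
\[
w_{n^\zero}(\gamma) \le \tilde \ve^{k-1}\exp\bigl(-\tfrac{15}{16}(1-\sigma_{3t_0+1})\tilde \kappa\|\gamma\| + 2\bar D(\gamma) + k(\gamma)M\bigr).
\]
I split the main decay as
\[
\tfrac{15}{16}(1-\sigma_{3t_0+1})\tilde \kappa\|\gamma\| = \tfrac{15}{16}(1-\sigma_{3t_0+2})\tilde \kappa\|\gamma\| + \tfrac{15}{16}\rho_{3t_0+2}\tilde \kappa\|\gamma\|,
\]
extract the chord-distance factor $\exp\bigl(-\tfrac{15}{16}(1-\sigma_{3t_0+2})\tilde \kappa|n-m|^{\alpha_0}\bigr)$ via the triangle inequality $\|\gamma\| \ge |n-m|^{\alpha_0}$ from the first piece, and reserve the slack $\tfrac{15}{16}\rho_{3t_0+2}\tilde \kappa\|\gamma\|$ for the combinatorial summation. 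For fixed $k \ge 2$, the sum over intermediate vertices $(n_2, \dots, n_{k-1})$ is controlled by
\[
\Bigl(\tilde \ve \sum_{p \neq 0}\exp\bigl(-\tfrac{15}{32}\rho_{3t_0+2}\tilde \kappa |p|^{\alpha_0}\bigr)\Bigr)^{k-2},
\]
which, under the hypotheses $\tilde \ve \le \ve_0$ and $R_1 \ge 2^{30}(\kappa_0^{-1}T)^2$, is bounded by $2^{-(k-2)}$, so that the sum over $k$ is a convergent geometric series. Finally, the bound $\bar D(\gamma) \le 2T\bigl[\min(|n_1|,|n^\zero-n_k|)^{1/5} + \|\gamma\|^{1/5}\bigr]$ from Lemma~\ref{lem:11.sumsproperties}(2), applied with $n_1 = m$ and $n_k = n$, accounts for the factor involving $\min(|m|, |n^\zero-n|)^{1/5}$ in \eqref{eq:11gamscaledconditionaldecaysum1statement}; the $\|\gamma\|^{1/5}$ contribution, as well as the $k(\gamma)M$ term, are absorbed into what remains of the slack.

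The principal obstacle is the bookkeeping of the slack factor $\rho_{3t_0+2}$, which decays quadratically in $t_0$. One must verify that the combinatorial series, the $k(\gamma)M$ term, and the $\|\gamma\|^{1/5}$ contribution to $\bar D(\gamma)$ all fit within this slack uniformly in $t_0$. The scale hypothesis $R_1 \ge 2^{30}(\kappa_0^{-1}T)^2$, combined with the geometric growth $R_t = (5/4)R_{t-1}$, is precisely what is needed to make all these corrections controllable without degrading the stated decay.
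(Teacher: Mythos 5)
There is a genuine gap in your argument for part (1), at the step where you claim that the resonance conditions \eqref{eq:11auxtrajectweight5NNNNN}--\eqref{eq:11auxtrajectweight5NNNNN1} yield $\sum_j D_{n^\zero}(n_j) \le 2\bar D(\gamma) + k(\gamma)M$. Those conditions do not bound the \emph{number} of vertices with $D > M$; they only say that for any two such vertices the smaller of the two $D$-values is dominated by $T\|\cdot\|^{1/5}$ of the piece of trajectory between them. Many vertices may carry $D$-values well above $M$, and the correct conclusion (this is exactly the content of \eqref{eq:11auxtrajectweight2} in Lemma~\ref{lem:11.sumsproperties}) is that the $D$-sum costs a \emph{multiplicative} degradation of the decay rate, $\exp(\kappa \vartheta_{t_D(\gamma)+1}\|\gamma\|)$, plus $2\bar D(\gamma)$ (or $k(\gamma)M^5$ in the small-$D$ regime) --- not an additive $2\bar D + kM$. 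Consequently, if you feed the uniform edge bound with rate $\frac{15}{16}(1-\sigma_{3t+4})\tilde\kappa$ into that analysis, you obtain at best the rate $(1-\vartheta)\cdot\frac{15}{16}(1-\sigma_{3t+4})\tilde\kappa$, i.e.\ roughly an extra factor $\frac{15}{16}$, which is strictly weaker than \eqref{eq:11gamscaledconditionaldecay}. This loss is not cosmetic: in the proof of part $(2)$ of Theorem~$\tilde B$ the contraction hinges on $\frac{15}{16}(1-\sigma)\tilde\kappa > (\frac{15}{16})^2\frac76\hat\kappa = L\hat\kappa$ with $L>1$, and an additional factor $\approx\frac{15}{16}$ would make $L<1$ and destroy the iteration. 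Were your inequality $\sum_j D(n_j)\le 2\bar D+kM$ actually available, the rate degradation in \eqref{eq:11auxtrajectweight2} would be unnecessary in the first place.

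The paper avoids this by proving \eqref{eq:11gamscaledconditionaldecay} by induction on the scale $t$, so that the $(1-\vartheta)$-type multiplicative loss is paid only once, at the base scale $t=1$ (where all steps are $\le R_2$ and the hypothesis \eqref{eq:11scaledconditionaldecay} gives the full rate $\tilde\kappa$, so \eqref{eq:11auxtrajectweight2} applies and $1-\vartheta \ge 15/16$). At each subsequent scale a trajectory in $\Gamma^{(t)}_{n^\zero}$ is split at its unique long jump, or at the point where the cumulative norm first exceeds $2R_{t-1}$, into two pieces lying in $\Gamma^{(t-1)}_{n^\zero}$; the inductive bound is applied to the pieces, and the only new $D$-contributions are those at the one or two split vertices, which are controlled via \eqref{eq:11auxtrajectweight5NNNNN}--\eqref{eq:11auxtrajectweight5NNNNN1} by $2T\|\gamma\|^{1/5} \le \frac{\rho_{3t+4}}{4}\tilde\kappa\|\gamma\|$, using $\|\gamma\| > R_t \ge 2^{30}(\kappa_0^{-1}T)^2(5/4)^{t-1}$. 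Thus the per-scale loss is the additive, summable quantity $\rho_{3t+4}$, which is precisely what the $\sigma_t$ bookkeeping is designed to absorb. Your part (2) sketch (reduction to $\Gamma^{(t_0-1)}_{n^\zero}$, splitting off the chord decay, and controlling the combinatorial sum with the slack $\rho_{3t_0+2}$ and the bound on $\bar D(\gamma)$) is in line with the paper's final summation, but it rests on part (1), so the argument as a whole does not go through without the inductive scale decomposition.
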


\begin{proof}
The proof of \eqref{eq:11gamscaledconditionaldecay} goes by induction in $t = 1, 2, \dots$. Let $\gamma = (n_1,\dots,n_{k}) \in \Gamma^{(1)}_{n^\zero}$. Then, in particular, $\max_j |n_{j+1}-n_j| \le R_{2}$. Due to \eqref{eq:11scaledconditionaldecay}, one has $w(n_j,n_{j+1}) \le \tilde \ve \exp(-\tilde \kappa |n_j-n_{j+1}|)$. Hence \eqref{eq:11auxtrajectweight2} applies. Note that $1 - \vartheta_{t_D(\gamma)+1} > 15/16$. This implies \eqref{eq:11gamscaledconditionaldecay} for $t = 1$ in both cases in \eqref{eq:11auxtrajectweight2}.

Let $\Gamma^{(t)}_{n^\zero,0}$ be the set of trajectories $\gamma = (n_1,\dots,n_{k}) \in \Gamma^{(t)}_{n^\zero}$ with $\max_j |n_{j+1}-n_j| \le R_t$, $\Gamma^{(t)}_{n^\zero,1} = \Gamma^{(t)}_{n^\zero} \setminus \Gamma^{(t)}_{n^\zero,0}$.

Let $\gamma = (n_1,\dots,n_{k}) \in \Gamma^{(t)}_{n^\zero,1}$. Then there exists $j_0$ such that $|n_{j_0+1} - n_{j_0}| > R_t$. Note that $|n_{j+1} - n_{j}| < R_t$ for any $j \neq j_0$, since $\| \gamma \| \le 2 R_t$. Let $\gamma_1 = (n_1,\dots,n_{j_0})$, $\gamma_2 = (n_{j_0+1},\dots,n_k)$. Note that $\| \gamma_1 \| + \| \gamma_2 \| < R_t < 2 R_{t-1}$ since $\| \gamma \| \le 2 R_t$ and $|n_{j_0+1} - n_{j_0}| > R_t$. Therefore $\gamma_1,\gamma_2 \in \Gamma^{(t-1)}_{n^\zero}$. Hence, the inductive assumption applies,
\begin{equation} \label{eq:11gamscaledconditionaldecay2}
\begin{split}
w_{n^\zero}(\gamma_i) & \le \tilde \ve^{k(\gamma_i)-1} \ve \exp \Big(-\frac{15}{16} (1 - \sigma_{3t+1}) \tilde \kappa \| \gamma_i \| + 2 \bar D(\gamma_i) + k(\gamma_i)M \Big), \quad i=1,2, \\
w_{n^\zero}(\gamma) & = w_{n^\zero}(\gamma_1) |c(n_{j_0+1} - n_{j_0})| w_{n^\zero}(\gamma_2) \\
& \le \tilde \ve^{k(\gamma)-1} \exp \Big( -\frac{15}{16} (1 - \sigma_{3t+1}) \tilde \kappa (\|\gamma_1\| + \|\gamma_2\|) \\
& \qquad -\frac{15}{16} (1 - \sigma_{3t+3}) \tilde \kappa |n_{j_0+1} - n_{j_0}| + 2 \bar D(\gamma_1) + 2 \bar D(\gamma_2) + k(\gamma)M \Big).
\end{split}
\end{equation}
Let $D(n_{j_i}) = \bar D(\gamma_i)$, $i=1,2$. We have the following cases:

$(a)$ Assume $j_1 < j_0$. In this case due to \eqref{eq:11auxtrajectweight5NNNNN}, we have
\begin{equation}\label{eq:11auxtrajectweight5NUPP}
2 \min (\bar D(\gamma_1), \bar D(\gamma_2)) \le 2 T \|\gamma \|^{1/5} < \frac{\rho_{3t+4}}{4} \tilde \kappa (\|\gamma_1\| + \|\gamma_2\| + |n_{j_0+1} - n_{j_0}|)
\end{equation}
since $\|\gamma\| > R_t \ge 2^{30} (\kappa_0^{-1}T)^2 (5/4)^{t-1}$, $t \ge 2$. Combining \eqref{eq:11gamscaledconditionaldecay2} with \eqref{eq:11auxtrajectweight5NUPP}, we obtain \eqref{eq:11gamscaledconditionaldecay}.

$(b)$ Assume $j_0 + 1 < j_2$. Similarly to case $(a)$, one verifies \eqref{eq:11gamscaledconditionaldecay}.

$(c)$ Assume $j_1 = j_0$, $j_0+1 = j_2$. Let $\gamma'_1 = (n_1,\dots,n_{j_0-1})$, $\gamma'_2 = (n_{j_0+2},\dots,n_k)$. Once again, applying the inductive assumption, we obtain
\begin{equation} \label{eq:11gamscaledconditionaldecay7}
\begin{split}
w_{n^\zero}(\gamma'_i) & \le \tilde \ve^{k(\gamma_i)-1} \exp \Big( -\frac{15}{16} (1 - \sigma_{3t+1}) \tilde \kappa \|\gamma'_i\| + 2 \bar D(\gamma'_i) + k(\gamma_i)M \Big), \quad i=1,2, \\
w_{n^\zero}(\gamma) & = w_{n^\zero}(\gamma'_1) \exp(D_{n^\zero}(n_{j_0})) |c(n_{j_0+1} - n_{j_0})| \exp(D_{n^\zero} (n_{j_0+1})) w_{n^\zero}(\gamma'_2) \\
& \le \tilde \ve^{k(\gamma)-1} \exp \Big(-\frac{15}{16} (1 - \sigma_{3t+1}) \tilde \kappa (\|\gamma_1\| + \|\gamma_2\|) - \frac{15}{16} (1 - \sigma_{3t+3}) \tilde \kappa |n_{j_0+1} - n_{j_0}| \Big) \times \\
& \quad \exp(2 \bar D(\gamma_1) + 2 \bar D(\gamma_2) + D_{n^\zero}(n_{j_0}) + D_{n^\zero}(n_{j_0+1}) + k(\gamma)M).
\end{split}
\end{equation}
We have $2 D_{n^\zero}(\gamma'_1) = 2 \min (D_{n^\zero}(\gamma'_1), D_{n^\zero}(n_{j_0})) < 2 T \|\gamma\|^{1/5} < \rho_{3t+4} \tilde \kappa (\|\gamma_1\| + \|\gamma_2\| + |n_{j_0+1} - n_{j_0}| )/4$. Similarly, $2 D_{n^\zero}(\gamma'_2) < \rho_{3t+4} \tilde \kappa (\|\gamma_1\| + \|\gamma_2\| + |n_{j_0+1} - n_{j_0}|)/4$. Therefore, \eqref{eq:11gamscaledconditionaldecay} follows from \eqref{eq:11gamscaledconditionaldecay7}.

Thus, \eqref{eq:11gamscaledconditionaldecay} holds for $\gamma \in \Gamma^{(t)}_{n^\zero,1}$ in any event. Let $\gamma = (n_1,\dots,n_{k}) \in \Gamma^{(t)}_{n^\zero,0}$. Assume $\|(n_1,\dots,n_{k})\| \le 2 R_{t-1}$. Recall that $\max_j |n_{j+1} - n_j| \le R_t$ since $\gamma = (n_1,\dots,n_{k}) \in \Gamma^{(t)}_{n^\zero,0}$. Hence, in this case the inductive assumption applies and even a stronger estimate than \eqref{eq:11gamscaledconditionaldecay} holds. Assume $\|(n_1,\dots,n_{k})\| > 2 R_{t-1}$. Then there exists $j_0$ such that $\|(n_1,\dots,n_{j_0})\| \le 2 R_{t-1}$, $\|(n_1,\dots,n_{j_0+1})\| > 2 R_{t-1}$. Let $\gamma_1 = (n_1,\dots,n_{j_0})$, $\gamma_2 = (n_{j_0+1},\dots,n_k)$. Note that $\|\gamma_2\| = \|\gamma\| - \|(n_1,\dots,n_{j_0+1})\| < 2 R_t - 2 R_{t-1} < 2 R_{t-1}$ since $R_t < 2 R_{t-1}$. Therefore $\gamma_1,\gamma_2 \in \Gamma^{(t-1)}_{n^\zero}$. Hence, the inductive assumption applies,
\begin{equation} \label{eq:11gamscaledconditionaldecay10}
\begin{split}
w_{n^\zero}(\gamma_i) & \le \tilde \ve^{k(\gamma_i)-1} \exp \Big( -\frac{15}{16} (1 - \sigma_{3t+1}) \tilde \kappa \|\gamma_i\| + 2 \bar D(\gamma_i) + k(\gamma_i) M \Big), \quad i=1,2, \\
w_{n^\zero}(\gamma) & = w_{n^\zero}(\gamma_1) |c(n_{j_0+1} - n_{j_0})| w_{n^\zero}(\gamma_2) \\
& \le \tilde \ve^{k(\gamma)-1} \exp \Big( -\frac{15}{16} (1 - \sigma_{3t+1}) \tilde \kappa (\|\gamma_1\| + \|\gamma_2\|) - \frac{15}{16} (1 - \sigma_{3t+3}) \tilde \kappa |n_{j_0+1} - n_{j_0}| \\
& \qquad + 2 \bar D(\gamma_1) + 2 \bar D(\gamma_2) + k(\gamma)M \Big).
\end{split}
\end{equation}
Let $D(n_{j_i}) = \bar D(\gamma_i)$, $i=1,2$. We have the following cases:

$(\alpha)$ Assume $j_1 < j_0$. In this case, due to \eqref{eq:11auxtrajectweight5NNNNN}, we have
\begin{equation}\label{eq:11auxtrajectweight5NUPPal}
2 \min (\bar D(\gamma_1), \bar D(\gamma_2)) \le 2 T \| \gamma \|^{1/5} < \frac{\rho_{3t+4}}{4} \tilde \kappa (\|\gamma_1\| + \|\gamma_2\| + |n_{j_0+1} - n_{j_0}|).
\end{equation}
Combining \eqref{eq:11gamscaledconditionaldecay10} with \eqref{eq:11auxtrajectweight5NUPPal}, we obtain \eqref{eq:11gamscaledconditionaldecay}.

$(\beta)$ Assume $j_0 + 1 < j_2$. Similarly to case $(\alpha)$, one verifies \eqref{eq:11gamscaledconditionaldecay}.

$(\gamma)$ Assume $j_1 = j_0$, $j_0 + 1 = j_2$. Let $\gamma'_1 = (n_1, \dots, n_{j_0-1})$, $\gamma'_2 = (n_{j_0 + 2}, \dots, n_k)$. Once again, applying the inductive assumption, we obtain
\begin{equation} \label{eq:11gamscaledconditionaldecay7be}
\begin{split}
w_{n^\zero}(\gamma'_i) & \le \tilde \ve^{k(\gamma'_i)-1} \exp \Big(-\frac{15}{16} (1 - \sigma_{3t+1}) \tilde \kappa \|\gamma'_i\| + 2 \bar D(\gamma'_i) + k(\gamma_i') M \Big), \quad i=1,2, \\
w_{n^\zero}(\gamma) & = w_{n^\zero}(\gamma'_1) |c(n_{j_0-1} - n_{j_0})|  \exp(D_{n^\zero}(n_{j_0})) |c(n_{j_0+1} - n_{j_0})| \exp(D_{n^\zero} (n_{j_0+1})) \\
|c(n_{j_0+2} - n_{j_0+1})| w_{n^\zero}(\gamma'_2) & \le \tilde \ve^{k(\gamma)-1} \exp(-\frac{15}{16} (1 - \sigma_{3t+1}) \tilde \kappa (\|\gamma'_1\| + \|\gamma'_2\|)) \\
& \quad \times \exp(- \frac{15}{16} (1 - \sigma_{3t+3}) \tilde \kappa (|n_{j_0} - n_{j_0-1}| + |n_{j_0+1} - n_{j_0}| + |n_{j_0+2} - n_{j_0+1}|)) \\
& \quad \times \exp(2 \bar D(\gamma'_1) + 2 \bar D(\gamma'_2) +  D_{n^\zero}(n_{j_0}) +  D_{n^\zero}(n_{j_0+1}) + k(\gamma)M).
\end{split}
\end{equation}
We have $2 \bar D_{n^\zero}(\gamma'_1) = 2 \min (\bar D_{n^\zero}(\gamma'_1), D_{n^\zero}(n_{j_0})) < 2 T \|\gamma\|^{1/5} < \rho_{3t+4} \tilde \kappa (\|\gamma'_1\| + \|\gamma'_2\| + |n_{j_0+2} - n_{j_0-1}| )/4$. Similarly, $2 \bar D_{n^\zero}(\gamma'_2) < \rho_{3t+4} \tilde \kappa (\|\gamma_1\| + \|\gamma_2\| + |n_{j_0+1} - n_{j_0}| )/4$. Therefore, \eqref{eq:11gamscaledconditionaldecay} follows from \eqref{eq:11gamscaledconditionaldecay7be}.

Thus \eqref{eq:11gamscaledconditionaldecay} holds in any event. Recall that $\bar D(\gamma) \le 2 T [(\min (|m|, |n^\zero - n|)^{1/5} + \|\gamma\|^{1/5}]$, $\gamma \in \Ga_{n^\zero}(m, n)$; see Lemma~\ref{lem:11.sumsproperties}. Set $w'_{n^\zero}(\gamma) = \exp(- 2 T (\min (|m|,|n^\zero-n|)^{1/5}) w_{n^\zero}(\gamma)$. Note that $2 T \|\gamma\|^{1/5} < \rho_{3t+5} \tilde \kappa \|\gamma\|/4$ if $\|\gamma\| \ge R_t$. Due to the elementary estimate \eqref{eq:PAexpsumomega1} in Lemma~\ref{lem:PAL10omegV1} one has for any $\alpha, k > 0$,
\begin{equation}\label{eq:11auxtrajectweight21a}
\sum_{\gamma \in \Ga(m, n; k, \mathfrak{T})} \exp(-\alpha \|\gamma\|) < (8 \alpha^{-1})^{(k-1)\nu}.
\end{equation}
Set $\Ga^{(t)}_{n^\zero}(m, n) = \Ga_{n^\zero}(m, n) \cap \Ga^{(t)}_{n^\zero}$. Note that if $\gamma \in \Ga_{n^\zero}$ and $\|\gamma\| \le R_{t+1}$, then $\gamma \in \Ga^{(t)}_{n^\zero}$ and \eqref{eq:11gamscaledconditionaldecay} applies. Finally, $\|\gamma\| \ge |n-m|$ for any $\gamma \in \Ga_{n^\zero}(m, n)$. Taking all that into account, we obtain
\begin{equation} \label{eq:11gamscaledconditionaldecaysum1}
\begin{split}
& \sum_{\gamma \in \Ga_{n^\zero}(m, n) : k(\gamma) \ge 2, \|\gamma\| \le R_{t_0}} w'_{n^\zero}(\gamma)  \le \sum_{t \le t_0-1} \sum_{\gamma \in \Ga_{n^\zero}(m, n), k(\gamma) \ge 2, R_t \le \|\gamma\| \le R_{t+1}} w'_{n^\zero}(\gamma) \\
& \le e^M \sum_{t \le t_0-1} \sum_{\gamma \in \Ga_{n^\zero}(m, n),k(\gamma) \ge 2, R_t \le \|\gamma\| \le R_{t+1}} (e^M \tilde \ve)^{k(\gamma)-1} \exp(-\frac{15}{16} (1 - \sigma_{3t+4}) \tilde \kappa \|\gamma\| + \rho_{3t+5} \tilde \kappa \|\gamma\|/4) \\
& \le e^M \sum_{k \ge 1} (e^M \tilde \ve)^{k-1} \exp(-\frac{15}{16} (1 - \sigma_{3t_0+2}) \tilde \kappa |n-m|) \sum_{k(\gamma) = k} \exp(-\rho_{3t_0+2} \tilde \kappa \|\gamma\|/4) \\
& \le e^M \exp(-\frac{15}{16} (1 - \sigma_{3t_0+2}) \tilde \kappa |n-m|) \sum_{k \ge 1} (e^M \tilde \ve)^{k-1}(8 \alpha^{-1})^{(k-1)\nu} \le \exp(-\frac{15}{16} (1 - \sigma_{3t_0+2}) \tilde \kappa |n-m|).
\end{split}
\end{equation}
Here, in the last step, $\alpha = \rho_{3t+5}\tilde \kappa/4$, and we have used $\tilde \ve < \ve_0$.
\end{proof}

\begin{proof}[Proof of part $(2)$ of Theorem~$\tilde B$]
Set $R_1 = 2^{30}(\kappa_0^{-1}T)^2$, $R_t = 5R_{t-1}/4$, $\rho_{t-1} = 2^{-10} t^{-2}$, $t = 2,\dots$, $\sigma_t = \sum_{1 \le \ell \le t} \rho_\ell$ as in Lemma~\ref{lem:11scaleddecayestimate}. Set also $\ve^\zero_0 := \exp(-2 R_2)$. One can see that $\ve^\zero_0 < \ve_0^4$. Assume that
\begin{equation} \label{eq:11theoremBgapcondition}
E^+(\La;k_{n^\zero}) - E^-(\La;k_{n^\zero}) \le \ve^\zero \exp(-\kappa^\zero |n^\zero|), \quad \text{ for all $n^\zero \in \mathfrak{T} \setminus \{0\}$},
\end{equation}
where $\ve^\zero < \ve^\zero_0$, $\kappa^\zero > 4 \kappa_0$. We can replace \eqref{eq:11gapsineqstatement} by the following estimate,
\begin{equation} \label{eq:11-10acbasicfunctionsBineq2}
|c(n^\zero)| \le (\ve^\zero)^{3/4} \exp(-3 \kappa^\zero |n^\zero|/4) + \sum_{m', n' \in \La(n^\zero)} |c(m')| s(n^\zero;m',n') |c(n'- n^\zero)|.
\end{equation}

Assume that with some $(\ve^\zero)^{1/2} < \hat \ve \le \ve^\zero_0$ and $\kappa_0 \le \hat \kappa \le \kappa^\zero/2$, we have $|c(p)|\le \hat \ve \exp(-\hat \kappa |p|)$ for $|p| > 0$. Set $\tilde \ve = \hat \ve/2$, $\tilde \kappa = 7 \hat \kappa/6$. We claim that in this case, in fact,
\begin{equation} \label{eq:11scaledconditionaldecayY}
|c(p)| \le \begin{cases} \tilde \ve \exp(-\tilde \kappa |p|) & \text {if $0 < |p| \le R_2$}, \\ \tilde \ve \exp(-\frac{15}{16} (1 - \sigma_{3t}) \tilde \kappa |p|) & \text {if $R_{t-1} < |p| \le R_t$, $t \ge3$}.
\end{cases}
\end{equation}
It is important to note here that $\frac{15}{16} (1 - \sigma_{3t}) \tilde \kappa > (\frac{15}{16})^2 \frac{7}{6} \hat \kappa := L \hat \kappa$, with $L > 1$. This allows one to iterate the argument and Theorem~$\tilde B$ follows.

The verification of the claim goes by induction in $t$, starting with the first line in \eqref{eq:11scaledconditionaldecayY}, and then with the help of Lemma~\ref{lem:11scaleddecayestimate}. The idea is to run $n^\zero$ in \eqref{eq:11-10acbasicfunctionsBineq2} and to combine the inequalities which we have for different $n^\zero$'s. To this end it is convenient to replace $n^\zero$ in the notation. To verify the first line in \eqref{eq:11scaledconditionaldecayY}, we invoke \eqref{eq:11gamscaledconditionaldecaysum1statement} from Lemma~\ref{lem:11scaleddecayestimate} with $\hat \ve$ in the role of $\tilde \ve$ and $\hat \kappa$ in the role of $\tilde \kappa$. Note that condition \eqref{eq:11scaledconditionaldecay} of Lemma~\ref{lem:11scaleddecayestimate} holds for any $t$ for trivial reasons. So,
\begin{equation}\label{eq:11auxtrajectweightO5B}
\begin{split}
& \sum_{m,n \in \La_{p}} |c(m)| s(p;m,n) |c(p-n)| \\
& \quad \le \hat \ve^2 \sum_{m,n \in \La_{p}} \exp(-\hat \kappa |m|) \exp \Big( -\frac{15}{16} (1 - \sigma_{3t_0+2}) \hat \kappa |n-m| + 2 T (\min(|m|,|n-p|))^{1/5} \Big) \exp(-\hat \kappa |p-n|).
\end{split}
\end{equation}
Elementary estimations  in \eqref{eq:11auxtrajectweightO5B} yield
\begin{equation}\label{eq:11auxtrajectweightO5B1}
\sum_{m,n \in \La_{p}} |c(m)| s(p;m,n) |c(p-n)| \le \hat \ve^{3/2}/4 \le \tilde \ve (\ve^\zero_0)^{1/2}/2 < (\tilde \ve /2) \exp(-R_2).
\end{equation}
It follows from \eqref{eq:11-10acbasicfunctionsBineq2} combined with \eqref{eq:11auxtrajectweightO5B1} that for any $|p| > 0$, we have
\begin{equation}\label{eq:11auxtrajectweightO5B2}
|c(p)| < (\ve^\zero)^{3/4} \exp(-3 \kappa^\zero |p|/4) + (\tilde \ve/2) \exp(-R_2) < \tilde \ve \exp(-\tilde \kappa R_2).
\end{equation}
This verifies the first line in \eqref{eq:11scaledconditionaldecay}.

Assume now that for some $\ell \ge 2$, \eqref{eq:11scaledconditionaldecay} holds for any $0 < |p| \le R_{t}$ and any $t \le \ell$. Let $|q| > R_\ell$ be arbitrary. For $t \ge 1$, let $\Gamma^{(t)}_{q}$ be the set of trajectories $\gamma = (n_1,\dots,n_{k}) \in \Ga_{q}$ with $\|\gamma\| \le 2 R_t$ and $\max_j |n_{j+1} - n_j| \le R_{t+1}$. Let $\Ga^{(t)}_q(m,n) = \Ga^{(t)}_{q} \cap \Ga_{q}(m,n)$. We have
\begin{equation}\label{eq:11auxtrajectweightO5}
\begin{split}
\sum_{m,n \in \La_{q}} |c(m)| s(q;m,n) |c(q-n)| & \le \sum_{m,n} |c(m)| |c(q-n)| \sum_{\gamma \in \Ga_{q}(m,n)} w_{q} (\gamma) \\
& \le \Sigma_1 + \Sigma_2 + \Sigma_3 \\
& := \sum_{m,n : |m|,|n-q| \le R_\ell} |c(m)| |c(q-n)| \sum_{\gamma \in \Ga^{(\ell-1)}_{q}(m,n)} w_{q} (\gamma) \\
& \quad + \sum_{m,n} |c(m)| |c(q-n)| \sum_{\gamma \in \Ga_{q}(m,n), \quad \|\gamma\| > 2 R_{\ell-1}} w_{q} (\gamma) + \Sigma_3.
\end{split}
\end{equation}
Here the sum $\Sigma_3$ is over the cases when $\|\gamma\| \le 2 R_{\ell-1}$ and either $\max (|m|,|q-n|) > R_\ell$ or $\gamma = (n_1,\dots,n_{k})$ obeys $\max_j |n_{j+1}-n_j| > R_{\ell}$, or both.

Using \eqref{eq:11gamscaledconditionaldecaysum1statement} from Lemma~\ref{lem:11scaleddecayestimate} with $\ell$ in the role of $t_0$ and the inductive assumption, one obtains
\begin{equation}\label{eq:11auxtrajectweightO6}
\begin{split}
\Sigma_1 \le \tilde \ve^2 \sum_{m,n} \exp \Big( -\frac{15}{16} (1 - \sigma_{3 \ell}) \tilde \kappa |m| \Big) \exp \Big( -\frac{15}{16} (1 - \sigma_{3 \ell + 2}) \tilde \kappa |m-n| + 2 T (\min(|m|,|q-n|))^{1/5} \Big) \times \\
\exp \Big( -\frac{15}{16} (1 - \sigma_{3\ell}) \tilde \kappa|q-n| \Big).
\end{split}
\end{equation}
Note that $2 T (\min(|m|,|q-n|))^{1/5}) < \frac{1}{4} \rho_{3 \ell + 3} (|m| + |m-n| + |q-n|)$ since $|q| > R_\ell$. Estimating the sum in \eqref{eq:11auxtrajectweightO6}, one obtains
\begin{equation}\label{eq:11auxtrajectweightO6a}
\Sigma_1 \le \tilde \ve^{3/2} \exp \Big( -\frac{15}{16} (1 - \sigma_{3\ell+2} - \frac{1}{4} \rho_{3\ell+3}) \tilde \kappa |q| \Big).
\end{equation}

To estimate the sum $\Sigma_2$, we use Lemma~\ref{lem:11scaleddecayestimate} with $\hat \ve$ in the role of $\tilde \ve$ and $\hat \kappa$ in the role of $\tilde\kappa$:
\begin{equation}\label{eq:11auxtrajectweightO15}
\begin{split}
\Sigma_2 & \le \sum_{m,n} |c(m)| |c(q-n)| \sum_{t \ge \ell-1} \sum_{\gamma \in \Ga_{q}(m, n), k(\gamma) \ge 2, R_t \le \|\gamma\| \le R_{t+1}} w_{q}(\gamma) \\
& \le \sum_{m,n} |c(m)| |c(q-n)| \exp(2 T (\min(|m|,|q-n|))^{1/5}) e^M \times \\
& \quad \big[ \sum_{\gamma \in \Ga_{q}(m, n),k(\gamma) \ge 2, 2 R_{\ell-1} \le \|\gamma\| \le R_{\ell}} + \sum_{t \ge \ell} \sum_{\gamma \in \Ga_{q}(m,n),k(\gamma) \ge 2, R_t \le \|\gamma\| \le R_{t+1}} \big] \\
& \quad (e^M \hat \ve)^{k(\gamma) - 1} \exp \Big( -\frac{15}{16} (1 - \sigma_{3t+4}) \hat \kappa \|\gamma\| + 2 T \|\gamma\|^{1/5} \Big) \\
& \le \hat \ve^2 \sum_{m,n} \exp(-\hat \kappa (|m| + |q-n|) + 2 T (\min(|m|,|q-n|))^{1/5}) \times \\
& \quad \exp \Big( -\frac{15}{16} (1 - \sigma_{3\ell+2}) \hat \kappa \times (2 R_{\ell-1}) \Big) \\
& \le \hat \ve^{3/2} \exp \Big( -\frac{15}{16} (1 - \sigma_{3\ell+2} - \frac{1}{4} \rho_{3\ell+3}) \hat \kappa \times (2R_{\ell-1}) \Big) \\
& < \hat\ve^{3/2} \exp \Big( -\frac{15}{16} (1 - \sigma_{3\ell+2} - \frac{1}{4} \rho_{3\ell+3}) \tilde \kappa R_{\ell+1} \Big).
\end{split}
\end{equation}

Let us now estimate $\Sigma_3$. Given $r,s \in \La_q$ with $|s-r| > R_\ell$, denote by $\Ga_{q;r,s}$ the set of trajectories $\gamma = (n_1,\dots,n_{k}) \in \bar \Ga_{q}$ with $\|\gamma\| \le 2 R_{\ell-1}$ and such that
\begin{equation}\label{eq:11gammarsplit}
\gamma = \gamma' \cup \gamma'',
\end{equation}
where $r$ is the endpoint of $\gamma'$ and $s$ is the starting point of $\gamma''$. Note that since $\|\gamma\| \le 2 R_{\ell-1}$, one has $|n_{j+1} - n_j| \le R_\ell$ for all $j$ with one exception when $n_{j+1} = s$, $n_j = r$. In particular, the inductive assumption applies to $\gamma'$, $\gamma''$. Denote by $\Sigma'_3$ the part of sum $\Sigma_3$ with $\gamma \in \Ga_{q;r,s}$ and with $|m|, |q-n| \le R_\ell$. Then just as in the above derivations, one obtains
\begin{equation}\label{eq:11auxtrajectweightO16}
\Sigma'_3 \le \tilde \ve^{3/2} \sum_{|r-s| > R_\ell} \exp \Big( -\frac{15}{16} (1 - \sigma_{3\ell+2}) \tilde \kappa |r| \Big) |c(r-s)| \exp \Big( -\frac{15}{16} (1 - \sigma_{3 \ell + 2}) \tilde \kappa|q-s| \Big).
\end{equation}
The estimation of the rest of the sum $\Sigma_3$ is similar. One has
\begin{equation}\label{eq:11auxtrajectweightO17}
\begin{split}
\Sigma_3 & \le \tilde \ve^{3/2} \sum_{|r-s| > R_\ell} \exp \Big( -\frac{15}{16} (1 - \sigma_{3\ell+2}) \tilde \kappa |r| \Big) |c(r-s)| \exp \Big( -\frac{15}{16} (1 - \sigma_{3 \ell + 2}) \tilde \kappa |q-s| \Big) \\
& \quad + 2 \tilde \ve^{3/2} \sum_{|r| > R_\ell} |c(r)| \exp \Big( -\frac{15}{16} (1 - \sigma_{3\ell+2}) \tilde \kappa |q-r| \Big) \\
& \le \tilde \ve^{3/2} \sum_{R_\ell < |r-s| \le R_{\ell+1}} \exp \Big( -\frac{15}{16} (1 - \sigma_{3\ell+2}) \tilde \kappa|r| \Big) |c(r-s)| \exp \Big( -\frac{15}{16} (1 - \sigma_{3 \ell + 2}) \tilde \kappa |q-s| \Big) \\
& \quad + 2 \tilde \ve^{3/2} \sum_{R_\ell < |r| \le R_{\ell+1}} |c(r)| \exp \Big( -\frac{15}{16} (1 - \sigma_{3 \ell + 2}) \tilde \kappa |q-r| \Big) \\
& \quad \hat \ve^{3/2} \exp \Big( -\frac{15}{16} (1 - \sigma_{3 \ell + 2} - \frac{1}{4} \rho_{3 \ell + 3}) \tilde \kappa R_{\ell + 1} \Big).
\end{split}
\end{equation}

Now we invoke \eqref{eq:11-10acbasicfunctionsBineq2}. For $|q| > R_\ell$, one obtains
\begin{equation} \label{eq:11-10acbasicfunctionsBineq2F1}
\begin{split}
|c(q)| & \le (\ve^\zero)^{3/4} \exp(- \kappa^\zero |q|/2) + \sum_{1 \le i \le 3} \Sigma_i \le (\ve^\zero)^{3/4} \exp(-\tilde\kappa|q|) \\
& \quad + \tilde \ve^{3/2} \exp \Big( -\frac{15}{16} (1 - \sigma_{3 \ell + 2} - \frac{1}{4} \rho_{3 \ell + 3}) \tilde \kappa |q| \Big) + 2 \hat \ve^{3/2} \exp \Big( -\frac{15}{16} (1 - \sigma_{3 \ell + 2} - \frac{1}{4} \rho_{3 \ell + 3}) \tilde \kappa R_{\ell + 1} \Big) \\
& \quad + \tilde \ve^{3/2} \sum_{R_\ell < |r-s| \le R_{\ell + 1}} \exp \Big( -\frac{15}{16} (1 - \sigma_{3 \ell + 2}) \tilde \kappa |r| \Big) |c(r-s)| \exp \Big( -\frac{15}{16} (1 - \sigma_{3 \ell + 2}) \tilde \kappa |q-s| \Big) \\
& \quad + 2 \tilde \ve^{3/2} \sum_{R_\ell < |r| \le R_{\ell + 1}} |c(r)| \exp \Big( -\frac{15}{16} (1 - \sigma_{3 \ell + 2}) \tilde \kappa |q-r| \Big).
\end{split}
\end{equation}

Here we have replaced $\kappa^\zero/2$ by $\tilde \kappa < \kappa^\zero/2$. Now we consider $R_\ell < |q| \le R_{\ell+1}$. We replace $R_{\ell+1}$ in the exponent by a smaller quantity $|q|$ and we obtain a self-contained system of inequalities for $|c(q)|$ with $R_\ell < |q| \le R_{\ell+1}$. This allows us to iterate  \eqref{eq:11-10acbasicfunctionsBineq2F1}. It is convenient to replace the multiple sums via summation over trajectories $\gamma = (n_0,\dots,n_k) \in \Ga(0,q)$. Set
\begin{equation} \label{eq:11-10acbasicfunctionsBineq2F2}
\begin{split}
\ve' = \tilde \ve^{1/4}, \quad \kappa' & = \frac{15}{16} (1 - \sigma_{3\ell+2} - \frac{1}{4} \rho_{3\ell+3}) \tilde \kappa, \quad w'(m,n) = \ve' \exp(-\kappa' |m-n|), \\
w'((n_0,\dots,n_k) & = \prod w'(n_j,n_{j+1}).
\end{split}
\end{equation}
Iterating \eqref{eq:11-10acbasicfunctionsBineq2F1} $N$ times, one obtains
\begin{equation} \label{eq:11-10acbasicfunctionsBineq2F3}
\begin{split}
|c(q)| & \le \tilde \ve^{3/2} (\sum_{0 \le k \le N} 4^k \ve'^k) \exp \Big( -\frac{15}{16} (1 - \sigma_{3 \ell + 2} - \frac{1}{4} \rho_{3 \ell + 3}) \tilde \kappa R_{\ell + 1} \Big) \\
& \quad + \tilde \ve \sum_{1 \le k \ge 3N} 4^{k} \ve'^k \sum_{\gamma \in \Ga(0,q) : k(\gamma) = k} w'(\gamma) + 4^N \ve'^N.
\end{split}
\end{equation}
Taking here $N$ large enough and evaluating the sums over $\gamma$ as before, one obtains \eqref{eq:11scaledconditionaldecay} which implies the statement in part $(2)$ of Theorem~$\tilde B$.
\end{proof}

\section{Proof of Theorem~$\tilde I$}\label{sec.4}

The proof of Theorem~$\tilde I$ is a simple combination of Theorem~$\tilde B$ with fundamental facts from the theory of Hill's equation. These facts play an instrumental role also in the proof of Theorem~$I$. We refer for these facts to ~\cite{McKvM}, ~\cite{McKTr}, ~\cite{KaPo}, ~\cite{PoTr},\cite{Du}
 \cite{Tr}.

$(a)$ Let $Q(x)$ be a real $T$-periodic smooth function,
\begin{equation} \label{eq:8potential}
Q(x) = \sum_{n \in \mathbb{Z} \setminus \{ 0 \}} d(n) e^{\frac{2\pi i nx}{T}}.
\end{equation}
Consider the Hill equation
\begin{equation} \label{eq:8Hill}
[H y](x) = -y''(x) + Q(x) y(x) = E y(x), \quad x \in \IR.
\end{equation}
Let $y_1(x, \lambda)$, $y_2(x, \lambda)$ be fundamental solutions of \eqref{eq:8Hill} defined via the initial conditions $y_1(0, \lambda) = 1$, $y'_1(0, \lambda) = 0$, $y_2(0, \lambda) = 0$, $y'_2(0, \lambda) = 1$. The function
\begin{equation} \label{eq:4HillDisc}
\triangle (\lambda) = y_1(T, \lambda) + \partial_x y_2(T, \lambda)
\end{equation}
is called the Hill discriminant. The function $\triangle(\lambda)$ is analytic. Let $H_{\pm P}$ be the operator $H$ restricted to the interval $[0,T]$ with periodic, respectively, anti-periodic boundary conditions. The number $\lambda$ is an eigenvalue of $H_{\pm P}$ if and only if
\begin{equation} \label{eq:4HillDiscequation}
\triangle (\lambda) = \pm 2.
\end{equation}
The roots of $\triangle^2 = 4$ can be enumerated as
\begin{equation} \label{eq:4Hillroots}
E_0 < E^-_1 \le E^+_1 < E^-_2 \le E^+_2 < \cdots,
\end{equation}
where $E^\pm_n$ are the periodic, respectively, anti-periodic eigenvalues, depending on $n$ being even or odd, respectively. These eigenvalues obey the following asymptotics,
$$
E^\pm_n = \frac{\pi^2 n^2}{T^2} + O(T^2 n^{-1}).
$$
The spectrum of $H$ consists of the following set,
\begin{equation} \label{eq:8Hillspec}
\widetilde{\mathcal{S}} = [E_0,\infty) \setminus \bigcup_{n \in \mathcal{O}} (E_n^-,E_n^+),
\end{equation}
where $\mathcal{O} = \{ n : E_n^- < E_n^+ \}$. The intervals $G_n \eqdef (E_n^-, E_n^+)$, $n \in \mathcal{O}$ are called the gaps.

$(b)$ Consider the operator $H$ restricted to the interval $[0,T]$ with Dirichlet boundary conditions. Its spectrum consists of eigenvalues $\mu_1 < \mu_2 < \cdots$. The closure of the gap $G_n$, $n \in \mathcal{O}$ contains the eigenvalue $\mu_n$ and no other Dirichlet eigenvalues. The following trace formula holds,
\begin{equation} \label{eq:8tracefla}
Q(0) = \sum_{n \in \mathcal{O}} E_n^+ + E^-_n - 2 \mu_n.
\end{equation}
The length of the gap decays at least like $O(T^2 n^{-2})$ $($depending on the smoothness of $Q$, see more comments below$)$. Therefore the series converges nicely.

$(c)$ Consider the gap $G_n = (E_n^-,E_n^+)$. Set
\begin{equation}\label{eq:8toruspm-2}
\begin{split}
G_{n,\sigma} & = \{ (\lambda,\sigma) : \lambda \in G_n\}, \; \sigma = \pm , \\
\mathcal{C}_n & = G_{n,-} \cup G_{n,+} \cup \{E_n^-,E_n^+\}.
\end{split}
\end{equation}
The set $\mathcal{C}_n$ has a natural smooth structure which makes it diffeomorphic to the circle $\mathbb{T} = \{ e^{i \theta} : 0 \le \theta \le 2 \pi \}$. The points $E_n^\pm$ are identified with $\theta = 0, \pi$, respectively. The sign $\sigma$ is considered as a function on $\mathcal{C}_n$ with the convention that $\sigma(\theta) = +$ for $0 < \theta < \pi$, $\sigma(\theta) = -$ for $\pi < \theta < 2 \pi$, $\sigma(0) = \sigma(\pi) = 0$.

$(d)$ Given $t \in \mathbb{R}$, consider the potential $Q(\cdot + t)$. By unitary equivalence, the periodic spectrum for this potential is the same as for $Q$. Denote by $\mu_n(t)$ the Dirichlet eigenvalues of $Q(\cdot + t)$. Due to the trace formula \eqref{eq:8tracefla}, the eigenvalues $\mu_n(t)$ recover the potential:
\begin{equation} \label{eq:8traceflat}
Q(t) = \sum_{n \in \mathcal{O}} E_n^+ + E^-_n - 2\mu_n(t).
\end{equation}

$(e)$ Denote by $\mathcal{I} \mathcal{S} \mathcal{O} (Q)$ the set of $T$-periodic smooth real functions isospectral with $Q$. The map $F : W \mapsto \mu = (\mu_n)_{n \in \mathcal{O}} \in \mathcal{C} = \prod_{n \in \mathcal{O}} \mathcal{C}_n$ is a bijection from the set $\mathcal{I} \mathcal{S} \mathcal{O} (Q)$ onto the torus $\mathcal{C}$.

$(f)$ The functions $\mu_n(t)$ are smooth. For each $t$, the value $\mu_n(t)$ can be uniquely identified with a point on the torus $\mathcal{C}_n$ so that these functions obey the following system of differential equations,
\begin{equation}\label{eq:8ODEsystem}
\dot \mu_n = \Psi_n(\mu) := \sigma(\mu_n(t))\sqrt{ 4(\mu_n - \underline{E} )(E_n^+ - \mu_n)(\mu_n - E_n^-) \prod_{i \neq n} \frac{(E_i^+ - \mu_n)( E_i^- - \mu_n)}{(\mu_i - \mu_n)^2} }.
\end{equation}
When $\mu_n(t)$ reaches $E_n^-$, it does not pause, but passes through this point, crossing from $G_{n,+}$ to $G_{n,-}$. Similarly, when $\mu_n(t)$ reaches $E_n^+$, it passes through this point, crossing from $G_{n,-}$ to $G_{n,+}$. The point $\mu_n(t)$ rotates clockwise on the circle $\mathcal{C}_n$ when $t$ runs in the interval $[0,T]$.

$(g)$ For any given initial point $\mu^\zero = (\mu^\zero_n)_{n \in \mathcal{O}} \in \mathcal{C}$, there exists a unique global solution $\mu(t) = (\mu_n(t))_{n \in \mathcal{O}}$, $t \in \mathbb{R}$ with $\mu(0) = \mu^\zero$. In particular, the map $\Phi : \mu^\zero \mapsto W$, defined via the (right-hand side of the) trace formula \eqref{eq:8traceflat} is the inverse for the map $F$.

$(h)$ The solutions $\mu(t)$ of the system \eqref{eq:8ODEsystem} are stable with respect to the initial
data $\mu(0)$. Namely, set
$$
d((\mu_n)_{n \in \mathcal{O}}, (\lambda_n)_{n \in \mathcal{O}}) = \sum_{n \in \mathcal{O}} d_n(\mu_n, \lambda_n),
$$
where $d_n$ stands for the natural distance one the circle $\mathcal{C}_n$. Then,
\begin{equation}\label{eq:8ODEsystability}
d(\mu(t), \lambda(t)) \le K e^{Lt} d(\mu(0), \lambda(0)),
\end{equation}
where the constants $K,L$ depend on the spectrum $\widetilde{\mathcal{S}}$, that is, on $Q$.

$(i)$ Assume that
\begin{equation} \label{eq:8potentialanalytic}
Q(x) = \sum_{n \in \mathbb{Z} \setminus \{0\}} d(n) e^{\frac{2\pi i nx}{T}}
\end{equation}
is analytic, that is,
\begin{equation} \label{eq:8coeffexpdecay}
|d(n)| \le A \exp \left( -\eta \frac{|n|}{T} \right)
\end{equation}
with some constants $A, \eta > 0$. Then the gap lengths obey
\begin{equation} \label{eq:8Hillgapsexpdecay}
(E_n^+ - E_n^-) \le A_1 \exp \left( -\eta_1 \frac{|n|}{T} \right).
\end{equation}
Let $W$ be a continuous real $T$-periodic function. Assume $W \in \mathcal{I} \mathcal{S} \mathcal{O} (Q)$. Then,
\begin{equation} \label{eq:8potentialanalyticQ}
W(x) = \sum_{n \in \mathbb{Z} \setminus \{0\}} g(n) e^{\frac{2\pi i nx}{T}}
\end{equation}
with
\begin{equation} \label{eq:8coeffexpdecayQ}
|g(n)| \le A_1 \exp \left( -\eta_1 \frac{|n|}{T} \right).
\end{equation}

\medskip

Recall how periodic potentials arose in our global context by replacing the vector $\omega$ with a vector $\tilde \omega$ having rational components. For such periodic potentials $\tilde V$, we have the representation involving $\zv$ (resp., the group $\mathfrak{Z}(\tilde \omega)$) of the potential and the gaps inherited from this global setting, but also the representation involving $\Z$ that comes from the general periodic theory summarized above. It is natural to connect these two representations, and the following definition and lemma will be useful in this regard.

\begin{defi}\label{defi:twometrics}
Let $\mathfrak{m} \in \mathfrak{Z}(\tilde \omega)$. One has $\mathfrak{m} \tilde \omega = \frac{\ell(\mathfrak{m})}{T}$ with $\ell(\mathfrak{m}) \in \mathbb{Z}$, where $T = \tau_0^{-1}$, see Lemmas~\ref{lem:PA10omegalattice1} and \ref{lem:PA10omegalattice1a}. We define a new distance on $\mathfrak{Z}(\tilde \omega)$ by setting $|\mathfrak{m}|_\mathbb{R} = \frac{|\ell(\mathfrak{m})|}{T}$. Note that $\mathfrak{Z}(\tilde \omega) \ni \mathfrak{m} \mapsto \ell(\mathfrak{m}) \in \Z$ is an isomorphism.
\end{defi}

We need the following elementary properties of the distances $|\mathfrak{m}|$ and $|\mathfrak{m}|_\mathbb{R}$.

\begin{lemma}\label{lem:PAL10omegagropnorms}
$(0)$ $|\mathfrak{m} \tilde \omega| \le |\mathfrak{m}| |\tilde \omega|$. \\
$(1)$ $|-\mathfrak{m}| = |\mathfrak{m}|$, $|h \mathfrak{m}| \le |h| |\mathfrak{m}|$ for any $\mathfrak{m} \in \mathfrak{Z}(\tilde \omega)$ and any $h \in \mathbb{Z}$. \\
$(2)$ For any $\mathfrak{m}$, we have $c_{\tilde \omega} |\mathfrak{m}|_\mathbb{R} \le |\mathfrak{m}| \le C_{\tilde \omega} |\mathfrak{m}|_\mathbb{R}$, where $c_{\tilde \omega}, C_{\tilde \omega} \in (0,\infty)$ are constants.
\end{lemma}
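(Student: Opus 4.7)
The plan is to handle the three parts in order, with the bulk of the work in part~(2), where the nontrivial direction will use that $\mathfrak{Z}(\tilde\omega)$ is a cyclic group under the isomorphism $\iota$ of Lemma~\ref{lem:PA10omegalattice1a}.

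For part~(0), I would pick a representative $n\in\mathfrak{m}$ realizing the infimum, so that $|n|=|\mathfrak{m}|$. Since $\mathfrak{m}\tilde\omega$ is well-defined and equals $n\tilde\omega=\sum_j n_j\tilde\omega_j$, the $\ell^1$--$\ell^\infty$ estimate gives
\begin{equation*}
|\mathfrak{m}\tilde\omega|=|n\tilde\omega|\le \Big(\sum_j|n_j|\Big)\max_j|\tilde\omega_j|\le |n|\,|\tilde\omega|=|\mathfrak{m}|\,|\tilde\omega|,
\end{equation*}
using our convention $|\tilde\omega|=\sum_j|\tilde\omega_j|$ (or its $\ell^\infty$ analogue; either choice works since $|\tilde\omega|$ is a fixed constant).

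For part~(1), the identity $|-\mathfrak{m}|=|\mathfrak{m}|$ is immediate from $n\mapsto -n$ being a bijection of $\mathfrak{m}$ onto $-\mathfrak{m}$ preserving $|\cdot|$. For the scaling inequality, I would again pick $n\in\mathfrak{m}$ with $|n|=|\mathfrak{m}|$; then $hn\in h\mathfrak{m}$ and so $|h\mathfrak{m}|\le |hn|=|h|\,|n|=|h|\,|\mathfrak{m}|$.

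Part~(2) is the main point. The lower bound $|\mathfrak{m}|\ge |\mathfrak{m}|_{\mathbb R}/|\tilde\omega|$ follows directly from part~(0): by Definition~\ref{defi:twometrics}, $|\mathfrak{m}|_{\mathbb R}=|\mathfrak{m}\tilde\omega|\le |\mathfrak{m}|\,|\tilde\omega|$, so one may take $c_{\tilde\omega}:=|\tilde\omega|^{-1}$. For the upper bound, I would exploit the cyclic structure. By Lemma~\ref{lem:PA10omegalattice1}, $\mathfrak{F}(\tilde\omega)=\{a\tau_0:a\in\mathbb Z\}$ with $\tau_0=T^{-1}$, and by Lemma~\ref{lem:PA10omegalattice1a} the map $\iota:\mathfrak{Z}(\tilde\omega)\to\mathfrak{F}(\tilde\omega)$ is a group isomorphism. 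Hence $\mathfrak{Z}(\tilde\omega)$ is cyclic, with a generator $\mathfrak{m}_0$ determined by $\iota(\mathfrak{m}_0)=\tau_0$. For an arbitrary $\mathfrak{m}\in\mathfrak{Z}(\tilde\omega)$, write $\iota(\mathfrak{m})=\mathfrak{m}\tilde\omega=\ell(\mathfrak{m})/T=\ell(\mathfrak{m})\tau_0=\iota(\ell(\mathfrak{m})\mathfrak{m}_0)$, and conclude from injectivity of $\iota$ that $\mathfrak{m}=\ell(\mathfrak{m})\mathfrak{m}_0$. Applying part~(1) now gives
\begin{equation*}
|\mathfrak{m}|=|\ell(\mathfrak{m})\mathfrak{m}_0|\le |\ell(\mathfrak{m})|\,|\mathfrak{m}_0|=T|\mathfrak{m}_0|\cdot \frac{|\ell(\mathfrak{m})|}{T}=T|\mathfrak{m}_0|\cdot |\mathfrak{m}|_{\mathbb R},
\end{equation*}
so one may take $C_{\tilde\omega}:=T|\mathfrak{m}_0|$. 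Both $c_{\tilde\omega}$ and $C_{\tilde\omega}$ lie in $(0,\infty)$, as required. The only real obstacle is producing the constant $C_{\tilde\omega}$; this is handled cleanly once one observes that the cyclicity of $\mathfrak{F}(\tilde\omega)\subset\mathbb R$ transfers to $\mathfrak{Z}(\tilde\omega)$ via the isomorphism $\iota$.
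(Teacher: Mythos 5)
Your proposal is correct and follows essentially the same route as the paper: representative realizing the quotient norm for parts (0)--(1), and for part (2) the generator $\mathfrak{m}_0$ with $\ell(\mathfrak{m}_0)=1$ obtained from the isomorphism $\iota$, giving $C_{\tilde\omega}=T|\mathfrak{m}_0|$ and $c_{\tilde\omega}=|\tilde\omega|^{-1}$ exactly as in the paper. The only (harmless) variation is that you bound $|h\mathfrak{m}|$ via the representative $hn$ rather than by iterating subadditivity of the quotient norm.
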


\begin{proof}
$(0)$ Let $m_1 \in \mathfrak{m}$ be such that $|\mathfrak{m}| = |m_1|$. Then $|\mathfrak{m} \tilde \omega| = |m_1 \tilde \omega| \le |m_1| |\tilde \omega| = |\mathfrak{m}| |\tilde \omega|$.

$(1)$ Note that the coset $-[m]$ consists of all vectors $-m_1$ with $m_1 \in [m]$. This implies $|-\mathfrak{m}| = |\mathfrak{m}|$. Let $h \in \mathbb{Z}$, $h > 0$. Then $h \mathfrak{m} = \mathfrak{m} + \mathfrak{m} + \cdots + \mathfrak{m}$ and $|h \mathfrak{m}| \le h |\mathfrak{m}|$ follows from the subadditivity property $|\mathfrak{m}_1 + \mathfrak{m}_2| \le |\mathfrak{m}_1| + |\mathfrak{m}_2|$.

$(2)$ There exists $\mathfrak{m}_0$ such that $\ell(\mathfrak{m}_0) = 1$. Since $\ell$ is an isomorphism, $\mathfrak{m} = \ell(\mathfrak{m}) \mathfrak{m}_0$ for any $\mathfrak{m}$. This implies
$$
|\mathfrak{m}| =| \ell(\mathfrak{m}) \mathfrak{m}_0| \le |\ell(\mathfrak{m})| |\mathfrak{m}_0| = C_{\tilde \omega} |\mathfrak{m}|_\mathbb{R},
$$
with $C_{\tilde \omega} = T |\mathfrak{m}_0|$. This proves the right-hand side inequality. Due to the last definition, we have $|\mathfrak{m}|_\mathbb{R} = |\mathfrak{m} \tilde \omega| \le c_{\tilde \omega}^{-1}| \mathfrak{m}|$, with $c_{\tilde \omega} = |\tilde \omega|^{-1}$.
\end{proof}

Now we turn to the proof of Theorem~$\tilde I$.

\begin{proof}[Proof of Theorem~$\tilde I$]
Let $V \in \mathcal{P}(\tilde \omega, \ve, \kappa_0)$. Let $\ve^\zero = \ve^\zero (a_0, b_0, \kappa_0)$ be as in Theorem~$\tilde B$. Assume $0 < \ve < \ve^\zero$. Let
$$
\cS = [\underline{E} , \infty) \setminus \bigcup_{m \in \mathfrak{Z}(\tilde \omega) \setminus \{ 0 \} : k_\mathfrak{m} > 0, \; E^-_\mathfrak{m} < E^+_\mathfrak{m}} (E^-_\mathfrak{m}, E^+_\mathfrak{m}),
$$
$\underline{E} = E(0)$, $E^\pm_\mathfrak{m} = E^\pm( k_\mathfrak{m}))$ be as in Theorem~$\tilde B$. Re-enumerate the eigenvalues $E^\pm_m$ via $E^\pm_\ell = E^\pm_{\mathfrak{m}}$, with $\ell = \ell(\mathfrak{m})$ defined in Lemma~\ref{lem:PAL10omegagropnorms}. Let $\mathcal{O} = \{ \ell \in \mathbb{Z} : \ell = \ell(\mathfrak{m}), \; E^-_{\mathfrak{m}} < E^+_{\mathfrak{m}} \}$.

The function $V$ is $T$-periodic with $T = \tau_0^{-1}$,
\begin{equation} \label{eq:8potentialVinell}
V(x) = \sum_{\ell \in \mathbb{Z} \setminus \{ 0 \}} c(\ell) e^{\frac{2 \pi i \ell x}{T}},
\end{equation}
where $c(\ell) := c(\mathfrak{m})$ with $\ell = \ell(\mathfrak{m})$; see Lemma~\ref{lem:PA10omegalattice1tildeV}. Let $Q \in \mathcal{I} \mathcal{S} \mathcal{O} (V)$. One has
\begin{equation} \label{eq:8potentialQinell}
Q(x) = \sum_{\ell \in \mathbb{Z} \setminus \{ 0 \}} d(\ell) e^{\frac{2 \pi i \ell x}{T}}.
\end{equation}
We can rewrite \eqref{eq:8potentialQinell} as
\begin{equation} \label{eq:8potentialQinell1}
Q(x) = \sum_{\mathfrak{m} \in \mathfrak{Z}(\tilde \omega) \setminus \{ 0 \}} d(\mathfrak{m}) e^{2 \pi i x \mathfrak{m} \tilde \omega}
\end{equation}
with $d(\mathfrak{m}) = d(\ell(\mathfrak{m}))$, $\mathfrak{m} \in \mathfrak{Z}(\tilde \omega) \setminus \{ 0 \}$.

Consider the circles $\mathcal{C}_n$ and the torus $\mathcal{C} = \prod_{\ell \in \mathcal{O}} \mathcal{C}_\ell$. Let $\mu \in \mathcal{C}$. Let $Q = \Phi(\mu)$ be the periodic potential defined as in part $(g)$ of the fundamental facts from the theory of Hill's equation, listed in the beginning of this section. Take an arbitrary $0 < \alpha < 1$. Let $\mathcal{G}_{\alpha}$ be the collection of all $\mu \in \mathcal{C}$ such that $Q = \Phi(\mu)$ has the representation \eqref{eq:8potentialQinell1} with
\begin{equation} \label{eq:8potentialQinellest}
|d(\mathfrak{m})| \le \sqrt{2 \ve} \exp \Big( -\frac{\kappa_0}{2} |\mathfrak{m}|^{\alpha} \Big).
\end{equation}
Let us verify that, actually, $\mathcal{G}_\alpha = \mathcal{C}$. Clearly $\mathcal{G}_\alpha$ is not empty since it contains the point $\mu^\zero$ with $\Phi (\mu^\zero) = V$. The set $\mathcal{G}_\alpha$ is obviously closed. We will now check that the set $\mathcal{G}_\alpha$ is also open. Note first of all that due to $(i)$, for any $Q \in \mathcal{I} \mathcal{S} \mathcal{O} (V)$, we have
\begin{equation} \label{eq:8potentialanalyticQ11}
Q(x) = \sum_{\mathfrak{m} \in \mathfrak{Z}(\tilde \omega) \setminus \{ 0 \}} d(\mathfrak{m}) e^{2 \pi i x \mathfrak{m} \tilde \omega}
\end{equation}
with
\begin{equation} \label{eq:8coeffexpdecayQ11}
|d(\mathfrak{m})| \le A \exp(-\eta |\mathfrak{m}|_\mathbb{R})
\end{equation}
and $A, \eta > 0$ being some constants depending on $V$. Due to Lemma~\ref{lem:PAL10omegagropnorms} we have
\begin{equation} \label{eq:8coeffexpdecayQ12}
|d(\ell)| \le A \exp(-\gamma |\mathfrak{m}|), \quad \ell = \ell(\mathfrak{m}),
\end{equation}
with $\gamma = \gamma(V) > 0$. Take $M = M(V, \alpha)$ large enough so that for $\ell = \ell(\mathfrak{m})$ with
$|\mathfrak{m}| > M$, we have
\begin{equation} \label{eq:8coeffexpdecayQ13}
|d(\ell)| < \ve_0 \exp \left( -\frac{\kappa_0}{4} |\mathfrak{m}|^\alpha \right).
\end{equation}
Set
$$
\delta = K^{-1} e^{-LT} \frac{\ve_0}{2} \exp(-\frac{\kappa_0}{4} M^\alpha).
$$
Let $\mu \in \mathcal{G}_\alpha$, and consider $\lambda \in \mathcal{C}$ with $d(\lambda,\mu) < \delta$. Let $Q = \Phi(\mu)$, $W = \Phi(\lambda)$. Due to $(h)$,
\begin{equation}\label{eq:8ODEsystability1}
\max_{0 \le t \le T} d(\mu(t), \lambda(t)) \le K e^{LT} \delta.
\end{equation}
Combining this with the trace formula \eqref{eq:8traceflat}, we obtain
\begin{equation}\label{eq:8ODEsystability2}
\max_{0 \le t \le T} |Q(t) - W(t)| \le K e^{LT} \delta.
\end{equation}
Let $d(\ell)$, $g(\ell)$, $\ell \in \mathbb{Z}$ be the Fourier coefficients of $Q$ and $W$, respectively. Then,
\begin{equation}\label{eq:8ODEsystability3}
|d(\ell) - g(\ell)| = \Big| \int_0^T (Q(t) - W(t)) e^{\frac{2 \pi i \ell t}{T}} \frac{dt}{T} \Big| \le \max_{0 \le t \le T} d(\mu(t), \lambda(t)) \le K e^{LT} \delta.
\end{equation}
Since $\mu \in \mathcal{G}_\alpha$, \eqref{eq:8potentialQinellest} holds. Assume $\ve < \ve_0^2/8$. Then,
\begin{equation}\label{eq:8ODEsystability4}
|g(\ell)| \le \sqrt{2 \ve} \exp \left( -\frac{\kappa_0}{2} |\mathfrak{m}|^{\alpha} \right) + K e^{LT} \delta < \ve_0 \exp \left( -\frac{\kappa_0}{4} |\mathfrak{m}|^\alpha \right), \quad \ell = \ell(\mathfrak{m})
\end{equation}
for $\ell = \ell(\mathfrak{m})$ with $|\mathfrak{m}| \le M$. Thus, the estimate \eqref{eq:8coeffexpdecayQ13} holds for all $\ell \in \mathbb{Z}$.

Now we want to apply part $(2)$ of Theorem~$\tilde B$ with $W$ in the role of $V$, $\kappa_0/4$ in the role of $\kappa_0$, $\kappa_0$ in the role of $\kappa'_0$, and with $\alpha_0' = \alpha_0 = \alpha$. The reason we pick these values of the parameters is that due to part $(1)$ in Theorem~$\tilde B$, we have $E^+_\mathfrak{m} - E^-_\mathfrak{m} \le 2 \ve \exp(-\kappa_0 |\mathfrak{m}|)$. The coefficients $g(\mathfrak{m})$ obey $|g(\mathfrak{m})| < \ve_0 \exp(-\frac{\kappa_0}{4} |\mathfrak{m}|^\alpha)$ for all $\mathfrak{m}$. Let $\ve^\zero = \ve^\zero (a_0, b_0, \kappa_0/4)$ be defined as in part $(2)$ of Theorem~$\tilde B$. Set
$$
\ve^\one = \min \left( \ve_0^2/8, \ve^\zero/2 \right)
$$
and assume $0 < \ve < \ve^\one$. Now we apply part $(2)$ of Theorem~$\tilde B$ and conclude that the Fourier coefficients $g(\mathfrak{m})$ in fact obey $|g(\mathfrak{m})| \le \sqrt{2 \ve} \exp(-\frac{\kappa_0}{2} |\mathfrak{m}|^{\alpha})$. This means $W \in \mathcal{G}_\alpha$. Thus, we have verified that $\mathcal{G}_\alpha$ is open.

Since $\mathcal{C}$ is connected, we conclude that $\mathcal{G}_\alpha = \mathcal{C}$. This means that for any $\mu \in \mathcal{C}$, $Q = \Phi(\mu)$ has the representation \eqref{eq:8potentialQinell1} and \eqref{eq:8potentialQinellest} holds. Now we can send $\alpha \to 1$ to obtain
\begin{equation} \label{eq:8potentialQinellest11}
|d(\mathfrak{m})| \le \sqrt{2 \ve} \exp \left( -\frac{\kappa_0}{2} |\mathfrak{m}| \right),
\end{equation}
as claimed in Theorem~$\tilde I$. The rest of the statements in Theorem~$\tilde I$ are due to the general facts from the theory of Hill's equation.
\end{proof}

\section{Periodic Approximations}\label{sec.5}

Let $\omega = (\omega_1, \ldots, \omega_\nu) \in \mathbb{R}^\nu$. Assume that the following Diophantine condition holds,
\begin{equation}\label{eq:PAI7-5-85a-zz}
|n \omega| \ge a_0 |n|^{-b_0}, \quad n \in \mathbb{Z}^\nu \setminus \{ 0 \}
\end{equation}
for some
\begin{equation}\label{eq:PAIombasicTcondition5a-zz}
0 < a_0 < 1, \quad \nu < b_0 < \infty.
\end{equation}
Assume also that each component $\omega_j$ obeys
\begin{equation}
\label{eq:diophant-zz}
\| n \omega_j \| \geq \frac{c}{|n|^{\beta}} \mbox{\ \ \ for all $n \in \mathbb{Z} \setminus \{ 0 \}$},
\end{equation}
$0 < c < 1 < \beta$.

\begin{lemma}\label{lem:PAomegas}
Given an arbitrary $r = 1, 2, \ldots$, there exists a vector $\tilde \omega = \tilde \omega^{(r)} = (\tilde \omega_1^{(r)}, \ldots, \tilde \omega_\nu^{(r)})$ with rational components $\tilde \omega_j := \tilde \omega_j^{(r)} = \ell_j^{(r)}/t_j^{(r)}$, $\ell_j^{(r)}, t_j^{(r)} \in \mathbb{Z}$ such that the following conditions hold,
$$
|\omega_j - \tilde \omega_j^{(r)}| < 1/r, \quad r < t_j^{(r)} \le c^{-1} r^\beta.
$$
\end{lemma}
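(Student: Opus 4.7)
The plan is to argue component by component. The Diophantine condition \eqref{eq:diophant-zz} forces each $\omega_j$ to be irrational, so $\omega_j$ admits an infinite continued fraction expansion with convergents $p_k^{(j)}/q_k^{(j)}$ whose denominators satisfy $q_0^{(j)} = 1$, are strictly increasing, and tend to infinity. Given $r$, I would let $k_j$ be the smallest index with $q_{k_j}^{(j)} > r$ and set
$$
t_j^{(r)} := q_{k_j}^{(j)}, \qquad \ell_j^{(r)} := p_{k_j}^{(j)}, \qquad \tilde \omega_j^{(r)} := \ell_j^{(r)}/t_j^{(r)}.
$$
The lower bound $t_j^{(r)} > r$ is built into the choice of $k_j$.

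For the upper bound, the key input is the classical convergent estimate $\|q_k^{(j)}\omega_j\| < 1/q_{k+1}^{(j)}$, which combined with \eqref{eq:diophant-zz} in the form $\|q_k^{(j)}\omega_j\| \ge c\,(q_k^{(j)})^{-\beta}$ yields the crucial recursion
$$
q_{k+1}^{(j)} < c^{-1} (q_k^{(j)})^{\beta}.
$$
Applying this with $k = k_j - 1$ and using the minimality $q_{k_j-1}^{(j)} \le r$ gives $t_j^{(r)} < c^{-1} r^{\beta}$, as required. The base case $k_j = 0$ is vacuous because $q_0^{(j)} = 1 \le r$, so in fact $k_j \ge 1$ always and the preceding convergent $q_{k_j-1}^{(j)}$ exists.

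For the approximation quality, I would observe that
$$
|\omega_j - \tilde \omega_j^{(r)}| = \frac{\|t_j^{(r)}\omega_j\|}{t_j^{(r)}} < \frac{1}{t_j^{(r)} \, q_{k_j+1}^{(j)}} \le \frac{1}{t_j^{(r)}} < \frac{1}{r},
$$
where the last inequality uses $t_j^{(r)} > r$.

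There is no real obstacle: the argument is a short, standard continued-fraction argument in which the Diophantine condition is used exactly once, to upgrade the trivial bound $q_{k+1}^{(j)} \le q_k^{(j)} \cdot$(partial quotient) into the quantitative bound $q_{k+1}^{(j)} < c^{-1}(q_k^{(j)})^\beta$ needed to control $t_j^{(r)}$ in terms of $r^\beta$. The only subtlety worth flagging — and it is routine — is the verification that $k_j \ge 1$, which simply uses $r \ge 1$.
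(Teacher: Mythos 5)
Your proposal is correct and follows essentially the same route as the paper: continued-fraction convergents, the Diophantine condition \eqref{eq:diophant-zz} upgraded via the classical estimate $|\omega_j - p_k^{(j)}/q_k^{(j)}| \le 1/(q_k^{(j)} q_{k+1}^{(j)})$ to the recursion $q_{k+1}^{(j)} \le c^{-1}(q_k^{(j)})^{\beta}$, and then choosing the first convergent whose denominator exceeds $r$. The paper phrases the choice as $q_j^{(s_j)} \le r < q_j^{(s_j+1)}$ and takes $\tilde\omega_j = p_j^{(s_j+1)}/q_j^{(s_j+1)}$, which is exactly your $k_j$; no substantive difference.
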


\begin{proof}
Let $[a_{j,1}, a_{j,2}, \ldots]$ be the continued fraction expansion of $\omega_j$. Let $\omega_j^{(s)} = [a_{j,1}, a_{j,2}, \ldots, a_{j,s}] = p_j^{(s)}/q_j^{(s)}$ be the convergents for $\omega_j$. Recall that
$$
|\omega_j - \omega_j^{(s)}| \le \frac{1}{q_j^{(s)} q_j^{(s+1)}}.
$$
Combining this inequality with condition \eqref{eq:diophant-zz}, we get
\begin{equation}\label{eq:diophantap1}
q_j^{(s+1)} \le c^{-1} (q^\es_j)^{\beta}.
\end{equation}

Given $r$, find $s_j = s_j(r)$ such that $q_j^{(s_j)} \le r < q_j^{(s_j+1)}$. Note that due to \eqref{eq:diophantap1}, we have
$$
q_j^{(s_j+1)} \le c^{-1} (q_j^{(s_j)})^{\beta} \le c^{-1} r^{\beta}.
$$
Set $\tilde \omega_j^{(r)} = \ell_j^{(r)}/t_j^{(r)} = p_j^{(s_j+1)}/q_j^{(s_j+1)}$. We have $|\omega_j - \omega_j^{(s_j+1)}| < 1/q_j^{(s_j+1)} < 1/r$ and we are done.
\end{proof}

\begin{corollary}\label{cor:5Diophantine}
Using the notation from the previous lemma, the following ``Diophantine condition in the box'' holds,
\begin{equation}\label{eq:5PAI7-5-8-zz}
|n \tilde \omega| \ge a'_0 |n|^{-b_0}, \quad 0 < |n| \le \bar R_0,
\end{equation}
with $a_0' = \frac{a_0}{2}$, $\bar R_0 := (\nu^{-1}a'_0r)^{1/(b_0+1)}$. For $r > r_0(a_0, b_0, \nu)$, we also have
\begin{equation}\label{eq:5PAIombasicTcondition-zz}
(\bar R_0)^{\bar b_0} > \prod t_j
\end{equation}
with $\bar b_0 = \bar b_0 (a_0, b_0, \beta, \nu)$.
\end{corollary}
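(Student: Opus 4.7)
The plan is to reduce both claims to elementary manipulations using the triangle inequality, the Diophantine condition \eqref{eq:PAI7-5-85a-zz} on $\omega$, and the quantitative bounds on the approximants $\tilde\omega$ from Lemma~\ref{lem:PAomegas}. No genuine obstacle is anticipated; the only point requiring care is to verify that the exponents match up so that the box radius $\bar R_0$ grows fast enough in $r$ to dominate $\prod t_j$.

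First I would prove the Diophantine-in-the-box inequality. For any $n \in \Z^\nu \setminus \{0\}$, write $n\tilde\omega = n\omega + n(\tilde\omega - \omega)$ and use
\[
|n\tilde\omega| \ge |n\omega| - |n|\,|\tilde\omega - \omega| \ge a_0 |n|^{-b_0} - |n|\cdot\frac{\nu}{r},
\]
where the final estimate uses the definition $|\tilde\omega - \omega| = \sum_j |\tilde\omega_j - \omega_j| \le \nu/r$ from Lemma~\ref{lem:PAomegas}. To force the right-hand side to exceed $\tfrac{a_0}{2}|n|^{-b_0} = a'_0 |n|^{-b_0}$, it suffices to require $|n|^{b_0+1} \le \nu^{-1} a'_0 r$, which is precisely the condition $|n| \le \bar R_0$ with $\bar R_0 = (\nu^{-1} a'_0 r)^{1/(b_0+1)}$. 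This yields \eqref{eq:5PAI7-5-8-zz}.

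For the second claim, I would simply compare growth rates in $r$. From Lemma~\ref{lem:PAomegas}, $t_j \le c^{-1} r^\beta$, so
\[
\prod_{j=1}^\nu t_j \le c^{-\nu} r^{\nu\beta}.
\]
On the other hand,
\[
\bar R_0^{\bar b_0} = (\nu^{-1} a'_0)^{\bar b_0/(b_0+1)}\; r^{\bar b_0/(b_0+1)}.
\]
Choosing any $\bar b_0 = \bar b_0(a_0,b_0,\beta,\nu)$ with $\bar b_0 > \nu\beta(b_0+1)$ makes the $r$-exponent on the right-hand side strictly larger than that on the left, and hence $\bar R_0^{\bar b_0} > \prod t_j$ for all $r$ larger than some $r_0 = r_0(a_0,b_0,\beta,\nu)$ that absorbs the multiplicative constants. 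This gives \eqref{eq:5PAIombasicTcondition-zz} and completes the proof.
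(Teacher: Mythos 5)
Your proof is correct and follows essentially the same route as the paper: the triangle-inequality estimate $|n\tilde\omega| \ge a_0|n|^{-b_0} - |n|\nu r^{-1} \ge \tfrac{a_0}{2}|n|^{-b_0}$ for $|n| \le \bar R_0$ is exactly the paper's argument, and your power-counting $\prod t_j \le c^{-\nu} r^{\nu\beta}$ versus $\bar R_0^{\bar b_0} \sim r^{\bar b_0/(b_0+1)}$ simply spells out the second claim that the paper dispatches in one sentence. No gaps.
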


\begin{proof}
We have
\begin{equation}\label{eq:5PAI7-5-8}
|n \tilde \omega| \ge |n \omega| - |n| |\tilde \omega - \omega| \ge a_0 |n|^{-b_0} - |n| \nu r^{-1} \ge \frac{a_0}{2} |n|^{-b_0}, \quad 0 < |n| \le \bar R_0.
\end{equation}
Since $t_j \le r$, the second statement follows.
\end{proof}

\begin{remark}\label{rem:5PA1}
$(1)$ Let $c(n) \in \mathbb{C}$, $n \in \zv$ obey
\begin{equation}\label{eq:517-4}
\begin{split}
\overline{c(n)} & = c(-n), \quad n \in \zv , \\
|c(n)| & \le \ve \exp(-\kappa_0|n|^{\alpha_0}), \quad n \in \zv \setminus \{ 0 \},
\end{split}
\end{equation}
$0 < \kappa_0, \alpha_0 \le 1$. Let $\tilde \omega = \tilde \omega^{(r)}$ be as in the last corollary. Consider the function
\begin{equation}\label{eq:52Vtilde}
\tilde V(x) = \sum_{n \in \zv} c(n) e^{2 \pi i xn\tilde \omega}\ , \quad x \in\mathbb{R}
\end{equation}
and the Hill equation
\begin{equation} \label{eq:517-1}
[\hat H y](x) = -y''(x) +\tilde V(x) y(x) = Ey(x), \quad x, \ve \in \IR.
\end{equation}
Due to the last corollary, Theorems~$\tilde A$ and $\tilde B$ apply to the potential $\tilde V(x)$. Moreover, all quantities like $\ve_0$ and all estimates are uniform in $r$.

$(2)$ Let $\tilde \omega = (\tilde \omega_1, \ldots, \tilde \omega_\nu) \neq 0$ be a vector with rational components $\tilde \omega_j = \ell_j/t_j$, $\ell_j, t_j \in \mathbb{Z}$. Let as usual $\mathfrak{N}(\tilde \omega) := \{ m \in \mathbb{Z}^\nu : m \omega = 0 \}$, $\mathfrak{Z}(\tilde \omega) = \zv/\mathfrak{N}(\tilde \omega)$, $\mathfrak{F}(\tilde \omega) := \{ m \tilde \omega : m \in \mathbb{Z}^\nu\}$. Recall that $(i)$ $\mathfrak{F}(\tilde \omega) := \{ a \tau_0 : a \in \mathbb{Z}\}$, where $\tau_0 \ge |\bar T^{-1}|$, $\bar T = \prod_j t_j$, $(ii)$ the map $\iota : \mathfrak{m} \to \mathfrak{m} \tilde \omega \in \mathfrak{F}(\tilde \omega)$, $\mathfrak{m} \in \mathfrak{Z}(\tilde \omega)$ is a group isomorphism, see Lemmas~\ref{lem:PA10omegalattice1} and \ref{lem:PA10omegalattice1a}.

We use the notation $[n]_{\tilde \omega} := [n]$ for the coset $n + \mathfrak{N}(\tilde \omega)$, $n \in \mathbb{Z}^\nu$. We denote by $|\mathfrak{n}|$ the standard quotient distance in $\mathfrak{Z}(\tilde \omega)$, $|\mathfrak{n}| := |\mathfrak{n}|_{\tilde \omega} = \min \{ |n| : n \in \mathfrak{n} \}$, $\mathfrak{n} \in \mathfrak{Z}(\tilde \omega)$. Let $\tilde V$ be as in \eqref{eq:52Vtilde}. Recall that due to Lemma~\ref{lem:PA10omegalattice1tildeV} one has $\tilde V(x+T) = \tilde V(x)$, $x \in \mathbb{R}$, with $T := T(\tilde \omega) := \tau_0^{-1}$.
\end{remark}

\section{The Distance Between the Gaps}\label{sec.6}

In this section we derive estimates relating the length of the gaps with the distance between them. We discuss in detail the setting of Theorem~$\tilde A$. To simplify the notation we assume that $\alpha_0 = 1$ in this theorem, since this is the only case we are interested in in the rest of this paper. We remark after this discussion that completely similar conclusions hold in the setting of Theorem A from \cite{DG}.

\begin{lemma}\label{lem2:homogenuityplus}
Using the notation from Theorem~$\tilde A$, the following statements hold:

$(1)$ For any $\mathfrak{m}' \neq \mathfrak{m}$ with $|\mathfrak{m}'| \ge |\mathfrak{m}|$, we have
$$
\dist ([E_\mathfrak{m}^-, E_\mathfrak{m}^+], [E_{\mathfrak{m}'}^-, E_{\mathfrak{m}'}^+])) \ge a |\mathfrak{m}'|^{-b},
$$
where $E^\pm_\mathfrak{n} := E^\pm(k_\mathfrak{n})$, $a, b > 0$ are constants depending on $a_0, b_0, \kappa_0, \nu$.

$(2)$ Let $\mathfrak{m}' \neq \mathfrak{m}$ be such that $|\mathfrak{m}| \ge |\mathfrak{m}'|$ and
\begin{equation}\label{eq:2resonance}
(E_{\mathfrak{m}'}^+ - E_{\mathfrak{m}'}^-) \ge (\dist ([E_\mathfrak{m}^-, E_\mathfrak{m}^+], [E_{\mathfrak{m}'}^-, E_{\mathfrak{m}'}^+]))^4.
\end{equation}
Then, in fact, $|\mathfrak{m}| \ge \tau \exp(\gamma |\mathfrak{m}'|)$, where $\tau, \gamma > 0$ depend on $a_0, b_0, \kappa_0, \nu$.

$(3)$ Using the notation of part $(2)$, denote by $\mathfrak{R}_\mathfrak{m}$ the set of all $\mathfrak{m}' \neq \mathfrak{m}$ with $|\mathfrak{m}'| \le |\mathfrak{m}|$ such that condition \eqref{eq:2resonance} holds. Then $\mathfrak{R}_\mathfrak{m}$ can be enumerated as follows: $n^{(r)}_\mathfrak{m}$, $r = 1, \ldots, r_0$, $|n^{(r-1)}_\mathfrak{m}| \ge |n^{(r)}_\mathfrak{m}|$,
$$
|n^{(r-2)}_\mathfrak{m}| \ge a \exp \left(\kappa |n^{(r)}_\mathfrak{m}| \right),
$$
where $a, \kappa > 0$ depend on $a_0, b_0, \kappa_0, \nu$, $r = 2, 3, \ldots$.

$(4)$ For any $\mathfrak{m} \neq 0$, we have
$$
E_\mathfrak{m}^- - \underline{E}\ge a|\mathfrak{m}|^{-b}.
$$

$(5)$
$$
E^+_\mathfrak{m} \le 2 (k_\mathfrak{m}+1)^2 \le C(|\tilde \omega| |\mathfrak{m}| + 1)^2.
$$
\end{lemma}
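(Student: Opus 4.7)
The plan is to prove the five parts in the order $(5), (4), (1), (2), (3)$, each building on the preceding ones together with Theorems~$\tilde A$ and $\tilde B$.

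For part $(5)$, $|k_\mathfrak{m}| \le |\mathfrak{m}||\tilde \omega|/2$ is immediate from the definition, so it suffices to bound $E^+_\mathfrak{m}$. I will iterate the upper-increment inequality from Theorem~$\tilde A$(c) along a partition of $[0, k_\mathfrak{m}]$ into subintervals of length $< 1/4$; this telescopes to $E^+_\mathfrak{m} \le E(0) + 2 k_\mathfrak{m}^2 + C \ve$, with the summed $\delta$-contribution controlled by $\sum_{\mathfrak{n}\neq 0} a_0(1+|\mathfrak{n}|)^{-b_0-3} < \infty$ via Lemma~\ref{lem:PA2Fvolume} (using $b_0 > \nu$), and the additive constant is absorbed into $2(k_\mathfrak{m}+1)^2$. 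Part $(4)$ is symmetric: the matching lower bound $(k^0)^2(k-k_1)^2 < E(k) - E(k_1)$ of Theorem~$\tilde A$(c), with $k_1 \to 0^+$ and $k \to k_\mathfrak{m}^-$, produces $E^-_\mathfrak{m} - \underline{E} \ge (k^0_\mathfrak{m})^2 k_\mathfrak{m}^2$, and the Diophantine bound $|k_\mathfrak{m}| \ge (a_0/2) |\mathfrak{m}|^{-b_0}$ of Lemma~\ref{lem:PAL10omdiophsubst} then gives the claim with $b = 4 b_0$.

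For part $(1)$, I will use the symmetry $E(k) = E(-k)$ of Theorem~$\tilde A$(c) to reduce to $k_\mathfrak{m}, k_{\mathfrak{m}'} > 0$ with $k_\mathfrak{m} < k_{\mathfrak{m}'}$. Monotonicity of $E$ on the positive axis (recorded in the proof of Theorem~$\tilde A$) identifies the distance between the two gaps with $E^-(k_{\mathfrak{m}'}) - E^+(k_\mathfrak{m})$, and iterating the lower-increment inequality yields
\[
E^-(k_{\mathfrak{m}'}) - E^+(k_\mathfrak{m}) \ge c (k^0_\mathfrak{m})^2 \min\{(k_{\mathfrak{m}'} - k_\mathfrak{m})^2, 1\}.
\]
The Diophantine estimates $|k_\mathfrak{m}| \ge (a_0/2) |\mathfrak{m}|^{-b_0}$ and $|k_{\mathfrak{m}'} - k_\mathfrak{m}| = |(\mathfrak{m}' - \mathfrak{m})\tilde \omega|/2 \ge c' |\mathfrak{m}'|^{-b_0}$ (the latter via Lemma~\ref{lem:PAL10omdiophsubst} applied to $\mathfrak{m}' - \mathfrak{m}$ and the trivial $|\mathfrak{m}' - \mathfrak{m}| \le 2|\mathfrak{m}'|$) then combine to $\dist(G_\mathfrak{m}, G_{\mathfrak{m}'}) \ge a|\mathfrak{m}'|^{-b}$ with $b = 4 b_0$. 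Part $(2)$ follows immediately: Theorem~$\tilde B$(1) gives $E^+_{\mathfrak{m}'} - E^-_{\mathfrak{m}'} \le 2\ve \exp(-\kappa_0 |\mathfrak{m}'|/2)$, which together with \eqref{eq:2resonance} forces $\dist \le (2\ve)^{1/4} \exp(-\kappa_0 |\mathfrak{m}'|/8)$, while part $(1)$ with the roles of $\mathfrak{m}, \mathfrak{m}'$ exchanged (since now $|\mathfrak{m}| \ge |\mathfrak{m}'|$) supplies the polynomial lower bound $\dist \ge a|\mathfrak{m}|^{-b}$; comparing and inverting yields $|\mathfrak{m}| \ge \tau \exp(\gamma |\mathfrak{m}'|)$ with $\gamma = \kappa_0/(8 b)$.

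For part $(3)$, after listing $\mathfrak{R}_\mathfrak{m}$ in decreasing order of modulus, the core inequality is the triangle estimate for three intervals on $\IR$:
\[
\dist(G_{n^{(r-2)}}, G_{n^{(r)}}) \le \dist(G_{n^{(r-2)}}, G_\mathfrak{m}) + (E^+_\mathfrak{m} - E^-_\mathfrak{m}) + \dist(G_\mathfrak{m}, G_{n^{(r)}}).
\]
Applying \eqref{eq:2resonance} to both $n^{(r-2)}$ and $n^{(r)}$, Theorem~$\tilde B$(1) to all three gap lengths, and exploiting $|\mathfrak{m}|, |n^{(r-2)}| \ge |n^{(r)}|$, each right-hand term is at most $(2\ve)^{1/4} \exp(-\kappa_0 |n^{(r)}|/8)$. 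Part $(1)$ applied to the pair $(n^{(r-2)}, n^{(r)})$ then supplies the polynomial lower bound $a |n^{(r-2)}|^{-b}$, and the two combine to the claimed $|n^{(r-2)}| \ge a \exp(\kappa |n^{(r)}|)$. The main obstacle is choosing the right telescope: proceeding through consecutive indices $r-1, r$ would require an effective bound of order $\exp(-\kappa_0|n^{(r)}|/2)$ on the gap at $n^{(r-1)}$, which Theorem~$\tilde B$ does not provide once $|n^{(r-1)}|$ is merely comparable to $|n^{(r)}|$; routing through $\mathfrak{m}$ and jumping from index $r-2$ directly to $r$ is what allows every right-hand quantity to be controlled by $\exp(-\kappa_0|n^{(r)}|/8)$.
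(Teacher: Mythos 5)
Your proposal is correct and follows the paper's own proof in all essentials: parts $(1)$--$(3)$ use exactly the paper's argument (Diophantine separation of the $k_{\mathfrak{n}}$'s combined with the two-sided increment inequality from Theorem~$\tilde A$(c); then Theorem~$\tilde B$ played against the polynomial lower bound of part $(1)$; then the triangle inequality routed through $G_\mathfrak{m}$ for part $(3)$), and your treatments of $(4)$ and $(5)$, which the paper dismisses as ``similar''/immediate, are consistent with that scheme. One quibble: your closing remark about the ``obstacle'' in part $(3)$ is mistaken --- Theorem~$\tilde B$ does bound the gap at $n^{(r-1)}$ by $2\ve\exp(-\tfrac{\kappa_0}{2}|n^{(r-1)}|)\le 2\ve\exp(-\tfrac{\kappa_0}{2}|n^{(r)}|)$ because $|n^{(r-1)}|\ge |n^{(r)}|$, so the same route through $G_\mathfrak{m}$ would in fact yield the consecutive-index estimate as well (the paper's own derivation is stated for an arbitrary pair in $\mathfrak{R}_\mathfrak{m}$); this aside does not affect the validity of your proof of the stated $(r-2,r)$ claim.
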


\begin{proof}
Recall from Lemma~\ref{lem:PAL10omdiophsubst} that
$$
|\mathfrak{m} \tilde\omega| > a_0 |\mathfrak{m}|^{-b_0} \quad \text{for any $\mathfrak{m} \neq 0$}.
$$
In what follows we denote by $a_j$ constants depending on $a_0,b_0,\kappa_0,\nu$.

$(1)$ Let $\mathfrak{m}' \neq \mathfrak{m}$, $|\mathfrak{m}| \le |\mathfrak{m}'|$ be arbitrary. Then,
$$
|k_\mathfrak{m} - k_{\mathfrak{m}'}| = |(\mathfrak{m} - \mathfrak{m}')\tilde \omega|/2 \ge a_0 (2 |\mathfrak{m}'|)^{-b_0}/2 \ge a_1 |\mathfrak{m}'|^{-b_0}.
$$
Assume for instance that $k_{\mathfrak{m}'} > k_{\mathfrak{m}} > 0$. Due to \eqref{eq:1Ekk1EGT11} in Theorem~$\tilde A$, we have
$$
E^-(k_{\mathfrak{m}'}) - E^+(k_{\mathfrak{m}}) > (k^0)^2 (k_{\mathfrak{m}'} - k_\mathfrak{m})^2 \ge (k^0)^2 a_2 |\mathfrak{m}'|^{-2b_0},
$$
where
$$
k^\zero := \min (\ve_0, k_{\mathfrak{m}} /1024) \ge \ve_0 a_3 |\mathfrak{m}'|^{-b_0}.
$$
Thus,
$$
E^-(k_{\mathfrak{m}'}) - E^+(k_{\mathfrak{m}}) > (k^0)^2 (k_{\mathfrak{m}'} - k_\mathfrak{m})^2 \ge \ve_0^2 a_4 | \mathfrak{m}'|^{-4 b_0} = a_5 |\mathfrak{m}'|^{-4 b_0},
$$
which means that
$$
\dist ([E_\mathfrak{m}^-, E_\mathfrak{m}^+], [E_{\mathfrak{m}'}^-, E_{\mathfrak{m}'}^+]) \ge a |\mathfrak{m}'|^{-b}.
$$
with $a=a_5$ and $b=4b_0$. The remaining cases are completely similar.

$(2)$ Just as in $(1)$ we obtain
$$
|\mathfrak{m}| \ge a_6 \dist ([E_\mathfrak{m}^-, E_\mathfrak{m}^+], [E_{\mathfrak{m}'}^-, E_{\mathfrak{m}'}^+]))^{-\frac{1}{4 b_0}},
$$
since this time we assume $|\mathfrak{m}| \ge |\mathfrak{m}'|$. Combining this with \eqref{eq:2resonance} we obtain
\begin{equation}\label{eq:3gapslenthsAG}
|\mathfrak{m}| \ge a_6 (E_{\mathfrak{m}'}^+ - E_{\mathfrak{m}'}^-)^{-\frac{1}{16 b_0}}.
\end{equation}
On the other hand, by Theorem~$\tilde B$,
\begin{equation}\label{eq:3gapslenths}
E^+(k_{\mathfrak{m}'}) - E^-(k_{\mathfrak{m}'}) < 2 \ve \exp \Big( -\frac{\kappa_0}{2} |\mathfrak{m}'| \Big).
\end{equation}
Combining \eqref{eq:3gapslenthsAG} with \eqref{eq:3gapslenths} we obtain the statement.

$(3)$ Let us enumerate $\mathfrak{R}_\mathfrak{m}$ as $n^{(r)}_\mathfrak{m}$, $r = 1, \ldots, r_0$,
so that
$$
|n^{(r-1)}_\mathfrak{m}| \ge |n^{(r)}_\mathfrak{m}|
$$
Let $\mathfrak{m}',\mathfrak{m}''\in\mathfrak{R}_\mathfrak{m}$, $\mathfrak{m}' \neq \mathfrak{m}''$,
$|\mathfrak{m}'| \ge |\mathfrak{m}''|$.
Using condition \eqref{eq:2resonance} and Theorem $\tilde B$ get
\begin{equation}\label{eq:2resonancerep1}
\begin{split}
\dist ([E_{\mathfrak{m}'}^-, E_{\mathfrak{m}'}^+], [E_{\mathfrak{m}''}^-, E_{\mathfrak{m}''}^+])
\le \dist ([E_{\mathfrak{m}'}^-, E_{\mathfrak{m}'}^+], [E_{\mathfrak{m}}^-, E_{\mathfrak{m}}^+])
+\dist ([E_{\mathfrak{m}}^-, E_{\mathfrak{m}}^+], [E_{\mathfrak{m}''}^-, E_{\mathfrak{m}''}^+]),\\
+(E_{\mathfrak{m}}^+- E_{\mathfrak{m}}^-)\le (E_{\mathfrak{m}'}^+ - E_{\mathfrak{m}'}^-)^{1/4}+(E_{\mathfrak{m}''}^+ - E_{\mathfrak{m}''}^-)^{1/4}+(E_{\mathfrak{m}}^+- E_{\mathfrak{m}}^-)\\
\le [2 \ve \exp \Big( -\frac{\kappa_0}{2} |\mathfrak{m}'|\Big )]^{1/4}
+[2 \ve \exp \Big( -\frac{\kappa_0}{2} |\mathfrak{m}''|\Big )]^{1/4}+
2 \ve \exp \Big( -\frac{\kappa_0}{2} |\mathfrak{m}|\Big )<\exp \Big( -\frac{\kappa_0}{8} |\mathfrak{m}''|\Big ).
\end{split}
\end{equation}
On the other hand by $(1)$
$$
|\mathfrak{m}'| \ge a_6 \dist ([E_{\mathfrak{m}'}^-, E_{\mathfrak{m}'}^+], [E_{\mathfrak{m}''}^-, E_{\mathfrak{m}''}^+]))^{-\frac{1}{4 b_0}},
$$
and the statement follows.

$(4)$ The proof is completely similar to $(1)$ and we omit it.

$(5)$ This follows from \eqref{eq:1Ekk1EGT11} in Theorem $\tilde A$.
\end{proof}

\begin{remark}\label{rem:6pot1}
Let $V$ be as in Theorem $\tilde A$. Consider the Schr\"odinger equation
\begin{equation} \label{eq:1-1zz}
[H\psi](x) := -\psi''(x) +  V(x) \psi(x) = E \psi(x), \qquad x \in \IR,
\end{equation}
Theorems $\tilde A$ and $\tilde B$ in the current work are versions of Theorems~A and B in \cite{DG}. In this work we need a version of  Lemma~\ref{lem2:homogenuityplus} in the setting of equation \eqref{eq:1-1zz}. We restate here Theorems~A and B from \cite{DG}. It is clear from the statements of Theorems~A and B that actually  Lemma~\ref{lem2:homogenuityplus} and its proof work word for word in this setting as well. We do not restate it in order to avoid redundant repetitions. Later we refer to  Lemma~\ref{lem2:homogenuityplus} for \textbf{both settings}.
\end{remark}

\bigskip

We now restate Theorems A and B from \cite{DG}. Set
\begin{equation}\label{eq:6K.1}
\begin{split}
k_n & = -n\omega/2, \quad n \in \zv \setminus \{0\}, \quad \mathcal{K}(\omega) = \{ k_n : n \in \zv \setminus \{0\} \}, \\
\mathfrak{J}_n & = ( k_n - \delta(n), k_n + \delta(n) ), \quad \delta(n) = a_0 (1 + |n|)^{-b_0-3}, \quad n \in \zv \setminus \{0\}, \\
\mathfrak{R}(k) & = \{ n \in \zv \setminus \{0\} : k \in \mathfrak{J}_n \}, \quad \mathfrak{G} = \{ k : |\mathfrak{R}(k)| < \infty \},
\end{split}
\end{equation}
where $a_0,b_0$ are as in the Diophantine condition \eqref{eq:1PAI7-5-85a}. Let $k \in \mathfrak{G}$ be such that $|\mathfrak{R}(k)| > 0$. Due to the Diophantine condition, one can enumerate the points of $\mathfrak{R}(k)$ as $n^{(\ell)}(k)$, $\ell = 0, \dots, \ell(k)$, $1 + \ell(k) = |\mathfrak{R}(k)|$, so that $|n^{(\ell)}(k)| < |n^{(\ell+1)}(k)|$. Set
\begin{equation}\label{eq:6mjdefi}
\begin{split}
T_{m}(n) & = m - n ,\quad m, n \in \mathbb{Z}^\nu, \\
\mathfrak{m}^{(0)}(k) & = \{ 0, n^{(0)}(k) \}, \quad \mathfrak{m}^{(\ell)}(k) = \mathfrak{m}^{(\ell-1)}(k) \cup T_{n^{(\ell)}(k)}(\mathfrak{m}^{(\ell-1)}(k)), \quad \ell = 1, \dots, \ell(k).
\end{split}
\end{equation}

\begin{thma}
There exists $\ve_0 = \ve_0(\ka_0, a_0, b_0) > 0$ such that for $0 < \ve < \ve_0$ and $k \in \mathfrak{G} \setminus \frac{\omega}{2}(\zv \setminus \{0\})$, there exist $E(k) \in \mathbb{R}$ and $\vp(k) := (\vp(n;k))_{n \in \zv}$ such that the following conditions hold:

$(a)$ $\vp(0; k) = 1$,
\begin{equation} \label{eq:6-17evdecay1A}
\begin{split}
|\vp(n ;k)| & \le \ve^{1/2} \sum_{m \in \mathfrak{m}^{(\ell)}} \exp \Big( -\frac{7}{8} \kappa_0 |n-m| \Big), \quad \text{ $n \notin \mathfrak{m}^{(\ell(k))}(k)$}, \\
|\vp(m;k)| & \le 2 \quad \text{for any $m \in \mathfrak{m}^{(\ell(k))}(k)$.}
\end{split}
\end{equation}

$(b)$ The function
$$
\psi(k, x) = \sum_{n \in \zv} \vp(n; k) e^{2 \pi i x (n \omega + k)}
$$
is well-defined and obeys equation \eqref{eq:1-1zz} with $E = E(k)$, that is,
\begin{equation}\label{eq:6.sco}
H \psi(k,x) \equiv  - \psi''(k,x) + V(x) \psi(k,x) = E(k) \psi(k,x).
\end{equation}

$(c)$
$$
E(k) = E(-k), \quad \varphi(n;-k) = \overline{\varphi(-n; k)}, \quad \psi(-k, x)=\overline{\psi(k, x)},
$$
\begin{equation}\label{eq:6Ekk1EGT11}
\begin{split}
(k^0)^2 (k - k_1)^2  < E(k) - E(k_1) < 2k (k - k_1) + 2 \ve \sum_{k_1 < k_{n} < k} \delta(n), \quad 0 < k - k_1 < 1/4, \; k_1 > 0,
\end{split}
\end{equation}
where $k^\zero := \min(\ve_0, k/1024)$.

$(d)$ The spectrum of $H$ consists of the following set,
$$
\cS = [E(0) , \infty) \setminus \bigcup_{m \in \zv \setminus \{0\} : E^-( k_m) < E^+( k_m)} (E^-( k_m), E^+( k_m)),
$$
where
$$
E^\pm(k_m) = \lim_{k \to k_m \pm 0, \; k \in \mathfrak{G} \setminus \mathcal{K}(\omega)} E(k), \quad \text{ for $k_m>0$.}
$$
\end{thma}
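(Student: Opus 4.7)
The plan is to mirror the proof of Theorem~$\tilde A$ given in Section~\ref{sec.2}, replacing the $\tilde\omega$-lattice $\mathfrak{Z}(\tilde\omega)$ by $\zv$ throughout. First I would introduce the family of dual operators $H_k$ on $\ell^2(\zv)$ with entries
\[
h(m,n;k) = (2\pi)^2(m\omega+k)^2\,\delta_{m,n} + \ve\, c(n-m)(1-\delta_{m,n}),
\]
linked to $H$ via the Fourier transform exactly as in \eqref{eq:17-1FtransH}--\eqref{eq:17-1FtransHdual}. The essential input is the multi-scale analysis of \cite{DG} (its Theorem~C, the $\zv$-analog of Theorem~$\tilde C$ invoked in Section~\ref{sec.2}): for each $k \in \mathfrak{G}\setminus \mathcal{K}(\omega)$ it produces a real eigenvalue $E(k)$ and an eigenvector $\vp(\cdot;k)$ of $H_k$ obeying the decay \eqref{eq:6-17evdecay1A}, the two-sided monotonicity/continuity estimate \eqref{eq:6Ekk1EGT11}, and exponentially decaying Green's function bounds for $(E-H_k)^{-1}$ whenever $E$ lies below $E(0)-\delta$ or strictly inside a gap.

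Given this input, parts (a)--(c) are essentially immediate. Setting $\psi(k,x) := \sum_n \vp(n;k) e^{2\pi i(n\omega+k)x}$, the decay in \eqref{eq:6-17evdecay1A} guarantees absolute convergence and term-by-term differentiability of the series, so the matrix identity $H_k\vp(k) = E(k)\vp(k)$ translates directly into \eqref{eq:6.sco}, yielding (b). The symmetries in (c) follow from $\overline{c(n)}=c(-n)$: by uniqueness of the branch produced by the multi-scale construction one verifies $\vp(n;-k)=\overline{\vp(-n;k)}$ and $E(-k)=E(k)$, which forces $\overline{\psi(k,x)} = \psi(-k,x)$. The two-sided inequality \eqref{eq:6Ekk1EGT11} is a verbatim restatement of the quantitative control on the branch $k\mapsto E(k)$ output by the multi-scale analysis.

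For part (d) I would run the dual-operator argument of Section~\ref{sec.2}: the analog of Lemma~\ref{lem:13.1}, combined with the Green's function decay for $H_k$ at $E < E(0)-\delta$ and at $E$ in the interior of a gap, forces $(E-H)$ to be invertible there, yielding
\[
\spec H \subset [E(0),\infty) \setminus \bigcup_{m:E^-(k_m)<E^+(k_m)} (E^-(k_m),E^+(k_m)).
\]
The reverse inclusion comes for free: each bounded $\psi(k,x)$ with $k \in \mathfrak{G}\setminus \mathcal{K}(\omega)$ is a polynomially bounded generalized eigenfunction, hence $E(k)\in \spec H$; by the continuity and strict monotonicity (for $k>0$) encoded in \eqref{eq:6Ekk1EGT11}, the closure of $\{E(k)\}$ fills out the claimed set, with $E^\pm(k_m)$ arising as the one-sided limits at the resonant points $k_m$.

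The main obstacle is, as in the periodic case, the multi-scale input itself. The Fourier-transform passage from $H_k$ to $H$ and the invertibility argument are routine, almost identical to the steps carried out in Section~\ref{sec.2}; the only substantive difficulty lies in the inductive construction of the branch $E(k)$ and the uniform off-diagonal Green's function estimates for $H_k$ in the presence of the small denominators $k+n\omega$ with $n\in\zv\setminus\{0\}$. That step is precisely the content of \cite{DG} and must be imported wholesale; no shortcut appears within the framework developed here.
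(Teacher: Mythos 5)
Your proposal is correct and follows essentially the same route the paper takes: the paper merely restates Theorem~A from \cite{DG} without reproving it, and your argument is exactly the template used in Section~\ref{sec.2} for Theorem~$\tilde A$ (dual operators $H_k$, Fourier-transform correspondence, the invertibility lemma analogous to Lemma~\ref{lem:13.1}, and the reverse inclusion via bounded generalized eigenfunctions together with the monotonicity/continuity of $E(k)$), with the multi-scale construction of $E(k)$, $\vp(\cdot;k)$ and the Green's function bounds imported from \cite{DG} just as the paper imports Theorem~$\tilde C$ from \cite{DGL}. You also correctly identify that this imported multi-scale input is the only substantive ingredient, so no gap remains beyond what is legitimately cited.
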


\begin{thmb}
$(1)$ The gaps $(E^-(k_m), E^+(k_m))$ in Theorem~A obey $E^+(k_m) - E^-(k_m) \le 2 \ve \exp(-\frac{\kappa_0}{2} |m|)$.

$(2)$ Using the notation from Theorem~A, there exists $\ve^\zero > 0$ such that if the gaps $(E^-(k_m), E^+(k_m))$ obey $E^+(k_m) - E^-(k_m) \le \ve \exp(-\kappa |m|)$ with $0 < \ve < \ve^\zero$, $\kappa > 4 \kappa_0$, then, in fact, the Fourier coefficients $c(m)$ obey $|c(m)| \le \ve^{1/2} \exp(-\frac{\kappa}{2} |m|)$.
\end{thmb}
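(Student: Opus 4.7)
The plan is to mirror the proof of Theorem~$\tilde B$ from Section~\ref{sec.PA4B}, replacing the periodic operator $H_{\tilde V}$ by the quasi-periodic operator $H_V$ and substituting Theorem~D of \cite{DG} for Theorem~$\tilde D$ of \cite{DGL}. The multi-scale analysis of \cite{DG} provides, for each scale $s$ and each resonant $n^\zero \in \zv \setminus \{0\}$ with $k_{n^\zero} > 0$, a nested box $\La^\es = \La^\es_{k_{n^\zero}}(0)$ and finite-scale edges $E^\pm(\La^\es; k_{n^\zero})$ which converge to $E^\pm(k_{n^\zero})$ at a rate summable in $s$, satisfy the verbatim analog of \eqref{eq:11kk1comp1gapsize}, and obey an implicit eigenvalue equation with an associated off-diagonal Green's function bound on $\La^\es \setminus \{0, n^\zero\}$. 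Part~(1) is then obtained by passing $s \to \infty$ in the scale-by-scale gap estimate $E^+(\La^\es; k_{n^\zero}) - E^-(\La^\es; k_{n^\zero}) \le 2\ve \exp(-\kappa_0 |n^\zero|/2)$, exactly as in \eqref{eq:11kk1comp1gapsizelim}.

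For part~(2), the first step is to extract from the eigenvalue equation the contraction inequality
\begin{equation*}
|c(n^\zero)| \le \exp(C(b_0)(\log[4+|n^\zero|])^3)\,(E^+(k_{n^\zero}) - E^-(k_{n^\zero})) + \sum_{m', n'} |c(m')|\, s(n^\zero; m', n')\, |c(n' - n^\zero)|,
\end{equation*}
the perfect analog of \eqref{eq:11gapsineqstatement}. The term $c(n^\zero)$ is isolated as the leading part of $G(0, n^\zero, \La; \ve, E)$, while the sub-exponential prefactor is produced by the continued-fraction-function machinery of \cite{DG} (the analog of Lemmas~\ref{lem:3.symmetry} and~\ref{lem:3.smalldenominators}), which removes the small denominators arising from nested resonances. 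With this inequality in hand, the bootstrap proceeds scale by scale using the weight function estimates of Lemma~\ref{lem:11scaleddecayestimate}: starting from the input decay $|c(n)| \le \ve \exp(-\kappa_0 |n|)$ together with the gap hypothesis $E^+(k_n) - E^-(k_n) \le \ve \exp(-\kappa |n|)$ with $\kappa > 4\kappa_0$, one improves the effective decay rate on successive windows $R_{t-1} < |n| \le R_t$ until it stabilizes above $\kappa/2$.

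The main obstacle is carrying out the bootstrap cleanly. At each iteration one must absorb the sub-exponential prefactor $\exp(C(b_0)(\log[4+|n^\zero|])^3)$ into the exponential gain coming from the gap hypothesis, while keeping the intermediate rates $\tfrac{15}{16}(1 - \sigma_{3t})\tilde\kappa$ that appear in the iteration strictly above the target $\kappa/2$ throughout. This is precisely what forces the hypothesis $\kappa > 4\kappa_0$, and the weight function classes of Definition~\ref{def:aux1} are tailored so that the quantitative bookkeeping closes (in particular, so that the loss from the small-denominator prefactor is negligible compared with the improvement at each step). Beyond this bookkeeping, the argument is a routine transcription of the proof in Section~\ref{sec.PA4B} into the quasi-periodic setting of \cite{DG}.
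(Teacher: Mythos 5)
Your plan is essentially the paper's own route: Theorem~B is quoted here from \cite{DG}, and the paper itself notes (Remark~\ref{rem3proofs}) that its proof is practically identical to the proof of Theorem~$\tilde B$ in Section~\ref{sec.PA4B} which you transcribe back to the quasi-periodic setting --- finite-scale gap bounds plus convergence of the box eigenvalues for part~(1), and for part~(2) the contraction inequality extracted from the implicit eigenvalue equation with continued-fraction-function control of the small denominators, followed by the weight-function bootstrap of Lemma~\ref{lem:11scaleddecayestimate}. The only minor imprecisions --- stating the contraction inequality and the gap hypothesis for the limiting edges rather than the finite-box edges $E^\pm(\La^\es;k_{n^\zero})$ (the transfer uses the convergence estimate as in \eqref{eq:7Ederiss1TD-3}), and describing the iterated rates as staying above $\kappa/2$ when in fact they climb geometrically by a factor $L>1$ per step until reaching $\kappa/2$ --- do not affect the argument.
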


\section{Convergence of the Spectra of the Rational Approximants}\label{sec.8}

For $\omega \in \mathbb{R}^\nu$, let $H_\omega = -\Delta + U(\omega x)$, with $U:\mathbb{T}^\nu \to \mathbb{R}$ as before and $\omega \in \mathbb{R}^\nu$.

We prove an upper bound on distances of points in the spectra of $H_\omega$ and $H_{\omega'}$, by adapting a result proved in \cite{AMS90, CEY90} for discrete Schr\"odinger operators.

\begin{theorem}\label{Tdistspec}
If $U\in C^1$, then there are constants $C, \tilde C>0$ which depend only on $\lVert U \rVert_\infty$ and $\lVert \nabla U \rVert_\infty$ such that for every $E \in \spec (H_\omega)$ and every $\omega'$ with $\lvert \omega - \omega'\rvert < \tilde C$,
\[
\dist(E, \spec (H_{\omega'}) ) \le C(1+\lvert E\rvert^{1/4} ) \lvert \omega - \omega' \rvert^{1/2}.
\]
\end{theorem}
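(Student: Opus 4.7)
My proof plan adapts the Weyl-sequence strategy of \cite{AMS90,CEY90} to the continuum setting. For $E \in \spec(H_\omega)$, the idea is to construct a normalized $\phi \in L^2(\mathbb{R})$ that is approximately an eigenfunction of $H_\omega$ at $E$ and is supported in an interval of length $\sim L$, then compare $H_\omega$ with $H_{\omega'}$ (or a suitable translate of it) on $\supp\phi$. Balancing the two resulting error terms in $L$ yields the claimed bound.

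I would begin by constructing a localized approximate eigenfunction via a partition-of-unity argument. By the spectral theorem, for every $\epsilon > 0$ there exists $\hat\psi \in L^2$ with $\|\hat\psi\|_2 = 1$ and $\|(H_\omega - E)\hat\psi\|_2 < \epsilon$; choosing $\hat\psi$ in the range of $\mathbf{1}_{[E-1,E+1]}(H_\omega)$ enforces $\|\hat\psi'\|_2^2 = \langle\hat\psi, -\Delta\hat\psi\rangle \le C(1+|E|)$. Let $\{\chi_n\}_{n\in\mathbb{Z}}$ be a smooth partition of unity with $\sum_n\chi_n^2\equiv 1$, each $\chi_n$ supported in an interval of length $\sim L$ centered at $nL$, and $\|\chi_n^{(k)}\|_\infty \le CL^{-k}$ for $k=0,1,2$. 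The commutator identity
\[
(H_\omega - E)(\chi_n\hat\psi) = \chi_n(H_\omega - E)\hat\psi - 2\chi_n'\hat\psi' - \chi_n''\hat\psi,
\]
together with the pointwise bounds $\sum_n |\chi_n'|^2 \le CL^{-2}$ and $\sum_n|\chi_n''|^2 \le CL^{-4}$, yields $\sum_n\|(H_\omega - E)(\chi_n\hat\psi)\|_2^2 \le C\epsilon^2 + CL^{-2}(1+|E|)$, while $\sum_n\|\chi_n\hat\psi\|_2^2 = 1$. Markov's inequality then produces an index $n_0$ for which the normalized $\phi := \chi_{n_0}\hat\psi/\|\chi_{n_0}\hat\psi\|_2$ is supported in an interval of length $\sim L$ centered at some $a := n_0 L$ and satisfies $\|(H_\omega - E)\phi\|_2 \le C\epsilon + CL^{-1}(1+|E|^{1/2})$.

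For the comparison, I would not translate $\phi$ itself; rather, I would replace $H_{\omega'}$ by the unitarily equivalent operator $-\Delta + V_{\omega'}(\cdot - s)$, which has the same spectrum, for a well-chosen $s \in \mathbb{R}$. On $\supp\phi$,
\[
|U(\omega x) - U(\omega'(x - s))| \le \|\nabla U\|_\infty \|(\omega - \omega')x + \omega' s\|_{\mathbb{T}^\nu},
\]
and by choosing $s$ so that $\omega' s$ approximately cancels $(\omega - \omega')a$ modulo $\mathbb{Z}^\nu$, the torus norm on the right can be bounded by the $x$-variation alone, namely $\le C|\omega - \omega'|L$. Such a cancellation is feasible because, for $\omega'$ close to the Diophantine $\omega$ (see \eqref{eq:PAI7-5-85a-zz}), the orbit $\{\omega' s \bmod \mathbb{Z}^\nu\}$ is dense in $\mathbb{T}^\nu$ with quantitative recurrence. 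Consequently
\[
\dist(E, \spec H_{\omega'}) \le \|(-\Delta + V_{\omega'}(\cdot - s) - E)\phi\|_2 \le C\epsilon + \frac{C(1+|E|^{1/2})}{L} + C|\omega-\omega'|L,
\]
and letting $\epsilon \downarrow 0$ and optimizing $L = \sqrt{(1+|E|^{1/2})/|\omega-\omega'|}$ (which is the relevant regime when $|\omega - \omega'|$ is below the threshold $\tilde C$) produces the stated bound $C(1+|E|^{1/4})|\omega - \omega'|^{1/2}$.

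The principal obstacle is the selection of $s$ satisfying $\|\omega' s - (\omega - \omega')a\|_{\mathbb{T}^\nu} \le C|\omega-\omega'|L$ despite the fact that $a = n_0 L$ may be arbitrary: this demands quantitative recurrence for the $\omega'$-orbit on $\mathbb{T}^\nu$. In the setting of the paper this is supplied by the Diophantine property of $\omega$, which transfers to the rational approximants $\omega'$ of Lemma~\ref{lem:PAomegas} and Corollary~\ref{cor:5Diophantine} uniformly in the approximation; without this input the pigeonholed center $a$ would be essentially uncontrolled and the $V_\omega - V_{\omega'}$ comparison on $\supp\phi$ would not yield a useful estimate.
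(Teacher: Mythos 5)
Your first step (the localized approximate eigenfunction) is fine and is essentially the paper's lemma in different clothing: where the paper averages a single cutoff over its center $a$ and then selects a good $a$, you pigeonhole over a partition of unity; either way one gets $\phi$ with $\supp\phi\subset[a-L,a+L]$ and $\lVert (H_\omega-E)\phi\rVert\le(\epsilon+CL^{-1}(1+\lvert E\rvert^{1/2}))\lVert\phi\rVert$. The genuine gap is in the comparison step. You insist on comparing with an exact translate $V_{\omega'}(\cdot-s)$, and therefore must find $s\in\mathbb{R}$ with $\omega's+(\omega-\omega')a$ of size about $\lvert\omega-\omega'\rvert L$ modulo $\mathbb{Z}^\nu$, which you justify by density (with quantitative recurrence) of the orbit $\{\omega's \bmod \mathbb{Z}^\nu\}$. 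That orbit is dense in $\mathbb{T}^\nu$ only if the components of $\omega'$ are rationally independent. The theorem, however, is stated for arbitrary $\omega'$ near $\omega$, with constants depending only on $\lVert U\rVert_\infty$ and $\lVert\nabla U\rVert_\infty$, and in this paper it is applied precisely to the rational approximants $\tilde\omega^{(r)}$ of Lemma~\ref{lem:PAomegas} (see Lemma~\ref{lem:8enumerinj} and Section~\ref{sec.10}). For rational $\omega'$ and $\nu\ge2$ the closure of $\{\omega's\bmod\mathbb{Z}^\nu : s\in\mathbb{R}\}$ is a one-dimensional subtorus, and the target phase $(\omega-\omega')a$ --- with $a$ an uncontrolled output of the pigeonhole --- is in general at distance of order one from it, so no admissible $s$ exists. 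The Diophantine property of $\omega$ does not transfer in the sense you need: the ``Diophantine condition in the box'' of Corollary~\ref{cor:5Diophantine} bounds $\lvert n\tilde\omega\rvert$ from below for $0<\lvert n\rvert\le\bar R_0$ and says nothing about the line $\{\omega's\}$ approaching an arbitrary point of $\mathbb{T}^\nu$. (Note also that if density did hold, quantitative recurrence would be irrelevant: translation by $s$ is unitary no matter how large $s$ is.)

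The paper sidesteps the choice of $s$ altogether: it compares $H_\omega$ on $[a-L,a+L]$ with the operator whose potential is $U(\omega'(x-a)+\omega a)$, i.e., it changes the frequency while anchoring the phase at the center $a$, so that the two potentials differ by at most $L\lVert\nabla U\rVert_\infty\lvert\omega-\omega'\rvert$ on $\supp\phi$ with no small-divisor or recurrence input at all, and it then identifies the spectrum of this phase-shifted operator with $\spec(H_{\omega'})$ (viewing it as a translate of $V_{\omega'}$). Your instinct that this identification is the delicate point is not unreasonable --- it is the one place where the structure of $\omega'$ enters --- but it must be addressed at the level of spectra of phase-shifted potentials, not by realizing the phase shift through an actual translation time, which is impossible in the rational case. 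If you replace your translate by the paper's phase-anchored potential, the rest of your argument (the $L^{-1}(1+\lvert E\rvert^{1/2})$ error, the $\lvert\omega-\omega'\rvert L$ error, and the optimization $L\sim((1+\lvert E\rvert^{1/2})/\lvert\omega-\omega'\rvert)^{1/2}$) is exactly the paper's proof.
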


To prove the theorem, we will need the following lemma.

\begin{lemma}
Let $H = - \Delta+ V$ be a Schr\"odinger operator on $\mathbb{R}$ with $V$ bounded, and let $E \in \spec(H)$. For any $\epsilon \in (0,1)$ and $L>1$,  there exist $a\in \mathbb{R}$, $\phi \in D(H)$ such that $\supp \phi \subset [a-L, a+L]$ and
\[
\lVert (H-E)\phi \rVert < \left( \epsilon + \frac C{L} (1 +\lvert E\rvert^{1/2}) \right) \lVert \phi \rVert
\]
where $C$ depends only on $\inf V$.
\end{lemma}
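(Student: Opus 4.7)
The plan is to produce a Weyl-sequence approximant of $E$ in $D(H)$ and to localize it via a smooth cutoff whose center $a$ is selected by an averaging argument in $a$.

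First I would use the Weyl criterion to choose $\psi\in D(H)=H^2(\mathbb{R})$ with $\|\psi\|_2=1$ and $\|(H-E)\psi\|_2<\epsilon/2$. Integration by parts gives $\|\psi'\|_2^2 = \langle\psi,H\psi\rangle - \langle\psi,V\psi\rangle$, so the bounds $|\langle\psi,H\psi\rangle|\le |E|+1$ and $-\langle\psi,V\psi\rangle\le -\inf V$ yield the key a~priori estimate
\[
\|\psi'\|_2^2 \le |E|+1+|\inf V|.
\]

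Next I would fix once and for all $\chi_0\in C_c^2(\mathbb{R})$ supported in $[-1,1]$ with $\chi_0\equiv 1$ on $[-1/2,1/2]$, and set $\chi(x)=\chi_0(x/L)$, $\chi_a(x)=\chi(x-a)$. A change of variables produces constants $c_1,c_2,c_3>0$ depending only on $\chi_0$ such that $\|\chi\|_2^2\ge c_3 L$, $\|\chi'\|_2^2=c_1/L$, and $\|\chi''\|_2^2=c_2/L^3$. Since $\chi_a\in C_c^2$ and $\psi\in H^2(\mathbb{R})$, $\chi_a\psi$ lies in $D(H)$ and the commutator identity
\[
(H-E)(\chi_a\psi) = \chi_a(H-E)\psi - 2\chi_a'\psi' - \chi_a''\psi
\]
holds. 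Combining $(A+B+C)^2\le 3(A^2+B^2+C^2)$ with the Fubini identity $\int_\mathbb{R}\|\chi_a f\|_2^2\,da = \|\chi\|_2^2\|f\|_2^2$ and its analogues for $\chi_a'$, $\chi_a''$ yields
\[
\int_\mathbb{R}\|(H-E)\chi_a\psi\|_2^2\,da \le 3\|\chi\|_2^2(\epsilon/2)^2 + 12\|\chi'\|_2^2\|\psi'\|_2^2 + 3\|\chi''\|_2^2,
\]
while $\int_\mathbb{R}\|\chi_a\psi\|_2^2\,da = \|\chi\|_2^2>0$.

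The last step is the standard extraction lemma: if $F,G\ge 0$ are measurable with $\int F\,da \le K\int G\,da$ and $\int G\,da>0$, there must exist $a$ with $G(a)>0$ and $F(a)\le KG(a)$. Applied with $F(a)=\|(H-E)\chi_a\psi\|_2^2$ and $G(a)=\|\chi_a\psi\|_2^2$, this produces $a$ such that $\phi:=\chi_a\psi$ is nonzero, lies in $D(H)$ with $\supp\phi\subset[a-L,a+L]$, and
\[
\|(H-E)\phi\|_2^2 \le \left(\tfrac{3\epsilon^2}{4} + \tfrac{12c_1}{c_3L^2}(|E|+1+|\inf V|) + \tfrac{3c_2}{c_3L^4}\right)\|\phi\|_2^2.
\]
Taking square roots via $\sqrt{A+B+C}\le\sqrt A+\sqrt B+\sqrt C$, and using $L>1$ together with $\sqrt 3/2<1$, I bound the right-hand side by $(\epsilon+(C/L)(1+|E|^{1/2}))\|\phi\|_2$ for a constant $C$ depending only on $\inf V$ and $\chi_0$, which is the desired estimate. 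The only subtle point is the extraction: one must confirm that $G(a)>0$ at the selected $a$, which is immediate from $\int G\,da>0$ together with the observation that $F(a)=0$ whenever $G(a)=0$. All remaining steps are routine constant-chasing.
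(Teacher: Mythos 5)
Your proof is correct and follows essentially the same route as the paper: a Weyl vector $\psi$, the a priori bound on $\lVert \psi' \rVert$ via the quadratic form and $\inf V$, a family of translated $C^2$ cutoffs, the commutator identity, an averaging (Fubini) argument over the translation parameter $a$, and a pigeonhole extraction of a good $a$. The only differences are cosmetic (unnormalized cutoff and the $3(A^2+B^2+C^2)$ bound instead of the paper's normalized $\eta_L$ and pairwise inequality), so nothing further is needed.
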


\begin{proof} Since $E \in \spec (H)$, we may pick $\psi \in D(H)$ such that
\[
\lVert (H-E) \psi \rVert < \frac \epsilon{\sqrt 2} \lVert \psi \rVert.
\]
Let $\eta \in C^2$ be such that $\supp \eta \subset [-1,1]$ and $\lVert \eta\rVert^2 = 1$. For $L>0$, denote
\[
\eta_L(x) = L^{-1/2} \eta(x/L).
\]
With $\eta^{(k)}$ denoting the $k$-th derivative of $\eta$,
\[
\lVert \eta_L^{(k)} \rVert^2 = L^{-2k} \lVert \eta^{(k)} \rVert^2.
\]
Denote also $\eta_{L,a}(x) = \eta_L(a-x)$. It is easy to see
\[
\int \lVert \eta_{L,a} \psi \rVert^2 da = \lVert \eta_{L} \rVert^2  \lVert \psi \rVert^2 = \lVert \psi \rVert^2
\]
since that integral is just the $L^1$-norm of the convolution $\lvert \eta_L\rvert^2  * \lvert \psi\rvert^2$, and likewise
\begin{equation}\label{ml02}
\int \lVert \eta_{L,a} (H-E)\psi \rVert^2 da = \lVert \eta_{L} \rVert^2  \lVert (H-E)\psi \rVert^2 =  \lVert (H-E)\psi \rVert^2 < \frac{\epsilon^2}2 \lVert \psi \rVert^2.
\end{equation}
Next, we compute
\[
[\eta_{L,a}, H-E] \psi = 2 \eta'_{L,a} \psi' + \eta''_{L,a} \psi.
\]
Using the inequality
\[
\lVert u + v \rVert^2 \le 2 \lVert u \rVert^2 + 2 \lVert v \rVert^2,
\]
this implies
\[
\int \lVert [\eta_{L,a}, (H-E)] \psi \rVert^2 da \le 8 \lVert \eta'_L \rVert^2 \lVert \psi' \rVert^2 + 2 \lVert \eta''_L \rVert^2 \lVert \psi \rVert^2.
\]
We can simplify that by using
\begin{equation}\label{ml01}
\lVert \psi' \rVert^2 = \langle \psi, H \psi \rangle - \int V(x) \lvert \psi(x) \rvert^2 dx < (E+\epsilon - \inf V) \lVert \psi \rVert^2,
\end{equation}
so that we conclude
\begin{equation}\label{ml03}
\int \lVert [\eta_{L,a}, (H-E)] \psi \rVert^2 da < \frac {C^2}{2 L^2} (1+ \lvert E \rvert) \lVert \psi \rVert^2,
\end{equation}
where
\[
C^2 = 20 \max( \lVert \eta' \rVert^2, \lVert \eta''\rVert^2) (1 + \lvert \inf V \rvert ).
\]
We can then combine \eqref{ml02} and \eqref{ml03} to see
\begin{align*}
\int \lVert (H-E)(\eta_{L,a} \psi) \rVert^2 da & < \left( \epsilon^2 +  \frac {C^2}{L^2}(1+ \lvert E \rvert) \right) \lVert \psi \rVert^2 \\
& = \int \left( \epsilon^2 +  \frac {C^2}{L^2}(1+ \lvert E \rvert) \right) \lVert \eta_{L,a} \psi \rVert^2 da.
\end{align*}
Thus, for some $a\in \mathbb{R}$,
\[
 \lVert (H-E)(\eta_{L,a} \psi) \rVert^2< \left( \epsilon^2 +  \frac {C^2}{L^2} (1+\lvert E\rvert) \right) \lVert \eta_{L,a} \psi \rVert^2.
\]
Taking $\phi = \eta_{L,a} \psi$ and taking the square root completes the proof.
\end{proof}

\begin{proof}[Proof of Theorem~\ref{Tdistspec}]
Let $E \in \spec (H_\omega)$. By the previous lemma, for any $\epsilon \in (0,1)$ and $L>1$ there exists $\phi \in D(H_\omega)$ such that
\[
\lVert (H_\omega - E) \phi \rVert < \left( \epsilon + \frac {C_1}L (1 +\lvert E\rvert^{1/2}) \right) \lVert \phi \rVert
\]
and $\supp \phi \subset [a-L, a+L]$. Denote by $\tilde H$ the Schr\"odinger operator with potential $\tilde V = U(\omega' (x-a) + \omega a)$; since this is just a translation of $V_{\omega'}$, obviously $\spec(\tilde H) = \spec(H_{\omega'})$. However, for $x\in [a-L, a+L]$,
\[
\lvert U(\omega' (x-a) + \omega a) - U(\omega  x) \rvert  \le \lVert \nabla U \rVert_\infty \lvert \omega' - \omega \rvert \lvert x- a \rvert \le L \lVert \nabla U \rVert_\infty \lvert \omega' - \omega \rvert,
\]
and since $\phi(x) = 0$ for $x\notin [a-L, a+L]$, this implies that
\[
\lVert (\tilde H - H_\omega) \phi \rVert = \lVert U(\omega' (x-a) + \omega a) \phi(x) - U(\omega  x) \phi(x) \rVert \le  L \lVert \nabla U \rVert_\infty \lvert \omega' - \omega \rvert  \lVert \phi \rVert.
\]
Thus,
\begin{align*}
\lVert (\tilde H - E) \phi \rVert & \le \lVert (H_{\omega} - E)\phi \rVert +  \lVert (\tilde H - H_{\omega})\phi \rVert \\
& \le \left( \epsilon + \frac {C_1}L (1 +\lvert E\rvert^{1/2}) \right) \lVert \phi \rVert + L \lVert \nabla U \rVert_\infty \lvert \omega' - \omega \rvert \lVert \phi \rVert
\end{align*}
so
\[
\dist(E, \sigma(H_{\omega'})) \le \epsilon + \frac {C_1}L (1 +\lvert E\rvert^{1/2})  + L \lVert \nabla U \rVert_\infty \lvert \omega' - \omega \rvert.
\]
We optimize this bound in $L$ by choosing
\begin{equation}\label{ml09}
L = \sqrt{ \frac{   {C_1}(1 +\lvert E\rvert^{1/2})  }{  \lVert \nabla U \rVert_\infty \lvert \omega' - \omega \rvert  }} ,
\end{equation}
which gives
\[
\dist(E, \sigma(H_{\omega'})) \le \epsilon + 2\sqrt{ C_1 \lVert \nabla U \rVert_\infty} (1 +\lvert E\rvert^{1/4}) \lvert \omega' - \omega \rvert^{1/2}.
\]
Since $\epsilon\in (0,1)$ was arbitrary, this completes the proof, with $C = 2\sqrt{ C_1 \lVert \nabla U \rVert_\infty}$. Note that the choice of $L$ in \eqref{ml09} was allowed because for $\lvert \omega - \omega' \rvert < \tilde C = C_1 / \lVert \nabla U \rVert_\infty$,
\[
L = \sqrt{ \frac{  {C_1}(1 +\lvert E\rvert^{1/2}) }{   \lVert \nabla U \rVert_\infty \lvert \omega' - \omega \rvert  }} > \sqrt{1+ \lvert E \rvert^{1/2}} > 1. \qedhere
\]
\end{proof}

\section{Convergence of the Translation Flows on the Isospectral Tori of Periodic Approximations}\label{sec:9}

In this section we discuss the differential equation for the translation dynamics $V \to V(\cdot + t)$ on the torus of isospectral periodic potentials derived by Dubrovin \cite{Du} and Trubowitz \cite{Tr} in the context of Theorem~$\tilde I$ and the rational approximation of a given Diophantine vector $\omega$.

Recall some definitions from Section~\ref{sec.4}: $G_{m} = (E_{m}^-, E_m^+)$, $m \in \mathfrak{Z}(\tilde \omega)$, $\mathcal{O} = \mathcal{O}(\tilde\omega)=\{ m : E_{m}^- < E_m^+ \}$,
\begin{equation}\label{eq:8toruspm-1}
\begin{split}
G_{m,\sigma} & = \{ (\lambda,\sigma) : \lambda \in G_m \}, \quad \sigma = \pm, \\
\mathcal{C}_{m} & = G_{m,-} \cup G_{m,+} \cup \{ E_m^-, E_m^+ \}, \\
\mathcal{C} & = \prod_{m \in \mathcal{O}} \mathcal{C}_m.
\end{split}
\end{equation}
Consider the vector-field
$\Phi(\mu) = (\Phi_n(\mu))_{n \in \mathcal{O}}$ on $\mathcal{C}$,
\begin{equation}\label{ml06}
\Phi_n(\mu) := \sigma(\mu_n) \sqrt{4 (\underline E - \mu_n)(E_n^- - \mu_n)(E_n^+ - \mu_n) \prod_{\substack{i \in \mathcal{O} \\ i \neq n}} \frac{(E_i^- - \mu_n)(E_i^+ - \mu_n)}{(\mu_i - \mu_n)^2} }.
\end{equation}
It was shown in the work of Trubowitz \cite{Tr} that $\mathcal{C}$ has a natural structure of a compact smooth manifold and the $\Phi$-flow $\mu(t)$ is well-defined.

Given $\tau > 0$, consider the set $\mathcal{O}_\tau = \{m : |G_m| \ge \tau \}$, the ``$\tau$-cutoff'' torus $\mathcal{T}_\tau = \prod_{m\in \mathcal{O}_\tau} \mathcal{C}_m$, and the vector-field
\begin{equation}\label{eq:9phicutoff}
\Phi_{\tau, n}(\mu) =  \sqrt{4 (\underline E - \mu_n)(E_n^- - \mu_n)(E_n^+ - \mu_n) \prod_{\substack{i \in \mathcal{O}_\tau \\ i \neq n}} \frac{(E_i^- - \mu_n)(E_i^+ - \mu_n)}{(\mu_i - \mu_n)^2} }, \quad n \in \mathcal{O}_\tau.
\end{equation}

\begin{remark}\label{rem:9closeODE}
$(1)$ The arguments from  \cite{Tr} apply to $\Phi_{\tau,n}(\mu)$. Our first goal is to compare the trajectories of the fields \eqref{eq:9phicutoff} and \eqref{ml06}. We need to compare the trajectories on a fixed time interval $[0,T]$, while $\tau$ can be chosen arbitrarily small.

$(2)$ We reduce the problem to finite dimensional manifolds and we quantify the stability. It seems that the most efficient way to carry out the estimation of the problem raised in $(1)$ goes via the following general setting. Let $\mathcal{X}$, $\mathcal{Y}$ be smooth manifolds, $\mathcal{M} = \mathcal{X} \times \mathcal{Y}$. We assume that $\mathcal{X}$ and $\mathcal{Y}$ are equipped with a metric $d_\mathcal{X}$ and $d_\mathcal{Y}$, respectively.  Define a metric on $\mathcal{M}$ by setting $d((x,y),(u,v)) = d_\mathcal{X}(x,u) + d_\mathcal{Y}(y,v)$. Let $\Psi(x)$ be a smooth vector-field on $\mathcal{M}$. Let us view the tangent space to $\mathcal{M}$ at $(x,y)$ as the product of the tangent space to $\mathcal{X}$ at $x$ and the tangent space to $\mathcal{Y}$ at $y$. Denote by $\Theta$ and $\Upsilon$ the respective components of $\Psi$. Assume that the following conditions hold:

$(i)$ There exists a smooth vector-field $\Xi(x)$ on $\mathcal{X}$ such that $|\Xi(x) - \Theta(x,y)| < \delta$ for any $(x,y) \in \mathcal{M}$.

$(ii)$ $|\Upsilon(x,y)| < \delta$ for any $(x,y) \in \mathcal{M}$. Here and everywhere else $|\cdot|$ stands for the $\ell^1$-norm in the tangent space, compare part $(3)$ of this remark.

Given $(x_0,y_0) \in \mathcal{M}$, let $g(x_0,y_0;t) := (u(x_0, y_0; t), v(x_0, y_0; t))$ be the $\Psi$-trajectory originating at $(x_0,y_0)$. Let $h(x_0,; t)$ be the $\Xi$-trajectory originating at $x_0$. We need to estimate $d_\mathcal{X}(u(x_0, y_0; t), h(x_0; t))$ for any $(x_0, y_0) \in \mathcal{M}$ and any $0 \le t \le T$. Of course some natural relation between the metric and the smooth structure on the manifolds is needed. Moreover the smooth structure needs to be naturally quantified. We discuss these issues below in this remark.

$(3)$ Before we proceed we want to emphasize that by convention we always evaluate the vectors via the $\ell^1$ norm, that is,
$$
|(x_1, x_2, \ldots)| = \sum _k |x_k|.
$$
We mention this here again since the quantitative stability estimates will be in terms of this norm, \textbf{as the dimension $m$ varies in what follows in the current section}. For this reason we introduce boxes $\mathcal{P} = \prod_{1 \le k \le m} (- \rho_k,\rho_k)$ with rapidly decaying $\rho_k$ as a ``standard space unit'' for the local charts, see $(4)$ below.

$(4)$ To derive the estimates for $d(u(x_0, y_0; t), h(x_0; t))$ we need to localize the problem, that is, we need to work within a local chart on the manifold. For that matter the following definition is needed. Let $\mathcal{L}$ be a compact smooth $m$-dimensional manifold. Fix a sequence $\rho_k > 0$, $\sum_k \rho_k \le 1$. Let $\mathcal{P} = \prod_{1 \le k \le m} (- \rho_k, \rho_k)$, $\mathcal{P}_\gamma = \prod_{1 \le i \le m} (-\gamma \rho_k, \gamma \rho_k)$. We say that $\mathcal{L}$ belongs to the class $\mathfrak{R}(K)$ if the following conditions hold:

$(a)$ There exist homeomorphic injections $f_j$, $j \in \mathcal{J}$ from $\mathcal{P}$ into $\mathcal{L}$ such that the sets $\mathcal{U}_j = f_j(\mathcal{Q})$, $\mathcal{Q} = \mathcal{P}_{1/2K}$ cover the manifold.

$(b)$ Let $\mathcal{D} = \mathcal{P}_{1/K}$ and $\mathcal{W}_j = f_j(\mathcal{D})$. If $\mathcal{W}_j \cap \mathcal{W}_k \neq \emptyset$, then the map $\varphi_{j,k} = f_k^{-1} \circ f_j$ is well-defined on $\mathcal{D}$, $C^1$-smooth at each point where it is defined and obeys $\sup |\partial^\alpha \varphi_{j,k}| \le K$ for any $|\alpha| = 1$.

In this case we say also that $\mathfrak{F} = \{ f_j : j \in \mathcal{J} \}$ is a $K$-atlas for $\mathcal{L}$. We also set $\mathcal{V}_j = f_j(\mathcal{P})$.

Let $d$ be a metric on $\mathcal{L}$. We say that $d$ is $A$-regular with respect to the atlas $\mathfrak{F}$ if for any $p, q \in \mathcal{P}$ and any $j$, we have $d(f_j(p), f_j(q)) \le A |p - q|$.

We want to mention that the particular choice of $\rho_k$ does not affect the estimates we develop here. For our applications we consider
\begin{equation} \label{eq:8rhokdefi}
\rho_k = k^{-2}.
\end{equation}

$(5)$ We also need the following definitions. Using the above notation, let $x(t)$, $0 \le t \le T$ be an arbitrary smooth curve in $\mathcal{L}$. There exists $1 \le j_0 \le N$ such that $x(0) \in \mathcal{U}_{j_0}$. If $x(t)$ does not leave $\mathcal{W}_{j_0}$, then we set $t_1 = T$. Otherwise, let $t_1$ be the first moment $x(t)$ leaves $\mathcal{W}_j$. There exists $j_1$ such that $x(t_1) \in \mathcal{U}_{j_1}$. If $x(t)$ does not leave $\mathcal{W}_{j_1}$, then we set $t_2 = T$. Otherwise, let $t_2$ be the first moment $x(t)$ leaves $\mathcal{W}_{j_1}$, etc. We call the collection $\mathfrak{I} = \{ j_0, \ldots; t_1, \ldots \}$ an $\mathfrak{F}$-itinerary for the curve $x(t)$. The $\mathfrak{F}$-itinerary is not defined uniquely, but this does not play any role in what follows. We call the intervals $(t_k, t_{k+1})$ the itinerary intervals. We denote by $n(\mathfrak{I})$ the total number of itinerary intervals.

We say that a given curve $x(t)$ is $B$-regular with respect to the atlas $\mathfrak{F}$ if the following condition holds. Let $\mathfrak{I} = \{ j_0, \ldots; t_1, \ldots \}$ be an $\mathfrak{F}$-itinerary for $x(t)$. Take an arbitrary itinerary interval $(t_k, t_{k+1})$. The function $t \mapsto f_{j_k}^{-1} (x(t)) \in \mathcal{P}$, which is well-defined on this interval, obeys $| \partial_t f_{j_k}^{-1}(x(t))| \le B$.

Let $F(x)$ be a vector-field on $\mathcal{L}$. Let $\mathfrak{F} = \{f_j : 1 \le j \le N \}$ be a $K$-atlas for $\mathcal{L}$. For each $j$, denote by $F \circ f^{-1}_j$ the pull-back of the vector-field $F$ to the box $\mathcal{P}$ under the map $f_j$. Let $(F \circ f^{-1}_j)_k$, $1 \le k \le m$ be the components of $F \circ f^{-1}_j$. We say that $F(x)$ is $B$-regular with respect to the atlas $\mathfrak{F}$ if for any $j, k$ and $|\alpha| \le 1$, we have $\sup |\partial^\alpha  (F \circ f^{-1}_j)_k| \le B \rho_k$.

$(6)$ The definitions in this remark and the statements following below apply to compact infinite-dimensional manifolds, that is, in the case $m = \infty$. The discussion of this case goes in the same way as for $m < \infty$. The label set $\mathcal{J}$ for the atlas $\mathfrak{F}$ in this case may be uncountable.
\end{remark}

The statements in the next two lemmas follow straight from the definitions.

\begin{lemma}\label{lem9:localchartsconnect1}
Let $\mathfrak{F} = \{ f_j : 1 \le j \le N \}$ be a $K$-atlas for $\mathcal{L}$. Let $x \in \mathcal{W}_{j} \cap \mathcal{U}_{k}$, $y \in \mathcal{V}_{j}$, and $|f^{-1}_{j}(x) - f^{-1}_{j}(y)| \le \delta < 1/2 K$. Then $y \in \mathcal{V}_k$ and $|f^{-1}_{k}(x) - f^{-1}_{k}(y)| \le K \delta$.
\end{lemma}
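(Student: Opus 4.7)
The plan is to reduce the lemma to a single application of the mean-value inequality for the transition map $\varphi_{j,k}=f_k^{-1}\circ f_j$, together with convexity of the boxes $\mathcal{P},\mathcal{D},\mathcal{Q}$.

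First I would observe that the hypotheses put us inside the overlap regime of the atlas. Since $x\in\mathcal{U}_k\subset\mathcal{W}_k$ and $x\in\mathcal{W}_j$ by assumption, we have $x\in\mathcal{W}_j\cap\mathcal{W}_k$, so by item~(b) of Remark~\ref{rem:9closeODE}(4), the transition map $\varphi_{j,k}$ is well-defined on $\mathcal{D}$ and satisfies $\sup|\partial^\alpha \varphi_{j,k}|\le K$ for all $|\alpha|=1$. Set $p_x:=f_j^{-1}(x)\in\mathcal{D}$, $q_x:=f_k^{-1}(x)\in\mathcal{Q}=\mathcal{P}_{1/(2K)}$, and $p_y:=f_j^{-1}(y)\in\mathcal{P}$, so that $\varphi_{j,k}(p_x)=q_x$ and the hypothesis reads $|p_x-p_y|\le\delta<1/(2K)$.

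Next I would apply the mean-value inequality along the straight-line segment $p_t=(1-t)p_x+tp_y$, $t\in[0,1]$. Unpacking the bound $\sup|\partial^\alpha\varphi_{j,k}|\le K$ componentwise and summing in the $\ell^{1}$ norm yields
\[
|\varphi_{j,k}(p_t)-\varphi_{j,k}(p_s)|\le K|p_t-p_s|
\]
for any $s,t$ such that the segment between $p_s$ and $p_t$ lies in the domain of $\varphi_{j,k}$. In particular, applied with $s=0$, $t=1$, this gives $|f_k^{-1}(y)-q_x|\le K\delta$, which is the second conclusion of the lemma.

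The main thing left to pin down is that the whole segment $\{p_t\}_{t\in[0,1]}$ remains in the domain of $\varphi_{j,k}$ and, equivalently, that $\varphi_{j,k}(p_t)$ stays inside $\mathcal{P}$ for every $t$, so that the endpoint $\varphi_{j,k}(p_1)=f_k^{-1}(y)$ is defined and witnesses $y\in\mathcal{V}_k=f_k(\mathcal{P})$. I would handle this by a standard continuity/open-closed argument on the set $\{t\in[0,1]:\varphi_{j,k}(p_s)\in\mathcal{P}\text{ for all }s\in[0,t]\}$: starting from $t=0$, where $\varphi_{j,k}(p_0)=q_x$ sits strictly inside $\mathcal{Q}$ (so each coordinate has a margin of at least $\rho_\ell(1-1/(2K))$ from $\partial\mathcal{P}$), and using the Lipschitz bound $|\varphi_{j,k}(p_t)-q_x|\le K\delta<1/2$ together with $\delta<1/(2K)$ to show that $\varphi_{j,k}(p_t)$ cannot reach $\partial\mathcal{P}$ before $t=1$. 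This is the only subtle step in the argument; once it is justified, the two claims of the lemma follow immediately.
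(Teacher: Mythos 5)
The paper offers no proof of this lemma (it is dismissed with ``the statements in the next two lemmas follow straight from the definitions''), so the only question is whether your argument stands on its own. The easy half does: granting that the segment from $p_x=f_j^{-1}(x)$ to $p_y=f_j^{-1}(y)$ lies in the domain of $\varphi_{j,k}$, the mean-value inequality with the natural reading of the atlas condition (each $\partial^\alpha\varphi_{j,k}$, $|\alpha|=1$, has $\ell^1$-norm at most $K$, i.e.\ $D\varphi_{j,k}$ has $\ell^1\to\ell^1$ norm at most $K$) gives $|f_k^{-1}(x)-f_k^{-1}(y)|\le K\delta$. The problem is precisely the step you yourself flag as the only subtle one. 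Your justification that $\varphi_{j,k}(p_t)$ cannot reach $\partial\mathcal{P}$ is quantitatively wrong: the displacement bound you have is $|\varphi_{j,k}(p_t)-q_x|\le K\delta<1/2$ in $\ell^1$, whereas the margin of $q_x\in\mathcal{Q}=\mathcal{P}_{1/2K}$ from $\partial\mathcal{P}$ in the $\ell$-th coordinate is only $\rho_\ell\bigl(1-\tfrac{1}{2K}\bigr)\le\rho_\ell$. The side lengths $\rho_\ell$ decay (the paper takes $\rho_k=k^{-2}$, $\sum_k\rho_k\le 1$, and in the application the number of coordinates is large or infinite), and the hypothesis $\delta<1/(2K)$ is an absolute bound, not a bound relative to the individual $\rho_\ell$. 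Hence for any coordinate with $\rho_\ell\le K\delta$ the margin is smaller than the allowed $\ell^1$ displacement, and nothing in your inequalities prevents the image path from leaving $\mathcal{P}$ through that face. The open/closed argument therefore does not close as written, and with it the containment $y\in\mathcal{V}_k$ --- which is the actual content of the lemma --- is not established.

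To repair this you need control of the transition map coordinate by coordinate, on the scale of the corresponding $\rho_\ell$, not merely a bound on its $\ell^1$ operator norm: an $\ell^1$ bound of size $1/2$ is comparable to the whole box ($\sum_\ell\rho_\ell\le 1$) and says nothing about staying inside a face of width $2\rho_\ell$. In the situation the lemma is actually used for, the atlas is the product atlas of circles $C_n$ of diameter $\rho_n$ and the transitions act factor by factor, so each coordinate of the image moves by at most $K$ times the corresponding coordinate increment and stays within its own interval; with such a per-coordinate Lipschitz property your continuity argument does go through (and re-proves the $\ell^1$ estimate as well). But from the abstract $K$-atlas definition alone, via the comparison $K\delta<1/2$ versus $\rho_\ell(1-\tfrac1{2K})$, the step is a genuine gap.
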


\begin{lemma}\label{lem9:regularcurves1}
$(1)$ Assume that $x(t)$ is $B$-regular with respect to the atlas $\mathfrak{F}$. Then each itinerary interval $(t_k, t_{k+1})$ obeys $|t_{k+1} - t_k| \ge (2B)^{-1}$. In particular, $n(\mathfrak{I}) \le 2 BT$.

$(2)$ Assume that the vector-field $F(x)$ is $B$-regular with respect to the atlas $\mathfrak{F}$. Then each $F$-trajectory $x(t)$ is $B$-regular with respect to the atlas $\mathfrak{F}$.
\end{lemma}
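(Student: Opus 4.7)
My plan is to handle part (2) first, since it is essentially the chain rule and clarifies what the vector-field hypothesis imposes on trajectories. First I would let $x(t)$ be an integral curve of $F$, fix an $\mathfrak{F}$-itinerary $\{j_0,\dots;t_1,\dots\}$ for $x(t)$, and on an itinerary interval $(t_k,t_{k+1})$ introduce the chart representation $y(t) := f_{j_k}^{-1}(x(t)) \in \mathcal{P}$. Differentiating the identity $x(t) = f_{j_k}(y(t))$ and using $\dot x(t) = F(x(t))$ would give the chart ODE $\dot y(t) = (F \circ f_{j_k}^{-1})(y(t))$. The regularity hypothesis $\sup|(F\circ f_{j_k}^{-1})_k| \le B \rho_k$ then controls each component of $\dot y$ by $B\rho_k$, which is exactly the curve-regularity condition on this interval. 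Applying this reasoning on every itinerary interval would complete part (2).

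For part (1) I would again pass to the chart. By the definition of an itinerary, $y(t_k) \in \mathcal{Q} = \mathcal{P}_{1/(2K)}$, while by the choice of $t_{k+1}$ the curve leaves $\mathcal{W}_{j_k}$, so $y(t_{k+1}^-) \in \partial \mathcal{D}$ with $\mathcal{D} = \mathcal{P}_{1/K}$. Hence some coordinate $y_i$ must travel a distance of at least $\rho_i/(2K)$ between times $t_k$ and $t_{k+1}$. Combining this minimum chart displacement with the $B$-regularity bound on $\dot y$ would give a uniform positive lower bound on $t_{k+1}-t_k$ of the stated order $1/(2B)$ (after the fixed atlas constant $K$ is absorbed into the normalization of $B$). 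The bound $n(\mathfrak{I}) \le 2BT$ then falls out by dividing the fixed time horizon $T$ by this minimum spacing, since consecutive itinerary intervals are disjoint.

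The main point requiring care --- and the only real subtlety --- will be reconciling the norm conventions used in the two $B$-regularity definitions. I would interpret the curve-regularity bound $|\partial_t f_{j_k}^{-1}(x(t))| \le B$ in the same componentwise, $\rho_k$-weighted sense as the vector-field hypothesis $\sup|\partial^\alpha (F \circ f_{j_k}^{-1})_k| \le B\rho_k$, so that the chain-rule computation in part~(2) matches termwise and the minimum-exit argument in part~(1) yields the constant as stated. Once these conventions are pinned down, both parts reduce to the chain rule and to unpacking the definition of an $\mathfrak{F}$-itinerary; there is no deeper analytical obstacle, and the lemma will then be available to verify the hypotheses needed in the translation-flow convergence analysis that Section~\ref{sec:9} sets up.
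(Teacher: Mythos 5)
Your proof is correct and is precisely the unpacking of the definitions in Remark~\ref{rem:9closeODE} that the paper leaves implicit (the paper gives no separate argument, asserting the lemma follows straight from the definitions). The norm issue you flag is genuine: under the literal $\ell^1$ reading of curve regularity, the chart displacement needed to exit from $\mathcal{Q}=\mathcal{P}_{1/2K}$ through the boundary of $\mathcal{D}=\mathcal{P}_{1/K}$ is only $\min_i \rho_i/(2K)$, whose infimum is $0$ in the infinite-dimensional case, so part $(1)$ would give no uniform bound; the componentwise, $\rho_i$-weighted reading you adopt is the one under which the statement holds, and it is exactly what your part $(2)$ argument (chain rule to the pulled-back field plus $\sup|(F\circ f_{j_k}^{-1})_i|\le B\rho_i$, together with $\sum_i\rho_i\le 1$) supplies for the only curves to which the lemma is subsequently applied, namely trajectories of $B$-regular fields. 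Note also that the clean constants coming out of your argument are $t_{k+1}-t_k\ge (2KB)^{-1}$ and hence $n(\mathfrak{I})\le 2KBT+1$ (the last itinerary interval ends at $T$ rather than at an exit, so it carries no lower bound); the paper's $(2B)^{-1}$ and $2BT$ silently absorb $K$ and the additive $1$ (and the proof of Lemma~\ref{lem9:trajestability2} even uses $n(\mathfrak{I})\le BT$), which is harmless since $K$ is an absolute constant in the application and only affects the downstream estimates through factors of the form $(4K)^{BT}$.
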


Before we proceed we need to recall the followings standard facts about contracting maps and ODE's.

$(i)$ Let $(\mathfrak{X}, \rho)$ be a metric space. Let $S : \mathfrak{X} \to \mathfrak{X}$ be a contracting map, that is, with some $0 < \lambda < 1$, we have for every $\mathfrak{x}, \mathfrak{y}$, $\rho(S(\mathfrak{x}), S(\mathfrak{y})) \le \lambda \rho(\mathfrak{x}, \mathfrak{y})$. We say that $S$ is $\lambda$-contracting. In this case $S$ has a unique fixed point $\mathfrak{x}_0$. Moreover, for any $\mathfrak{x}$ and $n$, we have
$$
\rho(S^n(\mathfrak{x}), \mathfrak{x}_0) \le \rho(\mathfrak{x}, S(\mathfrak{x})) (1-\lambda)^{-1} \lambda^n.
$$

$(ii)$ Using the notation from $(i)$ assume that $S_1, S_2 : \mathfrak{X} \to \mathfrak{X}$ are $\lambda$-contracting. Let $\delta = \sup_\mathfrak{x} \rho(S_1(\mathfrak{x}), S_2(\mathfrak{x}))$. Then,
$$
\sup_{\mathfrak{x}, n} \rho(S_1^n(\mathfrak{x}), S^n_2(\mathfrak{x})) \le (1 - \lambda)^{-1} \delta.
$$
Let $\mathfrak{x}^{(j)}_0$ be the fixed point of $S_j$, $j = 1, 2$. Then,
$$
\rho(\mathfrak{x}^{(1)}_0, \mathfrak{x}^{(2)}_0) \le (1 - \lambda)^{-1} \delta.
$$

$(iii)$ Let $H(x)$ be a vector-field on the box $\mathcal{P}$. Assume that $H$ is $B$-regular, that is, $\sup |\partial^\alpha H| \le B$ for any $|\alpha| = 1$.  Given $t_0 > 0$, let $\mathfrak{X}_{t_0}$ be the set of all continuous trajectories $\mathfrak{x} = (x(t))_{0 \le t \le t_0}$, $x(t) \in \mathcal{P}$. Set $\rho(\mathfrak{x}, \mathfrak{y}) = \sup_t |x(t) - y(t)|$, $\mathfrak{x}, \mathfrak{y} \in \mathfrak{X}_{t_0}$. Assume $\lambda := B t_0 \le 1/2$. Given $x^\zero \in \mathcal{P}_{1/2}$, set
\begin{equation}\label{eq:9intoper}
S_{x^\zero, H} [\mathfrak{x}](t) = x^\zero + \int_0^t H(x(s)) \, ds.
\end{equation}
This defines a map of $\mathfrak{X}_{t_0}$, which is $\lambda$-contracting. The fixed point of this map, $(x^\zero(t))_{0 \le t \le t_0}$, is the $H$-trajectory originating at $x^\zero$, that is, the unique solution of the ODE
\begin{equation}\label{eq:9ODEabst}
\dot{x}^\zero(t) = H(x^\zero(t)), \quad 0 \le t \le t_0, \quad x^\zero(0) = x^\zero.
\end{equation}
Furthermore, let $x^\zero, x^\one \in \mathcal{P}_{1/2}$. Let $x^\zero(t), x^\one(t)$ be the corresponding $H$-trajectories. Then,
\begin{equation}\label{eq:9ODEstab1}
\sup_t |x^\zero(t) - x^\one(t)| \le (1 - \lambda)^{-1} |x^\zero - x^\one|.
\end{equation}

$(iv)$ Using the notation of $(iii)$, let $H^{(j)}(x)$, $j = 1, 2$ be $B$-regular vector-fields on the box $\mathcal{P}$. Let $x^\zero \in \mathcal{P}_{1/2}$ and let $x^{(k)}(t)$ be the $H^{(k)}$-trajectory originating at $x^\zero$. Then,
\begin{equation}\label{eq:9ODEstab2}
\sup_t |x^\zero(t) - x^\one(t)| \le (1 - \lambda)^{-1} \sup_x |H^{(1)}(x) - H^{(2)}(x)|.
\end{equation}

$(v)$ Using the notation from Remark~\ref{rem:9closeODE}, let $H(x)$ be a smooth vector-field on the manifold $\mathcal{L}$. Then due to compactness, the $H$-trajectories are well defined for all $t$.

\bigskip

Now we combine these standard facts with the definitions from Remark~\ref{rem:9closeODE}.

\begin{lemma}\label{lem9:trajestability2}
Using the notation from Remark~\ref{rem:9closeODE}, let $\mathfrak{F} = \{ f_j : 1 \le j \le N \}$ be a $K$-atlas for $\mathcal{L}$. Let $H^{(k)}(x)$, $k = 1, 2$ be vector-fields on the manifold $\mathcal{L}$. Assume that $H^{(k)}$ is $B$-regular with respect to the atlas $\mathfrak{F}$. Set
\begin{equation}\label{eq:9fieldsdeviations}
\delta = \sup_{x,j} |H^{(1)} \circ f^{-1}_j - H^{(2)} \circ f^{-1}_j|.
\end{equation}
Let $x^\zero \in \mathcal{L}$ and denote by $x^{(k)}(t)$ the $H^{(k)}$-trajectory originating at $x^\zero$. Let $T$ be arbitrary and let $\mathfrak{I} = \{ j_0, \ldots; t_1, \ldots \}$ be an $\mathfrak{F}$-itinerary for the curve $x^\one(t)$ on the interval $[0,T]$. Then for any $t_r \le t \le t_{r+1}$, we have $x^{(2)}(t) \in \mathcal{V}_{j_r}$ and $|f^{-1}_{j_r}(x^{(2)}(t)) - f^{-1}_{j_r}(x^{(1)}(t))| \le \delta_r := (4K)^r \delta$. In particular, let $d$ be a metric on $\mathcal{L}$ that is $A$-regular with respect to the atlas $\mathfrak{F}$. Then,
\begin{equation}\label{eq:9trajecmetricdeviation}
d(x^{(1)}(t), x^{(2)}(t)) \le A (4K)^{BT} \delta.
\end{equation}
\end{lemma}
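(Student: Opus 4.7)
The proof will proceed by induction on the itinerary index $r$, working throughout in local charts. The broad strategy is: on each itinerary interval $(t_r,t_{r+1}]$, both trajectories can be pulled back to the box $\mathcal{P}$ via $f_{j_r}$, where a classical ODE stability estimate controls how the chart-coordinate deviation grows; and at each transition time $t_{r+1}$, Lemma~\ref{lem9:localchartsconnect1} transfers the estimate from chart $f_{j_r}$ to chart $f_{j_{r+1}}$, inflating the constant by a factor at most $K$. The geometric growth $\delta_r=(4K)^r\delta$ then encodes one factor of $K$ from the chart transition and one factor of $4$ (say) absorbing the ODE-stability multiplier per interval.

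For the base case $r=0$, both trajectories start at the same point $x^0\in\mathcal{U}_{j_0}$. Pulling the two vector fields back to $\mathcal{P}$, the $B$-regularity hypothesis (item~$(5)$ of Remark~\ref{rem:9closeODE}) gives that $H^{(k)}\circ f_{j_0}^{-1}$ are Lipschitz with a constant bounded by $B$ in the $\ell^1$-norm, while \eqref{eq:9fieldsdeviations} bounds their sup-norm difference by $\delta$. Subdividing $[0,t_1]$ into sub-intervals of length at most $1/(2B)$ and applying items~(iii)--(iv) of the standing remarks on each (or equivalently applying Gronwall's inequality directly to the difference), one obtains $|f_{j_0}^{-1}(x^{(2)}(t))-f_{j_0}^{-1}(x^{(1)}(t))|\le 4\delta$ on $[0,t_1]$, which is $\le\delta_0=4K\delta$; simultaneously this deviation is small enough that $x^{(2)}(t)$ remains inside $\mathcal{V}_{j_0}$ since $x^{(1)}(t)\in\mathcal{W}_{j_0}=f_{j_0}(\mathcal{D})$. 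For the inductive step, suppose the claim holds through $t=t_r$. Since $x^{(1)}(t_r)\in\mathcal{W}_{j_{r-1}}\cap\mathcal{U}_{j_r}$ and the chart-deviation at $t_r$ is at most $\delta_{r-1}$, Lemma~\ref{lem9:localchartsconnect1} yields $x^{(2)}(t_r)\in\mathcal{V}_{j_r}$ with $|f_{j_r}^{-1}(x^{(2)}(t_r))-f_{j_r}^{-1}(x^{(1)}(t_r))|\le K\delta_{r-1}$. Re-running the ODE stability argument on $(t_r,t_{r+1}]$ in chart $f_{j_r}$, starting from this inflated initial deviation, gives the desired bound $\delta_r=(4K)^r\delta$ on that interval, again while keeping $x^{(2)}(t)$ inside $\mathcal{V}_{j_r}$.

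Lemma~\ref{lem9:regularcurves1}(1) bounds $n(\mathfrak{I})\le 2BT$, so the induction terminates after at most $2BT$ steps; absorbing the factor $2$ into the constant in the exponent (by a slightly sharper accounting of the ODE-stability multiplier, or by replacing $4K$ with $2K$ throughout and adjusting the base case) yields the metric estimate $d(x^{(1)}(t),x^{(2)}(t))\le A(4K)^{BT}\delta$ via the $A$-regularity of $d$. The main point that requires care is the interplay between two smallness conditions: at each step of the induction one must verify that $\delta_r$ is strictly smaller than the separation between $\mathcal{D}$ and $\partial\mathcal{P}$ so that Lemma~\ref{lem9:localchartsconnect1} genuinely applies and $x^{(2)}$ does not escape the current chart. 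This is automatic in the regime $(4K)^{BT}\delta<(2K)^{-1}$, which is the only regime in which the conclusion of the lemma is non-vacuous; outside this regime, the estimate \eqref{eq:9trajecmetricdeviation} is trivial by compactness of $\mathcal{L}$ after enlarging $A$.
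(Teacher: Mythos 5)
Your proposal follows essentially the same route as the paper's proof: induction over the itinerary intervals, transferring the chart-coordinate deviation at each transition time via Lemma~\ref{lem9:localchartsconnect1} at the cost of a factor $K$, controlling the growth within each interval by the pull-back ODE stability estimates $(iii)$--$(iv)$ (the paper applies \eqref{eq:9ODEstab1}--\eqref{eq:9ODEstab2} once per interval to get $|f^{-1}_{j_r}(x^{(2)}(t)) - f^{-1}_{j_r}(x^{(1)}(t))| \le 2K\delta_{r-1} + 2\delta < \delta_r$), and then concluding via the bound on $n(\mathfrak{I})$ from Lemma~\ref{lem9:regularcurves1} together with $A$-regularity of $d$. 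The additional embellishments you include (subdividing long itinerary intervals, the explicit smallness regime for $\delta$) are not present in the paper's argument, which keeps the same crude bookkeeping you describe, so the two proofs are essentially identical in structure.
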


\begin{proof}
The proof of the first statement goes by induction over $r$, starting from $r = 0$. For $r = 0$, we consider the pull-back fields
$H^{(1)} \circ f^{-1}_{j_0}$, $k = 1,2$ and apply \eqref{eq:9ODEstab2} from $(iv)$. This yields $|f^{-1}_{j_0}(x^{(2)}(t)) - f^{-1}_{j_0}(x^{(1)}(t))| \le 2 \delta$ for any $t_0 \le t \le t_{1}$, that is, the statement holds for $r = 0$. Assume that the statement holds for $r'$ in the role of $r$ for $r' \le r-1$, $r \ge 1$. In particular, we have $|f^{-1}_{j_{r-1}}(x^{(2)}(t_r)) - f^{-1}_{j_{r-1}}(x^{(1)}(t_r))| \le \delta_{r-1}$. Recall that $x^{(1)}(t_r) \in \mathcal{U}_{j_r}$. Due to Lemma~\ref{lem9:localchartsconnect1}, $x^{(2)}(t_r) \in \mathcal{U}_{j_r}$ and $|f^{-1}_{j_{r}}(x^{(2)}(t_r)) - f^{-1}_{j_{r}}(x^{(1)}(t_r))| \le K \delta_{r-1}$. Now we consider the pull-back fields $H^{(1)} \circ f^{-1}_{j_r}$, $k = 1, 2$ and apply \eqref{eq:9ODEstab1} and \eqref{eq:9ODEstab2} from $(iv)$. This yields
$$
|f^{-1}_{j_r}(x^{(2)}(t)) - f^{-1}_{j_r}(x^{(1)}(t))| \le 2 |f^{-1}_{j_{r}}(x^{(2)}(t_r)) - f^{-1}_{j_{r}}(x^{(1)}(t_r))| + 2 \delta < \delta_r
$$
for any $t_r \le t \le t_{r+1}$, as claimed. For the second statement let us recall that due to Lemma~\ref{lem9:regularcurves1}, $n(\mathfrak{I}) \le BT$. Therefore the statement follows just from the definition of an $A$-regular metric.
\end{proof}

It is easy to see that the last lemma applies to the problem raised in Remark~\ref{rem:9closeODE}. Let $\mathcal{X}$, $\mathcal{Y}$ be smooth manifolds, $\mathcal{M} = \mathcal{X} \times \mathcal{Y}$. We assume that $\mathcal{M} \in \mathfrak{R}(K)$. Let $\mathfrak{F} = \{ f_j : 1 \le j \le N \}$ be a $K$-atlas for $\mathcal{M}$. Let $\Psi(x)$ be a smooth vector-field on $\mathcal{M}$ and let $\Theta$ and $\Upsilon$ be the respective components of $\Psi$. Assume: (i) there exists a smooth vector-field $\Xi(x)$ on $\mathcal{X}$ such that $|\Xi(x) - \Theta(x,y)| < \delta$ for any $(x,y) \in \mathcal{M}$ and $(ii)$ $|\Upsilon(x,y)| < \delta$ for any $(x,y) \in \mathcal{M}$. View $\Omega(x,y) := (\Xi(x),0)$ as a vector-field on $\mathcal{M}$. Assume that both $\Xi, \Omega$ are $B$-regular with respect to the atlas $\mathfrak{F}$. Given $(x_0, y_0) \in \mathcal{M}$, let $g(x_0, y_0; t) := (u(x_0, y_0; t), v(x_0, y_0; t))$ be the $\Psi$-trajectory originating at $(x_0, y_0)$. Let $h(x_0; t)$ be the $\Xi$-trajectory originated at $x_0$. Clearly, $q(x_0, y_0, t)=(h(x_0; t), y_0)$ is the $\Omega$-trajectory originating at $(x_0,y_0)$. We assume that $\mathcal{X}$ and $\mathcal{Y}$ are both equipped with a metric $d_\mathcal{X}$ and $d_\mathcal{Y}$, respectively. We define a metric on $\mathcal{M}$ by setting $d((x,y),(u,v)) = d_\mathcal{X}(x,u) + d_\mathcal{Y}(y,v)$. Finally, we assume that $d$ is $A$-regular with respect to the atlas $\mathfrak{F}$. Applying the last lemma, we obtain

\begin{corollary}\label{Lm9:voltera1}
With the above notation, we have for any $(x_0, y_0) \in \mathcal{M}$, any $T$, and any $0 \le t \le T$,
\begin{equation}\label{eq:9trajecmetricdeviation2-1}
d(g(x_0, y_0; t), q(x_0, y_0, t)) \le A (4K)^{BT} \delta.
\end{equation}
\end{corollary}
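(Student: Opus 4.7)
The statement is a direct application of Lemma~\ref{lem9:trajestability2} with $H^{(1)}=\Psi$ and $H^{(2)}=\Omega$, so the plan is simply to verify the three hypotheses of that lemma in the product situation at hand.

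First, I would identify the $\Omega$-trajectory originating at $(x_0,y_0)$ as precisely $q(x_0,y_0;t)=(h(x_0;t),y_0)$: the $\mathcal{X}$-component $h(x_0;t)$ solves $\dot u=\Xi(u)$, $u(0)=x_0$ by definition, while the $\mathcal{Y}$-component is constantly equal to $y_0$ because the $\mathcal{Y}$-component of $\Omega=(\Xi,0)$ vanishes. Thus $q$ is the $\Omega$-trajectory through $(x_0,y_0)$ and $g$ is the $\Psi$-trajectory through the same initial point, so the two trajectories to be compared start at a common point, as required by Lemma~\ref{lem9:trajestability2}.

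Second, both fields are $B$-regular with respect to $\mathfrak{F}$: for $\Omega$ this is assumed outright, and the $B$-regularity of $\Psi$ follows from the same hypothesis plus the fact that, by assumptions (i) and (ii) of Remark~\ref{rem:9closeODE}, the field $\Psi-\Omega=(\Theta-\Xi,\Upsilon)$ is pointwise bounded by $\delta$ in the $\ell^1$ convention. (One can either incorporate this into the definition of $B$ or, more cleanly, include $B$-regularity of $\Psi$ as a standing hypothesis; in the applications of the corollary, $\Psi$ will come with controlled derivatives on each local chart.) Third, the pullback difference in charts obeys
\begin{equation*}
\sup_{x,j}\bigl|\Psi\circ f_j^{-1}(x)-\Omega\circ f_j^{-1}(x)\bigr|\le C\,\delta,
\end{equation*}
because the product tangent norm is the $\ell^1$-sum of the $\mathcal{X}$- and $\mathcal{Y}$-components, each bounded by $\delta$ on $\mathcal{M}$ by (i) and (ii), and because the chart maps from the $K$-atlas distort the tangent norm by at most a factor depending only on $K$.

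With these three hypotheses in place, Lemma~\ref{lem9:trajestability2} yields
\begin{equation*}
d\bigl(g(x_0,y_0;t),q(x_0,y_0;t)\bigr)\le A(4K)^{BT}\delta
\end{equation*}
for every $0\le t\le T$, with the absolute constant $C$ above absorbed into $A$ (or, equivalently, by trivially redefining $\delta$ at the outset). The only place where care is needed, and hence the main obstacle such as it is, is the bookkeeping of chart-distortion constants: one must ensure that the conversion between the ambient pointwise bounds on $\Psi-\Omega$ and the pullback bounds used by Lemma~\ref{lem9:trajestability2} does not produce a factor growing with $T$ or with the itinerary length. Since the $K$-atlas condition gives a uniform bound on transition maps and their derivatives, this factor is uniformly bounded and is harmlessly absorbed into $A$ and $K$.
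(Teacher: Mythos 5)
Your proposal is correct and follows essentially the same route as the paper: the corollary is obtained by applying Lemma~\ref{lem9:trajestability2} to the pair $\Psi$ and $\Omega=(\Xi,0)$ on $\mathcal{M}$, after observing that $q(x_0,y_0;t)=(h(x_0;t),y_0)$ is exactly the $\Omega$-trajectory through $(x_0,y_0)$ and that assumptions $(i)$--$(ii)$ bound the chartwise difference of the two fields by (a constant times) $\delta$, the constant being absorbed into $A$. The only slight misstep is the parenthetical claim that $B$-regularity of $\Psi$ follows from the pointwise bound $|\Psi-\Omega|<\delta$ (closeness in value does not control the first derivatives required by $B$-regularity), but since you also note that the clean reading is to take $B$-regularity of $\Psi$ as a standing hypothesis — which is how it is verified in the application (Corollary~\ref{cor:8PsiregularM} and Proposition~\ref{prop:8.stabil1}) — this does not affect the argument.
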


\begin{remark}\label{rem:9closeODE1}
$(1)$ To resolve the problem raised in part $(1)$ of Remark~\ref{rem:9closeODE} we would need to verify that the last corollary applies to the vector-fields $\Phi$, $\Phi_{\tau}$. Clearly, for that we would need to develop some estimates for the vector-fields in question. First of all let us address the issue of the boxes $\mathcal{P} = \prod_{1 \le k \le m} (-\rho_k, \rho_k)$ as a ``standard space unit.'' We need to re-enumerate $\mathcal{O}$ via integers $k = 1, 2, \ldots$. Obviously, there are many ways to do this. The only thing that is important here is that the estimates \eqref{eq:8Gnesti} below hold. The following enumeration is convenient enough for us. Recall, once again, that due to Theorem~$\tilde B$,
\begin{equation}\label{eq:8GnestitilB}
|G_\mathfrak{m}| \le 2 \ve \exp \Big( -\frac{\kappa_0}{2} |\mathfrak{m}| \Big).
\end{equation}
Let
$$
L_k = \left\{ \mathfrak{m} : 2 \ve \exp \Big( -\frac{\kappa_0}{2}(k-1) \Big) \le |G_\mathfrak{m}| < 2 \ve \exp \Big( -\frac{\kappa_0}{2}k \Big) \right\}.
$$
We enumerate the points of $L_k$ arbitrarily. We enumerate $\mathcal{O}$ lexicographically with respect to $L_k$ with increasing $k$, starting from $k = 1$. One can see that with the enumeration $n \mapsto \mathfrak{m}_n$ obtained in this way, we have
\begin{equation}\label{eq:8Gnesti}
\begin{split}
c(a_0, b_0, \kappa_0, \nu) |\mathfrak{m}_n|^{\beta(a_0, b_0, \kappa_0, \nu)} & \le |n| \le C(a_0, b_0, \kappa_0, \nu) |\mathfrak{m}_n|^{b(a_0, b_0, \kappa_0, \nu)} , \\
c(a_0, b_0, \kappa_0, \nu) \ve_0\exp(-n^{\sigma_1(a_0, b_0, \kappa_0, \nu)}) & \le |G_n| \le C(a_0, b_0, \kappa_0, \nu) \ve_0 \exp(-n^{\sigma(a_0, b_0, \kappa_0, \nu)}).
\end{split}
\end{equation}
The specific way we do the enumeration does not affect the results in what follows, as long as \eqref{eq:8Gnesti} holds.

$(2)$ The vector-fields $\Phi_n$ are not smooth with respect to the variables $\mu$ at $\mu_n = E^\pm_n$. To resolve this problem we follow \cite{Tr} and introduce below auxiliary manifolds and vector-fields. Namely, set
\begin{equation}\label{eq:8auxmanofold}
\begin{split}
I_n & = (0, \pi \xi_n/2), \quad \xi_n = \exp(-n^{\sigma(a_0, b_0, \kappa_0, \nu)}/2), \quad I_{n, \pm} = \{ (\lambda, \pm) : \lambda \in I_n \}, \\
S_n & = I_{n,+} \cup I_{n,-} \cup \{ 0, \pi \xi_n/2 \}, \quad \mathcal{M} = \prod_{n} S_{n}, \quad \mathcal{X} = \prod_{n \le N} S_{n}, \quad \mathcal{Y} = \prod_{n > N} S_{n}, \\
\mu_n(\theta) & = E^-_n+ (E^+_n - E^-_n) \sin^2 \Big( \frac{\theta_n}{\xi_n} \Big), \quad \theta = (\theta_n, \sigma_n)_n \in \mathcal{M}, \\
\Psi_n(\theta) & =  4 \xi_n \sqrt{(\mu_n(\theta) - \underline{E}) \prod_{\substack{i \\ i \neq n}} \frac{(E_i^- - \mu_n(\theta))(E_i^+ - \mu_n(\theta))}{(\mu_i(\theta) - \mu_n(\theta))^2}}, \quad \Psi = (\Psi_n)_n.
\end{split}
\end{equation}
The field $\Psi$ is the pull-back of $\Phi$ under the map $\theta \mapsto \mu(\theta)$. We verify this later in Lemma~\ref{lem:8Lml2ODEmutheta} after we have all the results on $\Psi$ in place.

$(3)$ We identify $S_n$ with a circle $C_n$ of diameter $\rho_n$ via the bijection $(\theta,\sigma) \mapsto \rho_n \exp(4 i \sigma \theta \rho_n^{-1})$. Clearly, the standard smooth structure on $C_n$ has a $K$-atlas, with $K$ being an absolute constant. This defines a $K$ atlas on $S_n$. We consider the product structure
on $\mathcal{X}$ and denote by $\mathfrak{F}$ a $K$ atlas for it. Set
\begin{equation}\label{eq:9ddistance}
d((\theta,\sigma), (\theta',\sigma')) = \rho_n |\exp(4 i \sigma \theta \rho_n^{-1}) - \exp(4 i \sigma' \theta' \rho_n^{-1})|
\end{equation}
and
\begin{equation}\label{eq:9ddistanceX}
d(((\theta_n, \sigma_n))_n, ((\theta'_n, \sigma'_n))) = \sum_{n} d((\theta_n, \sigma_n), (\theta'_n, \sigma'_n)).
\end{equation}
The metric space $(\mathcal{M}, d)$ is compact. The metric $d$ is $A$-regular with respect to the atlas $\mathfrak{F}$, with $A$ being an absolute constant. The same applies to the manifolds $\mathcal{X}$ and $\mathcal{Y}$.

$(4)$ Note that $\xi_k$ are set up so that
\begin{equation} \label{eq:8rhokdefi1}
\xi_k^{-1} |G_k| \le C \xi_k,
\end{equation}
see \eqref{eq:8Gnesti}.
\end{remark}

We turn now to the analysis of the vector-fields in question. We use the enumeration mentioned in the last remark, so that  $\Phi(\mu) = (\Phi_{n}(\mu))_{n \ge 1}$. Lemma~\ref{lem2:homogenuityplus} is instrumental here. Due to this lemma and the first estimate in \eqref{eq:8Gnesti}, we have the following:

\smallskip
$(i)$ For any $m' \neq m$ with $m' \ge m$, we have
\begin{equation}\label{eq:6noresonanceI}
\dist ([E_{m}^-, E_{m}^+], [E_{m'}^-, E_{m'}^+]) \ge a (m')^{-b}
\end{equation}
with constants $a,b$ depending on $a_0, b_0, \kappa_0, \nu$.

$(ii)$ Denote by $\mathfrak{R}_{m}$ the set of all $m' \neq m$ with $m' \leq m$ such that
\begin{equation}\label{eq:6resonance}
(E_{m'}^+ - E_{m'}^-) \ge (\dist ([E_m^-, E_m^+], [E_{m'}^-, E_{m'}^+])^4.
\end{equation}
The set $\mathfrak{R}_m$ can be enumerated as follows: $n^{(r)}_m$, $r = 1, \ldots, r_0$,
$$
n^{(r-1)}_m \ge \tau_1 \exp((n^{(r)}_m)^{\gamma_1}), \quad r = 1, \ldots, r_0,
$$
where $n^{(0)}_m := m$, $\tau_1, \gamma_1$ depend on $a_0, b_0, \kappa_0, \nu$.

$(iii)$ For any $n$, we have
\begin{equation}\label{eq:6resonancelowerE}
a |n|^{-b} \le E_n^- - \underline{E} \le 2 C |\tilde \omega|^2 (1+n)^2.
\end{equation}

Consider the following functions, see \eqref{ml06},
\begin{equation}\label{ml07F}
\begin{split}
P_n(\mu) & = \prod_{\substack{i \in \mathfrak{R}_n}} \frac{(E_i^-  - \mu_n)(E_i^+  - \mu_n)}{(\mu_i -  \mu_n)^2}, \\
Q_n(\mu) & = \prod_{\substack{i \in \mathfrak{N}_n}} \frac{(E_i^-  - \mu_n)(E_i^+  - \mu_n)}{(\mu_i -  \mu_n)^2},
\end{split}
\end{equation}
where $\mathfrak{R}_n$ is defined as in $(ii)$ above, $\mathfrak{N}_n = \{ i \neq n \} \setminus \mathfrak{R}_n$. Introduce for convenience
\begin{align*}
\gamma_i & = E_{\mathfrak{i}}^+ - E_{\mathfrak{i}}^-, \\
\eta_{i,n} & = \dist ([E_\mathfrak{i}^-, E_\mathfrak{i}^+], [E_{\mathfrak{n}}^-, E_{\mathfrak{n}}^+]).
\end{align*}
We assume that $\ve_0$ is small enough so that \eqref{eq:8Gnesti} reads
\begin{equation}\label{eq:8Gnestimod}
\gamma_i \le \ve_1 \exp(-i^\sigma)
\end{equation}
with $\ve_1 \ll 1$. Due to the definition of the set $\mathfrak{N}_n$, we have
\begin{equation}\label{eq:6resonanceAA-1}
\gamma_i < \eta_{i,n}^4, \quad i \in \mathfrak{N}_n.
\end{equation}
On the other hand, due to \eqref{eq:6noresonanceI} we have
\begin{equation}\label{eq:6resonanceAA-2}
\gamma_n < \eta_{i,n}^4, \quad i \in \mathfrak{R}_n.
\end{equation}
since here $i < n$.

\begin{lemma}\label{Lml2} $(1)$
\begin{equation}\label{ml07}
Q_n(\mu) = \prod_{\substack{i \in \mathfrak{N}_n}} (1 + \phi_{i,n} (\mu_i,\mu_n)),
\end{equation}
where $\phi_{i,n}(\mu_i, \mu_n)$ obey
\begin{equation}\label{ml10}
|\phi_{i,n} (\mu_i, \mu_n)| \le 6 \gamma_i^{3/4}.
\end{equation}

$(2)$ The function $\sqrt {Q_n(\mu)}$ is well-defined, smooth and obeys
\begin{equation}\label{eq8:ml10}
|\partial_{\mu_k}\sqrt {Q_n(\mu)}| \le C(a_0, b_0, \kappa_0, \nu).
\end{equation}
\end{lemma}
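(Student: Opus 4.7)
The plan is to prove both parts via a direct algebraic reformulation of each factor $\psi_{i,n} := (E_i^- - \mu_n)(E_i^+ - \mu_n)/(\mu_i - \mu_n)^2$, exploiting the near-cancellation that is built into the definition of the non-resonant set $\mathfrak{N}_n$.

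First, for part (1), I would parametrize each $i$-th gap by its center $c_i = (E_i^- + E_i^+)/2$ and half-width $\gamma_i/2$, and write $\mu_i = c_i + \delta_i$ with $|\delta_i| \le \gamma_i/2$. The identities $(E_i^- - \mu_n)(E_i^+ - \mu_n) = (c_i - \mu_n)^2 - \gamma_i^2/4$ and $(\mu_i - \mu_n)^2 = (c_i - \mu_n)^2 + 2\delta_i(c_i - \mu_n) + \delta_i^2$ yield the explicit formula $\phi_{i,n} = -[\gamma_i^2/4 + 2\delta_i(c_i - \mu_n) + \delta_i^2]/(\mu_i - \mu_n)^2$. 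Since $\mu_n$ and $\mu_i$ lie in the closures of disjoint spectral gaps, $|\mu_i - \mu_n| \ge \eta_{i,n}$, and the triangle inequality gives $|c_i - \mu_n| \le |\mu_i - \mu_n| + \gamma_i/2$. Substituting these into the numerator, I would bound it by $\gamma_i^2 + \gamma_i |\mu_i - \mu_n|$, obtaining $|\phi_{i,n}| \le \gamma_i^2/\eta_{i,n}^2 + \gamma_i/\eta_{i,n}$. The defining condition $\gamma_i < \eta_{i,n}^4$ for $i \in \mathfrak{N}_n$, see \eqref{eq:6resonanceAA-1}, turns these contributions into $\gamma_i^{3/2}$ and $\gamma_i^{3/4}$, respectively; their sum is at most $6\gamma_i^{3/4}$ since $\gamma_i \le 1$.

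For part (2), once part (1) is in hand the super-exponential decay $\gamma_i \le \ve_1 \exp(-i^\sigma)$ from \eqref{eq:8Gnestimod} makes $|\phi_{i,n}| < 1/2$ for each $i$ provided $\ve_0$ is sufficiently small, so every factor $1 + \phi_{i,n}$ is positive and bounded away from zero and $\sqrt{Q_n(\mu)}$ is well-defined. The absolutely and uniformly convergent series $\log Q_n(\mu) = \sum_{i \in \mathfrak{N}_n} \log(1 + \phi_{i,n})$, with $\sum_i |\phi_{i,n}| \le 6\sum_i \gamma_i^{3/4} \le C(a_0,b_0,\ka_0,\nu)$, gives smoothness of $\sqrt{Q_n}$ in $\mu$ on the admissible set and the pointwise bound $\sqrt{Q_n(\mu)} \le \exp(\tfrac{1}{2}\sum 6\gamma_i^{3/4}) \le C(a_0,b_0,\ka_0,\nu)$. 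For the derivative estimate I would split according to the role of $k$. If $k \notin \mathfrak{N}_n \cup \{n\}$, then $Q_n$ does not depend on $\mu_k$ at all and the derivative vanishes. For $k = n$, where every factor contributes, the crucial observation is a term-by-term cancellation: direct differentiation gives $\partial_{\mu_n} \log \psi_{i,n} = (-2ts + \gamma_i^2/2)/[(t^2 - \gamma_i^2/4)(t-s)]$ with $t = \mu_n - c_i$, $s = \delta_i$, so that the leading $\pm 2/t$ contributions cancel and each term is of size $\gamma_i/\eta_{i,n}^2 \le \gamma_i^{1/2}$; summing over $i \in \mathfrak{N}_n$ and multiplying by $\sqrt{Q_n}$ gives a bound by $C(a_0,b_0,\ka_0,\nu)$. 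For $k \in \mathfrak{N}_n \setminus \{n\}$, only the $i = k$ factor depends on $\mu_k$ and $\partial_{\mu_k} \log \psi_{k,n} = -2/(\mu_k - \mu_n)$, which combined with the lower bound on $|\mu_k - \mu_n| \ge \eta_{k,n}$ supplied by Lemma~\ref{lem2:homogenuityplus} and the reordering \eqref{eq:8Gnesti} yields the claim.

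The hard part will be the $k \neq n$ case of the derivative bound, since the naive estimate $|\partial_{\mu_k} \sqrt{Q_n}| = \sqrt{Q_n}/|\mu_k - \mu_n|$ is only as strong as the polynomial lower bound on $\eta_{k,n}$; one must genuinely exploit the non-resonance condition defining $\mathfrak{N}_n$ and the systematic growth of $\eta_{k,n}$ encoded in Lemma~\ref{lem2:homogenuityplus} and \eqref{eq:8Gnesti}, together with the rapid decay of $\gamma_i$, to extract a constant depending only on $a_0$, $b_0$, $\ka_0$, $\nu$.
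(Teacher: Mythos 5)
Your part (1) and the $k=n$ half of part (2) are correct and in substance the same as the paper's argument: the paper expands each factor as $1+\phi_{i,n}$ with $\phi_{i,n}=\frac{E_i^--\mu_i}{\mu_i-\mu_n}+\frac{E_i^+-\mu_i}{\mu_i-\mu_n}+\frac{(E_i^--\mu_i)(E_i^+-\mu_i)}{(\mu_i-\mu_n)^2}$, bounds it by $6\gamma_i^{3/4}$ using $\gamma_i<\eta_{i,n}^4$ exactly as your center/half-width parametrization does, and for $k=n$ differentiates factor by factor, using the same cancellation you encode in $\partial_{\mu_n}\log\psi_{i,n}$ to get a summable contribution of order $\gamma_j^{1/2}$ per factor.

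The genuine gap is the case $k\in\mathfrak{N}_n\setminus\{n\}$: you first assert it follows from $|\mu_k-\mu_n|\ge\eta_{k,n}$, and then in your closing paragraph concede that this only gives a polynomial bound and that some further idea is needed, which you do not supply. In fact no such idea is available. Since only the factor $i=k$ depends on $\mu_k$, one has exactly $\partial_{\mu_k}\sqrt{Q_n}=-\sqrt{Q_n}/(\mu_k-\mu_n)$ with $\sqrt{Q_n}$ bounded below by a positive constant, so the derivative is of size $1/|\mu_k-\mu_n|$, which at the nearest gap endpoints equals $1/\eta_{k,n}$; for non-resonant pairs the only available lower bounds are $\eta_{k,n}\gtrsim \max(k,n)^{-b}$ from Lemma~\ref{lem2:homogenuityplus} and $\eta_{k,n}>\gamma_k^{1/4}$ from \eqref{eq:6resonanceAA-1}, neither of which is a constant, and for spectra whose gap separations are genuinely of polynomial size the uniform constant bound fails. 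You should know that the paper's own proof treats only $k=n$ and dismisses $k\neq n$ with ``the proof is similar''; carried out, that computation produces the same $2/|\mu_k-\mu_n|$ term, so the constant claimed in the lemma for $k\neq n$ is an overstatement there as well. The correct resolution is the one your computation already delivers: the bound $|\partial_{\mu_k}\sqrt{Q_n}|\le C/\eta_{k,n}\le C\max(k,n)^{b}$ (equivalently $\le C\gamma_k^{-1/4}$), which is all that is used downstream, since in Corollary~\ref{cor:8Psiregular} this derivative is multiplied by $\partial_{\theta_k}\mu_k\lesssim\gamma_k\xi_k^{-1}\lesssim\xi_k$ and by the prefactor $\xi_n$, which absorb any polynomial factor in $k$ or $n$, exactly as the factors $n^{b\log_2\log_2 n}$ coming from $P_n$ are absorbed. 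So either prove and record the polynomial bound for $k\neq n$ and note that it suffices for the applications, or restrict the constant bound to $k=n$; as stated, the uniform constant for $k\neq n$ cannot be established from the hypotheses.
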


\begin{proof}
We have
\begin{equation}\label{ml071}
\frac{(E_i^- - \mu_n)(E_i^+ - \mu_n)}{(\mu_i - \mu_n)^2} = 1 + \frac{E_i^- - \mu_i}{\mu_i - \mu_n} + \frac{E_i^+ - \mu_i}{\mu_i - \mu_n} + \frac{(E_i^- - \mu_i)(E_i^+ - \mu_i)}{(\mu_i - \mu_n)^2} =: 1 + \phi_{i,n}(\mu).
\end{equation}

Thus, for $i \in \mathfrak{N}_n$ and $(\mu_i,\mu_n)$ in the mentioned domain, we have
\begin{equation}\label{ml072}
\begin{split}
|\phi_{i,n}(\mu)| & \le \frac{E_i^+ - E_i^-}{\eta_{i,n} - 2 \gamma_i} + \frac{E_i^+ - E_i^-}{\eta_{i,n} - 2 \gamma_i} + \frac{(E_i^+ - E_i^- )^2}{(\eta_{i,n} - 2 \gamma_i)^2} \\
& \le 4 (E_i^+ - E_i^-)^{3/4} + 2 (E_i^+  -E_i^-)^{3/2} \\
& \le 6 (E_i^+ - E_i^-)^{3/4}.
\end{split}
\end{equation}
This finishes $(1)$. Differentiating, one obtains
\begin{equation}\label{ml07123}
\begin{split}
|\partial_{\mu_k} \sqrt {Q_n(\mu)}| = \frac{1}{2} |Q_n(\mu)|^{-1/2} |\partial_{\mu_k} Q_n(\mu)| \le \frac{1}{2} \prod_{\substack{i \in \mathfrak{N}_n}} |1 - 6 \gamma_i^{5/6}|^{-1/2} |\partial_{\mu_k} Q_n(\mu)| \le C_1(\kappa_0, \nu) |\partial_{\mu_k} Q_n(\mu)|.
\end{split}
\end{equation}
Take $k = n$:
\begin{equation}\label{ml07124}
\begin{split}
|\partial_{\mu_n} Q_n(\mu)| & \le \sum_{\substack{j \in \mathfrak{N}_n}}\prod_{\substack{i \in \mathfrak{N}_n\setminus \{j\}}}|\frac{(E_i^-  - \mu_n)(E_i^+  - \mu_n)}{(\mu_i -  \mu_n)^2}| |\partial_{\mu_n}\frac{(E_j^- - \mu_n)(E_j^+ - \mu_n)}{(\mu_j - \mu_n)^2}| \\
& \le C_1(a_0, b_0, \kappa_0, \nu) \sum_{\substack{j \in \mathfrak{N}_n}} |\partial_{\mu_n} \frac{(E_j^- - \mu_n)(E_j^+ - \mu_n)}{(\mu_j - \mu_n)^2}|, \\
|\partial_{\mu_n}\frac{(E_j^- - \mu_n)(E_j^+ - \mu_n)}{(\mu_j - \mu_n)^2}| & \le |\partial_{\mu_n}[\frac{E_j^- - \mu_j}{\mu_i - \mu_n} + \frac{E_i^+ - \mu_i}{\mu_i - \mu_n} + \frac{(E_i^- - \mu_i)(E_j^+ - \mu_j)}{(\mu_j - \mu_n)^2}]| \\
& \le \frac{E_j^+ - E_j^-}{(\eta_{j,n} - 2 \gamma_j)^2} + \frac{E_j^+ - E_j^-}{(\eta_{j,n} - 2 \gamma_j)^2} + 2 \frac{(E_j^+ - E_j^- )^2}{(\eta_{j,n} - 2 \gamma_j)^3} \\
& \le 8 (E_j^+ - E_j^-)^{1/2} + 2 (E_j^+ - E_j^-)^{5/4} \\
& \le 10 (E_j^+ - E_j^-)^{1/2} \\
& < 10 \ve_1^{1/2} \exp(-j^\sigma/2) \\
& < \exp(-j/2^\sigma), \\
|\partial_{\mu_n} Q_n(\mu)| & \le 10 C_1(a_0, b_0, \kappa_0, \nu) \sum_{\substack{j \in \mathfrak{N}_n}} \exp(-j^\sigma/2) \\
& = C(a_0,b_0,\kappa_0,\nu).
\end{split}
\end{equation}
For $k \neq n$, the proof is similar.
\end{proof}

\begin{lemma}\label{Lml2PQ}
$|\mathfrak{R}_m| \le \log_2 \log_2 m + D(\kappa_0, \nu)$.
\end{lemma}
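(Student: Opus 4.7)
The plan is to extract a chain with super-exponential growth from the set $\mathfrak{R}_m$ using Lemma~\ref{lem2:homogenuityplus}(3) (and its integer-indexed form in Remark~\ref{rem:9closeODE1}(ii)), then invert this growth to bound $|\mathfrak{R}_m|$ by $\log_2 \log_2 m$ up to an additive constant. Since Remark~\ref{rem:9closeODE1}(1) re-enumerates $\mathcal{O}$ via integers $k = 1, 2, \ldots$ with the two-sided polynomial bound \eqref{eq:8Gnesti}, the statement of Lemma~\ref{lem2:homogenuityplus}(3) transfers to this indexing while only perturbing the constants $\tau_1, \gamma_1$.

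First, I would enumerate the elements of $\mathfrak{R}_m$ in decreasing order as $n^{(r)}_m$, $r = 1, \ldots, r_0$, and set $n^{(0)}_m := m$. By Remark~\ref{rem:9closeODE1}(ii) the chain satisfies
$$
n^{(r-1)}_m \ge \tau_1 \exp\bigl( (n^{(r)}_m)^{\gamma_1} \bigr), \qquad r = 1, \ldots, r_0,
$$
with constants $\tau_1, \gamma_1 > 0$ depending only on $a_0, b_0, \kappa_0, \nu$ (and since $a_0, b_0$ are fixed throughout the paper, effectively only on $\kappa_0, \nu$). Reverse the chain by setting $b_j := n^{(r_0 - j)}_m$ for $j = 0, 1, \ldots, r_0$, so that $b_0 \ge 1$ (all elements of $\mathfrak{R}_m$ are positive integers) and
$$
b_{j+1} \ge \tau_1 \exp(b_j^{\gamma_1}).
$$

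Next, observe that since $\exp(x^{\gamma_1})/x^2 \to \infty$ as $x \to \infty$ (for any $\gamma_1 > 0$), there exists $j_0 = j_0(\kappa_0, \nu)$ such that for all $j \ge j_0$ we have $b_{j+1} \ge b_j^2$, equivalently $\log_2 b_{j+1} \ge 2 \log_2 b_j$. Iterating from $j_0$ gives $\log_2 b_j \ge 2^{j - j_0}$ for $j \ge j_0$, and therefore $\log_2 \log_2 b_j \ge j - j_0$. Specializing to $j = r_0$ and using $b_{r_0} = n^{(0)}_m = m$ yields
$$
r_0 \le \log_2 \log_2 m + j_0,
$$
which is the claimed bound with $D(\kappa_0, \nu) := j_0 + 1$ (the extra $+1$ accounting for including or excluding the index $0$).

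There is no substantive obstacle here: the work has already been done in establishing the exponential gap-separation in Lemma~\ref{lem2:homogenuityplus}(3); the remaining step is the elementary iteration above. The only bookkeeping issue is tracking the passage between the quotient-group size $|\mathfrak{m}|$ used in Lemma~\ref{lem2:homogenuityplus} and the integer index $n$ used in the current section, which is absorbed into $\tau_1, \gamma_1$ via \eqref{eq:8Gnesti}.
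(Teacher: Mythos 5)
Your proof is correct and follows essentially the same route as the paper: both take the chain $n^{(r-1)}_m \ge \tau_1 \exp((n^{(r)}_m)^{\gamma_1})$ from item $(ii)$ of Section~\ref{sec:9}, introduce a threshold $R_0$ beyond which the exponential dominates squaring, and iterate the resulting doubling of $\log_2$ to invert into the $\log_2\log_2 m$ bound, absorbing the below-threshold part into the constant $D(\kappa_0,\nu)$. The only step you should make explicit is why the threshold can be taken on the \emph{index} $j$ rather than on the value $b_j$: since the $b_j$ are distinct positive integers listed in increasing order, $b_j \ge j+1$, so $j \ge R_0$ forces $b_j \ge R_0$ and hence $b_{j+1} \ge b_j^2$ — this is the same counting the paper performs by noting there are at most $R_0$ elements of $\mathfrak{R}_m$ below the threshold.
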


\begin{proof}
Let $n^{(r)}_m$, $r = 1, \ldots, r_0$ be as in $(ii)$ above. Let $R_0 = R_0(a_0, b_0, \kappa_0, \nu)$ be such that
$$
\tau a_1 \exp(R^{\gamma_1}) \ge R^2, \quad \text{for $R \ge R_0$}.
$$
If $n^{(r)}_m \le R_0$ for all $r$, then $|\mathfrak{R}_m| \le R_0$ and we are done. Assume $n^{(r)}_m > R_0$ for $r = 0, \dots, r_1$. Then
$$
n^{(r-1)}_m \ge \tau \exp(\beta n^{(r)}_m) > (n^{(r)}_m)^{2},
$$
$r = 0, \dots, r_1$. Hence
\begin{equation}\nn
\begin{split}
m = n^{(0)}_m & \ge (n^{(r_1)}_m)^{2^{r_1}} > R_0^{2^{r_1}} > 2^{2^{r_1}}, \\
r_1 & \le \log_2 \log_2 m.
\end{split}
\end{equation}
\end{proof}

Before we proceed, recall \eqref{eq:6noresonanceI} once again:
\begin{equation}\label{eq:6noresonanceIREP}
\eta_{i,n}^{-1} \le C(a_0, b_0, \kappa_0, \nu) n^{b(a_0, b_0, \kappa_0, \nu)}, \quad i \le n.
\end{equation}

\begin{lemma}\label{Lml2PQAG}
The functions $P_n(\mu)$, $\sqrt {P_n(\mu)}$ are well-defined, smooth, and obey
\begin{equation}\label{eq8:ml10A-1}
\begin{split}
C(a_0, b_0, \kappa_0, \nu)^{-\log_2 \log_2 n} n^{-b(a_0, b_0, \kappa_0, \nu) \log_2 \log_2 n} \le |P_n(\mu)| \\
\le C(a_0, b_0, \kappa_0, \nu)^{\log_2 \log_2 n} n^{b(a_0, b_0, \kappa_0, \nu) \log_2 \log_2 n}, \\
|\partial_{\mu_k} \sqrt {P_n(\mu)}| \le C(a_0, b_0, \kappa_0, \nu)^{\log_2 \log_2 n} n^{b(a_0, b_0, \kappa_0, \nu) \log_2 \log_2 n}.
\end{split}
\end{equation}
\end{lemma}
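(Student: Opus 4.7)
The overall strategy rests on Lemma~\ref{Lml2PQ}, which guarantees that the product defining $P_n(\mu)$ contains at most $\log_2 \log_2 n + D(\kappa_0,\nu)$ factors. Accordingly, the plan is to show that each individual factor admits two-sided polynomial-in-$n$ bounds together with polynomial-in-$n$ bounds for its partial derivatives, and then to multiply.

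First I will verify that each factor $\frac{(E_i^- - \mu_n)(E_i^+ - \mu_n)}{(\mu_i - \mu_n)^2}$ with $i \in \mathfrak{R}_n$ is strictly positive, so that $\sqrt{P_n(\mu)}$ is well-defined and smooth on the torus $\mathcal{C}$. Indeed, by Lemma~\ref{lem2:homogenuityplus}(1) we have $\eta_{i,n}>0$ for $i \neq n$, so the closed gaps $[E_i^-,E_i^+]$ and $[E_n^-,E_n^+]$ are disjoint; since $\mu_n \in [E_n^-,E_n^+]$, the quantities $E_i^- -\mu_n$ and $E_i^+-\mu_n$ have the same sign, and their product is therefore positive. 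Smoothness of $P_n$ is immediate because the index set $\mathfrak{R}_n$ depends only on the spectrum $\{E_j^\pm\}$ (not on $\mu$), so $P_n$ is a finite product of smooth functions.

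Next, I will establish two-sided polynomial bounds on each factor. From $\mu_j \in [E_j^-,E_j^+]$ and the disjointness of the gaps one obtains the lower bounds $|E_i^\pm - \mu_n|, |\mu_i - \mu_n| \ge \eta_{i,n}$, while Lemma~\ref{lem2:homogenuityplus}(5) together with the enumeration bounds \eqref{eq:8Gnesti} produces the upper bound $|E_i^\pm - \mu_n|, |\mu_i - \mu_n| \le C n^{b_1}$ for a suitable $b_1 = b_1(a_0,b_0,\kappa_0,\nu)$. Combined with the Diophantine-type bound $\eta_{i,n}^{-1} \le C n^{b_2}$ from \eqref{eq:6noresonanceIREP}, each factor is bounded between $C^{-1} n^{-b_3}$ and $C n^{b_3}$ for some $b_3 = b_3(a_0,b_0,\kappa_0,\nu)$. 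Multiplying these bounds over the at most $\log_2\log_2 n + D$ factors in $\mathfrak{R}_n$ yields the two-sided estimate on $|P_n(\mu)|$ claimed in the lemma.

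Finally, for the derivative of $\sqrt{P_n(\mu)}$, I will use
\[
\partial_{\mu_k}\sqrt{P_n(\mu)} = \frac{1}{2\sqrt{P_n(\mu)}}\,\partial_{\mu_k} P_n(\mu),
\]
observing that $\partial_{\mu_k} P_n(\mu) = 0$ unless $k \in \{n\} \cup \mathfrak{R}_n$. For such $k$, the product rule expands $\partial_{\mu_k} P_n(\mu)$ into $O(|\mathfrak{R}_n|)$ terms, each a product of the undifferentiated factors (estimated as in the previous step) times the derivative of a single factor (bounded by a polynomial in $n$ by direct differentiation analogous to Lemma~\ref{Lml2}(2), using the same lower bounds $\eta_{i,n}^{-1} \le C n^{b_2}$). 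The lower bound on $P_n(\mu)$ just established controls $1/\sqrt{P_n(\mu)}$ by $C^{\log_2\log_2 n} n^{b \log_2\log_2 n}$ (with adjusted constants), and assembling these ingredients gives the claimed derivative bound. The only real obstacle is bookkeeping of exponents: each of the at most $\log_2\log_2 n + D$ factors contributes a uniform polynomial in $n$, and these contributions accumulate to produce exactly the $n^{b\log_2\log_2 n}$ dependence stated in the lemma; since $|\mathfrak{R}_n|$ grows slower than any power of $n$, this accumulation is mild and the polynomial exponents $b(a_0,b_0,\kappa_0,\nu)$ can be tracked explicitly.
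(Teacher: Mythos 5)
Your proposal is correct and follows essentially the same route as the paper's proof: positivity and smoothness from disjointness of the gaps, two-sided polynomial-in-$n$ bounds on each factor via $\eta_{i,n}^{-1}\le C n^{b}$ from \eqref{eq:6noresonanceIREP}, exponentiation over the set $\mathfrak{R}_n$ of cardinality $\le \log_2\log_2 n + D$ from Lemma~\ref{Lml2PQ}, and the chain/product rule together with the lower bound on $P_n$ to control $\partial_{\mu_k}\sqrt{P_n}$. The only cosmetic difference is that the paper bounds each factor by $1+\gamma_i/\eta_{i,n}$ above and roughly $\eta_{i,n}/(\eta_{i,n}+\gamma_i+\gamma_n)$ below (see \eqref{eq6:fracestimates}), whereas you use the cruder energy bound of Lemma~\ref{lem2:homogenuityplus}(5); both are polynomial in $n$, so the conclusion after the $\log_2\log_2 n$ exponentiation is the same.
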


\begin{proof}
Clearly, $(E_i^-  - \mu_n)(E_i^+  - \mu_n) > 0$ for any $i \neq n$ since the gaps are disjoint. Therefore $\sqrt {P_n(\mu)}$ is well-defined. Note that
\begin{equation}\label{eq6:fracestimates}
\begin{split}
C(a_0, b_0, \kappa_0, \nu)^{-1} n^{-b(a_0, b_0, \kappa_0, \nu)} & \le 3 \min (1, \eta_{i,n}) \\
& \le \frac{\eta_{i,n}}{\eta_{i,n} + \gamma_{i} + \gamma_{n}} \\
& \le |\frac{(E_i^-  - \mu_n)(E_i^+  - \mu_n)}{(\mu_i -  \mu_n)^2}| \\
& \le 1 + \frac{\gamma_i}{\eta_{i,n}} \\
& < C(a_0, b_0, \kappa_0, \nu) n^{b(a_0, b_0, \kappa_0, \nu)};
\end{split}
\end{equation}
see \eqref{eq:6noresonanceIREP}. This implies the estimates on $|P_n(\mu)|$ in \eqref{eq8:ml10A-1}. Differentiating, one obtains
\begin{equation}\label{ml0712345}
\begin{split}
|\partial_{\mu_k} \sqrt {P_n(\mu)}| & = \frac{1}{2} |P_n(\mu)|^{-1/2} |\partial_{\mu_k} P_n(\mu)| \\
& \le C(a_0, b_0, \kappa_0, \nu)^{\log_2 \log_2 n} (\log \gamma_n^{-1})^{b(a_0, b_0, \kappa_0, \nu) \log_2 \log_2 n} |\partial_{\mu_k} P_n(\mu)|.
\end{split}
\end{equation}
Take $k = n$:
\begin{equation}\label{ml0712445}
\begin{split}
|\partial_{\mu_n} P_n(\mu)| & \le \sum_{\substack{j \in \mathfrak{R}_n}} \prod_{\substack{i \in \mathfrak{R}_n \setminus \{ j \}}}| \frac{(E_i^-  - \mu_n)(E_i^+  - \mu_n)}{(\mu_i -  \mu_n)^2}| |\partial_{\mu_n} \frac{(E_j^-  - \mu_n)(E_j^+  - \mu_n)}{(\mu_j -  \mu_n)^2}| \\
& \le C(a_0, b_0, \kappa_0, \nu)^{\log_2 \log_2 n}(\log \gamma_n^{-1})^{b(a_0, b_0, \kappa_0, \nu) \log_2 \log_2 n} |\mathfrak{R}_n| \times \\
& \qquad \times \max_{\substack{j \in \mathfrak{R}_n}} |\partial_{\mu_n} \frac{(E_j^- - \mu_n)(E_j^+ - \mu_n)}{(\mu_j -  \mu_n)^2}|, \\
|\partial_{\mu_n} \frac{(E_j^- - \mu_n)(E_j^+ - \mu_n)}{(\mu_j - \mu_n)^2}| & \le \frac{|E_j^+ - \mu_n|}{(\mu_j - \mu_n)^2} + \frac{|E_j^- - \mu_n|}{(\mu_j - \mu_n)^2} + 2 \frac{|E_j^+ - \mu_n| |E_j^- - \mu_n|}{|\mu_j - \mu_n|^3} \\
& \le 4 (1 + \frac{2}{\eta_{i,n}^3}) \\
& < C_1(a_0, b_0, \kappa_0, \nu)(\log \gamma_n^{-1})^{b_1(a_0, b_0, \kappa_0, \nu)}.
\end{split}
\end{equation}
Combining \eqref{ml0712345} and \eqref{ml0712445}, we obtain the estimates on $|\partial_{\mu_n} P_n(\mu)|$ in \eqref{eq8:ml10A-1}. For $k \neq n$, the proof is similar.
\end{proof}

\begin{lemma}\label{Lml2PQAGlowerE}
The functions $\sqrt{\mu_n - \underline{E}}$ are well-defined, smooth, and obey
\begin{equation}\label{eq8:ml10A-2}
\begin{split}
a n^{-b} \le \sqrt{\mu_n - \underline{E}} & \le \mu_n \le C |\tilde \omega|^2 (1+n)^2, \\
|\partial_{\mu_k} \sqrt {P_n(\mu)}| & \le C n^b,
\end{split}
\end{equation}
where $a, b, C$ depend on $a_0, b_0, \kappa_0, \nu$.
\end{lemma}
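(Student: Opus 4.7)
The proof will be a routine application of the bounds on the spectral data already assembled, so I would keep it short. The key observation is that $\mu_n$ varies in the closed gap $\overline{G_n} = [E_n^-, E_n^+]$ (via the parametrization $\mu_n(\theta) = E_n^- + (E_n^+ - E_n^-)\sin^2(\theta_n/\xi_n)$ from \eqref{eq:8auxmanofold}), so in particular $\mu_n - \underline{E} \ge E_n^- - \underline{E}$ throughout the manifold $\mathcal{M}$.

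First, I would invoke the lower bound from \eqref{eq:6resonancelowerE} (equivalently, part $(4)$ of Lemma~\ref{lem2:homogenuityplus}, transcribed through the enumeration $n \mapsto \mathfrak{m}_n$ via \eqref{eq:8Gnesti}): there exist constants $a, b$ depending only on $a_0, b_0, \kappa_0, \nu$ such that
\[
E_n^- - \underline{E} \ge a\, n^{-b}.
\]
Combined with the preceding observation, this gives $\mu_n - \underline{E} \ge a\, n^{-b}$ on $\mathcal{M}$, so $\sqrt{\mu_n - \underline{E}}$ is strictly positive and hence a smooth function of $\mu_n$ (and independent of $\mu_k$ for $k \neq n$). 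In particular, $\sqrt{\mu_n - \underline{E}} \ge \sqrt{a}\, n^{-b/2}$, which, after possibly enlarging $b$ and shrinking $a$, yields the desired left-hand inequality.

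For the upper bound I would use part $(5)$ of Lemma~\ref{lem2:homogenuityplus}: $\mu_n \le E_n^+ \le C(|\tilde{\omega}|\,|\mathfrak{m}_n| + 1)^2 \le C\,|\tilde{\omega}|^2 (1+n)^2$ (again converting the quotient-distance bound via \eqref{eq:8Gnesti}, possibly absorbing polynomial factors into $C$). The intermediate inequality $\sqrt{\mu_n - \underline{E}} \le \mu_n$ is immediate once $\mu_n \ge 1$, which holds after absorbing any bounded portion into a redefined constant, or one simply uses $\sqrt{\mu_n - \underline{E}} \le \sqrt{\mu_n} \le 1 + \mu_n \le C\,|\tilde{\omega}|^2(1+n)^2$.

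For the derivative bound, since $\sqrt{\mu_n - \underline{E}}$ depends only on $\mu_n$, one has
\[
\partial_{\mu_k}\sqrt{\mu_n - \underline{E}} = \frac{\delta_{k,n}}{2\sqrt{\mu_n - \underline{E}}},
\]
and the lower bound $\sqrt{\mu_n - \underline{E}} \ge \sqrt{a}\, n^{-b/2}$ then yields $|\partial_{\mu_k}\sqrt{\mu_n - \underline{E}}| \le \tfrac{1}{2\sqrt{a}}\, n^{b/2} \le C\, n^b$ after enlarging $b$ and $C$. There is no real obstacle here; in contrast with Lemmas~\ref{Lml2} and \ref{Lml2PQAG}, where the difficulty was controlling an infinite product with possibly colliding denominators, the function treated here is a single smooth factor and the bounds follow by direct substitution into the estimates already proved.
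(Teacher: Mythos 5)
Your proposal is correct and follows essentially the same route as the paper, whose proof is the one-line observation that the claim follows from \eqref{eq:6resonancelowerE} (i.e.\ parts $(4)$ and $(5)$ of Lemma~\ref{lem2:homogenuityplus} transported through the enumeration \eqref{eq:8Gnesti}); you simply spell out the details, including the correct reading of the misprinted $\sqrt{P_n(\mu)}$ in the derivative bound as $\sqrt{\mu_n-\underline{E}}$, whose $\mu_k$-derivative is $\delta_{k,n}/(2\sqrt{\mu_n-\underline{E}})$ and is controlled by the same lower bound.
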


\begin{proof}
The statement follows from \eqref{eq:6resonancelowerE}.
\end{proof}

\begin{corollary}\label{cor:8Psiregular}
The functions $\Psi_n(\theta)$ in \eqref{eq:8auxmanofold} are smooth and obey
\begin{equation}\label{eq8:Psismooth}
|\partial^\alpha \Psi_n(\theta)| \le C(a_0, b_0, \kappa_0, \nu) \xi_n^{1/2} \le B(a_0, b_0, \kappa_0, \nu) n^{-2}, \quad \alpha \le 1.
\end{equation}
\end{corollary}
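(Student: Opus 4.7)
The plan is to exploit the prefactor $4\xi_n = 4\exp(-n^\sigma/2)$ appearing in the definition \eqref{eq:8auxmanofold} of $\Psi_n$, where $\sigma = \sigma(a_0,b_0,\kappa_0,\nu) > 0$ is the exponent from \eqref{eq:8Gnesti}. This prefactor decays faster than any power of $n$ and therefore absorbs all the polynomial and $C^{\log_2\log_2 n}\,n^{b\log_2\log_2 n}$ growth that the remaining factors of $\Psi_n$ may contribute. Writing $\Psi_n(\theta) = 4\xi_n\, f_n(\mu(\theta))$ with $f_n(\mu) := \sqrt{\mu_n - \underline{E}}\;\sqrt{P_n(\mu)}\;\sqrt{Q_n(\mu)}$ and $P_n,Q_n$ as in \eqref{ml07F}, each square-root factor is well-defined and smooth by Lemmas~\ref{Lml2}, \ref{Lml2PQAG}, \ref{Lml2PQAGlowerE}. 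Hence $\Psi_n$ is smooth on the relevant chart.

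For the zero-order bound, combine the pointwise estimates $\sqrt{\mu_n - \underline{E}} \le C(1+n)$ (Lemma~\ref{Lml2PQAGlowerE}), $\sqrt{|P_n(\mu)|} \le C^{\log_2\log_2 n} n^{b\log_2\log_2 n}$ (Lemma~\ref{Lml2PQAG}), and $\sqrt{|Q_n(\mu)|} \le C(a_0,b_0,\kappa_0,\nu)$ (Lemma~\ref{Lml2}), to obtain $|f_n(\mu(\theta))| \le C^{\log_2\log_2 n} n^{b''\log_2\log_2 n}$. Multiplying by $4\xi_n$ and factoring out $\xi_n^{1/2}$, the residual $\xi_n^{1/2} \cdot C^{\log_2\log_2 n} n^{b''\log_2\log_2 n}$ is bounded by an absolute constant, since $\log(\xi_n^{-1/2}) = n^\sigma/4$ dominates $(\log_2\log_2 n) \log n$. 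Thus $|\Psi_n(\theta)| \le C\xi_n^{1/2}$.

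For the first-order bound, observe that $\mu_j(\theta)$ depends only on $\theta_j$, so $\partial_{\theta_k}\Psi_n(\theta) = 4\xi_n\,(\partial_{\mu_k}f_n)(\mu(\theta))\,\partial_{\theta_k}\mu_k(\theta)$. A direct differentiation of the formula for $\mu_k$ in \eqref{eq:8auxmanofold} gives
$$|\partial_{\theta_k}\mu_k(\theta)| = \Bigl|\frac{\gamma_k}{\xi_k}\sin(2\theta_k/\xi_k)\Bigr| \le \gamma_k/\xi_k \le C\xi_k,$$
where we used $\gamma_k \le C\xi_k^2$, which follows from $\xi_k = \exp(-k^\sigma/2)$ and the gap bound \eqref{eq:8Gnesti}. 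Applying Leibniz to $f_n = \sqrt{\mu_n - \underline{E}}\sqrt{P_n}\sqrt{Q_n}$ and using the derivative bounds from Lemmas~\ref{Lml2}, \ref{Lml2PQAG}, \ref{Lml2PQAGlowerE} together with the pointwise lower bounds $\sqrt{\mu_n-\underline{E}} \ge a n^{-b}$ and $|P_n(\mu)|^{1/2} \ge C^{-\log_2\log_2 n}n^{-b\log_2\log_2 n}$ supplied by the same lemmas, one gets $|\partial_{\mu_k}f_n| \le C^{\log_2\log_2 n} n^{b'\log_2\log_2 n}$. Combining, $|\partial_{\theta_k}\Psi_n(\theta)| \le 4\xi_n\cdot C^{\log_2\log_2 n} n^{b'\log_2\log_2 n} \cdot C\xi_k \le C\xi_n^{1/2}$, by the same super-polynomial-beats-sub-exponential argument, and noting $\xi_k \le 1$.

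The second inequality $C\xi_n^{1/2} \le B n^{-2}$ is immediate since $\xi_n^{1/2} = \exp(-n^\sigma/4)$ with $\sigma > 0$. The main—though purely bookkeeping—point is controlling the factor $C^{\log_2\log_2 n}$ coming from Lemma~\ref{Lml2PQAG}; here Lemma~\ref{Lml2PQ}, which caps $|\mathfrak{R}_n| \le \log_2\log_2 n + D$, is exactly what keeps the product over the resonant set $\mathfrak{R}_n$ sub-polynomial, and the super-polynomial decay of $\xi_n^{1/2}$ provides more than enough room to absorb it.
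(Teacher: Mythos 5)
Your proposal is correct and follows essentially the same route as the paper: factor out the prefactor $4\xi_n$, bound the square-root factor and its $\mu_k$-derivatives via Lemmas~\ref{Lml2}, \ref{Lml2PQAG}, \ref{Lml2PQAGlowerE}, use the chain rule with $|\partial_{\theta_k}\mu_k|\le \gamma_k\xi_k^{-1}\le C\xi_k$ (the paper's relation \eqref{eq:8rhokdefi1}), and absorb the sub-polynomial factor $C^{\log_2\log_2 n}\,n^{b\log_2\log_2 n}$ into $\xi_n^{1/2}$. The only cosmetic difference is your explicit appeal to the lower bounds on $\sqrt{\mu_n-\underline{E}}$ and $\sqrt{P_n}$, which the cited lemmas have already built into their stated derivative bounds.
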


\begin{proof}
It follows from the definition \eqref{eq:8auxmanofold} and Lemmas~\ref{Lml2}--\ref{Lml2PQAGlowerE} that
\begin{equation}\label{eq:8auxmanofoldiff}
\begin{split}
|\Psi_n(\theta)| & = 4 \rho_n \sqrt{(\mu_n(\theta) - \underline{E}) P_n(\mu(\theta)) Q_n(\mu(\theta))} \\
& \le C(a_0, b_0, \kappa_0, \nu)^{\log_2 \log_2 n} n^{b(a_0, b_0, \kappa_0, \nu) \log_2 \log_2 n} |\tilde \omega|^2 \xi_n, \\
|\partial_{\theta_n} \Psi_n(\theta)| & = |\partial_{\mu_n} \Psi_n(\theta) \partial_{\theta_n} \mu_n| \le
2 |\partial_{\mu_n} \Psi_n(\theta)| \gamma_n \rho_n^{-1} \\
& \le C(a_0, b_0, \kappa_0, \nu)^{\log_2 \log_2 n} n^{b(a_0, b_0, \kappa_0, \nu) \log_2 \log_2 n} |\tilde \omega|^2 \gamma_n, \\
|\partial_{\theta_k} \Psi_n(\theta)| & = |\partial_{\mu_k} \Psi_n(\theta) \partial_{\theta_k} \mu_k| \le 2 |\partial_{\mu_k} \Psi_n(\theta)| \gamma_k \xi_k^{-1} \\
& \le C(a_0, b_0, \kappa_0, \nu)^{\log_2 \log_2 n} n^{b(a_0, b_0, \kappa_0, \nu) \log_2 \log_2 n} |\tilde \omega|^2 \xi_n.
\end{split}
\end{equation}
The statement follows now from \eqref{eq:8rhokdefi1}, \eqref{eq:8Gnestimod} combined with the adjustment in \eqref{eq:8rhokdefi1}:
$\xi_k^{-1}\gamma_k\le C\xi_k$.
\end{proof}

This in its turn implies the following statement.

\begin{corollary}\label{cor:8PsiregularM}
The vector-field $\Psi = (\Psi_n)_n$ is $B$-regular with respect to the atlas $\mathfrak{F}$ with $B = B(a_0, b_0, \kappa_0, \nu) > 0$.
\end{corollary}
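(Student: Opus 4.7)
The plan is to verify this corollary as a direct consequence of the pointwise bounds in Corollary~\ref{cor:8Psiregular} combined with bookkeeping about the atlas. The genuinely analytical content — producing the polynomial decay factor $n^{-2}$ — has already been carried out in Lemmas~\ref{Lml2}--\ref{Lml2PQAGlowerE} and Corollary~\ref{cor:8Psiregular}; what remains is to track how these bounds behave under the pull-back to the atlas charts.

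First I would unpack the atlas construction from Remark~\ref{rem:9closeODE1}: each factor $S_n$ is identified with a circle $C_n$ of diameter $\rho_n = n^{-2}$ via the map $(\theta_n,\sigma_n) \mapsto \rho_n \exp(4i\sigma_n\theta_n/\rho_n)$, and $\mathfrak{F}$ is the product of the standard arc-length $K$-atlases on each $C_n$ (with $K$ an absolute constant). Under this setup, a chart coordinate $x_n \in (-\rho_n,\rho_n)$ corresponds to $\theta_n$ via an affine map whose Jacobian $|dx_n/d\theta_n|$ is an absolute constant (independent of $n$). Consequently, on the overlap of the chart domain with $S_n$, partial derivatives with respect to $\theta_k$ and $x_k$ differ by at most that universal factor.

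Next I would compute the pull-back $\tilde\Psi := \Psi\circ f_j^{-1}$ to the box $\mathcal{P} = \prod_k (-\rho_k,\rho_k)$. By the chain rule, its $n$-th component is of the form $\tilde\Psi_n = c\,\sigma_n \Psi_n$ for an absolute constant $c$, and $\partial_{x_k}\tilde\Psi_n$ equals $\partial_{\theta_k}\Psi_n$ up to an absolute multiplicative factor, for every index $k$. Invoking the bound $|\partial^\alpha \Psi_n(\theta)| \le B(a_0,b_0,\kappa_0,\nu)\, n^{-2}$ from Corollary~\ref{cor:8Psiregular} therefore yields, for $|\alpha|\le 1$,
\[
\sup_{x \in \mathcal{P}}|\partial^\alpha \tilde\Psi_n(x)| \;\le\; C\cdot B(a_0,b_0,\kappa_0,\nu)\, n^{-2} \;=\; B'(a_0,b_0,\kappa_0,\nu)\,\rho_n,
\]
which is precisely the $B$-regularity condition from Remark~\ref{rem:9closeODE}(5) with $B' = C\cdot B(a_0,b_0,\kappa_0,\nu)$.

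The only potentially delicate step is confirming that the Jacobian $|dx_n/d\theta_n|$ between the chart coordinate and the $\theta_n$ variable is uniformly bounded in $n$. In the present construction this is automatic: the prescribed linear relation between $\theta_n$ and the angular coordinate on $C_n$ and the arc-length parameterization of $C_n$ only introduce numerical constants, not $n$-dependent ones. The careful matching $\rho_n = n^{-2}$ together with $\xi_n = \exp(-n^\sigma/2)$ from Remark~\ref{rem:9closeODE1} was designed precisely so that $S_n$ embeds inside $C_n$ as a tiny sub-arc, and on this sub-arc the coordinate transformation is innocuous. With this observation the corollary reduces to a direct transcription of Corollary~\ref{cor:8Psiregular}.
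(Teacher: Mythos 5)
Your proposal is correct and takes essentially the same route as the paper, which states this corollary as an immediate consequence of Corollary~\ref{cor:8Psiregular}: the bound $|\partial^\alpha \Psi_n(\theta)| \le B(a_0,b_0,\kappa_0,\nu)\, n^{-2} = B\rho_n$ passes to the pulled-back components in the charts of $\mathfrak{F}$ with only absolute multiplicative constants coming from the identification of $S_n$ with $C_n$ and the $K$-atlas maps. Your explicit chart bookkeeping merely spells out what the paper leaves implicit.
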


Consider the manifold $\mathcal{X}$ in \eqref{eq:8auxmanofold} and the vector-field on $\mathcal{X}$ defined via
\begin{equation}\label{eq:8auxmanofoldN}
\Psi_{N,n}(\theta) :=  4 \xi_n \sqrt{(\mu_n(\theta) - \underline{E}) \prod_{\substack{i \\ i \neq n, \quad i \le N}} \frac{(E_i^- - \mu_n(\theta))(E_i^+  - \mu_n(\theta))}{(\mu_i(\theta) - \mu_n(\theta))^2} }, \quad n \le N, \quad \Psi^{(N)} = (\Psi_{N,n})_n.
\end{equation}
Clearly, the same analysis applies to this case. So, we have the following statement:

\begin{corollary}\label{cor:8PsiregularX}
The vector-field $\Psi^{(N)} = (\Psi_{N,n})_n$ is $B$-regular with respect to the atlas $\mathfrak{F}_\mathcal{X}$ on $\mathcal{X}$, with $B = B(a_0, b_0, \kappa_0, \nu)$.
\end{corollary}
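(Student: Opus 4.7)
The plan is to mirror the proof of Corollaries~\ref{cor:8Psiregular} and~\ref{cor:8PsiregularM} verbatim, verifying that nothing is lost when the product in the definition of $\Psi_n$ is truncated to indices $i \le N$. Concretely, I would factor
$$
\Psi_{N,n}(\theta) = 4\xi_n \sqrt{(\mu_n(\theta) - \underline{E})\, P_n^{(N)}(\mu(\theta))\, Q_n^{(N)}(\mu(\theta))},
$$
where $P_n^{(N)}$ and $Q_n^{(N)}$ are obtained from $P_n$ and $Q_n$ in \eqref{ml07F} by restricting the index to $i \le N$, $i \ne n$. Since the resonant index set $\mathfrak{R}_n \cap \{1,\dots,N\}$ and the non-resonant index set $\mathfrak{N}_n \cap \{1,\dots,N\}$ are both subsets of $\mathfrak{R}_n$ and $\mathfrak{N}_n$ respectively, the factor-by-factor estimates in Lemmas~\ref{Lml2}, \ref{Lml2PQAG}, and \ref{Lml2PQAGlowerE} apply to each remaining factor with the same constants. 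The cardinality bound $|\mathfrak{R}_n| \le \log_2 \log_2 n + D$ from Lemma~\ref{Lml2PQ} only gets weaker under truncation, so the product bounds on $P_n^{(N)}$ and $\sqrt{P_n^{(N)}}$ (and the derivative bounds) are inherited, as is the uniform smoothness estimate on $\sqrt{Q_n^{(N)}}$ (whose product of $(1+\phi_{i,n})$ factors converges absolutely after truncation, being a sub-product of a convergent one).

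Next I would repeat the differentiation computation \eqref{eq:8auxmanofoldiff} with $\Psi_{N,n}$ in place of $\Psi_n$. The chain rule yields
$$
|\partial_{\theta_k} \Psi_{N,n}(\theta)| \le 2 |\partial_{\mu_k} \Psi_{N,n}(\theta)| \cdot \gamma_k \xi_k^{-1}, \quad k \ne n,
$$
and a similar expression when $k=n$, with the factor $\gamma_n \rho_n^{-1}$. Plugging in the reused lemmas gives, for any $|\alpha| \le 1$ and $n \le N$,
$$
|\partial^\alpha \Psi_{N,n}(\theta)| \le C(a_0,b_0,\kappa_0,\nu)^{\log_2 \log_2 n}\, n^{b(a_0,b_0,\kappa_0,\nu) \log_2 \log_2 n} |\tilde\omega|^2 \xi_n,
$$
with the constants independent of $N$ because every estimate used is uniform in $N$.

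Finally, since $\xi_n = \exp(-n^\sigma/2)$ decays faster than any polynomial or sub-exponential of the form $C^{\log\log n} n^{b \log\log n}$, the sub-exponential prefactors are absorbed into the decay of $\xi_n$, so that $|\partial^\alpha \Psi_{N,n}| \le B(a_0,b_0,\kappa_0,\nu)\, n^{-2} = B\rho_n$ for all $|\alpha| \le 1$ and $n \le N$, uniformly in $N$. This is exactly the $B$-regularity condition from part~(5) of Remark~\ref{rem:9closeODE} with respect to the atlas $\mathfrak{F}_\mathcal{X}$, yielding the claim.

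There is no genuine obstacle: the only thing to check is that truncation preserves every estimate, and this is automatic because the proofs in Lemmas~\ref{Lml2}--\ref{Lml2PQAGlowerE} use only summability of $\gamma_i$ (which persists under sub-summation) and the Diophantine spacing \eqref{eq:6noresonanceIREP} (which depends only on the enumeration, not on $N$). The constant $B = B(a_0,b_0,\kappa_0,\nu)$ is therefore the same constant as in Corollary~\ref{cor:8Psiregular}, which is the essential point for the application of Lemma~\ref{lem9:trajestability2} in the sequel.
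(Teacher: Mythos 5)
Your proposal is correct and follows essentially the same route as the paper, which disposes of this corollary by noting that the analysis behind Corollary~\ref{cor:8Psiregular} (Lemmas~\ref{Lml2}--\ref{Lml2PQAGlowerE} and the computation \eqref{eq:8auxmanofoldiff}) applies verbatim to the truncated product, with all estimates uniform in $N$. Your explicit check that truncation only removes factors and therefore preserves every bound, giving the same constant $B(a_0,b_0,\kappa_0,\nu)$, is exactly the content of the paper's ``the same analysis applies to this case.''
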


Finally, we have the following:

\begin{prop}\label{prop:8.stabil1}
Consider the vector-field $\Psi^{(N,O)} = (\Psi^{(N)},0)$ on $\mathcal{M}$.

$(1)$ The vector-field $\Psi^{(N,O)}$ is $B$-regular with respect to the atlas $\mathfrak{F}$ on $\mathcal{M}$, with $B = B(a_0, b_0, \kappa_0, \nu) > 0$.

$(2)$ $|\Psi^{(N,O)}_n - \Psi_n| \le C \xi_n^{1/2} \exp(-N^{\sigma}) := \xi^{1/2} \delta $, with $C, \sigma$ depending on $a_0, b_0, \kappa_0, \nu$.

$(3)$ Let $\theta^\zero \in \mathcal{M}$ be arbitrary. Let $\theta^\zero (t)$ and $\theta^{(N,0)}(t)$ be the $\Psi$-trajectory, resp.\ $\Psi^{(N)}$-trajectory originating at $\theta = \theta^\zero$. Then
\begin{equation}\label{eq:9trajecmetricdeviation2-2}
\max_{0 \le t \le T} d(\theta^\zero (t), \theta^{(N,0)}(t)) \le A (4K)^{BT} \delta.
\end{equation}

$(4)$ Let $\theta^\zero, \theta^\one \in \mathcal{M}$. Let $\theta^{(j)}(t)$ be the $\Psi$-trajectory originating at $\theta = \theta^{(j)}$, $j = 1, 2$. Then
\begin{equation}\label{eq:9trajecmetricdeviation2theta}
\max_{0 \le t \le T} d(\theta^\zero (t),\theta^{(1)}(t)) \le A (4K)^{BT} d(\theta^\zero, \theta^\one).
\end{equation}
\end{prop}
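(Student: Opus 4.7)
Part $(1)$ is essentially immediate. On the $\mathcal{X}$-factor the components $\Psi^{(N,O)}_n = \Psi^{(N)}_n$, $n \le N$, are already known to be $B$-regular with respect to the atlas $\mathfrak{F}_\mathcal{X}$ by Corollary~\ref{cor:8PsiregularX}; on the $\mathcal{Y}$-factor all components vanish identically. Under the product-atlas structure on $\mathcal{M} = \mathcal{X} \times \mathcal{Y}$ this yields $B$-regularity with the same constant $B = B(a_0, b_0, \kappa_0, \nu)$.

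For part $(2)$, distinguish two cases. If $n > N$, then $\Psi^{(N,O)}_n \equiv 0$, while by Corollary~\ref{cor:8Psiregular} $|\Psi_n| \le C \xi_n^{1/2} \cdot \xi_n^{1/2}$, and since $\xi_n = \exp(-n^{\sigma(a_0,b_0,\kappa_0,\nu)}/2)$ we have $\xi_n^{1/2} \le \exp(-N^{\sigma'}/2)$, giving the bound. If $n \le N$, write
\[
\Psi_n(\theta) - \Psi^{(N)}_n(\theta) = 4 \xi_n \sqrt{(\mu_n - \underline E)\, P_n\, Q_n^{(\le N)}}\,\Big(\sqrt{R_n^{(>N)}} - 1\Big),
\]
where $Q_n^{(\le N)}$ collects those factors of $Q_n(\mu)$ with index $i \le N$, $R_n^{(>N)} = \prod_{i \in \mathfrak{N}_n,\, i > N}\bigl(1+\phi_{i,n}\bigr)$, and $P_n$ involves only the resonant indices $\mathfrak{R}_n \subset \{1,\dots,n\} \subset \{1,\dots,N\}$ (this is the crucial point: the resonant product is entirely inside the $\le N$ truncation). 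By Lemma~\ref{Lml2}, $|\phi_{i,n}| \le 6\gamma_i^{3/4} \le 6 \ve_1^{3/4} \exp(-3 i^\sigma/4)$, so
\[
|R_n^{(>N)} - 1| \le \sum_{i > N} 2 \lvert \phi_{i,n} \rvert \le C \exp(-N^{\sigma''}).
\]
Combined with the uniform upper bound $|\Psi_n| \le C \xi_n^{1/2}$ from Corollary~\ref{cor:8Psiregular} (the prefactor including $\sqrt{P_n}$ having at worst a logarithmic correction, absorbable into $\exp(-N^{\sigma''})$), this yields the desired $C \xi_n^{1/2} \exp(-N^\sigma)$ estimate. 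Summing over $n$, the $\ell^1$-norm of the difference vector satisfies $\sum_n C \xi_n^{1/2} \exp(-N^\sigma) \le C' \exp(-N^\sigma) =: \delta$.

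For parts $(3)$ and $(4)$, apply the abstract ODE-stability tools already established. Both $\Psi$ and $\Psi^{(N,O)}$ are $B$-regular with respect to $\mathfrak{F}$, by Corollary~\ref{cor:8PsiregularM} and part $(1)$ respectively, and the metric $d$ is $A$-regular with respect to $\mathfrak{F}$ by Remark~\ref{rem:9closeODE1}. Part $(3)$ is then a direct application of Lemma~\ref{lem9:trajestability2} (or equivalently Corollary~\ref{Lm9:voltera1} with $\mathcal{Y}$ taken trivial on $\mathcal{X}$-coordinates and the pure drift on $\mathcal{Y}$-coordinates), with the $\delta$ supplied by part $(2)$. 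Part $(4)$ is the analogous stability-in-initial-data statement for a single $B$-regular field $\Psi$, and is obtained by the same itinerary argument: on each itinerary interval the local chart identifies the flow with an ODE on the box $\mathcal{P}$ whose vector field is $B$-regular, so item $(iii)$ in the preparation (inequality~\eqref{eq:9ODEstab1}) gives a factor $(1-B|t_{k+1}-t_k|)^{-1} \le 2$ per interval; chaining at most $n(\mathfrak{I}) \le 2BT$ intervals via Lemma~\ref{lem9:localchartsconnect1} yields the $(4K)^{BT}$ amplification factor.

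The only delicate point I anticipate is the bookkeeping in part $(2)$ when $n \le N$: one must make sure the estimate factorizes as ``prefactor depending on $n$ alone $\times$ tail contribution depending only on $N$.'' This works precisely because the resonant set $\mathfrak{R}_n \subset \{1,\dots,n\}$ and hence lies in the retained indices for every $n \le N$; the only factors that change under truncation are non-resonant ones (index in $\mathfrak{N}_n$, $i > N$), which by Lemma~\ref{Lml2} contribute multiplicatively as $1 + O(\gamma_i^{3/4})$ and are therefore harmless. Once this factorization is in place, parts $(3)$ and $(4)$ are routine consequences of the abstract machinery already assembled.
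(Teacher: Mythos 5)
Your argument is correct and follows essentially the same route as the paper: the same split into $n \le N$ and $n > N$ in part (2), the same key observation that $i > N$ and $n \le N$ force $i \in \mathfrak{N}_n$ so that Lemma~\ref{Lml2} controls the truncated tail against the prefactor bound $C\xi_n^{1/2}$, and the same abstract stability machinery (Corollary~\ref{Lm9:voltera1} and the itinerary argument behind Lemma~\ref{lem9:trajestability2}) for parts (3) and (4). Your treatment of $\sqrt{R_n^{(>N)}}-1$ and the explicit $\ell^1$ summation producing $\delta$ is, if anything, slightly more careful than the paper's own write-up.
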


\begin{proof}
The verification of $(1)$ is completely similar to the one we did for $\Psi$.

Furthermore, for $n \le N$, we have
\begin{equation}\label{eq:8auxmanofoldNcompare1}
\begin{split}
|\Psi_{N,n}(\theta) - \Psi_n(\theta)| = 4 \xi_n \sqrt{(\mu_n(\theta) - \underline{E}) \prod_{\substack{i \\ i \neq n, \quad i \le N}} \Big| \frac{(E_i^-  - \mu_n(\theta))(E_i^+ - \mu_n(\theta))}{(\mu_i(\theta) - \mu_n(\theta))^2} \Big| } \times \\
\Big| 1 - \prod_{\substack{i \\ i \neq n, \quad i > N}} \frac{(E_i^- - \mu_n(\theta))(E_i^+ - \mu_n(\theta))}{(\mu_i(\theta) - \mu_n(\theta))^2} \Big|.
\end{split}
\end{equation}
Just as in the proof of Corollary~\ref{cor:8Psiregular}, we have
\begin{equation}\label{eq:8auxmanofoldiff111}
4 \xi_n \sqrt{(\mu_n(\theta) - \underline{E}) \prod_{\substack{i \\ i \neq n, \quad i \le N}} |\frac{(E_i^- - \mu_n(\theta))(E_i^+ - \mu_n(\theta))}{(\mu_i(\theta) - \mu_n(\theta))^2} }| \le C \xi_n^{1/2}.
\end{equation}
Note that for $i > N$ and $n \le N$, we have $i \in \mathfrak{N}_n$. Therefore, Lemma~\ref{Lml2} applies. This yields
\begin{equation}\label{eq:8auxmanofoldNcompare2}
\Big| 1 - \prod_{\substack{i \\ i \neq n, \quad i > N}} \frac{(E_i^- - \mu_n(\theta))(E_i^+ - \mu_n(\theta))}{(\mu_i(\theta) - \mu_n(\theta))^2} \Big| \le \sum_{i > N} \gamma_i^{1/2} \le C(a_0, b_0, \kappa_0, \nu) \exp(-N^{\sigma_0(a_0, b_0, \kappa_0, \nu)}).
\end{equation}
So, for $n \le N$, the statement in $(2)$ holds. For $n > N$, we have
\begin{equation}\label{eq:8auxmanofoldNcompare3}
|\Psi_{N,n}(\theta) - \Psi_n(\theta)| = |\Psi_n(\theta)| \le C \xi_n^{1/2} \exp(-N^{\sigma}).
\end{equation}

Part $(3)$ follows from $(2)$ due to Corollary~\ref{Lm9:voltera1}.

Part $(4)$ is due to Corollary~\ref{Lm9:voltera1}.
\end{proof}

\begin{remark}\label{rem:9closeODE1CONV}
$(1)$  Let $\omega = (\omega_1, \ldots, \omega_\nu) \in \mathbb{R}^\nu$ and $\tilde \omega^{(r)} = (\tilde \omega_1^{(r)}, \ldots, \tilde \omega_\nu^{(r)})$ be as in Section~\ref{sec.5}. For each $r$, let
\begin{equation}\label{eq:52VtildeCONV}
V^\ar (x) = \sum_{n \in \zv} c(n) e^{2 \pi i x n \tilde \omega^\ar}\ , \quad x \in\mathbb{R},
\end{equation}
\begin{equation}\label{eq:8Fourier}
\begin{split}
\overline{c(n)} & = c(-n), \quad n \in \zv , \quad c(0) = 0,\\
|c(n)| & \le \ve \exp(-\kappa_0|n|^{\alpha_0}), \quad n \in \zv ,
\end{split}
\end{equation}
\begin{equation} \label{eq:517-1CONV}
[H^\ar y](x) = -y''(x) + V^\ar(x) y(x) = Ey(x), \quad x, \ve \in \IR;
\end{equation}
compare with the notation in Remark~\ref{rem:5PA1}. Let $\mathfrak{Z}^\ar = \mathfrak{Z}(\tilde \omega^{(r)})$, and let $G^\ar_{m} = (E_{m}^{r,-}, E_m^{r,+})$, $m \in \mathfrak{Z}^\ar$ be the gaps in the spectrum $\mathcal{S}^\ar$ of $H^\ar$. Let $\mathcal{M}^\ar$, $\mathcal{X}^\ar$ be the manifolds defined in part $(2)$ of Remark~\ref{rem:9closeODE1} with $\tilde \omega^\ar$ in the role of $\tilde \omega$. Let $\Psi^\ar$ and $\Psi^{(r,N)}$ be the vector-fields defined in \eqref{eq:8auxmanofold} and \eqref{eq:8auxmanofoldN}, respectively. Our ultimate goal in this section is to show \textbf{the convergence of the trajectories} of the vector-fields $\Psi^\ar$ as $r \to \infty$.

$(2)$ Let
\begin{equation}\label{eq:52VtildeCONVL}
V (x) = \sum_{n \in \zv} c(n) e^{2 \pi i x n \omega}\ , \quad x \in \mathbb{R},
\end{equation}
\begin{equation} \label{eq:517-1CONVL}
[H y](x) = -y''(x) + V(x) y(x) = Ey(x), \quad x, \ve \in \IR.
\end{equation}
Let $G_{m} = (E_{m}^{-}, E_m^{+})$, $m \in \mathbb{Z}^\nu$ be the gaps in the spectrum $\mathcal{S}$ of $H$. We define the manifolds $\mathcal{M}$, $\mathcal{X}$ just as in part $(2)$ of Remark~\ref{rem:9closeODE1}. Let $\Psi$ and $\Psi^{(N)}$ be the vector-fields defined in \eqref{eq:8auxmanofold} and \eqref{eq:8auxmanofoldN}, respectively. Proposition~\ref{prop:8.stabil1} applies just as before.

$(3)$ In Remark~\ref{rem:9closeODE1CONV1} below we set up the definitions in part $(2)$ of Remark~\ref{rem:9closeODE1} so that
\begin{equation} \label{eq:517-1specidentL}
\mathcal{X}^\ar = \mathcal{X}.
\end{equation}
To get this identity we just need to enumerate the ``relatively large gaps'' in $\mathcal{S}$ and $\mathcal{S}^\ar$ so that this enumeration will actually define a bijection between these two sets of gaps. Naturally, Theorem~\ref{Tdistspec} is instrumental for this task. The identity \eqref{eq:517-1specidentL} allows us to show the convergence of the trajectories.
\end{remark}

\begin{lemma}\label{Lml2PQloverupperbound}
$(1)$ Let $\underline{E} = \inf \mathcal{S}$, $\underline{E}^\ar = \inf \mathcal{S}^\ar$. Then $\underline{E}, \underline{E}^\ar \ge -1$.

$(2)$ $E^+_m \le C(\kappa_0) |\omega|^2 (\log |G_m|^{-1})^2$, $E^{r,+}_m \le C(\kappa_0) |\omega|^2 (\log |G^\ar_m|^{-1})^2$.
\end{lemma}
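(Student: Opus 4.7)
The plan is to prove each part by combining simple a priori bounds with results already in hand. Both claims reduce to one-line calculations once the right input is assembled.

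\textbf{Part (1).} I would bound the sup-norm of the potential directly via its Fourier expansion. Since $c(0) = 0$ and $|c(n)| \le \ve \exp(-\kappa_0 |n|^{\alpha_0})$, we have
\[
\|V\|_\infty \le \sum_{n \in \zv \setminus \{0\}} |c(n)| \le \ve \sum_{n \neq 0} \exp(-\kappa_0 |n|^{\alpha_0}) \le C(\kappa_0,\nu)\,\ve,
\]
and the same estimate holds for $V^{(r)}$ (identical Fourier data, just paired with a different frequency). For $\ve$ sufficiently small (which we are free to assume, since throughout this section $\ve < \ve_0(\kappa_0,a_0,b_0)$ with $\ve_0$ at our disposal), the right side is at most $1$. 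The quadratic-form inequality $\langle \psi, H\psi\rangle = \|\psi'\|^2 + \int V|\psi|^2 \ge -\|V\|_\infty \|\psi\|^2$ then gives $H \ge -\|V\|_\infty \ge -1$, so $\underline E \ge -1$. The identical argument applied to $H^{(r)}$ yields $\underline E^{(r)} \ge -1$.

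\textbf{Part (2).} Here I would combine two inputs from earlier in the paper. Part (5) of Lemma~\ref{lem2:homogenuityplus} provides the upper endpoint estimate
\[
E^+_m \le 2(k_m + 1)^2 \le C(|\omega| |m| + 1)^2,
\]
which holds in both settings (quasi-periodic and rational approximant) by Remark~\ref{rem:6pot1}. Theorem~$\tilde B$(1) (respectively Theorem~B(1)), in the case $\alpha_0 = 1$ that we are in throughout this section, gives the exponential gap estimate
\[
|G_m| \le 2\ve \exp\!\Big(-\tfrac{\kappa_0}{2}|m|\Big).
\]
Taking logarithms and using $\log(2\ve) \le 0$ for $\ve \le 1/2$, this inverts to
\[
|m| \le \tfrac{2}{\kappa_0}\log|G_m|^{-1}.
\]

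\textbf{Combining.} Substituting the bound on $|m|$ into the bound on $E^+_m$,
\[
E^+_m \le C\Big(\tfrac{2|\omega|}{\kappa_0}\log|G_m|^{-1} + 1\Big)^2.
\]
Since $|G_m| \le 2\ve < 1$, the quantity $\log|G_m|^{-1}$ is bounded below by a positive constant depending only on $\ve$ (hence on the fixed parameters), which lets us absorb the additive $+1$ inside the square into the leading term, yielding
\[
E^+_m \le C(\kappa_0)\,|\omega|^2 (\log|G_m|^{-1})^2,
\]
and identically $E^{r,+}_m \le C(\kappa_0)\,|\omega|^2(\log|G^{(r)}_m|^{-1})^2$ using Theorem~$\tilde B$ and part (5) of Lemma~\ref{lem2:homogenuityplus} in the periodic setting. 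There is no real obstacle: the two ingredients fit together directly. The only small care required is that $\ve$ be chosen small enough that both $\|V\|_\infty \le 1$ and $\log(2\ve)^{-1}$ is a positive constant, both of which are already guaranteed by the smallness assumptions in force in this section.
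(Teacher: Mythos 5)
Your proposal is correct and follows essentially the same route as the paper: part (1) bounds $\|V\|_\infty$ (and $\|V^\ar\|_\infty$) by the sum of the Fourier coefficients, which is at most $1$ for small $\ve$, and part (2) combines the quadratic upper bound on $E^+_m$ in terms of $|m|$ from Theorem~$A$/$\tilde A$ (equivalently part (5) of Lemma~\ref{lem2:homogenuityplus}) with the exponential gap decay of Theorem~$B$/$\tilde B$, inverting the latter to get $|m| \le \tfrac{2}{\kappa_0}\log|G_m|^{-1}$. The only difference is cosmetic (citing Lemma~\ref{lem2:homogenuityplus}(5) rather than Theorem~$A$ directly, and spelling out the absorption of the additive constant), so no changes are needed.
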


\begin{proof}
Due to \eqref{eq:8Fourier}, for $0 < \ve \le \ve_0$, we have $|V(x)|, |V^\ar(x)| \le 1$ for any $x$. It is well known that this implies the estimate in $(1)$.

Recall that $E^+ = E^+(k_m)$ and $E(k) \le C |\omega|^2 |m|^2$, see Theorem~$A$. On the other hand, due to Theorem~$A$, $|G_m| \le \exp(-\frac{\kappa_0}{2} |m|)$. This implies $(2)$ for $H$. The argument for $H^\ar$ is completely similar.
\end{proof}

\begin{lemma}\label{lem:8enumerinj}
There exists $C_0(\kappa_0)$ such that if for some $\tau$, we have $\lambda'_r := C_0(\kappa_0) |\omega|^2 (\log \tau^{-1})^2 \lambda_r < \tau/4$ with $\lambda_r = \lvert \omega - \tilde \omega^\ar \rvert^{1/2}$, then there is an injection $m \mapsto \mathfrak{n}(m) \in \mathfrak{Z}^\ar$ defined for all $m$ with $|G_m| \ge \tau$, such that $|E^\pm_m - E^{r,\pm}_{\mathfrak{n}(m)}| < \lambda'_{r}$.
\end{lemma}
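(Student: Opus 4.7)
The plan is to define $\mathfrak{n}(m)$ via the midpoint of $G_m$ and exploit Theorem~\ref{Tdistspec} in both directions, combined with the a priori upper bound on $|E^\pm_m|$ coming from Lemma~\ref{Lml2PQloverupperbound}. Fix $m$ with $|G_m|\ge \tau$ and set $x_m:=(E_m^-+E_m^+)/2$. By Lemma~\ref{Lml2PQloverupperbound}(2) and the hypothesis $|G_m|\ge\tau$, every point $E$ lying within distance, say, $\tau$ of $[E_m^-,E_m^+]$ satisfies $|E|\le C(\kappa_0)|\omega|^2(\log\tau^{-1})^2$. Inserting this into Theorem~\ref{Tdistspec} (applied with either of $H,H^{(r)}$ in the role of $H_\omega$) yields that the quantity $C(1+|E|^{1/4})\lambda_r$ is bounded by $\lambda'_r/2$, provided $C_0(\kappa_0)$ is taken sufficiently large at the outset. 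In particular, no $y\in\spec(H^{(r)})$ can sit within $\tau/2-\lambda'_r/2>\tau/4$ of $x_m$ (otherwise a corresponding $z\in\spec(H)$ would sit within $\lambda'_r/2+\tau/2-\lambda'_r/2<\tau/2=\dist(x_m,\spec(H))$, a contradiction), so $x_m$ belongs to the interior of a unique gap $G^{(r)}_{\mathfrak{n}(m)}=(E^{r,-}_{\mathfrak{n}(m)},E^{r,+}_{\mathfrak{n}(m)})$ of $\spec(H^{(r)})$, whose endpoints lie at distance $>\tau/4$ from $x_m$.

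Next I would show $|E^{r,\pm}_{\mathfrak{n}(m)}-E^\pm_m|\le\lambda'_r/2<\lambda'_r$. For the $+$ case: Theorem~\ref{Tdistspec} gives some $w\in\spec(H^{(r)})$ with $|w-E_m^+|\le\lambda'_r/2$. Since $w$ is closer to $x_m$ than the distance at which $\spec(H^{(r)})$ first appears to the \emph{left} of $x_m$ (namely $\ge\tau/4$), $w$ must lie above $E^{r,+}_{\mathfrak{n}(m)}$, yielding $E^{r,+}_{\mathfrak{n}(m)}\le w\le E_m^++\lambda'_r/2$. Conversely, applying Theorem~\ref{Tdistspec} to $E^{r,+}_{\mathfrak{n}(m)}\in\spec(H^{(r)})$ produces $z\in\spec(H)$ with $|z-E^{r,+}_{\mathfrak{n}(m)}|\le\lambda'_r/2$. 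Since $z>x_m$ (as $E^{r,+}_{\mathfrak{n}(m)}\ge x_m+\tau/4$ and $\lambda'_r<\tau/4$) and $z\notin G_m$, necessarily $z\ge E_m^+$, giving $E^{r,+}_{\mathfrak{n}(m)}\ge z-\lambda'_r/2\ge E_m^+-\lambda'_r/2$. The $-$ case is symmetric.

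Injectivity follows from a similar midpoint-separation argument. Assume $\mathfrak{n}(m)=\mathfrak{n}(m')$ with $m\neq m'$, and WLOG $G_m$ lies below $G_{m'}$, so that $x_m<E_m^+\le E_{m'}^-<x_{m'}$. Both midpoints lie in the open gap $G^{(r)}_{\mathfrak{n}(m)}$ of $\spec(H^{(r)})$, hence $[x_m,x_{m'}]\cap\spec(H^{(r)})=\emptyset$. However, $E_m^+\in[x_m,x_{m'}]\cap\spec(H)$, and Theorem~\ref{Tdistspec} supplies a point of $\spec(H^{(r)})$ within $\lambda'_r/2$ of $E_m^+$. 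Because $|E_m^+-x_m|=|G_m|/2\ge\tau/2$ and $|E_m^+-x_{m'}|\ge|G_{m'}|/2\ge\tau/2$, both being strictly larger than $\lambda'_r/2$ (since $\lambda'_r<\tau/4$), that point must in fact lie inside $(x_m,x_{m'})$, contradicting the emptiness just noted. Hence $m\mapsto\mathfrak{n}(m)$ is injective.

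The only real obstacle is the bookkeeping of constants in the first paragraph: one must verify that a single $C_0(\kappa_0)$ can be chosen so that all four invocations of Theorem~\ref{Tdistspec} simultaneously produce errors $\le\lambda'_r/2$. This is routine given Lemma~\ref{Lml2PQloverupperbound}(2), because for any $E$ within distance $\tau$ of a gap of length $\ge\tau$ one has $(1+|E|^{1/4})\lesssim |\omega|^{1/2}(\log\tau^{-1})$, so the factor $|\omega|^2(\log\tau^{-1})^2$ present in the definition of $\lambda'_r$ leaves ample room.
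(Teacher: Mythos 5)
Your argument is correct and follows essentially the same route as the paper: both proofs rest on Theorem~\ref{Tdistspec} combined with the energy bound of Lemma~\ref{Lml2PQloverupperbound}\,(2), the paper tracking the shrunk gap $[E_m^-+\lambda'_r,\,E_m^+-\lambda'_r]$ inside a single gap of $\spec(H^{(r)})$ and then exchanging the roles of the two spectra, while you track the midpoint $x_m$ and estimate the endpoints one at a time, with the same injectivity mechanism (gaps of length $\ge\tau$ cannot share a gap of the approximant). The only points left implicit (as they are in the paper) are the verification that $x_m>\inf\spec(H^{(r)})$, so that the component of the resolvent set containing $x_m$ is indeed a bounded gap $G^{(r)}_{\mathfrak{n}(m)}$ rather than the half-line below the spectrum, and the constant bookkeeping you already flag; both are routine given Lemma~\ref{Lml2PQloverupperbound}\,(1) and Theorem~\ref{Tdistspec}.
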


\begin{proof}
Due to Theorem~\ref{Tdistspec},
\begin{equation}\label{eq:8specdistest}
d (\mathcal{S} \cap (-E,E), \mathcal{S}^\ar \cap (-E,E)) \le C (1 + \lvert E \rvert^{1/4}) \lambda_r,
\end{equation}
where $d$ denotes Hausdorff distance.

Let $|G_m| = E^+_m - E^-_m \ge \tau$. Due to Lemma~\ref{Lml2PQloverupperbound}, we have $E^+_m \le C(\kappa_0) |\omega|^2 (\log \tau^{-1})^2$. Since $\lambda'_{r} < |G_m|/4$, due to \eqref{eq:8specdistest} we have
\begin{equation}\label{eq:8specdistest2}
[E^-_m + \lambda'_{r}, E^+_m - \lambda'_{r}] \subset \bigcup_\mathfrak{n} G^\ar_{\mathfrak{n}}.
\end{equation}
Take $E^-_m + \lambda_{r,m} < E' < E^+_m - \lambda'_{r}$. Since the gaps do not overlap, there exists a unique $G^\ar_{\mathfrak{n}(m)}$ such that $E' \in G^\ar_{\mathfrak{n}(m)}$. Moreover, $[E^-_m + \lambda_{r,m}, E^+_m - \lambda_{r,m}] \subset G^\ar_{\mathfrak{n}(m)}$. Indeed, assume that $[E^-_m + \lambda'_{r}, E^+_m - \lambda'_{r}] \cap G^\ar_{\mathfrak{n}}$ for some $\mathfrak{n} \neq \mathfrak{n}(m)$. Since the gaps do not overlap this would contradict \eqref{eq:8specdistest2}. We can exchange the roles of $G_m$ and $G^\ar_{\mathfrak{m}(n)}$ in this argument. Indeed, the only thing we need for this is the lower estimate for $|G^\ar_m|$. Clearly, $|G^\ar_m| \ge \tau/2$. One can see that with this lower estimate the above argument still works. This proves that the map $m \mapsto \mathfrak{n}(m)$ is injective and $|E^\pm_m - E^{r,\pm}_{\mathfrak{n}(m)}| < \lambda'_{r}$.
\end{proof}

\begin{remark}\label{rem:9closeODE1CONV1}
The previous lemma enables us to define $\mathcal{X}^\ar, \mathcal{X}$ so that \eqref{eq:517-1specidentL} holds. Indeed, let us enumerate the gaps in the spectrum $\mathcal{S}$ via $G_n$, $n = 1, 2, \ldots$, just as in Remark~\ref{rem:9closeODE1}. The statement in Lemma~\ref{lem:8enumerinj} does not specify the labeling of
the gaps. So, we use the same injection $n \mapsto \mathfrak{m}(n)$ with $n = 1, 2, \ldots$. Set $G^\ar_n := G^\ar_{\mathfrak{m}(n)}$. In Lemma~\ref{lem:8enumerinj11} below we verify conditions \eqref{eq:8Gnesti} in part $(1)$ of Remark~\ref{rem:9closeODE1}. Due to these conditions Proposition~\ref{prop:8.stabil1} applies, as was mentioned in part $(1)$ of Remark~\ref{rem:9closeODE1}. Thus, indeed we can define $\mathcal{X}^\ar, \mathcal{X}$ being the same.
\end{remark}

\begin{lemma}\label{lem:8enumerinj11}
Set $G^\ar_n := G^\ar_{\mathfrak{m}(n)}$. Then,
\begin{equation}\label{eq:8GnestiAP1}
\begin{split}
c'(a_0, b_0, \kappa_0, \nu) |\mathfrak{m}(n)|^{\beta'(a_0, b_0, \kappa_0, \nu)} \le |n| \le C'(a_0, b_0, \kappa_0, \nu) |\mathfrak{m}(n)|^{b'(a_0, b_0, \kappa_0, \nu)}, \\
c'(a_0, b_0, \kappa_0, \nu) \exp(-n^{\sigma'_1(\kappa_0, \nu)}) \le |G^\ar_{\mathfrak{m}(n)}| \le C'(a_0, b_0, \kappa_0, \nu) \ve_0 \exp(-n^{\sigma'(\kappa_0, \nu)}).
\end{split}
\end{equation}
\end{lemma}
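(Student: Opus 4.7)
The strategy is to show that, in the regime where $\mathfrak{m}(n)$ is defined (i.e., $\lambda'_r < |G_n|/4$), the injection $n \mapsto \mathfrak{m}(n)$ coincides with $n \mapsto [\mathfrak{m}_n]_{\tilde \omega^\ar}$, and moreover the minimum-norm representative of the coset $[\mathfrak{m}_n]_{\tilde \omega^\ar}$ is $\mathfrak{m}_n$ itself. Once $|\mathfrak{m}(n)| = |\mathfrak{m}_n|$ is established, the first line of \eqref{eq:8GnestiAP1} reduces to the first line of \eqref{eq:8Gnesti} with $\beta' = \beta$ and $b' = b$. For the second line, Lemma~\ref{lem:8enumerinj} gives $||G^\ar_{\mathfrak{m}(n)}| - |G_n|| \le 2 \lambda'_r < |G_n|/2$, so $|G^\ar_{\mathfrak{m}(n)}| \in [|G_n|/2, 2|G_n|]$, and combining with the second line of \eqref{eq:8Gnesti} gives the claimed bounds with $\sigma' = \sigma$ and $\sigma'_1 = \sigma_1$ after an adjustment of constants.

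To exhibit $\mathfrak{m}_n$ as the minimum-norm representative of $[\mathfrak{m}_n]_{\tilde \omega^\ar}$, I first bound below the kernel lattice $L_r := \min\{|k| : 0 \neq k \in \mathfrak{N}(\tilde \omega^\ar)\}$. For such $k$, the Diophantine condition \eqref{eq:PAI7-5-85a-zz} and Lemma~\ref{lem:PAomegas} yield
\[
a_0 |k|^{-b_0} \le |k \omega| = |k (\omega - \tilde \omega^\ar)| \le \nu |k|/r,
\]
so $L_r \ge (a_0 r / \nu)^{1/(b_0+1)}$. In the definition regime, the second line of \eqref{eq:8Gnesti} forces $n \le N(r) \le C(\log r)^{1/\sigma_1}$ and hence $|\mathfrak{m}_n| \le C(\log r)^{1/(\beta \sigma_1)}$, which for $r$ large is below $L_r/2$. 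Then for any nonzero $k \in \mathfrak{N}(\tilde \omega^\ar)$ the triangle inequality gives $|\mathfrak{m}_n + k| \ge |k| - |\mathfrak{m}_n| > L_r/2 > |\mathfrak{m}_n|$, so the minimum over $[\mathfrak{m}_n]_{\tilde \omega^\ar}$ is attained at $\mathfrak{m}_n$.

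To identify $\mathfrak{m}(n)$ with $[\mathfrak{m}_n]_{\tilde \omega^\ar}$: the $\mathfrak{n} = 0$ component of the eigenvalue equation in Theorem~$\tilde A$(b) together with $\varphi(0;k) = 1$ and the decay bounds \eqref{eq:1-17evdecay1A} yield $E^\pm(k) = 4 \pi^2 k^2 + O(\ve)$ uniformly in $k$, for both $H$ and $H^\ar$. Hence
\[
|E^{r,\pm}_{[\mathfrak{m}_n]_{\tilde \omega^\ar}} - E^\pm_{\mathfrak{m}_n}| \le \pi^2 \bigl| |\mathfrak{m}_n \tilde \omega^\ar|^2 - |\mathfrak{m}_n \omega|^2 \bigr| + O(\ve) = O(|\mathfrak{m}_n|^2/r + \ve),
\]
which is smaller than $\lambda'_r \sim r^{-1/2} (\log r)^2$ for $r$ large (given the polylogarithmic control on $|\mathfrak{m}_n|$ above); so $[\mathfrak{m}_n]_{\tilde \omega^\ar}$ is a valid image of $\mathfrak{m}_n$ under the injection $\mathfrak{n}(\cdot)$. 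Uniqueness follows from the polynomial gap-separation of Lemma~\ref{lem2:homogenuityplus}(1) applied to $\mathcal{S}^\ar$ (via Remark~\ref{rem:6pot1}): any other coset $\mathfrak{m}'$ in $\mathfrak{Z}^\ar$ produces a gap $G^\ar_{\mathfrak{m}'}$ at distance $\ge a \max(|[\mathfrak{m}_n]|,|\mathfrak{m}'|)^{-b}$ from $G^\ar_{[\mathfrak{m}_n]_{\tilde \omega^\ar}}$, which far exceeds $2\lambda'_r + |G_n|$ for $r$ large, so no alternative image is possible. The main obstacle is the uniform perturbative estimate $E^\pm(k) = 4 \pi^2 k^2 + O(\ve)$; once this is in place, all remaining steps are elementary.
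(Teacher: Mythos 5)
Your treatment of the second line of \eqref{eq:8GnestiAP1} is exactly the paper's: Lemma~\ref{lem:8enumerinj} gives $|G^\ar_{\mathfrak{m}(n)}|\in[|G_n|/2,\,2|G_n|]$, and \eqref{eq:8Gnesti} does the rest. The kernel-lattice bound $L_r\ge (a_0r/\nu)^{1/(b_0+1)}$ and the observation that $\mathfrak{m}_n$ is then the minimum-norm representative of its coset are also correct. But the step on which your whole first line rests --- identifying $\mathfrak{m}(n)$ with $[\mathfrak{m}_n]_{\tilde\omega^\ar}$ --- has a genuine gap. You compare the gap edges of $H_\omega$ and $H_{\tilde\omega^\ar}$ through the crude asymptotics $E^\pm(k)=4\pi^2k^2+O(\ve)$ and conclude that $|E^{r,\pm}_{[\mathfrak{m}_n]}-E^{\pm}_{\mathfrak{m}_n}|=O(|\mathfrak{m}_n|^2/r+\ve)$ is below $\lambda'_r\sim r^{-1/2}(\log r)^2$ for large $r$. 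It is not: the two $O(\ve)$ corrections (one for the quasi-periodic operator, one for the periodic approximant) are each of fixed size comparable to $\ve$, independent of $r$, and nothing in the paper shows that their difference tends to zero, let alone that it is below $\lambda'_r$ or below the gap lengths $\le 2\ve\exp(-\tfrac{\kappa_0}{2}|\mathfrak{m}|)$ that must be resolved. The only spectral-comparison tool available here is Theorem~\ref{Tdistspec}, which controls the Hausdorff distance of the spectra and carries no label information; a labeled continuity statement for gap edges as $\tilde\omega^\ar\to\omega$ is precisely what is \emph{not} available, and your uniqueness argument via Lemma~\ref{lem2:homogenuityplus} only kicks in after this (unproved) proximity estimate.

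The detour is also unnecessary: the lemma never asserts $|\mathfrak{m}(n)|=|\mathfrak{m}_n|$, and the paper derives the first line of \eqref{eq:8GnestiAP1} without identifying the coset. Once the second line is in place, one combines its lower bound $c'\exp(-n^{\sigma'_1})\le|G^\ar_{\mathfrak{m}(n)}|$ with the gap-length estimate of Theorem~$\tilde B$ for the approximant, $|G^\ar_{\mathfrak{m}}|\le 2\ve\exp(-\tfrac{\kappa_0}{2}|\mathfrak{m}|)$ (uniform in $r$ by Corollary~\ref{cor:5Diophantine}), which immediately ties $|\mathfrak{m}(n)|$ polynomially to $n$. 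So the fix is to drop the identification of $\mathfrak{m}(n)$ with $[\mathfrak{m}_n]_{\tilde\omega^\ar}$ altogether and argue directly with gap lengths, as the paper does; as written, your proposal's key identification step would fail.
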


\begin{proof}
Recall that due to \eqref{eq:8Gnesti},
\begin{equation}\label{eq:8GnestiAP}
c(a_0, b_0, \kappa_0, \nu) \exp(-n^{\sigma_1(a_0, b_0, \kappa_0, \nu)}) \le |G_n| \le C(a_0, b_0, \kappa_0, \nu) \ve_0 \exp(-n^{\sigma(a_0, b_0, \kappa_0, \nu)}).
\end{equation}
Since $|G^\ar_{\mathfrak{m}(n)}|/2 \le |G_n| \le 2 |G^\ar_{\mathfrak{m}(n)}|$, the second line in \eqref{eq:8GnestiAP1} holds. Since $|G^\ar_{\mathfrak{m}}| \le C(a_0, b_0, \kappa_0, \nu) \exp(-|m|^{\sigma(a_0, b_0, \kappa_0, \nu)})$, the first line follows from the second.
\end{proof}

\begin{lemma}\label{lem:8enumerinj114}
With $\lambda'_r$ as in Lemma~\ref{lem:8enumerinj}, we have $|\underline{E}^\ar - \underline{E}| \le \lambda'_r \to 0$ as $r \to \infty$.
\end{lemma}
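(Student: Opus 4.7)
The plan is to reduce the claim to a direct application of Theorem~\ref{Tdistspec}, which gives a Hausdorff-type bound on the symmetric distance between $\mathcal{S}$ and $\mathcal{S}^\ar$ in terms of $\lambda_r = |\omega - \tilde\omega^\ar|^{1/2}$. First I would extract uniform a priori bounds on $|\underline{E}|$ and $|\underline{E}^\ar|$: the lower bound $\underline{E}, \underline{E}^\ar \ge -1$ is furnished by part (1) of Lemma~\ref{Lml2PQloverupperbound}, and a matching upper bound of the form $\underline{E}, \underline{E}^\ar \le 1$ follows by using compactly supported approximate eigenfunctions of $-\Delta$ at energy $0$ and noting $\|V\|_\infty, \|V^\ar\|_\infty \le 1$ for $\ve \le \ve_0$ (from \eqref{eq:8Fourier}). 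In particular there is a constant $M$, independent of $r$, with $|\underline{E}|, |\underline{E}^\ar| \le M$.

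Next I would apply Theorem~\ref{Tdistspec} in both directions. Since $\underline{E} \in \mathcal{S}$, there exists $\tilde E \in \mathcal{S}^\ar$ with $|\tilde E - \underline{E}| \le C(1+|\underline{E}|^{1/4}) \lambda_r$, and because $\underline{E}^\ar = \inf \mathcal{S}^\ar$ we obtain $\underline{E}^\ar \le \tilde E \le \underline{E} + C(1+M^{1/4})\lambda_r$. Exchanging the roles of $\omega$ and $\tilde\omega^\ar$ yields the symmetric inequality $\underline{E} \le \underline{E}^\ar + C(1+M^{1/4})\lambda_r$. Combined, these give $|\underline{E}^\ar - \underline{E}| \le C(1+M^{1/4}) \lambda_r$.

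Finally, since $\lambda_r = |\omega - \tilde\omega^\ar|^{1/2}$ and $|\omega - \tilde\omega^\ar| \le \nu/r$ by Lemma~\ref{lem:PAomegas}, we have $\lambda_r \to 0$, and for any admissible choice of $\tau$ used to define $\lambda'_r$ in Lemma~\ref{lem:8enumerinj} the bound $C(1+M^{1/4})\lambda_r \le \lambda'_r$ holds once $r$ is large enough (since $\lambda'_r$ differs from $\lambda_r$ only by a factor depending on $\omega, \kappa_0, \tau$, and not on $r$). This yields both $|\underline{E}^\ar - \underline{E}| \le \lambda'_r$ and $\lambda'_r \to 0$, as claimed.

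The argument is essentially routine once Theorem~\ref{Tdistspec} is in hand; there is no real obstacle. The only point requiring mild care is the uniform a priori bound on $|\underline{E}^\ar|$, which must be $r$-independent so that the prefactor $C(1+|\underline{E}^\ar|^{1/4})$ coming from Theorem~\ref{Tdistspec} does not blow up; this is handled by the smallness of $\|V^\ar\|_\infty$ which is itself uniform in $r$ by \eqref{eq:8Fourier}.
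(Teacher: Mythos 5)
Your proposal is correct and follows essentially the same route as the paper, which deduces the lemma from the two-sided spectral distance estimate of Theorem~\ref{Tdistspec} (recorded as \eqref{eq:8specdistest}) combined with the uniform bound on $\underline{E}$, $\underline{E}^\ar$ coming from part (1) of Lemma~\ref{Lml2PQloverupperbound} and $\|V\|_\infty,\|V^\ar\|_\infty\le 1$. The only cosmetic point is your phrase ``once $r$ is large enough'': since both $C(1+M^{1/4})\lambda_r$ and $\lambda'_r$ are proportional to $\lambda_r$, the comparison is not improved by increasing $r$, but rather holds because the factor $C_0(\kappa_0)|\omega|^2(\log\tau^{-1})^2$ in the definition of $\lambda'_r$ in Lemma~\ref{lem:8enumerinj} is taken to dominate the prefactor from Theorem~\ref{Tdistspec} at bounded energies.
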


\begin{proof}
The statement follows from \eqref{eq:8specdistest} combined with part $(1)$ of Lemma~\ref{Lml2PQloverupperbound}.
\end{proof}

Let $\Psi_{N,n}(\theta)$ be as in Proposition~\ref{prop:8.stabil1}, see \eqref{eq:8auxmanofoldN}. Consider the vector-fields on $\mathcal{X}$ defined via
\begin{equation}\label{eq:8auxmanofoldN111}
\begin{split}
\Psi^\ar_{N,n}(\theta) & =  4 \xi_n \sqrt{(\mu^\ar_n(\theta) - \underline{E}^\ar) \prod_{\substack{i \\ i \neq n, \quad i \le N}} \frac{(E_i^{r,-} - \mu^\ar_n(\theta))(E_i^{r,+} - \mu^\ar_n(\theta))}{(\mu^\ar_i(\theta) - \mu^\ar_n(\theta))^2} }, \quad n \le N, \\
\Psi^{(r,N)} & = (\Psi^\ar_{N,n})_n.
\end{split}
\end{equation}

\begin{lemma}\label{lem:8enumerinj113}
We have
\begin{equation}\label{eq:8GnestiAP13}
\lim_{r \to \infty} \Psi^\ar_{N,n}(\theta) = \Psi_{N,n}(\theta), \quad 1 \le n \le N,
\end{equation}
uniformly in $\theta$.
\end{lemma}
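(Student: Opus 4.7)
The plan is to prove the lemma by direct continuity, exploiting the fact that $\Psi_{N,n}$ and $\Psi^{(r)}_{N,n}$ are finite products of smooth functions of the spectral endpoints $\underline{E}, E_i^\pm$ (resp.\ $\underline{E}^{(r)}, E_i^{(r),\pm}$) and the phase variables $\theta$, and all these spectral endpoints for $i\le N$ converge as $r\to\infty$ by Lemmas~\ref{lem:8enumerinj} and \ref{lem:8enumerinj114}. Concretely, fix $N$ and $1\le n\le N$, and set $\tau:=\tfrac{1}{2}\min_{i\le N}|G_i|>0$; by Lemma~\ref{lem:8enumerinj11} this minimum is strictly positive. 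Apply Lemma~\ref{lem:8enumerinj} with this $\tau$: for all sufficiently large $r$ (so that $\lambda'_r<\tau/4$), the injection $m\mapsto\mathfrak{n}(m)$ is defined on every $m\le N$ and satisfies
\begin{equation}
|E^\pm_i - E^{(r),\pm}_{\mathfrak{n}(i)}| \le \lambda'_r,\qquad 1\le i\le N,
\end{equation}
with $\lambda'_r\to 0$. Together with Lemma~\ref{lem:8enumerinj114} this gives convergence of all spectral endpoints used in $\Psi_{N,n}$.

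Next I would deduce that $\mu^{(r)}_i(\theta)\to\mu_i(\theta)$ uniformly in $\theta$ for each $i\le N$. Indeed, from the definition
\[
\mu_i(\theta)=E^-_i+(E^+_i-E^-_i)\sin^2(\theta_i/\xi_i)
\]
and the analogous formula for $\mu^{(r)}_i$, one has $|\mu^{(r)}_i(\theta)-\mu_i(\theta)|\le 2\lambda'_r$ uniformly in $\theta$. The same estimate gives $|(\mu^{(r)}_n(\theta)-\underline{E}^{(r)})-(\mu_n(\theta)-\underline{E})|\le 3\lambda'_r$, so the factor $\sqrt{\mu_n(\theta)-\underline{E}}$ converges uniformly (noting the lower bound $\mu_n(\theta)-\underline{E}\ge a|n|^{-b}>0$ from Lemma~\ref{Lml2PQAGlowerE}, which stays positive under small perturbations).

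The key point is to control the denominators $(\mu^{(r)}_i(\theta)-\mu^{(r)}_n(\theta))^2$ uniformly in $\theta$ and in $r$. For $i\ne n$ the intervals $G_i$ and $G_n$ are disjoint (they are gaps of the same operator), so
\[
|\mu_i(\theta)-\mu_n(\theta)| \ge \dist([E^-_i,E^+_i],[E^-_n,E^+_n]) =: \eta_{i,n}>0,
\]
and by Lemma~\ref{lem2:homogenuityplus} this separation is bounded below by a quantity depending only on $N$ through $\max(i,n)$. For large $r$ the same holds for $G^{(r)}_{\mathfrak{n}(i)}$ and $G^{(r)}_{\mathfrak{n}(n)}$, because these gaps are within $\lambda'_r\ll\eta_{i,n}$ of the originals; hence $|\mu^{(r)}_i(\theta)-\mu^{(r)}_n(\theta)|\ge \eta_{i,n}/2$ uniformly in $\theta$ and $r$. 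Since the numerators $(E^{(r),\pm}_i-\mu^{(r)}_n(\theta))$ are uniformly bounded in $\theta$ (by part $(5)$ of Lemma~\ref{lem2:homogenuityplus} and Lemma~\ref{Lml2PQloverupperbound}), each of the finitely many factors in the product defining $\Psi^{(r)}_{N,n}(\theta)$ converges uniformly to the corresponding factor of $\Psi_{N,n}(\theta)$.

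The main obstacle, and the only substantive point, is the uniform lower bound on the denominators; once the nonresonance separation $\eta_{i,n}$ is shown to be stable under the perturbation $\tilde\omega^{(r)}\to\omega$, the result is a routine uniform-convergence argument for a finite product of continuous functions on a compact manifold. Finally the uniform convergence of a finite product from uniform convergence of its (bounded, bounded-below) factors gives \eqref{eq:8GnestiAP13} and completes the proof.
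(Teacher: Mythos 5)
Your argument is correct and follows essentially the same route as the paper's own proof: convergence of the gap endpoints (Lemma~\ref{lem:8enumerinj}) and of $\underline{E}^{(r)}$ (Lemma~\ref{lem:8enumerinj114}), uniform convergence of $\mu^{(r)}_i(\theta)\to\mu_i(\theta)$ from the explicit $\sin^2$ formula, and the positive separation $\eta_{i,n}>0$ to keep the finitely many denominators bounded below. The paper states this more tersely, while you spell out the quantitative stability of the denominators and the boundedness of the numerators; no gap.
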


\begin{proof}
Recall that $\mu_n(\theta) = E^-_n + (E^+_n - E^-_n) \sin^2 (\frac{\theta_n}{\rho_n})$, $\mu^\ar_n(\theta) = E^{r,-}_n + (E^{r,+}_n - E^{r,-}_n) \sin^2 (\frac{\theta_n}{\rho_n})$. Due to Lemma~\ref{lem:8enumerinj}, $|E^{r,\pm}_n - E^{\pm}_n| \to 0$ as $r \to \infty$. Recall also that $\eta_{i,n} = \dist(G_i, G_n) > 0$. This implies $\lim_{r \to \infty} \mu^\ar_n(\theta) = \mu_n(\theta)$, uniformly in $\theta$. Due to Lemma~\ref{lem:8enumerinj114}, $|\underline{E}^\ar - \underline{E} | \to 0$ as $r \to \infty$. Recall that $\underline{E}^\ar < E^-_n \le \mu_n(\theta)$ for any $n$ and $\theta$. The statement follows.
\end{proof}

\begin{remark}\label{rem:9PhiPsiconnection}
We discuss now the fields $\Phi$, see \eqref{eq:8toruspm-1}, \eqref{ml06}. Using the same enumeration as above, set
\begin{equation}\label{eq:8toruspmPsi}
\begin{split}
G_{n,\sigma} & = \{ (\mu,\sigma) : \mu \in G_n \},\quad G^\ar_{n,\sigma} = \{ (\mu,\sigma) : \mu \in G^\ar_m \}, \quad \sigma = \pm, \\
\mathcal{C}_{n} & = G_{n,-} \cup G_{n,+} \cup \{ E_n^-, E_n^+ \},\quad \mathcal{C} = \prod_{n} \mathcal{C}_n, \\
\mathcal{C}^\ar_{n} & = G^\ar_{n,-} \cup G^\ar_{n,+} \cup \{ E_n^{r,-}, E_n^{r,+} \}, \quad \mathcal{C}^\ar = \prod_{n} \mathcal{C}^\ar_n.
\end{split}
\end{equation}
The tori $\mathcal{C}$, $\mathcal{C}^\ar$ have natural smooth structures. We denote the points on these tori by $\mu=(\mu_n)_n$ with $\sigma(\mu_n) \in \{+,-\}$ being suppressed from the notation. Consider the vector-fields
\begin{equation}\label{eq:8toruspmPsivf}
\begin{split}
\Phi_n(\mu) & = \sigma(\mu_n) \sqrt{4 ( \underline E - \mu_n)(E_n^- - \mu_n)(E_n^+ - \mu_n) \prod_{\substack{i \neq n}} \frac{(E_i^- - \mu_n)(E_i^+ - \mu_n)}{(\mu_i - \mu_n)^2} }, \\
\Phi^\ar_n(\mu) & = \sigma(\mu_n) \sqrt{4 (\underline E^\ar - \mu_n)(E_n^{r,-} - \mu_n)(E_n^{r,+} - \mu_n) \prod_{\substack{i \neq n}} \frac{(E_i^{r,-} - \mu_n)(E_i^{r,+} - \mu_n)}{(\mu_i - \mu_n)^2}}.
\end{split}
\end{equation}
Recall \eqref{eq:8auxmanofold}:
\begin{equation}\label{eq:8auxmanofoldPhiPsi}
\begin{split}
\mu_n(\theta) = E^-_n + (E^+_n - E^-_n) \sin^2 \Big(\frac{\theta_n}{\xi_n}\Big), \quad \theta := (\theta_n,\sigma_n)_n \in \mathcal{M}, \quad \xi_n = \exp(-n^{\sigma(\kappa_0, \nu)}/2), \\
\mu^\ar_n(\theta) = E^{r,-}_n + (E^{r,+}_n - E^{r,-}_n) \sin^2 \Big(\frac{\theta_n}{\xi_n}\Big), \quad \theta := (\theta_n, \sigma_n)_n \in \mathcal{M}, \quad \xi_n = \exp(-n^{\sigma(\kappa_0, \nu)}/2).
\end{split}
\end{equation}
\end{remark}

\begin{lemma}\label{lem:8Lml2ODEmutheta}
The function $\mu(t)$ obeys
\begin{equation}\label{ml07origODE}
\dot{\mu} = \Phi(\mu), \quad \text{resp., $\dot{\mu} = \Phi^\ar(\mu)$}
\end{equation}
if and only if $\mu(t) = \mu(\theta(t))$, resp.\ $\mu(t) = \mu^\ar(\theta(t))$, with
\begin{equation}\label{ml07origODEtheta}
\dot{\theta} = \Psi(\theta), \quad \text{resp., $\dot{\theta} = \Psi^\ar(\theta)$.}
\end{equation}
In particular, for any given $\mu^0 \in \mathcal{C}$ (resp., $\mu^0 \in \mathcal{C}^\ar$), there exists a unique trajectory originating at $\mu^0$.
\end{lemma}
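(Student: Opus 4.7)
The plan is to verify this lemma as a chain-rule calculation combined with a desingularization argument. The auxiliary manifold $\mathcal{M}$ is introduced precisely so that the map $\theta \mapsto \mu(\theta)$ resolves the square-root singularities of $\Phi_n$ at the endpoints $E_n^\pm$: on each component $I_{n,\pm}$, the map $\theta_n \mapsto \mu_n(\theta)$ is a diffeomorphism onto $G_{n,\pm}$, and the boundary points $\theta_n \in \{0,\pi\xi_n/2\}$ glue to $\mu_n = E_n^\pm$ so that $S_n \cong \mathcal{C}_n$ smoothly. The sign $\sigma(\mu_n)$ in $\Phi_n$ is encoded by the choice of branch $I_{n,\pm}$.

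For the equivalence, since $\mu_n$ depends only on $\theta_n$, the chain rule gives $\dot\mu_n = (d\mu_n/d\theta_n)\dot\theta_n$ with $d\mu_n/d\theta_n = \xi_n^{-1}(E_n^+-E_n^-)\sin(2\theta_n/\xi_n)$. Using the identity
\[
(\mu_n-E_n^-)(E_n^+-\mu_n) = (E_n^+-E_n^-)^2\sin^2(\theta_n/\xi_n)\cos^2(\theta_n/\xi_n),
\]
the factor $\sqrt{(\mu_n-E_n^-)(E_n^+-\mu_n)}$ hidden inside $\Phi_n$ is replaced by a smooth function of $\theta_n$. Substituting $\Psi_n(\theta)$ for $\dot\theta_n$ and collecting the common factor $\sqrt{(\mu_n-\underline E)\prod_{i\neq n}(E_i^--\mu_n)(E_i^+-\mu_n)/(\mu_i-\mu_n)^2}$ in both expressions, the two formulas differ only in prefactors and a sign. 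The sign on $I_{n,\pm}$ is matched to $\sigma(\mu_n)$ by definition of the gluing, and the prefactor matches by the choice of $\xi_n$ in the normalization of $\theta_n$. This establishes the ``if'' direction: $\Psi$-trajectories project to $\Phi$-trajectories.

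Existence and uniqueness on $\mathcal{M}$ follow from Corollaries~\ref{cor:8Psiregular} and \ref{cor:8PsiregularM}, which show that $\Psi$ extends smoothly across the boundary points (the would-be singular factor $\sin(2\theta_n/\xi_n)$ from $d\mu_n/d\theta_n$ does not appear in $\Psi_n$). Hence $\Psi$ is a $C^1$ vector field on the compact manifold $\mathcal{M}$, and standard ODE theory (item $(v)$ before Lemma~\ref{lem9:trajestability2}) gives a unique global $\Psi$-trajectory through every $\theta^0\in\mathcal{M}$. Given any $\mu^0\in\mathcal{C}$ I would pick a preimage $\theta^0\in\mathcal{M}$ and define $\mu(t):=\mu(\theta(t))$; this is a $\Phi$-trajectory by the computation above. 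For uniqueness of the $\mu$-trajectory, the only possible ambiguity is at the boundary points $\mu_n=E_n^\pm$, where $\Phi_n$ vanishes and one must decide whether $\mu_n$ reflects into $G_{n,+}$ or $G_{n,-}$; but at the corresponding $\theta_n\in\{0,\pi\xi_n/2\}$ the field $\Psi_n$ does \emph{not} vanish, so $\theta_n(t)$ crosses transversally and the branch is forced.

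The main obstacle is the consistent sign bookkeeping at these boundary points: I have to verify that the transversal crossing of $\theta_n$ through $0$ or $\pi\xi_n/2$ flips $\sigma(\mu_n)$ in exactly the way dictated by the gluing of $G_{n,\pm}$ into $\mathcal{C}_n$, so that $\mu(t)$ is a genuine $C^1$ curve on $\mathcal{C}$ rather than something that fails to be differentiable at the gap endpoints. Once this is checked on each factor $S_n\to\mathcal{C}_n$ separately, the identical argument handles $\Psi^{(r)}$ and $\Phi^{(r)}$ with $\underline E^{(r)},E_n^{r,\pm}$ in place of $\underline E,E_n^\pm$, giving the ``resp.'' half of the lemma.
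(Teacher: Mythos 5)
Your proposal is correct and follows essentially the same route as the paper: the paper's proof is precisely the chain-rule identity $\Phi_n(\mu(\theta)) = \sigma_n \sqrt{(\mu_n(\theta_n)-E_n^-)(E_n^+-\mu_n(\theta_n))}\,\Psi_n(\theta)$ combined with $\dot{\mu}_n = \frac{d\mu_n}{d\theta_n}\dot{\theta}_n$, with global existence and uniqueness coming from the regularity of $\Psi$ on the compact manifold $\mathcal{M}$ (Corollaries~\ref{cor:8Psiregular} and \ref{cor:8PsiregularM}), exactly as you argue. The endpoint/sign bookkeeping you flag as the remaining obstacle is handled no more explicitly in the paper, so your treatment is if anything more careful than the original.
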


\begin{proof}
We have
\begin{equation}\label{ml07origODEthetaverif}
\begin{split}
\Phi_n(\mu) & = \sigma_n \sqrt{(\mu_n(\theta_n) - E_n^-)(E_n^+ - \mu_n(\theta_n))} \Psi_n(\theta) = \sigma_n (E^+_n - E^-_n) \sin \Big( \frac{\theta_n}{\xi_n} \Big) \cos \Big( \frac{\theta_n}{\xi_n} \Big) \Psi_n(\theta), \\
\dot{\mu}_n & = \frac{d\mu_n}{d\theta_n} \dot{\theta}_n = \frac{2(E^+_n - E^-_n)}{\xi_n} \sin \Big( \frac{\theta_n}{\xi_n} \Big) \cos \Big( \frac{\theta_n}{\xi_n} \Big) \dot{\theta}_n.
\end{split}
\end{equation}
This verifies the statement for $\Phi$. The verification for $\Phi^\ar$ is the same.
\end{proof}

Now we can state the main results of this section. Let $\mathcal{M}, \mathcal{X}, \mathcal{Y}, \mathcal{M}^\ar, \mathcal{X}^\ar, \mathcal{Y}^\ar, \Psi, \Psi^\ar, \Psi^{(N)}, \Psi^{(r,N)}$ be as above. As above we have a setup with $\mathcal{X} = \mathcal{X}^\ar$. The manifolds $\mathcal{Y}$ and $\mathcal{Y}^\ar$ are not related to each other. Let $\theta^\zero = (\theta_n^\zero)_n \in \mathcal{M}$ be arbitrary. Let $\theta^\zero (t) = (\theta_n^\zero(t))_n$ be the $\Psi$-trajectory originating at $\theta^\zero$. We have $\theta^{N,0} := (\theta_n^\zero)_{n \le N} \in \mathcal{X}^\ar$. Take an arbitrary $\gamma^\ar \in \mathcal{Y}^\ar$. Let $\theta^\zero (t; \gamma^\ar) = (\theta_n^\zero (t; \gamma^\ar))_n$ be the $\Psi^\ar$-trajectory originating at $(\theta^{N,0}, \gamma^\ar)$.

\begin{prop}\label{prop:8.stabilM}
We have
\begin{equation}\label{eq:9trajecmetricdeviation234}
\begin{split}
\limsup_{r \to \infty} \left[ \sup_{\gamma^\ar} \max_{0 \le t \le T} \sum_{1 \le n \le N} |\theta^\zero_n (t) - \theta_n^{(0)}(t; \gamma^\ar)| \right] \le A (4K)^{BT} \delta, \\
\limsup_{r \to \infty} \left[ \sup_{\gamma^\ar} \max_{0 \le t \le T} \sum_{1 \le n \le N} |\mu_n(\theta^\zero_n (t)) - \mu^\ar_n(\theta_n^{(0)}(t; \gamma^\ar))| \right] \le A (4K)^{BT} \delta,
\end{split}
\end{equation}
with $A, B, K$ depending on $a_0, b_0, \kappa_0, \nu$ and $\delta = \exp(-N^{\sigma})$.
\end{prop}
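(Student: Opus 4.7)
The plan is to decompose the quantity $|\theta^{(0)}_n(t) - \theta^{(0)}_n(t;\gamma^\ar)|$ via the triangle inequality through two intermediate trajectories, both living on the common manifold $\mathcal{X} = \mathcal{X}^\ar$ constructed in Remark~\ref{rem:9closeODE1CONV1}. Specifically, let $\tilde \theta(t)$ denote the $\Psi^{(N)}$-trajectory on $\mathcal{X}$ originating at $\theta^{N,0} = (\theta^\zero_n)_{n \le N}$, and let $\tilde \theta^{\ar}(t)$ denote the $\Psi^{(r,N)}$-trajectory on $\mathcal{X} = \mathcal{X}^\ar$ originating at the same point $\theta^{N,0}$. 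Then for $1 \le n \le N$ one has
\[
|\theta^{(0)}_n(t) - \theta^{(0)}_n(t;\gamma^\ar)| \le |\theta^{(0)}_n(t) - \tilde \theta_n(t)| + |\tilde \theta_n(t) - \tilde \theta^\ar_n(t)| + |\tilde \theta^\ar_n(t) - \theta^{(0)}_n(t;\gamma^\ar)|,
\]
so summing over $n \le N$ and maximizing over $t \in [0,T]$ reduces the proposition to estimating three sums.

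First, by part~$(3)$ of Proposition~\ref{prop:8.stabil1} applied on $\mathcal{M}$, the sum $\sum_{n \le N} |\theta^{(0)}_n(t) - \tilde \theta_n(t)|$ is bounded by $A(4K)^{BT}\delta$, because the $\Psi^{(N,0)}$-trajectory from $\theta^\zero$ is constant in the $\mathcal{Y}$-coordinates and its $\mathcal{X}$-component is precisely $\tilde \theta(t)$. The same argument, applied on $\mathcal{M}^\ar$ with initial point $(\theta^{N,0}, \gamma^\ar)$, yields the corresponding bound $A(4K)^{BT}\delta$ on $\sum_{n \le N} |\theta^{(0)}_n(t;\gamma^\ar) - \tilde \theta^\ar_n(t)|$, and this bound is uniform in $\gamma^\ar$ and in $r$ since the constants $A, B, K$ in Proposition~\ref{prop:8.stabil1} depend only on $(a_0,b_0,\kappa_0,\nu)$ and the uniform enumeration estimates \eqref{eq:8GnestiAP1}.

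For the middle term I will invoke the ODE stability Lemma~\ref{lem9:trajestability2} applied to $\Psi^{(N)}$ and $\Psi^{(r,N)}$ on the common manifold $\mathcal{X}$: both fields are $B$-regular with respect to the same atlas $\mathfrak{F}_\mathcal{X}$ uniformly in $r$ by Corollary~\ref{cor:8PsiregularX} (using \eqref{eq:8GnestiAP1} and Lemma~\ref{lem:8enumerinj114}), and by Lemma~\ref{lem:8enumerinj113} we have $\sup_\theta |\Psi^{(r,N)}_n(\theta) - \Psi^{(N)}_n(\theta)| \to 0$ as $r \to \infty$ for each $n \le N$. Lemma~\ref{lem9:trajestability2} then yields $\sup_{t \in [0,T]} \sum_{n \le N} |\tilde \theta_n(t) - \tilde \theta^\ar_n(t)| \to 0$. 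Combining the three bounds establishes the first inequality. The second inequality follows from the first together with the smoothness of $\mu_n$ in $\theta_n$ (which gives a Lipschitz bound $|\mu_n(\theta) - \mu_n(\theta')| \le C|\theta_n - \theta'_n|$ for the rescaled variable) and the uniform convergence $\sup_\theta|\mu^\ar_n(\theta) - \mu_n(\theta)| \to 0$, which is immediate from $|E^{r,\pm}_n - E^\pm_n| \to 0$ established in Lemma~\ref{lem:8enumerinj}.

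The main obstacle is purely bookkeeping: one must check that all constants appearing in the regularity estimates of Corollaries~\ref{cor:8Psiregular}, \ref{cor:8PsiregularX} and in Proposition~\ref{prop:8.stabil1} are genuinely uniform in $r$ once the enumeration of Remark~\ref{rem:9closeODE1CONV1} is fixed. This uniformity is precisely what Lemma~\ref{lem:8enumerinj11} delivers (via the comparison $|G^\ar_{\mathfrak{m}(n)}|/2 \le |G_n| \le 2|G^\ar_{\mathfrak{m}(n)}|$), so the only subtlety is to ensure that the resonance classification $\mathfrak{R}_n$ used to split the products $P_n \cdot Q_n$ behaves consistently for $H$ and $H^\ar$; since $|E^{r,\pm}_n - E^\pm_n| = O(\lambda'_r) \to 0$, the resonance sets $\mathfrak{R}^\ar_n$ stabilize and coincide with $\mathfrak{R}_n$ for $r$ large enough at each fixed $n \le N$, which suffices for taking the limit under the $\limsup$.
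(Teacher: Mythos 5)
Your proposal is correct and follows essentially the same route as the paper's own proof: the same three-term triangle inequality through the cut-off trajectories of $\Psi^{(N)}$ and $\Psi^{(r,N)}$ on the common manifold $\mathcal{X}=\mathcal{X}^{(r)}$, with Proposition~\ref{prop:8.stabil1} handling the two outer terms, Lemma~\ref{lem:8enumerinj113} together with the uniform $B$-regularity handling the middle term, and the Lipschitz bound $|\mu_n(\theta_n)-\mu_n(\hat\theta_n)|\le (E_n^+-E_n^-)\xi_n^{-1}|\theta_n-\hat\theta_n|$ plus $|E_n^{r,\pm}-E_n^{\pm}|\to 0$ giving the second line. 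Your closing remark about the resonance sets is extra bookkeeping the paper does not need at this stage, since the uniformity in $r$ is already encoded in Lemma~\ref{lem:8enumerinj11} and the regularity corollaries.
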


\begin{proof}
Let $\Psi_{N,n}(\theta), \Psi^\ar_{n}(\theta)$ be as in Lemma~\ref{lem:8enumerinj113}, $\Psi^{N} = (\Psi_{N,n}(\theta))_{1 \le n \le N}, \Psi^{r,N}(\theta) = (\Psi^\ar_{n}(\theta))_{1 \le n \le N}$. Let $\theta^{(0)} (t;N) = (\theta_n^\zero (t;N))_{1 \le n \le N}$ be the $\Psi^N$-trajectory originating at $(\theta_n^{(0)})_{1 \le n \le N}$. Let $\theta^\zero (t;r,N) = (\theta_n^\zero (t; r, N))_{1 \le n \le N}$ be the $\Psi^{r,N}$-trajectory originating at $(\theta_n^{(0)})_{1 \le n \le N}$. Due to Lemma~\ref{lem:8enumerinj113},
\begin{equation}\label{eq:9trajecmetricdeviation234lim}
\lim_{r \to \infty} \Psi^\ar_{N,n}(\theta) = \Psi^\ar_{n}(\theta), \quad 1 \le n \le N,
\end{equation}
uniformly in $\theta$. Recall also that due to Corollary~\ref{cor:8PsiregularM} the vector-fields $\Psi^{N}, \Psi^{r,N}(\theta)$ are $B$-regular with respect to the atlas $\mathfrak{F}$, with $B = B(a_0, b_0, \kappa_0, \nu) > 0$. Combining this fact with \eqref{eq:9trajecmetricdeviation234lim}, we conclude that
\begin{equation}\label{eq:9trajecmetricdeviationinter}
\lim \left[ \sum_{1 \le n \le N} |\theta^\zero_n (t;N) - \theta_n^{(0)}(t;r,N)| \right] = 0
\end{equation}
$($this is a well-known general fact, but one can invoke Corollary~\ref{eq:9trajecmetricdeviation2-1}$)$. Due to Proposition~\ref{prop:8.stabil1},
\begin{equation}\label{eq:9trajecmetricdeviation2AG}
\begin{split}
\max_{0 \le t \le T} \sum_{1 \le n \le N} |\theta_n^\zero (t) - \theta_n^{(0)}(t;N)| & \le A (4K)^{BT} \delta, \\
\max_{0 \le t \le T} \sum_{1 \le n \le N} |\theta_n^\zero (t;\gamma_r) - \theta_n^{(0)}(t;r,N)| & \le A (4K)^{BT} \delta.
\end{split}
\end{equation}
Combining \eqref{eq:9trajecmetricdeviation2AG} with \eqref{eq:9trajecmetricdeviationinter}, we obtain the first line in \eqref{eq:9trajecmetricdeviation234}. We have
\begin{equation}\label{eq:8auxmanofoldPhiPsi11}
\begin{split}
\sum_{1 \le n \le N} |\mu_n(\theta_n) - \mu_n(\hat{\theta}_n)| & \le \sum_{1 \le n \le N} (E^+_n - E^-_n) |\theta_n - \hat{\theta}_n| \xi_n^{-1} \le C \sum_{1 \le n \le N} |\theta_n - \hat{\theta}_n|, \\
\sum_{1 \le n \le N} |\mu_n(\theta_n) - \mu_n^\ar(\theta_n)| & \le \sum_{1 \le n \le N} [|E^-_n - E^{r,-}| + |E^+_n - E^{r,+}|] \to 0 \text{ as $r \to \infty$}.
\end{split}
\end{equation}
Therefore the second line in \eqref{eq:9trajecmetricdeviation234} follows from the first one.
\end{proof}

\section{Proof of Theorem~$I$}\label{sec.10}

The next lemma is well known.

\begin{lemma}\label{Lml:8quasibasis}
Let
\begin{equation}\label{ml07qbasis}
Q(t) = \sum_{n \in \zv} d(n) e^{2 \pi i t n \omega}, \quad t \in \mathbb{R}
\end{equation}
with
\begin{equation}\label{eq:8-4decay-1}
|d(n)| \le \exp(-\kappa_0 |n|).
\end{equation}
Let $T > 0$ be arbitrary and let $\delta_T := \max_{0 \le t \le T} |Q(t)|$. Let $\beta_0 := (2 b_0)^{-1}$. If $|n| \le T^{\beta_0}$, then
\begin{equation}\label{eq:qpbasis4}
|d(n)| \le \delta_T + \frac{C(a_0, b_0, \kappa_0, \nu)}{T^{1/2}} + C(\kappa_0, \nu) \exp(-\kappa_0 T^{\beta_0}).
\end{equation}
In particular, if $Q(t) = 0$ for all $t \in \mathbb{R}$, then $d(n) = 0$ for all $n \in \zv$.
\end{lemma}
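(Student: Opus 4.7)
The plan is to recover $d(n)$ by integrating $Q(t)$ against the conjugate character $e^{-2\pi i t n\omega}$ over $[0,T]$ and then exploiting the Diophantine condition to control the off-diagonal terms. Concretely, I would start from
\begin{equation*}
\frac{1}{T}\int_0^T Q(t)\,e^{-2\pi i t n\omega}\,dt = d(n) + \sum_{m\in\zv\setminus\{n\}} d(m)\cdot \frac{1}{T}\int_0^T e^{2\pi i t (m-n)\omega}\,dt,
\end{equation*}
so that the left-hand side is bounded in absolute value by $\delta_T$, while each off-diagonal term is bounded by $\min\bigl(|d(m)|,\ |d(m)|/(\pi T|(m-n)\omega|)\bigr)$. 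Using \eqref{eq:PAI7-5-85a-zz} I get $|(m-n)\omega|\geq a_0|m-n|^{-b_0}$, hence the denominator is at worst $(\pi T a_0)^{-1}|m-n|^{b_0}$.

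Next I would split the sum at a threshold $R$: for $|m-n|\le R$ use the Diophantine-based bound, and for $|m-n|>R$ use only $|d(m)|\le \exp(-\kappa_0|m|)$. The near part is bounded by
\begin{equation*}
\frac{1}{\pi T a_0}\sum_{0<|m-n|\le R} |d(m)|\,|m-n|^{b_0} \le \frac{R^{b_0}}{\pi T a_0}\sum_{m\in\zv}\exp(-\kappa_0|m|) \le \frac{C(\kappa_0,\nu)\,R^{b_0}}{T a_0},
\end{equation*}
while the far part is bounded by $\sum_{|m-n|>R}\exp(-\kappa_0|m|)\le C(\kappa_0,\nu)\exp(-\kappa_0 R/2)$ (shifting the origin to $n$ and using $|n|\le T^{\beta_0}\le R$ once $R$ is chosen large, which I will do next).

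Now the choice $R := T^{\beta_0}$ with $\beta_0 = (2b_0)^{-1}$ is exactly tuned so that $R^{b_0}/T = T^{1/2-1} = T^{-1/2}$, giving the near-part bound $C(a_0,b_0,\kappa_0,\nu)/T^{1/2}$. Combined with the far-part bound $C(\kappa_0,\nu)\exp(-\kappa_0 T^{\beta_0}/2)$, which can be absorbed in $C(\kappa_0,\nu)\exp(-\kappa_0 T^{\beta_0})$ after mildly enlarging $R$ or the constant, and with the triangle inequality using $\delta_T$ on the main integral, this yields \eqref{eq:qpbasis4}. The hypothesis $|n|\le T^{\beta_0}$ enters only to guarantee that $|m-n|>R$ implies $|m|>R/2$, so the exponential decay of $d(m)$ is effective in the far-part estimate; this is the one place where the condition $|n|\le T^{\beta_0}$ is used and is the main technical point to handle carefully.

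Finally, if $Q\equiv 0$ then $\delta_T=0$ for every $T$, so sending $T\to\infty$ in \eqref{eq:qpbasis4} (with fixed $n$, which is eventually $\le T^{\beta_0}$) makes both remaining terms vanish, forcing $d(n)=0$. No obstacles beyond careful bookkeeping of the exponent $\beta_0 b_0 = 1/2$ are anticipated.
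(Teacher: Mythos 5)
Your overall strategy is the same as the paper's: expand $\frac1T\int_0^T Q(t)e^{-2\pi i t n\omega}\,dt$ to isolate $d(n)$, bound the left side by $\delta_T$, bound each off-diagonal oscillatory integral by $\min\bigl(1,\tfrac{1}{\pi T|(m-n)\omega|}\bigr)\le \tfrac{|m-n|^{b_0}}{\pi a_0 T}$ via the Diophantine condition, and split the sum at the scale $T^{\beta_0}$ so that the exponent $\beta_0 b_0=1/2$ produces the $T^{-1/2}$ term. The one place where your write-up goes wrong is the far-part estimate. You split according to $|m-n|\le R$ versus $|m-n|>R$ with $R:=T^{\beta_0}$, and you claim that $|m-n|>R$ together with $|n|\le T^{\beta_0}\le R$ forces $|m|>R/2$. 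That implication needs $|n|\le R/2$, not $|n|\le R$: if, say, $\nu=1$ and $|n|=T^{\beta_0}=R$, then $m$ of norm $1$ on the far side of the origin satisfies $|m-n|=R+1>R$ while $\exp(-\kappa_0|m|)$ is of order one, so $\sum_{|m-n|>R}\exp(-\kappa_0|m|)$ is not $O(e^{-\kappa_0 R/2})$ — it is bounded below by a constant. So the step as stated fails precisely in the boundary regime $R/2<|n|\le T^{\beta_0}$, which is admitted by the hypothesis.

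The repair is minor, and there are two natural ways to do it. Either take $R=2T^{\beta_0}$ (the near part then costs only an extra factor $2^{b_0}$ in the $T^{-1/2}$ term, and $|m-n|>R$ now genuinely gives $|m|>T^{\beta_0}$, yielding a tail $\le C(\kappa_0,\nu)e^{-\kappa_0 T^{\beta_0}/2}$, which is the honest form of the exponential error in \eqref{eq:qpbasis4}); or, as the paper does, split on $|m|\le T^{\beta_0}$ versus $|m|>T^{\beta_0}$ rather than on $|m-n|$: in the near part one bounds $|m-n|^{b_0}\le(|n|+|m|)^{b_0}\le (2T^{\beta_0})^{b_0}\le CT^{1/2}$ using the hypothesis $|n|\le T^{\beta_0}$, and in the far part the decay $|d(m)|\le e^{-\kappa_0|m|}$ applies directly with no triangle-inequality shift at all. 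A third fix within your own split: the troublesome terms have $|m|$ small and hence $|m-n|\le |m|+|n|\le 2R$, so for them the oscillatory bound still gives $O(T^{-1/2})$ and they can be moved back into the near part. With any of these adjustments your argument is correct and is essentially the paper's proof; the final limiting step ($Q\equiv 0\Rightarrow d(n)=0$ by letting $T\to\infty$) is fine as you state it.
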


\begin{proof}
Take an arbitrary $n \in \zv$. We have
\begin{equation}\label{eq:qpbasis1}
\Big| d(n) + \sum_{m \in \zv, \, m \neq n} d(m) \frac{1}{T} \int_0^T e^{2 \pi i t (m-n) \omega} \, dt \Big| = \Big| \frac{1}{T} \int_0^T Q(t) e^{-2 \pi i t n \omega} \, dt \Big| \le \delta_T.
\end{equation}
Note that for any $m \neq n$, we have
\begin{equation}\label{eq:qpbasis2}
\frac{1}{T} \Big| \int_0^T e^{2 \pi i t (m-n) \omega} \, dt \Big| \le \min \Big( 1, \frac{1}{\pi T |(m-n) \omega|} \Big) \le \min \Big( 1, \frac{|m-n|^{b_0}}{\pi a_0 T} \Big).
\end{equation}
Assume $|n| \le T^{\beta_0}$. Then
\begin{equation}\label{eq:qpbasis3}
\begin{split}
\sum_{m \neq n, \, |m| \le T^{\beta_0}} |d(m)| \frac{1}{T} \Big| \int_0^T e^{2 \pi i t (m-n) \omega} \, dt \Big|
& \le \sum_{|m| \le T^{\beta_0}} |d(m)| \frac{(|n|+|m|)^{b_0}}{\pi a_0 T} \le \frac{C(a_0, b_0, \kappa_0, \nu)}{T^{1/2}}, \\
\sum_{m \neq n, \, |m| > T^{\beta_0}} |d(m)| \frac{1}{T} \Big| \int_0^T e^{2 \pi i t (m-n) \omega} \, dt \Big| & \le
\sum_{|m| > T^{\beta_0}} |d(m)| \le C(\kappa_0, \nu) \exp(-\kappa_0 T^{\beta_0}).
\end{split}
\end{equation}
Combining this with \eqref{eq:qpbasis1}, we obtain \eqref{eq:qpbasis4}. If $Q(x) = 0$ for all $x$, then taking $T \to \infty$, we conclude that $d(n) = 0$ for all $n$.
\end{proof}

Let $\omega$ and $\tilde \omega = \tilde \omega^{(r)} = (\tilde \omega_1^{(r)}, \ldots, \tilde \omega_\nu^{(r)})$ be as in Lemma~\ref{lem:PAomegas}; see Section~\ref{sec.5}. Let
\begin{equation}\label{ml07qbasisexp1}
Q^{(r)}(t) = \sum_{n \in \zv} c^{(r)}(n) e^{2 \pi i t n \tilde \omega^{(r)}}, \quad t \in \mathbb{R},
\end{equation}
with
\begin{equation}\label{eq:8-4decay-2}
|c^{(r)}(n)| \le  \ve \exp(-\kappa_0|n|), \quad n \in \zv.
\end{equation}

\begin{lemma}\label{Lml:8quasibasisconverge1}
Assume that the limit $Q(t) = \lim_{r \to \infty} Q^{(r)}(t)$ exists for every $t$. Then the limits $c(n) = \lim_{r \to \infty} c^{(r)}(n)$ exist for every $n \in \zv$. Moreover,
\begin{equation}\label{eq:8-4decay123-1}
\begin{split}
Q(t) & = \sum_{n \in \zv} c(n) e^{2 \pi i t n\omega}, \quad t \in \mathbb{R}, \\
|c(n)| & \le \ve \exp(-\kappa_0|n|), \quad n \in \zv.
\end{split}
\end{equation}
\end{lemma}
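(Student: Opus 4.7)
The plan is to extract the Fourier coefficients $c^{(r)}(n)$ as near time-averages of $Q^{(r)}$, pass to the limit $r \to \infty$ in each such time-average by dominated convergence, and then recover the series representation of $Q$ by a second dominated-convergence argument in the sum.

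First I would fix $n \in \zv$ and, for an auxiliary parameter $T>0$, imitate the proof of Lemma~\ref{Lml:8quasibasis} applied to $Q^{(r)}$ with frequency vector $\tilde\omega^{(r)}$. Writing
\[
\frac{1}{T}\int_0^T Q^{(r)}(t) e^{-2\pi i t n \tilde\omega^{(r)}}\, dt
= c^{(r)}(n) + \sum_{m \neq n} c^{(r)}(m)\, \frac{1}{T}\int_0^T e^{2\pi i t(m-n)\tilde\omega^{(r)}}\, dt,
\]
and using Corollary~\ref{cor:5Diophantine} to bound $|(m-n)\tilde\omega^{(r)}|$ from below by $\tfrac{a_0}{2}|m-n|^{-b_0}$ in the box $|m-n| \le \bar R_0^{(r)}$, together with the decay \eqref{eq:8-4decay-2} for the tail $|m| > T^{\beta_0}$ with $\beta_0 = (2b_0)^{-1}$, I would obtain (for all $r$ large enough that $T^{\beta_0} \le \bar R_0^{(r)}$ and all $|n| \le T^{\beta_0}$)
\begin{equation}\label{eq:plan-invest}
\Big| c^{(r)}(n) - \frac{1}{T}\int_0^T Q^{(r)}(t) e^{-2\pi i t n \tilde\omega^{(r)}}\, dt \Big|
\le \frac{C(a_0,b_0,\kappa_0,\nu)}{T^{1/2}} + C(\kappa_0,\nu)\exp(-\kappa_0 T^{\beta_0}).
\end{equation}

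Next, for fixed $T$ I would send $r \to \infty$ in the integral on the right-hand side of \eqref{eq:plan-invest}. The integrands are uniformly bounded by $M := \ve \sum_{n\in\zv} \exp(-\kappa_0|n|) < \infty$, and they converge pointwise to $Q(t) e^{-2\pi i t n\omega}$ because $Q^{(r)}(t) \to Q(t)$ by hypothesis and $n\tilde\omega^{(r)} \to n\omega$ by construction. Dominated convergence on $[0,T]$ therefore yields
\[
\lim_{r \to \infty} \frac{1}{T}\int_0^T Q^{(r)}(t) e^{-2\pi i t n \tilde\omega^{(r)}}\, dt
= \frac{1}{T}\int_0^T Q(t) e^{-2\pi i t n\omega}\, dt.
\]
Combining this with \eqref{eq:plan-invest} shows that $\limsup_r c^{(r)}(n) - \liminf_r c^{(r)}(n)$ is at most twice the right-hand side of \eqref{eq:plan-invest}, which tends to $0$ as $T \to \infty$. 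Hence $c(n) := \lim_{r\to\infty} c^{(r)}(n)$ exists, and the bound $|c(n)| \le \ve \exp(-\kappa_0|n|)$ follows immediately by passing to the limit in \eqref{eq:8-4decay-2}.

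Finally, I would recover the series representation of $Q$ by a second dominated-convergence argument, this time in the summation variable. For every fixed $t \in \IR$, each summand $c^{(r)}(n) e^{2\pi i t n \tilde\omega^{(r)}}$ converges to $c(n) e^{2\pi i t n\omega}$, and all summands are majorized by the $r$-independent summable sequence $\ve\exp(-\kappa_0|n|)$. Consequently
\[
Q(t) = \lim_{r\to\infty} Q^{(r)}(t) = \lim_{r\to\infty} \sum_{n \in \zv} c^{(r)}(n) e^{2\pi i t n\tilde\omega^{(r)}} = \sum_{n\in\zv} c(n) e^{2\pi i t n\omega},
\]
which is precisely \eqref{eq:8-4decay123-1}. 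The only delicate step is the uniformity in $r$ needed in \eqref{eq:plan-invest}; the hypothesis $\bar R_0^{(r)} \to \infty$ from Section~\ref{sec.5} is exactly what makes it work, since for every fixed $T$ the box size $\bar R_0^{(r)}$ eventually exceeds $T^{\beta_0}$, so the Diophantine estimate applies to all relevant small-denominator terms.
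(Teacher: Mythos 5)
Your proposal is correct, but it takes a genuinely different route from the paper. The paper argues by contradiction and compactness: if $\lim_r c^{(r)}(n^{(0)})$ failed to exist for some $n^{(0)}$, it extracts (via the uniform bound \eqref{eq:8-4decay-2} and a diagonal argument) two subsequences along which the coefficients converge to limits $c'(n)\neq c''(n^{(0)})$, notes that along each subsequence the sums converge pointwise to $\sum_n c'(n)e^{2\pi i t n\omega}$ and $\sum_n c''(n)e^{2\pi i t n\omega}$ respectively (using only $\tilde\omega^{(r)}\to\omega$ and the uniform exponential decay), and then invokes the uniqueness statement at the end of Lemma~\ref{Lml:8quasibasis} for the \emph{limit} frequency $\omega$ (where the full Diophantine condition \eqref{eq:PAI7-5-85a-zz} holds) to force $c'=c''$, a contradiction; the representation \eqref{eq:8-4decay123-1} then follows exactly as in your last step. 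You instead give a direct, quantitative proof: you rerun the time-averaging estimate of Lemma~\ref{Lml:8quasibasis} at the rational frequencies $\tilde\omega^{(r)}$, which requires the ``Diophantine condition in the box'' of Corollary~\ref{cor:5Diophantine} uniformly in $r$ (here you should ask for $2T^{\beta_0}\le \bar R_0^{(r)}$ rather than $T^{\beta_0}\le \bar R_0^{(r)}$, since the relevant differences satisfy $|m-n|\le 2T^{\beta_0}$ — a harmless adjustment, and the possibly degenerate terms $(m-n)\tilde\omega^{(r)}=0$ are safely absorbed into the tail sum $|m|>T^{\beta_0}$), then pass to the limit in $r$ by bounded convergence and finally let $T\to\infty$. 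What your approach buys is a constructive identification $c(n)=\lim_{T\to\infty}\frac1T\int_0^T Q(t)e^{-2\pi i t n\omega}\,dt$ together with explicit error rates in $T$; what the paper's approach buys is softness — it needs no uniform-in-$r$ small-denominator estimates at all, only the qualitative uniqueness part of Lemma~\ref{Lml:8quasibasis} at the limiting frequency, at the cost of an indirect subsequence argument. Both are complete proofs of the statement.
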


\begin{proof}
Assume that the limit $\lim_{r \to \infty} c^{(r)}(n^\zero)$ does not exist for some $n^\zero \in \zv$. Using the condition \eqref{eq:8-4decay-2}, we can find two sequences $r'_s$ and $r''_s$, $s = 1, 2, \ldots$ such that the limits $c'(n) = \lim_{s \to \infty} c^{(r'_s)}(n)$,  $c''(n) = \lim_{s \to \infty} c^{(r''_s)}(n)$ exist for every $n \in \zv$, and $c'(n^\zero) \neq c''(n^\zero)$. Moreover,
\begin{equation}\label{eq:8-4decay125-1}
|c'(n)|, |c''(n)| \le \ve \exp(-\kappa_0 |n|), \quad n \in \zv.
\end{equation}
Once again, using the condition \eqref{eq:8-4decay-2} and the estimates \eqref{eq:8-4decay125-1}, we obtain
\begin{equation}\label{eq:8-4decay126-1}
\begin{split}
\lim_{s \to \infty} \sum_{n \in \zv} c^{(r'_s)}(n) e^{2 \pi i t n\omega^{(r'_s)}} & = \sum_{n \in \zv} c'(n) e^{2 \pi i t n \omega}, \quad t \in \mathbb{R}, \\
\lim_{s \to \infty} \sum_{n \in \zv} c^{(r''_s)}(n) e^{2 \pi i t n \omega^{(r''_s)}} & = \sum_{n \in \zv} c''(n) e^{2 \pi i t n \omega}, \quad t \in \mathbb{R}
\end{split}
\end{equation}
$($recall that $|\omega - \omega^{(r)}| \to 0$ when $r \to \infty)$. Due to the assumptions of the lemma, we have
\begin{equation}\label{eq:8-4decay126-2}
\sum_{n \in \zv} c'(n) e^{2 \pi i t n \omega} = \sum_{n \in \zv} c''(n) e^{2 \pi i t n \omega}, \quad t \in \mathbb{R}.
\end{equation}
Due to the previous lemma this implies $c'(n) = c''(n)$ for any $n \in \zv$. The contradiction we get proves the first statement in the lemma. The second one follows from it.
\end{proof}
\begin{remark}\label{rem:9fortheorI}
$(1)$ In the proof of Theorem $I$ we need to invoke yet another fundamental fact from the theory of Hill operators.
Let $q(x)$ be a continuous real $T$-periodic function,
\begin{equation} \label{eq:9potentialR}
q(x) = \sum_{n \in \mathbb{Z} \setminus \{ 0 \}} c(n) e^{\frac{2\pi i nx}{T}}.
\end{equation}
Consider the Hill equation
\begin{equation} \label{eq:9HillR-1}
[H_qy](x) = -y''(x) + q(x) y(x) = E y(x), \quad x \in \IR.
\end{equation}
Denote by $\mu_1(q) < \mu_2(q) < \cdots$ the Dirichlet eigenvalues on $[0,T]$. We view $\mu_n(q)$ as a functional of $q \in C[0,T]$. The following estimate holds,
\begin{equation} \label{eq:9HillR-2}
|\mu_n(q) - \mu_n(p)| \le \max_{0 \le x \le T} |q(x)-p(x)|.
\end{equation}
For $T = 1$, we may refer to \cite[p.~34]{PoTr}. For arbitrary $T$, we can employ the standard re-scaling
$$
\tilde q(x) = T^2 q(xT), \quad 0 \le x \le 1,
$$
which defines a $1$-periodic function $\tilde q$ with $\mu_n(\tilde q) = T^2 \mu_n(q)$.

$(2)$ In the proof we also invoke the following results of Craig \cite{Cr}. Let $Q(x)$ be a bounded continuous real function, $t \in \mathbb{R}$. Consider the Schr\"odinger operator
\begin{equation} \label{eq:9craig1}
[H y](x) = - y''(x) +  Q(x) y(x), \quad x \in \IR.
\end{equation}
Let $\mathcal{S}$ be the spectrum of $H$. Assume that
$$
\mathcal{S} = [\underline{E}, \infty) \setminus \bigcup_{n \ge 1} (E^-_n,E^+_n),
$$
where the gaps $G_n := (E^-_n, E^+_n)$ obey
\begin{equation} \label{eq:9craig2}
\sum_n \gamma_n < \infty, \quad \sum_{i, n : i \neq n} \frac{\gamma_i^{1/2} \gamma_n^{1/2}}{\eta_{i,n}} < \infty,
\end{equation}
$\gamma_n := |G_n|$, $\eta_{i,n} := \dist (G_i,G_n)$. Finally, assume that $H$ is reflectionless, see Remark~\ref{rem:reflectionles}. Then,
\begin{equation} \label{eq:craig3}
Q(t) = \sum_{n} E_n^+ + E^-_n - 2\mu_n(t),
\end{equation}
where the functions $\mu_n(t)$ obey the differential equation
\begin{equation}\label{eq:9craig4}
\dot{\mu}_n = \Phi_n(\mu) = \sigma(\mu_n) \sqrt{4 (\underline E - \mu_n)(E_n^- - \mu_n)(E_n^+ - \mu_n) \prod_{\substack{i \neq n}} \frac{(E_i^- - \mu_n)(E_i^+ - \mu_n)}{(\mu_i - \mu_n)^2} },
\end{equation}
$\sigma_n \in \{ +, - \}$ and $\mu_n(t) \in [E_n^-, E_n^+]$. The ODE \eqref{eq:9craig4} is defined on the manifold $\mathcal{C}$ just as for the Hill operator.

$(3)$ In the proof of Theorem~$I$ we invoke Theorem~$\tilde I$. This gives the uniform exponential estimates on the Fourier coefficients which are crucial for the derivation of Theorem~$I$. It is easy to see that the estimates \eqref{eq:8coeffexpdecay} known in the setting of a general Hill equation with analytic potential are ineffective as the period $T$ grows to $\infty$.
\end{remark}

\begin{proof}[Proof of Theorem~$I$]
Let $V$ be as in Theorem~$I$. We use Proposition~\ref{prop:8.stabilM}, see also Remark~\ref{rem:9closeODE1CONV}. Take an arbitrary $\theta^\zero = (\theta^\zero_n) \in \mathcal{M}$ and set
\begin{equation} \label{eq:9traceformula1-1}
\begin{split}
Q(t) & = Q(t; \theta^\zero) = \sum_{n} E_n^+ + E^-_n - 2 \mu_n(\theta^\zero_n (t)), \\
Q^\ar(t, \gamma^\ar) & = Q^\ar(t; \theta^\zero, \gamma^\ar) = \sum_{n} E_n^{r,+} + E^{r,-}_n - 2 \mu^\ar_n(\theta_n^{(0)}(t; \gamma^\ar)).
\end{split}
\end{equation}
Due to the McKean-van Moerbeke-Trubowitz trace formula, $Q^\ar(t,\gamma^\ar)$ is a periodic function with the same period as
$V^\ar$ and with the same spectrum, that is, $Q \in \mathcal{ISO}(V)$; see Theorem~$\tilde I$. By Theorem~$\tilde I$ we have
\begin{equation}\label{eq:9VtildeCONV}
\begin{split}
Q^\ar(t, \gamma^\ar) & = \sum_{n \in \zv} d^\ar(n,\gamma^\ar) e^{2 \pi i t n\tilde \omega^\ar}\ , \quad t \in\mathbb{R}, \\
|d^\ar(n, \gamma^\ar)| & \le \sqrt{2\ve} \exp \Big( -\frac{\kappa_0}{2} |n| \Big), \quad n \in \zv \setminus \{ 0 \}.
\end{split}
\end{equation}
Using Lemmas~\ref{lem:8enumerinj} and \ref{lem:8enumerinj11} and \eqref{eq:8Gnesti}, we obtain
\begin{equation} \label{eq:9traceformula2}
\begin{split}
\sum_{n} |E_n^+ + E^-_n - E_n^{r,+} + E^{r,-}_n| & = \sum_{n \le N} |E_n^+ + E^-_n - E_n^{r,+} + E^{r,-}_n| + \sum_{n > N} |E_n^+ + E^-_n - E_n^{r,+} + E^{r,-}_n| \\
& \le C N \lambda^\ar + C \sum_{n > N} \exp(-n^{\sigma(\kappa_0, \nu)}).
\end{split}
\end{equation}
Combining Proposition~\ref{prop:8.stabilM} with \eqref{eq:9traceformula2} we conclude that
\begin{equation} \label{eq:9traceformula1-2}
\lim_{r \to \infty} \max_{0 \le t \le T} |Q(t) - Q^\ar(t, \gamma^\ar)| = 0
\end{equation}
for any $T$. Due to Lemma~\ref{Lml:8quasibasisconverge1},
\begin{equation}\label{eq:8-4coeflim}
d(n) = \lim_{r \to \infty} d^\ar(n, \gamma^\ar)
\end{equation}
exists for every $n \in \zv$. Moreover,
\begin{equation}\label{eq:8-4decay123-2}
\begin{split}
Q(t) & = \sum_{n \in \zv} d(n) e^{2 \pi i t n\omega}, \quad t \in \mathbb{R}, \\
|d(n)| & \le \ve^{1/2} \exp \Big( -\frac{\kappa_0}{2} |n| \Big), \quad n \in \zv.
\end{split}
\end{equation}
Using the notation in Theorem~$I$, this shows that $Q \in \mathcal{P}(\omega, \ve^{1/2}, \kappa_0/2)$.

Set
\begin{equation}\label{eq:8-4decay124}
\begin{split}
\hat{Q}^\ar(t) & = \sum_{n \in \zv} d^\ar(n, \gamma^\ar) e^{2 \pi i t n \omega}, \quad t \in \mathbb{R}, \\
[H_Q y](t) & = - y''(t) +  Q(t) y(t), \\
[\hat{H}_Q^\ar y](t) & = - y''(t) + \hat{Q}^\ar(t) y(t), \\
[H_Q^\ar y](t) & = - y''(t) + Q^\ar(t, \gamma^\ar) y(t).
\end{split}
\end{equation}
Let $\mathcal{S}_Q$, $\hat{\mathcal{S}}^\ar_Q$ and $\mathcal{S}^\ar_Q$ be the spectrum of $H_Q$, $\hat{H}_Q^\ar$ and $H_Q^\ar$,  respectively. Clearly,
\begin{equation}\label{eq:8-4decay125-2}
\sup_t |\hat{Q}^\ar(t) - Q(t)| \to 0 \quad \text{as $r \to \infty$}.
\end{equation}

Fix some compact interval $I$. By \eqref{eq:8-4decay125-2}, it follows that
\begin{equation}\label{eq:8-4decay126-3}
\begin{split}
d (\mathcal{S}_Q\cap I, \hat{\mathcal{S}}^\ar_Q\cap I) \to 0 \quad \text{as $r \to \infty$},
\end{split}
\end{equation}
where $d$ again stands for the Hausdorff distance between the sets. Moreover, by Theorem~\ref{Tdistspec},
\begin{equation}\label{eq:8-4decay127}
d (\mathcal{S}^\ar_Q\cap I, \hat{\mathcal{S}}^\ar_Q\cap I) \le C |\omega - \tilde \omega^\ar|^{1/2} \to 0 \quad \text{as $r \to \infty$}.
\end{equation}
Thus,
\begin{equation}\label{eq:8-4decay128}
d (\mathcal{S}_Q\cap I, \mathcal{S}^\ar_Q\cap I) \to 0 \quad \text{as $r \to \infty$}.
\end{equation}
for any fixed compact interval $I$.

Recall that $\mathcal{S}^\ar_Q = [\underline{E}^\ar, \infty) \setminus \bigcup (E^{r,-}_{n}, E^{r,+}_{n})$. Due to Lemma~\ref{lem:8enumerinj}, $|E^\pm_n - E^{r,\pm}_{n}| < \lambda'_{r}$ for any $1 \le n \le N$, any $N$, provided $r \ge r(N)$. Here,
$$
\mathcal{S} = [\underline{E}, \infty) \setminus \bigcup_{n \ge 1} (E^{-}_{n}, E^{+}_{n})
$$
is the spectrum for $V$ as in Proposition~\ref{prop:8.stabilM}. Due to Lemma~\ref{lem:8enumerinj114}, $|\underline{E}^\ar - \underline{E}| \le \lambda'_r \to 0$ as $r \to \infty$. So,
\begin{equation}\label{eq:8-4decay129}
d(\mathcal{S}\cap I, \mathcal{S}^\ar_Q\cap I) \to 0 \quad \text{as $r \to \infty$}.
\end{equation}
for any fixed compact interval $I$. This implies
\begin{equation}\label{eq:8-4decay1210}
\mathcal{S}_Q = \mathcal{S},
\end{equation}
that is, $Q$ is isospectral with $V$. Thus, we have defined a map $\Phi$, given by
\begin{equation} \label{eq:9traceformula11}
\mathcal{M} \ni \theta^\zero \mapsto \Phi(\theta^\zero) := Q(t, \theta^\zero) := \sum_{n} E_n^+ + E^-_n - 2 \mu_n(\theta^\zero_n (t)) \in \mathcal{ISO}(V)
\cap \mathcal{P}(\omega, \sqrt{2\ve}, \kappa_0/2).
\end{equation}
Let us verify that this map is injective. Let $Q(t, \theta^\zero) = Q(t, \theta^{(1)})$, $\theta^\zero = (\theta^\zero_n)_n$, $\theta^\one = (\theta^\one_n)_n$. Let $Q^\ar(t; \theta^\zero, \gamma^\ar)$, $Q^\ar(t; \theta^\one, \gamma^\ar)$ be defined as in \eqref{eq:9traceformula1-1} with arbitrarily chosen $\gamma^\ar$.
Due to \eqref{eq:9VtildeCONV} we have
\begin{equation}\label{eq:9VtildeCONV111}
\begin{split}
Q^\ar(t, \theta^{(j)} \gamma^\ar) & = \sum_{n \in \zv} d^{(r,j)}(n, \gamma^\ar) e^{2 \pi i t n \tilde \omega^\ar}, \quad t \in \mathbb{R}, \\
|d^{(r,j)}(n, \gamma^\ar)| & \le \sqrt{2\ve} \exp \big( -\frac{\kappa_0}{2} |n| \big), \quad n \in \zv \setminus \{ 0 \},
\end{split}
\end{equation}
$j = 0, 1$. Due to \eqref{eq:8-4coeflim},
\begin{equation}\label{eq:8-4coeflim111}
\lim_{r \to \infty} d^{(r,j)}(n,\gamma^\ar) = d^{(j)}(n),
\end{equation}
where
\begin{equation}\label{eq:8-4decay12311}
Q(t,\theta^{(j)}) = \sum_{n \in \zv} d^{(j)}(n) e^{2 \pi i t n\omega},
\end{equation}
$j = 0, 1$; see \eqref{eq:8-4decay123-2}. Since $Q(t, \theta^\zero) = Q(t, \theta^{(1)})$, we conclude that $d^{(0)}(n) = d^{(1)}(n)$ for any $n \in \zv$. Thus,
\begin{equation}\label{eq:8-4coeflim1121}
\lim_{r \to \infty} |d^{(r,0)}(n,\gamma^\ar) - d^{(r,1)}(n,\gamma^\ar)| = 0.
\end{equation}
Combining \eqref{eq:9VtildeCONV111} with \eqref{eq:8-4coeflim1121}, we conclude that
\begin{equation}\label{eq:9VtildeCONV11121}
\lim_{r \to \infty} \sup_{t \in \mathbb{R}} |Q^\ar(t, \theta^{(0)} \gamma^\ar) - Q^\ar(t, \theta^{(1)} \gamma^\ar)| = 0.
\end{equation}
Recall that $\mu_n^r(\theta^{(k)}_n)$ is a Dirichlet eigenvalue for the Hill equation with potential $Q^\ar(t, \theta^{(k)} \gamma^\ar)$. Due to \eqref{eq:9HillR-2},
\begin{equation} \label{eq:9HillR1}
|\mu_n^r(\theta^{(0)}_n) - \mu_n^r(\theta^{(1)}_n)| \le \sup_{t \in \mathbb{R}} |Q^\ar(t, \theta^{(0)} \gamma^\ar) - Q^\ar(t, \theta^{(1)} \gamma^\ar)|.
\end{equation}
Recall that
\begin{equation}\label{eq:9auxmanofold}
\mu^\ar_n(\theta) = E^{r,-}_n + (E^{r,+}_n-E^{r,-}_n) \sin^2 \Big( \frac{\theta_n}{\xi_n} \Big), \quad \xi_n = \exp(-n^{\sigma(a_0, b_0, \kappa_0, \nu)}/2),
\end{equation}
see \eqref{eq:8auxmanofold}. Recall also that $0 \le \theta^{(k)} \le \pi \xi_n/2$. So,
\begin{equation} \label{eq:9HillR122}
\begin{split}
|\mu_n^r(\theta^{(0)}_n) - \mu_n^r(\theta^{(1)}_n)| & = (E^{r,+}_n - E^{r,-}_n) \Big| \sin^2 \Big( \frac{\theta^\zero_n}{\xi_n} \Big) - \sin^2 \Big( \frac{\theta^\one_n}{\xi_n} \Big) \Big| \\
& \ge c(E^{r,+}_n - E^{r,-}_n) |\theta^{(0)}_n - \theta^{(1)}_n|,
\end{split}
\end{equation}
$c = c(\frac{\theta^{(0)}_n}{\xi_n},\frac{\theta^{(1)}_n}{\xi_n})$. Once again,
\begin{equation} \label{eq:9HillR12233}
\lim_{r \to \infty} (E^{r,+}_n - E^{r,-}_n) = E^{+}_n - E^{-}_n > 0.
\end{equation}
Combining \eqref{eq:9HillR122} with \eqref{eq:9HillR1}, \eqref{eq:9VtildeCONV11121}, and  \eqref{eq:9HillR12233}, we conclude that $\theta^{(0)}_n = \theta^{(1)}_n$.

Let $Q(x)$ be a bounded continuous real function, $x \in \R$. Consider the Schr\"odinger operator
\begin{equation} \label{eq:9craig1ThI}
[H_Q y](x) = - y''(x) +  Q(x) y(x), \quad x \in \IR.
\end{equation}
Let $\mathcal{S}_Q$ be the spectrum of $H_Q$. Assume that $\mathcal{S}_Q = \mathcal{S}$ and that $H_Q$ is reflectionless. Recall that due to \eqref{eq:8Gnestimod}--\eqref{eq:6resonanceAA-2}, we have
\begin{equation}\label{eq:8GnestimodThI}
\gamma_n \le \exp(-n^\sigma), \quad n \ge 1, \quad \gamma_n < \eta_{i,n}^4, \quad i < n.
\end{equation}
This implies the conditions \eqref{eq:9craig2} in Remark~\ref{rem:9fortheorI}. So,
\begin{equation} \label{eq:craig3ThI}
Q(t) = \sum_{n} E_n^+ + E^-_n - 2\mu_n(t),
\end{equation}
where the functions $\mu_n(t)$ obey the differential equation
\begin{equation}\label{eq:9craig4ThI}
\dot{\mu}_n = \Phi_n(\mu) = \sigma(\mu_n) \sqrt{4 (\underline E - \mu_n)(E_n^- - \mu_n)(E_n^+ - \mu_n) \prod_{\substack{i \neq n}} \frac{(E_i^- - \mu_n)(E_i^+ - \mu_n)}{(\mu_i - \mu_n)^2}}
\end{equation}
and $\mu_n(t) \in [E_n^-, E_n^+]$. Due to Lemma~\ref{lem:8Lml2ODEmutheta}, $\mu(t) = \mu(\theta(t))$ with
\begin{equation}\label{ml07origODEthetaThI}
\dot{\theta} = \Psi(\theta).
\end{equation}
Here $\mu_n(\theta_n)$ is the same as above. Just as above we derive that
\begin{equation}\label{eq:8-4decay123-3}
\begin{split}
Q(t) & = \sum_{n \in \zv} d(n) e^{2 \pi i t n\omega}, \quad t \in \mathbb{R}, \\
|d(n)| & \le \sqrt{2\ve} \exp \big( -\frac{\kappa_0}{2} |n| \big), \quad n \in \zv;
\end{split}
\end{equation}
see the derivation of \eqref{eq:8-4decay123-2}.

Let $Q \in \mathcal{ISO}(V) \cap \mathcal{P}(\omega, \sqrt{2\ve}, \kappa_0/2)$. We assume that $\ve_0$ (and hence $\sqrt{2\ve}$) is small enough so that the existing theory implies that the spectrum of $H_Q$ is purely absolutely continuous, see \cite{DG, El}. In particular $H_Q$ is reflectionless; compare \cite{R07}. Since we also have by assumption that $\mathcal{S}_Q = \mathcal{S}$, it follows from the arguments above that $Q = \Phi(\theta^\zero)$ for some $\theta^\zero \in \mathcal{M}$.

Thus, $\Phi$ is a bijection from $\mathcal{M}$ onto the $Q \in \mathcal{ISO}(V) \cap \mathcal{P}(\omega, \sqrt{2\ve}, \kappa_0/2)$, as claimed in Theorem~$I$. Finally, let us verify that $\Phi$ is a homeomorphism.

Let $\theta^\zero, \theta^\one \in \mathcal{M}$. Let $\theta^{(j)}(t)$ be the $\Psi$-trajectory originating at $\theta = \theta^{(j)}$, $j = 0, 1$. Then due to Proposition~\ref{prop:8.stabil1}, for any $T$, we have
\begin{equation}\label{eq:9ation2theta}
\max_{0 \le t \le T} d(\theta^\zero (t), \theta^{(1)}(t)) \le A (4K)^{BT} d(\theta^\zero,\theta^\one),
\end{equation}
where $A, B, K$ depend on $a_0, b_0, \kappa_0, \nu$. Let
\begin{equation} \label{eq:9traceformula11222}
Q(t, \theta^{(j)}) := \sum_{n} E_n^+ + E^-_n - 2 \mu_n(\theta^{(j)}_n (t)), \quad j = 0, 1.
\end{equation}
Then,
\begin{equation} \label{eq:9traceformula11223}
\begin{split}
|Q(t, \theta^\zero) - Q(t, \theta^\one)| & \le \sum_{n} 2 |\mu_n(\theta^{(0)}_n (t)) - \mu_n(\theta^{(1)}_n (t))| \\
& \le C \sum_{n} \frac{E_n^+- E^-_n}{\xi_n} |\theta^{(0)}_n (t) - \theta^{(1)}_n (t)| \\
& \le C(a_0, b_0, \kappa_0, \nu) \, d(\theta^\zero (t), \theta^{(1)}(t)).
\end{split}
\end{equation}
Combining this with \eqref{eq:9ation2theta} we obtain
\begin{equation}\label{eq:9ation2theta111}
\max_{0 \le t \le T} |Q(t, \theta^\zero) - Q(t, \theta^\one)| \le A_1 (4K)^{BT} d(\theta^\zero, \theta^\one) := \delta_T(\theta^\zero, \theta^\one).
\end{equation}
We have
\begin{equation}\label{eq:8-4decay123Fin}
\begin{split}
Q^{(j)}(t) & = \sum_{n \in \zv} g^{(j)}(n) e^{2 \pi i t n \omega}, \quad t \in \mathbb{R}, \\
|g^{(j)}(n)| & \le \ve^{1/2} \exp \Big( -\frac{\kappa_0}{2} |n| \Big), \quad n \in \zv, \quad j = 0, 1.
\end{split}
\end{equation}
Let $\beta_0 := (2 b_0)^{-1}$. Due to Lemma~\ref{Lml:8quasibasis}, for $|n| \le T^{\beta_0}$, we have
\begin{equation}\label{eq:9qpbasis4}
|g^\zero(n) - g^\one(n)| \le \delta_T (\theta^\zero, \theta^\one) + \frac{C(a_0, b_0, \kappa_0, \nu)}{T^{1/2}} + C(\kappa_0, \nu) \exp \Big( -\frac{\kappa_0}{2} T^{\beta_0} \Big).
\end{equation}
Therefore,
\begin{equation} \label{eq:9Fourdist}
\begin{split}
d((g^\zero(n))_n, (g^\one(n)_n)_n) & = \sum_{n \in \zv} |g^\zero(n) - g^\one(n)| \\
& = \sum_{|n| \le T^{\nu^{-1} \beta_0}} |g^\zero(n) - g^\one(n)| + \sum_{|n| > T^{\nu^{-1} \beta_0}} |g^\zero(n) - g^\one(n)| \\
& \le C (\nu_0) T^{\nu \beta_0} \left[ \delta_T(\theta^\zero, \theta^\one) + \frac{C(a_0, b_0, \kappa_0, \nu)}{T^{1/2}} + C(\kappa_0, \nu) \exp \Big( -\frac{\kappa_0}{2} T^{\beta_0} \Big) \right] \\
& \qquad + \sum_{|n|> T^{\nu^{-1}\beta_0}} |(g^\zero(n)| + |g^\one(n)|) \\
& \le A_1 (4K_1)^{BT} d(\theta^\zero, \theta^\one) + \frac{C_1(a_0, b_0, \kappa_0, \nu)}{T^{1/4}}.
\end{split}
\end{equation}
It follows from \eqref{eq:9Fourdist} that the map $\Phi : (\mathcal{M},d) \to (\mathcal{ISO}(V) \cap \mathcal{P}(\omega, \sqrt{2\ve}, \kappa_0/2), d)$ is continuous. Since $(\mathcal{M},d)$ is compact and $\Phi$ is injective, the inverse is also continuous.
\end{proof}

\end{document}